\newtheorem{thm}{Theorem}[section]
\newtheorem{prop}[thm]{Proposition}
\newtheorem{lem}[thm]{Lemma}
\newtheorem{dfn}[thm]{Definition}
\newtheorem{cor}[thm]{Corollary}
\theoremstyle{remark}
\newtheorem{rk}[thm]{Remark}
\newtheorem*{rule123*}{{\bf Side-pairing selection process (123-rule)}}
\newtheorem{hyp}{Assumption}
\def\cqfd{\mbox{}\nolinebreak\hfill$\Box$\medbreak\par}
\newenvironment{pf}{\noindent\textbf{Proof:}}{\cqfd}
\newcommand{\tr}{\textrm{Tr}}
\newcommand{\re}{\mathfrak{Re}}
\newcommand{\br}{\textrm{br}}
\renewcommand{\r}{\rho}
\newcommand{\s}{\sigma}
\renewcommand{\t}{\tau}
\newcommand{\p}{p}
\title[Non-arithmetic complex hyperbolic lattices]{New non-arithmetic complex hyperbolic lattices II}
\date{Dec 10, 2019}
\author{Martin Deraux, John R. Parker and Julien Paupert}
\newcommand{\pu}{{\rm PU}}
\newcommand{\su}{{\rm SU}}
\newcommand{\C}{\mathbb{C}}
\newcommand{\Q}{\mathbb{Q}}
\newcommand{\Z}{\mathbb{Z}}
\newcommand{\N}{\mathbb{N}}
\newcommand{\R}{\mathbb{R}}
\newcommand{\B}{\mathcal{B}}
\newcommand{\RH}[1]{{{\bf H}^{#1}_{\R}}}
\newcommand{\CH}[1]{{{\bf H}^{#1}_{\C}}}
\newcommand{\CP}[1]{{{\bf P}^{#1}_{\C}}}
\newcommand{\CHB}[1]{{{\bf \overline{H}}^{#1}_{\C}}}
\renewcommand{\S}{\mathcal{S}}
\newcommand{\T}{\mathcal{T}}
\begin{document}

\begin{abstract}
  We describe a general procedure to produce fundamental domains for
  complex hyperbolic triangle groups. This allows us to produce new
  non-arithmetic lattices, bringing the number of known non-arithmetic
  commensurability classes to 22.
\end{abstract}

\subjclass[2010]{22E40, 20F05, 20F36, 32M15} 

\maketitle

\section{Introduction}

The goal of this paper is to give a unified construction of various
families of lattices in the isometry group $\pu(2,1)$ of the complex
hyperbolic plane $\CH 2$. We describe a systematic manner to produce
fundamental domains that works for all known triangle group lattices,
with minor modifications for some pathological cases.

The groups we consider turn out to produce all previously known
examples of non-arithmetic lattices in $\pu(2,1)$. Until recently, all
such groups were contained, up to commensurability, in the list of
lattices that appears in work of Deligne-Mostow/Thurston,
see~\cite{delignemostow},~\cite{mostowihes},~\cite{thurstonshapes}. In
fact, a lot of these groups were discovered over a century ago by
Picard~\cite{picard}, and studied by several people including
Terada~\cite{terada}. These groups give nine commensurability classes
of non-arithmetic lattices, but the determination of the precise
number of commensurability classes required a significant amount of
work
(see~\cite{sauter},~\cite{delignemostowbook},~\cite{kappesmoller},~\cite{mcmullengaussbonnet}).

In~\cite{dpp2}, we announced the construction of 12 lattices, giving
at least 9 new non-arithmetic commensurability classes. The most
difficult part of the result is the proof that the groups are
lattices.  Indeed, the fact that they are not commensurable to any
Deligne-Mostow lattice can be proved by the somewhat rough
commensurability invariant given by the field generated by traces in
the adjoint representation. The fact that they are not arithmetic
follows from a standard application of the complex reflection version
of the Vinberg arithmeticity criterion, see for
instance~\cite{paupert}.

The proof of discreteness relies on the construction of an explicit
fundamental domain for each group. There are general ways to produce
such fundamental domains, for instance Dirichlet domains, but these
often turn out to give overly complicated combinatorics, see for
instance~\cite{deraux4445}.

The domains used in~\cite{dpp2} are quite simple and natural. Their
vertices are all given by (well-chosen) intersections of mirrors of
reflections in the group, their 1-faces are all geodesic arcs, and
2-faces are as natural as possible in the context of the non-constant
curvature geometry of the complex hyperbolic plane, as they lie on
complex lines or Giraud disks.

The combinatorial construction of the domains was inspired by the
fundamental domains constructed by Rich Schwartz in~\cite{rhochi}, and
work related to James Thompson's thesis~\cite{thompson}. The general
procedure turns out to be quite elementary, and a lot of it can be
described by hand, even though a lot of the computations are much
easier to perform with a computer.

The computational heart of our argument is the proof that the
geometric realization gives an embedding of our combinatorial
fundamental domain into $\CHB 2$. This relies on interval arithmetic
in conjunction with the rational univariate representation (RUR) for
0-dimensional polynomial systems (see~\cite{rouillier}). Software that
performs these verifications is publicly available~\cite{spocheck}; it
is based on the implementation of the RUR that was developed by
B. Parisse in giac, see~\cite{giac}.

We will review and clarify the construction, and show that it applies
to a wide class of complex hyperbolic triangle groups. As a result, we
get new fundamental domains for many groups that appeared previously
in the literature, including many of the Deligne-Mostow lattices.

Our methods also allow us to treat the 6 sporadic triangle groups left
over from~\cite{dpp2}. We denote by $\S(\p,\tau)$ the sporadic
triangle group generated by a complex reflection $R_1$ with rotation
angle $2\pi/\p$ and an order 3 isometry $J$ with $\tr(R_1J)=\tau$;
  see Table~\ref{tab:tauvalues} for the meaning of the notation
  $\sigma_j$. The family of groups
    $\S(\p,\bar\sigma_4)$ was studied in \cite{dpp2}.
\begin{thm} \label{thm:sporadicNA}
The groups $\S(\p,\sigma_1)$ are non-arithmetic lattices for
$\p=3,4,6$.  The groups $\S(\p,\sigma_5)$ are non-arithmetic lattices
for $\p=3,4$. They are not commensurable to any Deligne-Mostow
lattice, nor to any lattice of the form $\S(\p,\bar\sigma_4)$.
\end{thm}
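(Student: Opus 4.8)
The plan is to prove three separate assertions for the groups $\S(\p,\sigma_1)$ ($\p=3,4,6$) and $\S(\p,\sigma_5)$ ($\p=3,4$): that they are lattices, that they are non-arithmetic, and that they are not commensurable to the Deligne-Mostow lattices nor to the $\S(\p,\bar\sigma_4)$ family. Each part uses a different tool, and I would organize the proof accordingly.

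First I would establish that each group is a lattice. This is the deepest part and should rely on the general fundamental-domain construction promised earlier in the paper. The strategy is to build, for each of these five sporadic parameter values, an explicit fundamental polytope whose vertices are intersections of mirrors, whose edges are geodesic arcs, and whose $2$-faces lie on bisectors, exactly as described in the introduction. Once the combinatorial and metric data of the polytope are verified, I would apply the Poincar\'e polyhedron theorem: checking that the side-pairing maps generate the group, that the images of the polytope tile $\CH 2$ with the correct local picture along each ridge (the cycle/tessellation conditions around codimension-two faces), and that the quotient has finite volume. Finiteness of volume, together with discreteness, gives the lattice property. The main obstacle lies here, since verifying the tessellation conditions around every ridge is delicate and is precisely where the explicit computer-assisted computations enter; the sporadic cases may also require the ``minor modifications for pathological cases'' alluded to in the introduction.

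Next I would prove non-arithmeticity. The cleanest route is the complex-reflection form of the Vinberg arithmeticity criterion referenced in the excerpt (see~\cite{paupert}). Concretely, I would compute the adjoint trace field $k$ of each group, and then examine the Galois conjugates of the invariant Hermitian form defined over $k$. Arithmeticity would force every nontrivial Galois conjugate of the form (relative to the compact place) to be definite; so it suffices to exhibit one Galois embedding under which the conjugated form remains of signature $(2,1)$ rather than definite. Since the groups are already known to be lattices, failing this definiteness test rules out arithmeticity. This step is essentially a finite linear-algebra computation over the explicitly determined number field, and should be routine once the trace field is in hand.

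Finally, for the commensurability statements I would use the commensurability invariants discussed earlier in the paper, most directly the adjoint trace field, which is the ``somewhat rough'' invariant mentioned for the $12$ lattices of~\cite{dpp2}. I would compute this field (and, if it fails to separate some cases, finer invariants) for each of the five new groups, and compare against the known values for all Deligne-Mostow lattices and for the $\S(\p,\bar\sigma_4)$ family. Commensurable lattices share the same adjoint trace field, so any discrepancy immediately yields non-commensurability; where the fields coincide I would fall back on additional invariants (such as the noncompact dimension data or volume-type invariants) to separate the classes. I expect this part to reduce to tabulating field-theoretic data and observing that the new values do not appear on the existing lists, thereby confirming that these groups genuinely yield new non-arithmetic commensurability classes.
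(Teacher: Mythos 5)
Your proposal follows essentially the same route as the paper: the lattice property via the invariant-shell fundamental domains and the Poincar\'e polyhedron theorem (with computer-assisted verification), non-arithmeticity via the Mostow/Vinberg criterion applied to the signatures of Galois conjugates of the Hermitian form, and non-commensurability via the adjoint trace field. For this particular theorem the rough invariants already suffice, exactly as the paper argues: the fields $\Q(\sqrt{2})$, $\Q(\sqrt{6})$, $\Q(\sqrt{5})$, $\Q(\sqrt{3},\sqrt{5})$ of the groups $\S(\p,\sigma_1)$ and $\S(\p,\sigma_5)$ never occur among non-arithmetic Deligne--Mostow lattices or the $\S(\p,\bar\sigma_4)$ family (and arithmeticity itself is a commensurability invariant), so your fallback to finer invariants is not actually needed here.
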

\begin{thm} \label{thm:sporadicA}
The group $\S(2,\sigma_5)$ is an arithmetic lattice, and so are the
groups $\S(\p,\sigma_{10})$ for $\p=3,4,5,10$.
\end{thm}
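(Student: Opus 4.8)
The plan is to prove arithmeticity for each of the listed sporadic groups by verifying the complex-reflection version of the Vinberg arithmeticity criterion, which is the same tool the authors cite (via~\cite{paupert}) for proving non-arithmeticity in Theorem~\ref{thm:sporadicNA}; here it will instead certify that the groups \emph{are} arithmetic. First I would compute, for each pair $(\p,\tau)$ with $\tau=\sigma_5$ (at $\p=2$) or $\tau=\sigma_{10}$ (at $\p=3,4,5,10$), the trace field $k=\Q(\tr\operatorname{Ad}\Gamma)$ generated by traces in the adjoint representation, together with the field $\ell$ generated by the entries of a Hermitian form $H$ preserved by $\Gamma$ and the matrix entries of the generators $R_1$ and $J$. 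Since these groups are explicit and generated by two matrices, this reduces to evaluating a handful of algebraic numbers and identifying the number field they generate and its ring of integers.

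Next I would use the Galois action on $\ell$ to enumerate the archimedean places of the trace field and, at each Galois conjugate embedding $\varphi\colon\ell\hookrightarrow\C$, examine the signature of the conjugated Hermitian form $\varphi(H)$. Arithmeticity in $\pu(2,1)$ requires that at every non-identity embedding the conjugate form be definite, so that the corresponding conjugate group is relatively compact (the group lands in a compact $\su(2)$ or $\su(3)$ factor), while the identity embedding gives signature $(2,1)$. Concretely, for each of the five groups I would tabulate the signatures $(\varphi(H))_\varphi$ and check that exactly one place yields $(2,1)$ and all others yield a definite $(3,0)$ (or $(0,3)$) form. This, combined with the trace field being totally real or a CM field of the appropriate type and the entries being algebraic integers, is precisely what Vinberg's criterion demands.

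The main obstacle will be the bookkeeping at the place $\p=2$ for $\S(2,\sigma_5)$, where the reflection $R_1$ has order $2$ and the group may degenerate or require a slightly different normalization of the Hermitian form; one must be careful that $\S(2,\sigma_5)$ is genuinely a lattice (discrete and cofinite) before the arithmeticity question is even well-posed, so I would first confirm it arises from the general fundamental-domain construction of the paper. A secondary subtlety is verifying integrality: the matrix entries of $R_1J$ and its conjugates must be shown to be algebraic integers in $\ell$ (or to lie in an order over which the form is defined), which can fail superficially due to a bad choice of basis and must be repaired by conjugating the representation into a suitable integral model. Once the signature data and integrality are in hand, the conclusion that each group is arithmetic is immediate from the criterion, and I would present the results in a table paralleling the non-arithmetic cases, contrasting the all-definite conjugate signatures found here with the indefinite conjugate signatures that produced non-arithmeticity in Theorem~\ref{thm:sporadicNA}.
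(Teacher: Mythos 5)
Your proposal is correct and follows essentially the same route as the paper: lattice-ness comes from the paper's fundamental domain construction (the algorithm and Poincar\'e polyhedron theorem apply to $\S(2,\sigma_5)$ and to $\S(p,\sigma_{10})$ for $p=3,4,5,10$), and arithmeticity is certified by the Mostow/Vinberg criterion, checking that for the adjoint trace field $\Q(\sqrt{5})$ (computed in all five cases from traces such as $|\tr(R_3R_2R_1)|^2$) the unique non-trivial Galois conjugate of the preserved Hermitian form is definite, with the groups lying (up to finite index) in the integer points $\su(H,\mathcal{O}_{\mathbb{L}})$. This is exactly the signature-spectrum computation recorded in the paper's appendix tables, so no further comparison is needed.
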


We also consider a slightly different family of lattices $\T(\p,{\bf
  T})$, that comes out of James Thompson's thesis (see
Table~\ref{tab:Tvalues}). We prove that some of them are
non-arithmetic and also that they are new, in the
sense that they are not commensurable to any Deligne-Mostow lattice,
nor to any sporadic triangle group.
\begin{thm} \label{thm:thompson}
  The groups $\T(\p,{\bf S}_2)$ for $\p=4,5$ and $\T(3,{\bf H_2})$ are 
  non-arithmetic lattices. They are not commensurable to each other, to any 
  Deligne-Mostow lattice, nor to any sporadic triangle group.
\end{thm}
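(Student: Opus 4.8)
The plan is to treat the three assertions of the statement---lattice, non-arithmetic, and incommensurability---in turn, reusing the machinery set up above for the sporadic groups.

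First I would establish that each of $\T(\p,{\bf S}_2)$ for $\p=4,5$ and $\T(3,{\bf H_2})$ is a lattice by feeding it through the general fundamental-domain construction developed earlier. For each group I would write down the candidate polyhedron whose vertices are prescribed intersections of mirrors of the generating complex reflections, whose $1$-faces are geodesic arcs, and whose $2$-faces lie on bisectors; I would then identify the side-pairing isometries and verify the hypotheses of the Poincar\'e polyhedron theorem, namely that the images of the polyhedron under the side-pairings locally tile a neighborhood of each ridge and that the associated cycle relations hold. Discreteness and a presentation follow, and since each polyhedron is of finite volume, each group is a lattice in $\pu(2,1)$. As in the earlier treatment, this is where essentially all the work lives: checking the ridge cycles and the local tiling condition is delicate and is carried out with computer assistance.

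Next, for non-arithmeticity I would apply the complex-reflection version of the Vinberg criterion. For each group I would compute the adjoint trace field $k$, i.e.\ the field generated by the traces of the adjoint representation, and determine the invariant Hermitian form together with its field of definition. Arithmeticity would force every nontrivial Galois conjugate of this form to be definite; I would instead exhibit a Galois embedding of $k$ for which the conjugated form has signature $(2,1)$ rather than $(3,0)$, which rules out arithmeticity.

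Finally, the incommensurability statements rest on the fact that the adjoint trace field is a commensurability invariant. I would compute $k$ for each Thompson group and compare it with the adjoint trace fields of the Deligne--Mostow lattices and of the sporadic groups $\S(\p,\sigma_j)$: whenever the fields differ, non-commensurability is immediate, and the same comparison separates the three Thompson groups from one another in the cases where their fields already disagree. The hard part will be the remaining cases, where two groups share the same adjoint trace field, since there the field invariant no longer suffices; I would then fall back on a finer invariant---comparing cusp cross-section data, or commensurability-invariant ratios of orbifold Euler characteristics (equivalently covolumes)---to complete the separation. Besides the fundamental-domain verification, producing invariants that simultaneously distinguish all the relevant classes is the principal obstacle.
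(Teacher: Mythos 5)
Your treatment of the first two assertions coincides with the paper's: lattice-ness is obtained by running the three groups through the invariant-shell construction and verifying the hypotheses of the Poincar\'e polyhedron theorem (these do hold for $\T(4,{\bf S}_2)$, $\T(5,{\bf S}_2)$ and $\T(3,{\bf H_2})$, see Proposition~\ref{prop:goodcases}), and non-arithmeticity follows from the Mostow/Vinberg criterion by exhibiting a Galois conjugate of the Hermitian form with signature $(2,1)$. The gap is in the commensurability part. To begin with, trace fields alone do not even separate $\T(5,{\bf S}_2)$ and $\T(3,{\bf H_2})$ from all Deligne--Mostow lattices: the non-arithmetic Mostow groups $\Gamma(10,4/15)$ and $\Gamma(3,1/30)$ have the same adjoint trace field $\Q(\cos(2\pi/15))$. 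The paper separates these using the signature spectrum: the non-arithmeticity index is $2$ for those Mostow groups and $1$ for the Thompson groups, and Proposition~\ref{prop:index} makes this a commensurability invariant. You compute exactly this Galois data when proving non-arithmeticity, but you never invoke it as an invariant.

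More seriously, for the two pairs on which \emph{all} rough invariants agree --- $\T(5,{\bf S}_2)$ versus $\T(3,{\bf H_2})$ (same field, same index, both cocompact) and $\T(4,{\bf S}_2)$ versus $\S(4,\sigma_5)$ (same field, same index, both non-cocompact) --- the fallback invariants you propose cannot work. Cusp data is vacuous for the first pair, since both groups are cocompact; and for the second pair, cusp cross-sections carry no information either, because their fundamental groups are virtually lattices in the Heisenberg group and hence all mutually commensurable. Ratios of orbifold Euler characteristics are not a commensurability invariant at all: every Euler characteristic in play is rational, so every such ratio is rational whether or not the groups are commensurable. What the paper actually does (section~\ref{sec:commensurability}) combines three ingredients: Margulis' commensurator theorem (Proposition~\ref{prop:commonsupgroup}), which places two commensurable non-arithmetic lattices with finite indices $d_1,d_2$ inside a common lattice $\Gamma$; arithmetic of Euler characteristics, e.g.\ $\chi=133/300$ and $26/75$ have coprime numerators, forcing $\chi^{orb}(\Gamma)\le 1/300$, and similarly forcing $d_1\ge 17$ in the cusped case; and explicit geometric estimates that contradict these conclusions --- a lower bound for the covolume of any lattice containing a complex reflection of order at least $7$ (Proposition~\ref{prop:volest_refl}), supplemented by an order-$10$ element argument in Fuchsian stabilizers, for the cocompact pair, and upper/lower bounds for the volume of a precisely invariant horoball (Corollary~\ref{cor:vol-upper-bd} and Proposition~\ref{prop:vol-lower-bd}) for the cusped pair. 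These quantitative volume bounds, together with the Margulis step, are the essential content missing from your sketch; without them neither hard pair can be separated.
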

This statement follows from the analysis of their adjoint trace fields
(see section~\ref{sec:tracefields}) and their non-arithmeticity index,
see section~\ref{sec:spectrum}.  A more detailed analysis, requiring
more subtle arguments, shows the following (see
section~\ref{sec:commensurability}, Table~\ref{tab:commclasses} in
particular).
\begin{thm}\label{thm:numberofclasses}
  The currently known non-arithmetic lattices in $\pu(2,1)$ come in 22
  commensurability classes.
\end{thm}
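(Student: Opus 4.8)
The plan is to treat Theorem~\ref{thm:numberofclasses} as the bookkeeping step that sits on top of the non-arithmeticity and non-commensurability results already assembled, organising the complete list of currently known non-arithmetic lattices into commensurability classes and counting them. First I would fix the list itself: the Deligne-Mostow/Thurston lattices (known to give exactly $9$ non-arithmetic classes by the work cited in the introduction), together with the sporadic groups $\S(\p,\s_j)$ and the Thompson groups $\T(\p,{\bf T})$, after discarding those that are in fact arithmetic and hence irrelevant here, namely $\S(2,\s_5)$ and the $\S(\p,\s_{10})$ of Theorem~\ref{thm:sporadicA}. The argument then splits into a lower bound (producing at least $22$ distinct classes) and an upper bound (collapsing the list so that at most $22$ survive); since the $9$ Deligne-Mostow classes are already accounted for, the real content is to show that the surviving sporadic and Thompson lattices contribute exactly $13$ further classes.

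For the lower bound I would run through a hierarchy of commensurability invariants, from coarse to fine. The coarsest is the adjoint trace field, the field generated by the traces of the adjoint representation $\Q(\tr\,{\rm Ad}\,\Gamma)$: being a commensurability invariant, it immediately separates any two lattices on the list whose adjoint trace fields differ, and this is precisely the mechanism already used in Theorems~\ref{thm:sporadicNA} and~\ref{thm:thompson} to distinguish the new groups from the Deligne-Mostow lattices. For pairs sharing the same adjoint trace field this invariant is blind, and there I would invoke the finer datum of section~\ref{sec:spectrum}, the non-arithmeticity index, which records the signatures of the Galois conjugates of the defining Hermitian form and is therefore again preserved under commensurability. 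Tabulating the adjoint trace field together with the non-arithmeticity index for every group (Table~\ref{tab:commclasses}) should separate all but a small number of coincidental pairs.

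The upper bound requires producing the commensurabilities that actually hold, that is, merging those lattices that genuinely share a class. For the Deligne-Mostow part I would simply inherit the coincidences established in the prior literature (which is why that family yields only $9$ classes rather than more); for the new groups I would match those whose invariants agree by exhibiting explicit commensurations---a common finite-index subgroup, or a conjugation under which the intersection has finite index in both groups---and by comparing covolumes, whose ratios must be rational within a single class. Particular care is needed for the family $\S(\p,\bar\s_4)$ of~\cite{dpp2} together with the new sporadic and Thompson lattices, since \emph{a priori} some of these could coincide up to commensurability with a Deligne-Mostow lattice or with one another, and each such potential coincidence must be either confirmed or ruled out.

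The main obstacle is exactly the handful of pairs on which both the adjoint trace field and the non-arithmeticity index agree: there the coarse invariants give no purchase, and one must fall back on the more delicate analysis of section~\ref{sec:commensurability}. In those cases proving non-commensurability demands genuinely finer data---cusp invariants, more refined trace information, or direct covolume comparisons---while proving commensurability demands an explicit geometric construction, and neither direction is automatic. Once every entry of Table~\ref{tab:commclasses} has been justified in this way, counting the resulting distinct classes yields the asserted total of $22$.
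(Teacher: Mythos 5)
Your outline reproduces the paper's overall strategy faithfully: fix the list (Deligne--Mostow plus the surviving sporadic and Thompson lattices), separate by the rough invariants (cocompactness, adjoint trace field, non-arithmeticity index), merge via the known isomorphisms and the prior Deligne--Mostow commensurability literature, and then deal with the residual pairs. Your count (9 Deligne--Mostow classes plus 13 new ones) is also consistent with Table~\ref{tab:commclasses}. But the proposal stops exactly where the theorem's actual mathematical content begins. There are four pairs on which \emph{all} rough invariants agree --- $\T(5,{\bf S}_2)$ vs.\ $\T(3,{\bf H_2})$ (cocompact, field $\Q(\cos\frac{2\pi}{15})$, index 1), and the three non-cocompact pairs $\S(3,\sigma_1)$ vs.\ $\S(6,\sigma_1)$, $\S(4,\sigma_5)$ vs.\ $\T(4,{\bf S}_2)$, $\Gamma(6,1/6)$ vs.\ $\T(4,{\bf E}_2)$ --- and for these you only offer a menu of possible tools (``cusp invariants, more refined trace information, or direct covolume comparisons'') while conceding that ``neither direction is automatic.'' That concession is the gap: without an argument for these four separations, the number of classes proved is strictly less than 22.

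The paper closes this gap with machinery your plan never sets up. First, it invokes Margulis's commensurator theorem (Theorem~\ref{thm:commensurator}, Proposition~\ref{prop:commonsupgroup}) to replace ``commensurable'' by ``both conjugate into a common lattice $\Gamma$ with finite indices $d_1,d_2$''; this step is indispensable, since commensurability by itself only yields a common finite-index \emph{subgroup}, which is useless for the index-bounding arguments that follow. Second, for the cocompact pair it proves a volume lower bound for any lattice containing a complex reflection of order $n\ge 7$ (Proposition~\ref{prop:volest_refl}, a tube argument resting on a J\o rgensen-type inequality), and combines it with orbifold Euler characteristic arithmetic ($133/300$ vs.\ $26/75$) and an order-10 rotation argument to reach a contradiction. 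Third, for the non-cocompact pairs it analyzes the parabolic subgroup $\langle R_1,R_2\rangle$ (Heisenberg lattice structure, the index $m=p$, and the power $q$ of the central translation realized as a shortest commutator), proves a trace-explicit upper bound for the volume of any precisely invariant horoball (Proposition~\ref{prop:from-jrp-vol}) and the lower bound $d/4$ for a one-cusped index-$d$ sublattice (Proposition~\ref{prop:vol-lower-bd}), obtaining explicit index bounds ($<11$, $<10$, $<4$) that contradict the lower bounds ($\ge 16$, $\ge 17$, $\ge 11$) forced by the Euler characteristics. None of this is a routine application of ``cusp invariants'' or ``covolume comparison'': the estimates are new and quantitative, and they are what makes the count 22 rather than an inequality. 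A minor further point: your remark that covolume ratios must be rational within a class is true but is not the mechanism the paper uses for merging; the mergers come from the explicit changes of generators in section~\ref{sec:isomorphisms} (e.g.\ $\T(4,{\bf E_2})$ sitting inside the Deligne--Mostow group with $\mu=(3,3,5,6,7)/12$) and from the cited Deligne--Mostow commensurability results.
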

It was recently observed~\cite{derauxklein} that some of these
non-arithmetic lattices actually appear in a list of lattices
constructed by Couwenberg, Heckman and Looijenga~\cite{CHL}, that gave
a common generalization of work of
Barthel-Hirzebruch-H\"ofer~\cite{BHH} and
Deligne-Mostow~\cite{delignemostow}. We refer to these lattices as CHL
lattices. Note that, apart from Deligne-Mostow lattices, the CHL
lattices contain three families of 2-dimensional lattices,
corresponding to line arrangements in $\CP 2$ of type $H_3$, $G_{24}$
and $G_{26}$. Using the same techniques as the ones
in~\cite{derauxklein}, one verifies that these three families
correspond to our families $\S(p,\sigma_{10})$,
$\S(p,\overline{\sigma}_4)$ and $\T(p,{\bf S_2})$, respectively (see
also~\cite{derauxabel}).

Using the analysis in section~\ref{sec:commensurability}
(Table~\ref{tab:commclasses}), we see that our lattices
$\S(p,\sigma_{1})$ ($p=3,4,6$), $\S(p,\sigma_5)$ ($p=3,4$) and
$\T(3,{\bf H_2})$ are non-arithmetic lattices that are not
commensurable to any CHL lattice.

We assume the reader is familiar with basic notions of hyperbolic
geometry over some base field, and with Coxeter groups. To get a quick
idea of the main differences between real and complex hyperbolic
geometry, the reader can consult~\cite{chengreenberg}. We will freely
use the classification of isometries into elliptic, parabolic and
loxodromic elements, sometimes with slight refinements, e.g. a regular
elliptic isometry is an elliptic isometry whose matrix representatives
have distinct eigenvalues.  We refer to~\cite{goldman} for background
on complex hyperbolic geometry and bisectors, see also section~2
of~\cite{dpp2} for a quick review.

{\bf Acknowledgements:} Part of this work took place while the authors
were visiting ICERM, Arizona State University, Universit\'e
Grenoble-Alpes, Durham University and Tokyo Institute of Technology.
The authors would like to thank these institutions for their
hospitality during these visits. The first author would like to thank
Bernard Parisse for his help in the development of
spocheck~\cite{spocheck}, and his flexibility in adapting giac.  The
second author was partly supported by a JSPS Invitation Fellowship
L16517. The third author was partly supported by Simons Foundation
Collaboration Grant for Mathematicians 318124 and National Science
Foundation grant DMS 170846.

\section{Groups generated by two complex reflections}

\subsection{Subgroups of $\pu(1,1)$ generated by two elliptic elements}

Let $b$ and $c$ be two elliptic elements in $\pu(1,1)$, which we
assume to be primitive of the same order, i.e. they rotate in $\CH 1$
by an angle $2\pi/\p$, $\p\in\N$, $\p\geq 2$. It is a well known fact
that the discreteness of the group generated by $b$ and $c$ is
controlled by the product $bc$, in the following sense.
\begin{prop}\label{prop:knapp}
  If $\langle b,c\rangle$ is a lattice, then $bc$ is
  non-loxodromic. If $bc$ is elliptic, then the group is a lattice if
  and only if $bc$ rotates by an angle $4\pi/n$ for some $n\in\N^*$,
  or by $8\pi/\p$.
\end{prop}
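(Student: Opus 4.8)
The plan is to work in the Poincaré disk model of $\CH 1$, where $\pu(1,1)$ acts by orientation-preserving isometries, and to encode the product $bc$ geometrically through a reflection decomposition. Writing $A$ and $B$ for the fixed points of $b$ and $c$, I would factor each rotation as a product of two reflections, taking the common mirror to be the geodesic $AB$: thus $b=R_1R_0$ and $c=R_0R_2$, where $R_0$ is the reflection in $AB$ and $R_1,R_2$ are reflections in geodesics $g_1\ni A$, $g_2\ni B$ each making angle $\pi/\p$ with $AB$. Then $bc=R_1R_2$, so the type of $bc$ is read off from the relative position of $g_1$ and $g_2$: they cross, are asymptotic, or are ultraparallel exactly when $bc$ is elliptic, parabolic, or loxodromic. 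Quantitatively, if $\delta=d(A,B)$ and $bc$ is elliptic rotating by $2\gamma$, the trace relation gives $\cos\gamma=\cos^2(\pi/\p)-\sin^2(\pi/\p)\cosh\delta$, so the type of $bc$ varies monotonically with $\delta$: elliptic for small $\delta$, parabolic at one critical distance, loxodromic beyond it.

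For the first statement, suppose $bc$ is loxodromic, so $g_1$ and $g_2$ are ultraparallel. Let $H=\langle R_0,R_1,R_2\rangle$; then $G=\langle b,c\rangle$ is its index-two orientation-preserving subgroup, so $H=G\sqcup R_0G$ is discrete whenever $G$ is. As a discrete reflection group in three geodesics, two of which meet at angle $\pi/\p$ at each of $A$ and $B$ while $g_1,g_2$ are ultraparallel, $H$ has as fundamental domain the region bounded by $AB$, $g_1$, $g_2$; since $g_1$ and $g_2$ do not meet, this region has infinite area. Hence $H$, and therefore $G$, has infinite covolume, contradicting the lattice hypothesis. So a lattice forces $bc$ to be non-loxodromic.

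For the second statement I would study the elliptic case, where $g_1,g_2$ cross at a point $C$ and $bc$ rotates by $2\gamma$, with $\gamma$ the angle at $C$ of the triangle $ABC$ whose other two angles equal $\pi/\p$. The easy direction is classical: when $\gamma=\pi/q$ for an integer $q$, all three angles have the form $\pi/(\mathrm{integer})$, Poincaré's theorem shows $H$ is a finite-area Fuchsian triangle reflection group, and $G$ is its rotation subgroup, hence a lattice; here $bc$ rotates by $2\pi/q=4\pi/n$ with $n=2q$ even. The real content is the converse together with the remaining values (odd $n$, and $8\pi/\p$), for which $ABC$ is \emph{not} a fundamental domain. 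To treat these I would invoke the classification of discrete two-generator elliptic groups (Knapp): parametrize $G$ by the rotation angle $2\gamma$ of $bc$, show that discreteness restricts $2\gamma$ to a finite list of angular values, and use the Gauss--Bonnet formula to decide which of these give finite covolume. Translating the surviving values back into the rotation angle of $bc$ yields exactly $4\pi/n$ or $8\pi/\p$.

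The main obstacle is precisely these exceptional families. That $bc$ rotating by $4\pi/n$ with $n$ odd, or by $8\pi/\p$, actually produces a lattice — and that no other elliptic value does — cannot be seen from $ABC$ directly, since its angles are then not all of the form $\pi/(\mathrm{integer})$. The resolution is to recognize each such $G$ as the rotation subgroup of a genuine reflection group on a different, finer triangle in which $C$ lies in the interior or on an edge of the fundamental domain rather than at a vertex, and then to verify finiteness of covolume. Carrying out this case analysis, rather than the monotonicity and area arguments above, is where the work lies.
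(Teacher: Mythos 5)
Your proposal is correct and follows essentially the same route as the paper, whose entire proof consists of two citations: the first claim ``follows from a straightforward application of the Poincar\'e polyhedron theorem'' and the second ``is a special case of Knapp's theorem.'' Your reflection-decomposition argument (the ultraparallel mirrors give a Coxeter triangle of infinite area, contradicting finite covolume) is exactly the straightforward application the paper has in mind, and, like the paper, you defer the genuinely hard content --- the converse and the exceptional values $4\pi/n$ with $n$ odd and $8\pi/p$ --- to Knapp's classification, so the two proofs coincide in substance (your one sign slip in the law-of-cosines formula is a harmless orientation convention).
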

The first part follows from a straightforward application of the
Poincar\'e polyhedron theorem. The second one is more subtle, it is a
special case of Knapp's theorem, see~\cite{knapp}.

Proposition~\ref{prop:knapp} will serve as a model for higher
dimensional analogues (we will look for simple words in the generators
whose behavior determines whether or not the group is a lattice), and
it is also important because it explains the behavior of subgroups
generated by two complex reflections in $\CH 2$ (by looking at the
projective line of lines through the intersection of the mirrors,
possibly in projective space).

A natural analogue of the elliptic elements of $\pu(1,1)$ for higher
dimensions is given by complex reflections in $\pu(n,1)$, whose
representative matrices have an eigenvalue of multiplicity
$n$. Geometrically, such an isometry fixes pointwise a complex
projective hyperplane called its mirror, and rotates about it by a
certain angle. In the next section, we discuss groups generated by two
such complex reflections.

\subsection{Subgroups of $\su(2,1)$ generated by two complex reflections}
\label{sec:pair-cx-ref}

Let $A,B\in \su(2,1)$ be complex reflections with angle $2\pi/\p$,
with distinct mirrors. We assume they each have eigenvalues $u^2,\bar
u, \bar u$, where $u=e^{2\pi i/3\p}$. Let ${\bf a}$ and ${\bf b}$ be
polar vectors to the mirrors of $A$ and $B$ respectively; that is
${\bf a}$ and ${\bf b}$ are $u^2$-eigenvectors.  Note that $\bar u^2$
is an eigenvalue of $AB$, corresponding to the intersection of the
multiple eigenspaces of $A$ and $B$.  Indeed using formulae
in~\cite{pratoussevitch}, see also~\cite{parkertraces}, we can write
down the trace of $AB$.

\begin{lem}\label{lem:tr-AB}
Let $A$ and $B$ be as above. Then
$$
{\rm tr}(AB)=\left(2-\frac{|u^3-1|^2\bigl|\langle{\bf a},{\bf b}\rangle\bigr|^2}
{\langle{\bf a},{\bf a}\rangle\langle{\bf b},{\bf b}\rangle}\right)u+\bar u^2.
$$
\end{lem}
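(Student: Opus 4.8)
The plan is to compute the trace of $AB$ directly from the matrix entries of the two complex reflections, using the structure of the polar vectors. First I would recall the explicit matrix form of a complex reflection in $\su(2,1)$ with angle $2\pi/\p$ and polar vector ${\bf a}$. Since $A$ fixes the mirror ${\bf a}^\perp$ pointwise (eigenvalue $\bar u$ with multiplicity two) and acts on the line spanned by ${\bf a}$ by the eigenvalue $u^2$, it can be written in the reflection formula
$$
A{\bf x}={\bf x}+(u^2-\bar u)\frac{\langle {\bf x},{\bf a}\rangle}{\langle {\bf a},{\bf a}\rangle}{\bf a},
$$
and similarly for $B$ with ${\bf b}$. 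This is the standard formula for a complex reflection fixing a hyperplane, where the scalar $u^2-\bar u$ is exactly the difference between the two eigenvalues.

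Next I would form the composition $AB$ and take its trace. The cleanest route is to use the general fact that for a rank-one perturbation of the identity the trace is easy to read off: writing $A=I+(u^2-\bar u)P_{\bf a}$ and $B=I+(u^2-\bar u)P_{\bf b}$, where $P_{\bf a}{\bf x}=\frac{\langle{\bf x},{\bf a}\rangle}{\langle{\bf a},{\bf a}\rangle}{\bf a}$ is the (non-orthogonal, Hermitian-form) projection onto the ${\bf a}$-line, one has
$$
AB=I+(u^2-\bar u)(P_{\bf a}+P_{\bf b})+(u^2-\bar u)^2 P_{\bf a}P_{\bf b}.
$$
Taking the trace term by term, $\tr(I)=3$, each $\tr(P_{\bf a})=\tr(P_{\bf b})=1$, and the remaining term requires $\tr(P_{\bf a}P_{\bf b})$, which evaluates to $\frac{|\langle{\bf a},{\bf b}\rangle|^2}{\langle{\bf a},{\bf a}\rangle\langle{\bf b},{\bf b}\rangle}$ by a short computation with the rank-one projectors. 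Collecting terms yields
$$
\tr(AB)=3+2(u^2-\bar u)+(u^2-\bar u)^2\frac{|\langle{\bf a},{\bf b}\rangle|^2}{\langle{\bf a},{\bf a}\rangle\langle{\bf b},{\bf b}\rangle}.
$$

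The final step is algebraic bookkeeping to match this with the stated form. I would use $u=e^{2\pi i/3\p}$ so that $u^3=e^{2\pi i/\p}$, and simplify the coefficients $3+2(u^2-\bar u)$ and $(u^2-\bar u)^2$ in terms of $u$ and $u^3$. The key identities are $|u^3-1|^2=(u^3-1)(\bar u^3-1)$ and that $u^2-\bar u=\bar u(u^3-1)$, so $(u^2-\bar u)^2=\bar u^2(u^3-1)^2$; I would need to reconcile this with the factor $|u^3-1|^2 u$ appearing in the statement, which is where one uses $|u|=1$ and the relation $\bar u^2(u^3-1)^2 = -|u^3-1|^2 u \cdot(\text{phase})$ together with the constant term $3+2\bar u(u^3-1)$ collapsing to $\bar u^2$ plus the $2u$ piece. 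The main obstacle, and the only genuinely delicate part, is this trigonometric/algebraic simplification: verifying that the real and imaginary parts line up so that the constant contribution is exactly $\bar u^2$ and the mirror-angle-dependent contribution carries the claimed factor $|u^3-1|^2 u$ with the correct sign. Everything before that is routine linear algebra; I expect the simplification to succeed cleanly once the substitution $u^3=e^{2\pi i/\p}$ is made and powers of $u$ are tracked carefully.
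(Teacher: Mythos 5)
Your overall strategy (write each reflection as a scalar plus a rank-one operator, expand $AB$, and use $\tr(P_{\bf a}P_{\bf b})=\frac{|\langle{\bf a},{\bf b}\rangle|^2}{\langle{\bf a},{\bf a}\rangle\langle{\bf b},{\bf b}\rangle}$) is sound, and it is essentially the computation behind the trace formulas of Pratoussevitch and Parker that the paper simply cites instead of proving the lemma in detail. However, there is a concrete error in your starting formula, and it is fatal to the final step you deferred. You describe the eigenvalues correctly ($\bar u,\bar u$ on the mirror, $u^2$ on the polar line), but the matrix you then write down,
$$
A{\bf x}={\bf x}+(u^2-\bar u)\frac{\langle {\bf x},{\bf a}\rangle}{\langle {\bf a},{\bf a}\rangle}{\bf a},
$$
does not have those eigenvalues: it fixes ${\bf a}^\perp$ with eigenvalue $1$ (not $\bar u$) and sends ${\bf a}$ to $(1+u^2-\bar u){\bf a}$ (not $u^2{\bf a}$). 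The correct formula carries an overall scalar,
$$
A=\bar u\,I+(u^2-\bar u)P_{\bf a}=\bar u\bigl(I+(u^3-1)P_{\bf a}\bigr),
$$
and similarly for $B$. Because of the missing scalar, your expansion produces the constant term $3+2(u^2-\bar u)$, which is \emph{not} equal to $2u+\bar u^2$ (test $u=i$: one side is $1+2i$, the other $-1+2i$), so the "algebraic bookkeeping" you postponed to the end cannot be made to work from your displayed formula.

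With the corrected formula the computation closes immediately and there is nothing delicate left: expanding
$$
AB=\bar u^2 I+\bar u(u^2-\bar u)(P_{\bf a}+P_{\bf b})+(u^2-\bar u)^2P_{\bf a}P_{\bf b}
$$
and taking traces gives $\tr(AB)=3\bar u^2+2\bar u(u^2-\bar u)+(u^2-\bar u)^2X$ with $X=\frac{|\langle{\bf a},{\bf b}\rangle|^2}{\langle{\bf a},{\bf a}\rangle\langle{\bf b},{\bf b}\rangle}$. Since $|u|=1$, one has $u^2\bar u=u$, so the scalar part collapses to $\bar u^2+2u$, and the identity
$$
(u^2-\bar u)^2=u^4-2u+\bar u^2=-u\,(2-u^3-\bar u^3)=-u\,|u^3-1|^2
$$
gives the coefficient of $X$ exactly as in the statement. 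So the gap is not in the architecture of your argument but in the reflection formula itself; once the factor $\bar u$ is restored, the proof is complete and elementary.
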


We are interested in the case where $AB$ is elliptic of finite order. The 
following proposition follows easily from Lemma~\ref{lem:tr-AB}. 

\begin{prop}\label{prop:pair-cx-ref}
Let $A$ and $B$ be as above. Then the following are equivalent.
\begin{enumerate}
\item[(1)] 
$$
{\rm tr}(AB)=\bigl(2-4\cos^2(\theta)\bigr)u+\bar u^2=-2\cos(2\theta)u+\bar u^2,
$$
\item[(2)] $AB$ has eigenvalues $-ue^{2i\theta}$, $-ue^{-2i\theta}$, $\bar u^2$,
\item[(3)] 
$$
\frac{|u^3-1|^2\bigl|\langle{\bf a},{\bf b}\rangle\bigr|^2}
{\langle{\bf a},{\bf a}\rangle\langle{\bf b},{\bf b}\rangle}=4\cos^2(\theta).
$$
\end{enumerate}
In particular, if $AB$ has finite order then $\theta$ is a rational multiple of $\pi$.
\end{prop}

If the $\bar u^2$-eigenspace of $AB$ is spanned by a negative vector,
it corresponds to the intersection of the mirrors of $A$ and $B$ and
is a fixed point of $AB$ in ${\bf H}^2_{\mathbb C}$. 
If the $\bar u^2$-eigenspace of $AB$ is spanned 
by a positive vector then it is polar to a complex line preserved by $AB$
which is orthogonal to the mirrors of $A$ and $B$.

\subsection{Braid length} \label{sec:braiding}

Throughout the paper, we will use the following terminology for {\bf
  braid relations} between group elements (see Section 2.2 of Mostow
  \cite{mostowpacific}). If $G$ is a group and
$a,b\in G$, we say that $a$ and $b$ satisfy a braid relation of length
$n\in\N^*$ if
\begin{equation}\label{eq:braiding}
(ab)^{n/2}=(ba)^{n/2},
\end{equation}
where powers that are half integers should be interpreted as saying
that the corresponding alternating product of $a$ and $b$ should have
$n$ factors.  For instance, $(ab)^{3/2}=aba$, $(ba)^2=baba$,
$(ab)^{5/2}=ababa$, etc. 
For short, we will sometimes write the sentence ``$a$ and
  $b$ satisfy a braid relation of length $n$'' simply as ``$\br_n(a,b)$''.

If $a$ and $b$ satisfy some braid relation, the smallest $n$ such
that~\eqref{eq:braiding} holds will be called the {\bf braid length}
of the pair $a,b$, which we will denote by $\br(a,b)$.
\begin{rk}\label{rk:braiding}
  \begin{itemize}
    \item A braid relation of length 2 simply means $a$ and $b$ commute.
    \item The classical braid relation $aba=bab$ is a braid relation
      of length 3.
    \item If $a$ and $b$ both have order 2, $\br(a,b)=n$ if and only
      if their product has order $n$.
    \item If $\br_n(a,b)$ holds for some integer $n$, then clearly the
      relation $\br_{kn}(a,b)$ also holds for every integer $k>1$. In
      particular, the relation $\br_n(a,b)$ does not imply
      $\br(a,b)=n$, but it does imply $\br(a,b)$ divides $n$.
  \end{itemize}
\end{rk}

It will be useful later in the paper to consider in some detail the
case where ${\rm tr}(AB)=-2u\cos(2\pi/q)+\bar u^2$ for some $q\in\N^*$. 
That is, we take $\theta=\pi/q$ in Proposition \ref{prop:pair-cx-ref}.

\begin{prop}\label{prop:braiding}
Let $u=e^{2\pi i/3\p}$ for some integer $p>1$.  
If ${\rm tr}(AB)=-2u\cos(2\pi/q)+\bar u^2$ for some integer $q>1$, then
  $\br(A,B)=q$. Moreover, the group generated by $A$ and
  $B$ in $\pu(2,1)$ is a central extension of the rotation subgroup of
  a triangle group. 
  \begin{enumerate}
  \item 
  If $q$ is odd, then its center is generated by
  $(AB)^q$, which is a complex reflection with angle $\frac{(q-2)p-2q}{p}\pi$. 
  The corresponding quotient is a $(2,p,q)$-triangle group.
  \item 
  If $q$ is even, the center is generated by $(AB)^{q/2}$, which is a complex
  reflection with angle  $\frac{(q-2)p-2q}{2p}\pi$.
  The quotient is a $(\frac{q}{2},p,p)$ triangle group.
  \end{enumerate}
  In particular, if $r=\frac{2pq}{(q-2)p-2q}$ is an integer then
  the order of $AB$ is the least common multiple of $q$ and $r$ when
  $q$ is odd and it is the least common multiple of $q/2$ and $r$ when $q$
  is even.
\end{prop}

We remark that in most cases which we consider, the least common multiple in 
the last part of this result is $r$. However, when $p=12$ and $q=6$ we have 
$r=4$ and the order of $AB$ is 12. This will arise for the group $\T(12,{\bf E}_2)$ 
below.

When mentioning $(k,l,m)$-triangle groups, we always assume
$k,l,m\geq2$ are integers. By the rotation subgroup of a
$(k,l,m)$-triangle group, we mean the index two subgroup of
orientation preserving isometries in the group generated by real
reflections in the sides of a triangle with angles $\pi/k$, $\pi/l$,
$\pi/m$ (note that such a triangle lives in $\RH 2$, $\R^2$ or $S^2$
depending on $k,l,m$). In other words, it is generated by rotations
around the vertices of the triangle, with respective angles $2\pi/k$,
$2\pi/l$, $2\pi/m$.  We note that the triangle group in
Proposition~\ref{prop:braiding} is spherical, Euclidean or hyperbolic
whenever $1/p+1/q-1/2$ is positive, zero or negative
respectively. This happens if and only if the angle
$\frac{(q-2)p-2q}{p}\pi$ or $\frac{(q-2)p-2q}{2p}\pi$ is negative,
zero or positive respectively.

The equation $r=\frac{2pq}{(q-2)p-2q}$ is equivalent to
$2/p+2/q+2/r=1$. We are interested in solutions with
$p,\,q,\,r\in\Z\cup\{\infty\}$ (and the usual convention that
$1/\infty=0$).  Since this is symmetric when we permute $p$, $q$ and
$r$, it suffices to give the set $\{p,q,r\}$ and allow
permutations. Hence we give solutions with $1/r\le 1/q\le
1/p<1$. There is one infinite family of solutions, namely $(2,q,-q)$,
as well as the following finite list:
$$
\begin{array}{lllll}
(3,3,-6),\ & (3,4,-12),\ & (3,5,-30),\ & (3,6,\infty),\ & (3,7,42), \\
(3,8,24),\ & (3,9,18),\ & (3,10,15),\ & (3,12,12), \ & (4,4,\infty), \\
(4,5,20),\ & (4,6,12),\ & (4,8,8),\ & (5,5,10), & (6,6,6).
\end{array}
$$

\section{Subgroups of $\pu(2,1)$ generated by three complex reflections} \label{sec:notation}

We now wish to analyze groups generated by three complex reflections
$R_1$, $R_2$ and $R_3$ in $\pu(2,1)$.  Throughout, we will consider
triangle groups whose generators have the same rotation angle, given
by $2\pi/\p$. If the triangle group is equilateral, i.e. there is an
elliptic isometry cyclically permuting the mirrors of the generators,
we write $J$ for that isometry, and order the reflections so that
$R_2=JR_1J^{-1}$, $R_3=JR_2J^{-1}$. We then write
$$
P=R_1J,\quad Q=R_1R_2R_3.
$$ 
It is straightforward to check that, in the equilateral case,
$Q=P^3$.

For reasons that will become clear later, we assume that $Q$ has an
isolated fixed point. This assumption may seem somewhat unnatural, but
the discussion in the previous section should make it more natural in
the search for lattices (rather than simply discrete groups).

The central motivating question of this paper is the following:
\begin{center}
 \textbf{
  When is the group generated by $R_1$, $R_2$ and $R_3$ a lattice?}
\end{center}
It is a folklore belief that the discreteness of the group should be
controlled by explicit short words in the generators. In the special
case where the $R_j$ are involutions, a precise conjectural statement
was given by Rich Schwartz in~\cite{schwartzICM}, where the
conjectural control words actually depend on the triangle. In his
Ph.D. thesis, James Thompson gave a conjectural list of the triangle
groups (with involutive generators) that were not only discrete, but
actually lattices (his work was partly motivated by the example
in~\cite{deraux4445}).

A guiding principle (which is at this stage far from justified rigorously)
is that, if the group is to be a lattice, then
\begin{itemize}
  \item for all $j=1,2,3$, $R_j$ and $R_{j+1}$ should generate a
    lattice in $\pu(1,1)$ (or in $\pu(2)$), in particular $R_1R_2$, 
    $R_2R_3$ and $R_3R_1$ should all be
    should be non-loxo\-dro\-mic;
  \item $R_1R_2R_3$ should be non-loxodromic;
  \item $R_1R_2R_3R_2^{-1}$, $R_1R_3^{-1}R_2R_3$ and
    $R_3R_1R_2R_1^{-1}$ should be non-loxo\-dro\-mic.
\end{itemize}

Throughout the paper, we will use word notation in the generators
$R_1$, $R_2$, $R_3$, and denote these group elements simply by
$1,2,3$. Hoping that no confusion with complex conjugation occurs, we
will also denote their inverses by $\bar1, \bar2, \bar3$. In
particular, the above control words read $12$, $23$, $31$, $123$, $123\bar2$, 
$1\bar323$, $312\bar1$, etc.

\subsection{Equilateral triangle groups} \label{sec:equilateral}
The idea in the above guiding principle was used to give a rough sieve
of the lattice candidates in~\cite{parkerunfaithful},~\cite{parkerpaupert}, whose results we now briefly recall. The basic
point is that equilateral triangle groups can be parametrized by the
order $\p$ of the generators and the complex parameter
$$
  \tau=\tr(R_1J).
$$  
Writing ${\bf n}_j$ for a polar vector to the mirror of $R_j$ and
$u=e^{2\pi i/3\p}$, an equivalent definition of $\tau$ is
$$
\tau=(u^2-\bar{u})\frac{\langle{\bf n}_{j+1},{\bf n}_j\rangle}
{\Vert{\bf n}_{j+1}\Vert\,\Vert{\bf n}_j\Vert}.
$$

The precise statement about parametrizing groups by the pair $\p,\tau$
is the following.
\begin{prop}
  Let $\p\in\mathbb{N}$, $\p\geq 2$ and $\tau\in\mathbb{C}$. We write
  $u=e^{2\pi i/3\p}$, $\alpha=2-u^3-\overline{u}^3$ and
  $\beta=(\overline{u}^2-u)\tau$.  Then there exists a complex
  reflection $R_1$ with rotation angle $2\pi/p$ and a regular elliptic
  element $J$ in $\su(2,1)$ such that $\tr(R_1J)=\tau$ if and only if
  \begin{equation}
    \alpha^3+2\re(\beta^3) - 3 \alpha|\beta|^2 < 0. \label{eq:signature}
  \end{equation}
\end{prop}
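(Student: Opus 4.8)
The plan is to translate the existence statement into a condition on the Gram matrix of the three mirror polar vectors and then to compute that condition explicitly. Write ${\bf n}_1,{\bf n}_2,{\bf n}_3$ for the polar vectors, normalized so that $\langle{\bf n}_j,{\bf n}_j\rangle=1$, and set $\rho=\tau/(u^2-\bar u)$. Since ${\bf n}_{j+1}=J{\bf n}_j$ and $J$ is unitary, all the inner products $\langle{\bf n}_{j+1},{\bf n}_j\rangle$ coincide, and by the displayed formula for $\tau$ they equal $\rho$. Hence the Gram matrix is the Hermitian circulant
$$
H=\begin{pmatrix} 1 & \bar\rho & \rho \\ \rho & 1 & \bar\rho \\ \bar\rho & \rho & 1 \end{pmatrix}.
$$

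First I would establish that a pair $(R_1,J)$ with the required properties exists in $\su(2,1)$ if and only if $H$ has signature $(2,1)$. For the forward direction, given such $R_1$ and $J$, the vectors ${\bf n}_j$ are linearly independent (otherwise $J$, which permutes them cyclically, could not be regular elliptic with an isolated fixed point), so their Gram matrix $H$, computed in the ambient signature-$(2,1)$ form, itself has signature $(2,1)$. Conversely, any non-degenerate Hermitian matrix of signature $(2,1)$ is the Gram matrix of a basis of $\C^{2,1}$; from such a basis one recovers $J$ as a unitary of order three cycling the ${\bf n}_j$ (scaled into $\su(2,1)$; it is regular elliptic because its three eigenvalues are distinct) and $R_1$ as the complex reflection with eigenvalues $u^2,\bar u,\bar u$ whose $u^2$-eigenvector is ${\bf n}_1$, so that $\tr(R_1J)=\tau$ by construction.

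Next I would compute the determinant directly, obtaining
$$
\det H=1-3|\rho|^2+2\re(\rho^3).
$$
Since $\tr H=3>0$, the three real eigenvalues of $H$ cannot all be negative, so $\det H<0$ forces exactly one negative eigenvalue, i.e. signature $(2,1)$; conversely signature $(2,1)$ gives $\det H<0$. Thus existence is equivalent to $\det H<0$. Finally I would match this with \eqref{eq:signature}: using $|u|=1$ one checks $(\bar u^2-u)(u^2-\bar u)=2-u^3-\bar u^3=\alpha$, which equals $4\sin^2(\pi/\p)>0$ for $\p\geq 2$. Hence $\beta=(\bar u^2-u)\tau=\alpha\rho$, so $|\beta|^2=\alpha^2|\rho|^2$ and $\re(\beta^3)=\alpha^3\re(\rho^3)$; substituting gives
$$
\alpha^3+2\re(\beta^3)-3\alpha|\beta|^2=\alpha^3\det H.
$$
As $\alpha^3>0$, the inequality holds if and only if $\det H<0$, which is exactly the required equivalence.

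The determinant evaluation and the algebra of the last step are routine. The point needing care is the equivalence in the second paragraph: one must justify that realizability in $\su(2,1)$ corresponds precisely to signature $(2,1)$ of $H$, excluding the degenerate locus $\det H=0$ where the ${\bf n}_j$ become dependent and $J$ fails to be regular elliptic with an isolated fixed point, and one must confirm that the reconstructed $R_1$ and $J$ carry exactly the prescribed eigenvalue data (this is where the normalization $u=e^{2\pi i/3\p}$ and the reflection angle $2\pi/\p$ are used).
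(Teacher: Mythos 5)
Your argument is, in substance, the paper's own: the paper's proof consists of writing the Hermitian form in the basis of mirror polar vectors as the circulant matrix $H$ with diagonal $\alpha$ and off-diagonal entries $\beta,\bar\beta$, exhibiting explicit matrices $R_1$ and $J$ preserving it, and observing that the left-hand side of \eqref{eq:signature} is exactly $\det H$. Your version merely rescales the polar vectors to unit norm (the paper's vectors have squared norm $\alpha$, whence $\beta=\alpha\rho$ and your factor $\alpha^3$); the determinant computation, the remark that positivity of the trace upgrades $\det H<0$ to signature $(2,1)$, and the converse construction from a signature-$(2,1)$ Gram matrix all agree with the paper and are correct.

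The step that does not hold up is the parenthetical justifying linear independence in your forward direction. It is false that dependence of ${\bf n}_1, J{\bf n}_1, J^2{\bf n}_1$ would prevent $J$ from being regular elliptic: a regular elliptic element has three eigenvectors and three invariant complex $2$-planes, and nothing in the stated hypotheses stops ${\bf n}_1$ from lying in one of them. Concretely, work with the form ${\rm diag}(1,1,-1)$, take $p=2$ (so $u=e^{\pi i/3}$), let $R_1$ be the complex reflection with eigenvalues $u^2,\bar u,\bar u$ whose polar vector is $(e_1+e_2)/\sqrt2$, and let $J={\rm diag}(i,1,-i)$. Then $J$ is regular elliptic, $R_1$ has rotation angle $2\pi/p$, and $\tau={\rm tr}(R_1J)=e^{7\pi i/6}$, which gives $\alpha=4$, $\beta=2i$, and $\alpha^3+2\re(\beta^3)-3\alpha|\beta|^2=64+0-48=16>0$. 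So the ``only if'' implication is actually false for an arbitrary regular elliptic $J$: the proposition carries the implicit assumption (clear from the surrounding discussion, and used silently by the paper in the phrase ``using a basis for $\C^3$ consisting of vectors polar to the mirrors'') that $J$ cyclically permutes three mirrors in general position, i.e.\ that the polar vectors span $\C^3$. Note that this hypothesis is also what makes your Gram matrix circulant: the equality $\langle{\bf n}_1,J^2{\bf n}_1\rangle=\langle J{\bf n}_1,{\bf n}_1\rangle$ requires $J^3$ to fix ${\bf n}_1$, which regular ellipticity alone does not give. With that hypothesis made explicit (and $J$ normalized within $\su(2,1)$ so that $J^3={\rm Id}$, which changes $\tau$ only by a cube root of unity and leaves \eqref{eq:signature} invariant), your proof is complete and coincides with the paper's.
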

In fact, using a basis for $\C^3$ consisting of vectors polar
to the mirrors of the reflections $R_j$, we can write
$$
H=\left(\begin{matrix}
  \alpha & \beta & \overline{\beta}\\
  \overline{\beta}&\alpha&\beta\\
  \beta&\overline{\beta}&\alpha
\end{matrix}\right)
,\quad 
R_1=\left(\begin{matrix}
  u^2 & \tau & -u\overline{\tau}\\
  0 & \overline{u} & 0\\
  0 & 0 & \overline{u}
\end{matrix}\right)
,\quad 
J=\left(\begin{matrix}
  0 & 0 & 1\\
  1 & 0 & 0 \\
  0 & 1 & 0
\end{matrix}\right),
$$ 
and the expression that appears in equation~\eqref{eq:signature} is
simply the determinant of $H$.  We denote by $\S(\p,\tau)$ the
corresponding group (we will always assume that~\eqref{eq:signature}
is satisfied). Note that the generating pair is almost uniquely
determined by $p$ and $\tau$, in the following sense.
\begin{prop}
  Let $R_1$, $R_1'$ be complex reflections of angle $2\pi/p$, let $J$,
  $J'$ be regular elliptic elements of $\su(2,1)$. Denote by
  $\tau=\tr(R_1J)$, $\tau'=\tr(R_1'J')$. If the pairs $(R_1,J)$ and
  $(R_1',J')$ are conjugate in $\pu(2,1)$, then there exists a cube
  root of unity $\omega$ such that $\tau'=\omega\tau$, or $p=2$ and
  there is a cube root of unity $\omega$ such that
  $\tau'=\omega\bar\tau$.
\end{prop}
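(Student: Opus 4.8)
The plan is to pass from $\pu(2,1)$ to $\su(2,1)$ and keep track of the central ambiguity. Since the center of $\su(2,1)$ is $\{\omega I:\omega^3=1\}$ and $\pu(2,1)=\su(2,1)/\{\omega I\}$, any lift $g\in\su(2,1)$ of the conjugating element realizes
\[
gR_1g^{-1}=\eta_1R_1',\qquad gJg^{-1}=\eta_2J',
\]
for some cube roots of unity $\eta_1,\eta_2$: indeed $gR_1g^{-1}$ and $R_1'$ (resp.\ $gJg^{-1}$ and $J'$) have the same image in $\pu(2,1)$, so they differ by a central factor. The whole content is then to pin down $\eta_1$ and $\eta_2$ using the eigenvalue normalizations built into the reflections and into $J,J'$.

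First I would read off $\eta_1$ from eigenvalues. The reflection $R_1$ has $u^2$ as its unique multiplicity-one eigenvalue and $\bar u$ as its repeated eigenvalue, with $u^2\neq\bar u$ for $p\geq 2$; the same holds for $R_1'$. Since $\pu(2,1)$-conjugacy preserves the signed rotation angle and $e^{2\pi i/p}\neq e^{-2\pi i/p}$ for $p\geq 3$, the reflection $R_1'$ must carry the same distinguished eigenvalue $u^2$; comparing multiplicity-one eigenvalues on the two sides of $gR_1g^{-1}=\eta_1R_1'$ then gives $u^2=\eta_1u^2$, hence $\eta_1=1$. Once $\eta_1=1$, the trace computation is immediate:
\[
\tau'=\tr(R_1'J')=\tr\!\bigl(gR_1g^{-1}\,\eta_2^{-1}gJg^{-1}\bigr)=\eta_2^{-1}\,\tr(gR_1Jg^{-1})=\eta_2^{-1}\tau,
\]
so $\tau'=\omega\tau$ with $\omega=\eta_2^{-1}$. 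This disposes of every $p\geq 3$.

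The hard part will be the case $p=2$, and it is here that the second alternative enters. When $p=2$ the reflection is an involution of angle $\pi$, so the eigenvalue matching no longer forces $\eta_1=1$: the two admissible normal forms, with eigenvalue patterns $u^2,\bar u,\bar u$ and $\bar u^2,u,u$, both describe a reflection of angle $2\pi/p$, and there is no intrinsic way to single out a distinguished eigenvalue. Equivalently, complex conjugation of matrices carries an angle-$2\pi/p$ reflection to an angle-$(-2\pi/p)$ reflection, and these fall in the same admissible class exactly when $p=2$; consequently $R_1'$ may be presented in the complex-conjugate orientation, in which case the parameter read off is $\bar\tau$ rather than $\tau$ (up to a cube root of unity). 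I would make this precise by computing, in the explicit normal form displayed above,
\[
\tr(R_1J^{-1})=-u\,\bar\tau,
\]
and observing that $-u$ is a cube root of unity precisely when $p=2$, since $(-u)^3=-e^{2\pi i/p}$ equals $1$ if and only if $p=2$. Thus reversing the order-three symmetry $J\mapsto J^{-1}$ (equivalently, passing to the complex-conjugate normal form) yields $\omega\bar\tau$ while staying inside the family of angle-$2\pi/p$ reflections only in the involutive case.

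I would therefore organize the proof as a generic computation — reducing entirely to trace invariance together with the elementary eigenvalue bookkeeping that forces $\eta_1=1$ — and a dedicated treatment of $p=2$. The only real obstacle lies in this last step: everything for $p\geq 3$ is formal, whereas for $p=2$ one must verify that the orientation of an order-two reflection is not intrinsic, so that the conjugation symmetry respecting the normalization $\tr(R_1)=u^2+2\bar u$ can produce exactly the extra value $\omega\bar\tau$, and only then.
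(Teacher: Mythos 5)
The paper states this proposition without proof, so your attempt has to be judged on its own terms; its first half is essentially right, but the $p=2$ half contains a genuine error. Your central-ambiguity bookkeeping in fact proves more than you use: writing $R_1'=\eta_1^{-1}gR_1g^{-1}$ and $J'=\eta_2^{-1}gJg^{-1}$ with $\eta_1,\eta_2$ cube roots of unity, one gets
\[
\tau'={\rm tr}(R_1'J')=(\eta_1\eta_2)^{-1}\,{\rm tr}\bigl(gR_1Jg^{-1}\bigr)=(\eta_1\eta_2)^{-1}\tau,
\]
with no need to determine $\eta_1$ at all. So $\pu(2,1)$-conjugacy of the pairs forces $\tau'=\omega\tau$ for \emph{every} $p$, including $p=2$, and independently of which eigenvalue normal form $R_1'$ is in: changing the normal form only changes $\eta_1$ by a cube root of unity, which the identity absorbs. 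Consequently your proposed mechanism for the second alternative — that for $p=2$ the ambiguity of normal form lets a holomorphic conjugation produce $\omega\bar\tau$ — is impossible; it contradicts the very trace identity on which your $p\geq 3$ argument rests.

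The genuine gap is therefore the $p=2$ case. Your computation ${\rm tr}(R_1J^{-1})=-u\bar\tau$ is correct, but the pair $(R_1,J^{-1})$ is not conjugate to $(R_1,J)$ in $\pu(2,1)$ — if it were, the identity above would force $-u\bar\tau\in\{\tau,\omega\tau,\omega^2\tau\}$, which fails for generic $\tau$ — so this computation says nothing about pairs satisfying the hypothesis. What actually makes $p=2$ exceptional is that the hypothesis is then preserved by operations that \emph{conjugate} traces: conjugation by an anti-holomorphic isometry of $\CH 2$, or replacement of $(R_1,J)$ by the inverse pair $(R_1^{-1},J^{-1})$ (recall ${\rm tr}(g^{-1})=\overline{{\rm tr}(g)}$ for $g\in\su(2,1)$). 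Either operation carries a reflection of angle $2\pi/p$ to one of angle $-2\pi/p$ and changes $\tau$ into $\bar\tau$ up to a cube root of unity, and either one stays inside the class of angle-$2\pi/p$ reflections precisely when $-2\pi/p\equiv 2\pi/p$, i.e. when $p=2$; this is the only way the second disjunct can be realized, since under the literal (holomorphic) reading of the hypothesis the first disjunct always holds. You brush against this idea ("complex conjugation of matrices carries an angle-$2\pi/p$ reflection to an angle-$(-2\pi/p)$ reflection") but never bring anti-holomorphic conjugations or inverse pairs into the statement being proved; as written, your $p=2$ case both misattributes the source of $\bar\tau$ and establishes nothing about pairs that actually satisfy the conjugacy hypothesis.
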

Beware that the groups $\S(p,\tau)$ and $\S(p',\tau')$ may well be
conjugate in $\pu(2,1)$ even when the corresponding generating pairs
$(R_1,J)$, $(R_1',J')$ are not.

It is difficult to determine the values of the parameters for which 
the group $\S(\p,\tau)$ is lattice, even though, as mentioned above,
it is likely that this implies that the pairwise product of generators
should be non-loxodromic (see~\cite{schwartzICM},~\cite{thompson}).

In particular, we search for groups such that the eigenvalues of
$R_1J$ and $R_1R_2$ are all roots of unity (recall that
$R_2=JR_1J^{-1}$). Note that
\begin{eqnarray}
  &\tr(R_1J)=\tau\\
  &\tr(R_1R_2)=u(2-|\tau|^2)+\overline{u}^2
\end{eqnarray}
Using Proposition \ref{prop:pair-cx-ref} we see that when $R_1R_2$ is
elliptic then $|\tau|=2\cos(\theta)$, or equivalently
$|\tau|^2-2=2\cos(2\theta)$, for some $\theta$.

Now we search for $\p,\tau$ such that 
\begin{eqnarray}
  &\tau=e^{i\alpha}+e^{i\beta}+e^{-i(\alpha+\beta)}\label{eq:rational-1}
  \\
  \label{eq:theta}
  &|\tau|^2-2=2\cos2\theta,
\end{eqnarray}
where $\alpha$, $\beta$ and $\theta$ are all rational multiples of
$\pi$.  This allows us to make the crucial observation that our set of
equations is in fact equivalent to one that does not involve $\p$.  In
other words, we need only find the values of $\tau$ such that there
exist $\alpha$, $\beta$ and $\theta$ rational multiples of $\pi$
satisfying~\eqref{eq:rational-1} and~\eqref{eq:theta}. 
For each such value of $\tau$, any value of $\p\geq 2$ gives a group
preserving a Hermitian form, but the signature of this form depends
on $p$ and $\tau$. We are interested in the case where this signature is
$(2,1)$.

Eliminating $\tau$ from~\eqref{eq:rational-1} and~\eqref{eq:theta} yields
\begin{equation}
  \cos(2\theta)-\cos(\alpha-\beta)-\cos(2\alpha+\beta)-\cos(\alpha+2\beta)=\frac{1}{2},\label{eq:conwayjones}
\end{equation}
so the question is now reduced to a problem about finding all possible
sets of rational multiples of $\pi$ that satisfy the rational
relation~\eqref{eq:conwayjones}; as explained
in~\cite{parkerunfaithful}, this problem was stated and solved by
Conway and Jones (see Theorem~7 of~\cite{conwayjones}).

Note that $\tau$ determines the angles, so we can list the solutions
only by giving the values of $\tau$. Moreover, if $\tau$ corresponds
to a solution, then clearly so do $\omega\tau$ and
$\overline{\omega}\tau$, where $\omega=(-1+i\sqrt{3})/2$ is a
primitive cube root of unity; in terms of our geometric motivaion,
this corresponds to multiplying the group by a scalar matrix of order
3. Also, if $\tau$ is a solution, then so is $\overline{\tau}$, so in
the list below we will only list one representative for complex
conjugate pairs, and avoid repetitions coming from multiplying a given
trace $\tau$ by a cube root of unity.

Because of the fact that there are many solutions, Conway and Jones
only list them up to obvious symmetry in the angles. As a consequence,
the application of~\cite{conwayjones} in this context requires a lot of
bookkeeping, and it is quite difficult to achieve it by hand.

It turns out there are two continuous families of solutions, given by
\begin{equation}\label{eq:taumostow}
  \tau=-e^{i\phi/3}, \ {\rm and}
\end{equation}
\begin{equation}\label{eq:tausauter}
  \tau=e^{i\phi/6}\cdot 2\cos(\phi/2).
\end{equation}
These are of course only seemingly continuous, since we are only
interested in solutions where $\phi$ is a rational multiple of $\pi$.

As mentioned in~\cite{parkerunfaithful}, the first family corresponds
to Mostow groups, whereas the second family corresponds to certain
subgroups of Mostow groups (note that some values of $\tau$ lie in
both families). We refer to the corresponding (parametrized) curves in
the complex plane as the Mostow curve and the Sauter curve,
respectively.

For groups with $\tau$ on the Mostow or Sauter curves, the list of
lattices can be deduced from work of Deligne-Mostow
(see~\cite{mostowdiscontinuous},~\cite{parkerunfaithful}
and~\cite{parkerpaupert}). In order to refer to these groups, we will
use the same notation as Mostow, namely 
$$
\Gamma(\p,t)
$$ 
denotes the group generated by reflections of order $\p$ and
phase-shift $t\in\Q$. This group can also be described as
$\S(\p,\tau)$ where $\tau=e^{\pi
  i(\frac{3}{2}+\frac{1}{3p}-\frac{t}{3})}$.

There are also a finite number of solutions that lie neither on the
Mostow curve nor on the Sauter curve, which are given in Table~\ref{tab:tauvalues}.
{\scriptsize
\begin{table}[htbp]
\begin{eqnarray*}
  & \sigma_1 = -1+i\sqrt{2};\\
  & \sigma_2 = -1+i(\sqrt{5}+1)/2;
      \quad \sigma_3 = -1+i(\sqrt{5}-1)/2\\
  & \sigma_4 = (-1+i\sqrt{7})/2\\
  & \sigma_5 = e^{-\pi i/9}(-\bar\omega-(1-\sqrt{5})/2);
      \quad \sigma_6 = e^{-\pi i/9}(-\bar\omega-(1+\sqrt{5})/2)\\
  & \sigma_7 = -e^{-\pi i/9}(\bar\omega+2\cos\frac{2\pi}{7});
   \quad \sigma_8 = -e^{-\pi i/9}(\bar\omega+2\cos\frac{4\pi}{7});
   \quad \sigma_9 = -e^{-\pi i/9}(\bar\omega+2\cos\frac{6\pi}{7})\\
  & \sigma_{10} = (1+\sqrt{5})/2
  \quad \sigma_{11} = (1-\sqrt{5})/2
\end{eqnarray*}
\caption{The list of isolated values of $\tau$ that give $R_1J$ and
  $R_1R_2$ of finite order (or possibly parabolic). The list is given
  only up to complex conjugation, and up to multiplication by a cube
  root of unity.}\label{tab:tauvalues}
\end{table}
}
Note that the last two values were missing
in~\cite{parkerunfaithful}, but this has essentially no bearing on the
results in~\cite{dpp2}, since the corresponding lattices turn out to
be arithmetic (see the commensurability invariants given in the
appendix).  

Groups with $\tau=\tr(R_1J)$ in Table~\ref{tab:tauvalues} are called {\bf sporadic
  triangle groups}. A conjectural list of sporadic triangle groups
that are lattices was given in~\cite{dpp}, and a significant part of
that conjecture was proved in~\cite{dpp2}.
The goal of the present paper is to extend the methods of~\cite{dpp2}
to a wider class of groups. For one thing, the general method should
make some of the ad hoc constructions in~\cite{dpp2} more transparent.
In particular, we complete the proof of the conjectures from~\cite{dpp}.

For the other, we exhibit a larger number of lattices, some of them
giving new non-arithmetic commensurability classes of lattices (some
are not commensurable to any Deligne-Mostow/Thurston groups, nor to
any sporadic triangle group).
\begin{itemize}
\item We prove that all 12 groups mentioned in~\cite{dpp2} are indeed
  lattices (the proof given there covered six out of the twelve), as
  well as the four extra sporadic groups.
\item We propose an extension of the construction to some
  non-equi\-lateral lattices, and handle the groups that come out of the
  analysis in James Thompson's thesis~\cite{thompson}.
\end{itemize}

For concreteness, in Table~\ref{tab:sporadicLattices} we list the
relevant values of the order $p$ of complex reflections, for sporadic
families of groups that do indeed contain lattices.
\begin{table}
  \begin{tabular}{|c|c|}
    \hline
    $\tau$                   & Lattice for $p=$\\
    \hline
    $\sigma_1$               & 3,4,6\\
    $\overline{\sigma}_4$    & 3,4,5,6,8,12\\
    $\sigma_5$               & 2,3,4\\
    $\sigma_{10}$            & 3,4,5,10\\
    \hline
  \end{tabular}
\caption{Values of $p,\tau$ such that $\S(p,\tau)$ are lattices.}\label{tab:sporadicLattices}
\end{table}

\subsection{Non equilateral triangle groups}\label{sec:noneq}

In this section, we describe the groups that come from
Thompson's thesis, since they do not appear anywhere in the
literature (in~\cite{thompson} and \cite{kamiyaparkerthompson}
mainly involutive generators were considered). 

The non equilateral triangle groups that appear in this paper will be
parametrized by a triple of complex numbers, denoted by 
${\bf  T}=(\rho,\sigma,\tau)$. These three complex numbers generalize $\tau$
  in the sense that when the triangle is equilateral they are all equal to the
  parameter $\tau$ given above. 
  As before, we assume the three generators
rotate by the same angle $2\pi/\p$, and denote
$u=e^{2i\pi/3\p}$. Then
$$
\rho=(u^2-\bar{u})\frac{\langle{\bf n}_2,{\bf n}_1\rangle}
{\Vert{\bf n}_2\Vert\,\Vert{\bf n}_1\Vert},\quad
\sigma=(u^2-\bar{u})\frac{\langle{\bf n}_3,{\bf n}_2\rangle}
{\Vert{\bf n}_3\Vert\,\Vert{\bf n}_2\Vert},\quad
\tau=(u^2-\bar{u})\frac{\langle{\bf n}_1,{\bf n}_3\rangle}
{\Vert{\bf n}_1\Vert\,\Vert{\bf n}_3\Vert}.
$$

We denote the corresponding group by
$\T(\p,{\bf T})$. Its generators are given by { \small
\begin{equation*}
  R_1=\left(\begin{matrix}
  u^2 & \r & -u\overline{\t}\\
  0 & \bar u & 0\\
  0 & 0 & \bar u
  \end{matrix}\right);\ 
  R_2=\left(\begin{matrix}
  \bar u & 0 & 0\\
  -u\bar \r & u^2 & \s\\
  0 & 0 & \bar u
\end{matrix}\right);\ 
  R_3=\left(\begin{matrix}
  \bar u & 0 & 0\\
  0 & \bar u & 0\\
  \t & -u\bar \s & u^2
\end{matrix}\right)
\end{equation*}
}
which preserve the Hermitian form
$$
H=\left(\begin{matrix}
  \alpha & \beta_1 & \overline{\beta}_3\\
  \overline{\beta}_1&\alpha&\beta_2\\
  \beta_3&\overline{\beta}_2&\alpha
\end{matrix}\right)
$$ 
where $\alpha=2-u^3-\bar u^3$, $\beta_1=(\bar u^2-u)\r$,
$\beta_2=(\bar u^2-u)\s$, $\beta_3=(\bar u^2-u)\t$. Note that
putting $u=-1$ gives the formulae in Section 2.3 of
\cite{thompson} except that $H$ is multiplied by $2$.

The triple $(R_1,R_2,R_3)$ is determined up to conjugacy by 
$|\rho|$, $|\sigma|$, $|\tau|$ and $\arg(\rho\sigma\tau)$; 
see~\cite{pratoussevitch} and~\cite{parkertraces}:

\begin{prop}
  For $j=1,\,2,\,3$, let $R_j$, $R_j'$ be complex reflections of 
  angle $2\pi/p$ in $\su(2,1)$. Let $(\rho,\sigma,\tau)$ and 
  $(\rho',\sigma,\tau')$ be defined as above. If the triples 
  $(R_1,R_2,R_3)$ and $(R_1',R_2',R_3')$ are conjugate in 
  $\pu(2,1)$, then 
  $$
  |\rho'|=|\rho|,\quad |\sigma'|=|\sigma|,\quad |\tau'|=|\tau|,\quad
  \arg(\rho'\sigma'\tau')=\arg(\rho\sigma\tau)
  $$
  or $p=2$ and 
  $$
  |\rho'|=|\rho|,\quad |\sigma'|=|\sigma|,\quad |\tau'|=|\tau|,\quad
  \arg(\rho'\sigma'\tau')=-\arg(\rho\sigma\tau).
  $$
\end{prop}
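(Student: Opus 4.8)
The plan is to prove the stated implication directly from the definitions of $\rho,\sigma,\tau$, by tracking what a conjugacy does to the polar vectors ${\bf n}_1,{\bf n}_2,{\bf n}_3$ and to their pairwise Hermitian products. The three numbers $|\rho|,|\sigma|,|\tau|$ are (fixed multiples of) the normalized Hermitian products $|\langle{\bf n}_{j+1},{\bf n}_j\rangle|/(\Vert{\bf n}_{j+1}\Vert\,\Vert{\bf n}_j\Vert)$, while $\arg(\rho\sigma\tau)$ is, up to a phase depending only on the common reflection angle, the Cartan angular invariant of the triple $({\bf n}_1,{\bf n}_2,{\bf n}_3)$; both kinds of data are manifestly $\pu(2,1)$--invariant, so the real work is to turn this into a clean argument and to isolate the exceptional behaviour of involutions.

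First I would note that for $p\geq 3$ a complex reflection $R_j$ of angle $2\pi/p$ determines its polar vector ${\bf n}_j$ (the eigenvector for the distinguished eigenvalue $u^2$, with $u=e^{2\pi i/3p}$) uniquely up to a nonzero scalar, since $u^2$ is then a simple eigenvalue and the rotation angle $\arg(u^3)=2\pi/p\neq-2\pi/p$ pins down $u$ among the admissible normalizations. Since the center of $\su(2,1)$ acts trivially by conjugation, a conjugacy $(R_1',R_2',R_3')=(gR_1g^{-1},gR_2g^{-1},gR_3g^{-1})$ in $\pu(2,1)$ can be realized by some $g\in\su(2,1)$; comparing eigenvalues (both triples are normalized with the same distinguished eigenvalue $u^2$) forces the central ambiguity to be trivial, so $gR_jg^{-1}=R_j'$ as matrices, and applying both sides to ${\bf n}_j$ gives $g\,{\bf n}_j=\lambda_j{\bf n}_j'$ for some $\lambda_j\in\C^*$.

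Next I would substitute this into the defining formulae. Because $g$ preserves the Hermitian form one has $\langle{\bf n}_{j+1}',{\bf n}_j'\rangle=\lambda_{j+1}^{-1}\bar\lambda_j^{-1}\langle{\bf n}_{j+1},{\bf n}_j\rangle$ and $\Vert{\bf n}_j'\Vert=|\lambda_j|^{-1}\Vert{\bf n}_j\Vert$, so each of $\rho',\sigma',\tau'$ differs from its unprimed counterpart by a unit-modulus factor built from the $\lambda_j$; taking absolute values, and using that the prefactor $u^2-\bar u$ is the same for both triples (equal reflection angles), gives $|\rho'|=|\rho|$, $|\sigma'|=|\sigma|$, $|\tau'|=|\tau|$ at once. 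Forming the product $\rho'\sigma'\tau'$, these correction factors multiply to $1$: each $\lambda_j$ occurs once as $\lambda_j/|\lambda_j|$ and once (from the neighbouring factor) as $\bar\lambda_j/|\lambda_j|$, and the two cancel. Hence $\rho'\sigma'\tau'=\rho\sigma\tau$ exactly, and in particular $\arg(\rho'\sigma'\tau')=\arg(\rho\sigma\tau)$, which is the first alternative whenever $p\geq 3$.

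The remaining work, which I expect to be the only delicate point, is the case $p=2$. Here the generators are involutions, the rotation angle $\pi$ equals $-\pi$, and consequently $u$ and $\bar u$ give equally valid normalizations of the same element of $\pu(2,1)$ (indeed $u^2=\omega$ and $\bar u=\overline{\omega}\,u$, so the two $\su(2,1)$ lifts differ only by the central scalar $\omega$). As a result the reflection data no longer remember the sign of the rotation, so a configuration is no longer distinguished from its complex conjugate, whose triple product $\langle{\bf n}_1,{\bf n}_2\rangle\langle{\bf n}_2,{\bf n}_3\rangle\langle{\bf n}_3,{\bf n}_1\rangle$ carries the opposite argument; this is exactly what produces the extra possibility $\arg(\rho'\sigma'\tau')=-\arg(\rho\sigma\tau)$ while leaving the three moduli unchanged. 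I would handle it by carefully comparing the two normalizations, equivalently by comparing $({\bf n}_j)$ with its image under an antiholomorphic involution, following the analysis of \cite{pratoussevitch} and \cite{parkertraces}; the key point to verify is that no such sign ambiguity can occur for $p\geq 3$, which is precisely the simplicity of $u^2$ and the rigidity of $\arg(u^3)$ recorded in the first step.
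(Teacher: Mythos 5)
The paper gives no proof of this proposition: it is stated with a pointer to \cite{pratoussevitch} and \cite{parkertraces} (those references are really needed for the converse assertion, that the parameters determine the triple up to conjugacy), so your proposal can only be judged on its own terms. Your central computation is correct and is the natural direct argument for the stated implication: lift the conjugacy to some $g\in\su(2,1)$, use the matching of the simple eigenvalue $u^2$ to kill the central ambiguity, write $g\,{\bf n}_j=\lambda_j{\bf n}_j'$, and observe that the unimodular correction factors cancel in the product, so that $|\rho'|=|\rho|$, $|\sigma'|=|\sigma|$, $|\tau'|=|\tau|$ and $\rho'\sigma'\tau'=\rho\sigma\tau$. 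However, nothing in this argument uses $p\geq 3$: the eigenvalue matching only needs $u^2\neq\bar u$, i.e.\ $u^3\neq 1$, and for $p=2$ the three lifts of an involution have simple eigenvalue $\omega$, $\bar\omega$, $1$ respectively, so the normalized lift and its polar vector are unique there as well. Consequently, whenever ``conjugate in $\pu(2,1)$'' means conjugate by a holomorphic isometry, your first two steps already prove the proposition for every $p$ (the conclusion is a disjunction, and you have established the first disjunct); the case you describe as the only delicate point is then not needed at all.

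The genuine gap is in that last case, and in its diagnosis. A holomorphic conjugation can never produce the second alternative (your own cancellation shows the triple product is preserved exactly); it arises only if the conjugating isometry is allowed to be antiholomorphic, which is what makes the $p=2$ clause non-vacuous and is presumably the intended reading (compare Remark~\ref{rk:char}, which asserts that for $p=2$ the groups with parameters ${\bf T}$ and ${\bf \bar T}$ are conjugate). The dichotomy between $p\geq 3$ and $p=2$ lives there, not in the uniqueness of the normalization: an antiholomorphic isometry conjugates a reflection of angle $2\pi/p$ into one of angle $-2\pi/p$, so for $p\geq 3$ it cannot carry one triple satisfying the hypotheses to another, whereas for $p=2$ an involution is its own inverse, the class of such triples is preserved, and since antiholomorphic maps conjugate all Hermitian inner products (and $(u^2-\bar u)^3=8$ is real when $p=2$) one gets exactly $\arg(\rho'\sigma'\tau')=-\arg(\rho\sigma\tau)$ with unchanged moduli. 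Your sketch points at the right mechanism (comparison with an antiholomorphic involution) but leaves both verifications --- impossibility for $p\geq 3$, sign flip for $p=2$ --- as a plan deferred to the references, and it misattributes the distinction to ``the simplicity of $u^2$'', which holds for $p=2$ too. To complete the write-up, state explicitly in which group the conjugating element lies; if it is the holomorphic group, drop the $p\geq 3$ restriction and you are done, and if antiholomorphic isometries are allowed, add the two short arguments above.
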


Even though the triangle is not equilateral, we take complex
reflections that rotate by the same angle, and an important
consequence of this is that the condition corresponding to the
requirement that short words ($123$, $123\bar2$, etc) be
non-loxodromic turns out to be independent of that angle.

In particular, in order to determine the relevant values of
$(\r,\s,\t)$, one can restrict to considering groups generated by
reflections of order 2. In that case, the triangle is determined by
its angles together with a Cartan angular invariant
(see~\cite{schwartzICM} or~\cite{thompson}), and it has become
customary to label this triangle according to the orders of
$23$, $31$, $12$ and $1\bar{3}23$ and Schwartz uses 
$p$, $q$, $r$, $n$ for these orders respectively. Because of the 
conflict of this notation with the order $\p$ of the complex reflections, 
we choose to write $(a,b,c;d)$ instead. Specifically, we write 
$(a,b,c;d)$ for the group generated by complex reflections in a triangle with 
angles $\pi/a$, $\pi/b$, $\pi/c$ such that the element corresponding 
to $1\bar323$ has order $d$ (more specifically the triangle with 
sides the mirrors of $R_1$, $R_3$, $R_3^{-1}R_2R_3$ has 
angles $\pi/a$, $\pi/b$, $\pi/d$). 

Note that the above discussion makes sense only when $a,b,c\geq 3$,
since the $(2,b,c)$ triangle groups are rigid in $\pu(2,1)$. In fact,
some of these rigid groups turn out to produce lattices as well, when
replacing involutions by reflections of order larger than 2; we will
come back to this below (see Table~\ref{tab:Trigid} for instance).

The traces of the relevant products of reflections are 
\begin{eqnarray*}
  \tr(R_1R_2)&=&u(2-|\r|^2)+\overline{u}^2, \\
  \tr(R_2R_3)&=&u(2-|\s|^2)+\overline{u}^2, \\
  \tr(R_3R_1)&=&u(2-|\tau|^2)+\overline{u}^2, \\
  \tr(R_1R_3^{-1}R_2R_3)&=&u(2-|\s\t-\bar\r|^2)+\overline{u}^2, \\
  \tr(R_1R_2R_3)&=&3-|\r|^2-|\s|^2-|\t|^2+\r\s\t. 
\end{eqnarray*}
Therefore, the analogues of equation \eqref{eq:theta} are:
\begin{eqnarray*}
|\r|^2-2 & = & 2\cos(2\pi/c), \\
|\s|^2-2 & = & 2\cos(2\pi/a), \\
|\t|^2-2 & = & 2\cos(2\pi/b), \\
|\s\t-\bar\r|^2 -2& = & 2\cos(2\pi/d).
\end{eqnarray*}

The analogue of the equation~\eqref{eq:conwayjones} in this context
turns out to be much harder to solve (it involves a sum of eight
cosines rather than four). Rather than solving that equation, Thompson
used a computer search to list $(a,b,c;d)$ triangles (still with
$a,b,c\geq 3$) such that the short words mentioned above are all
elliptic, assuming that $a$, $b$, $c$ and $d$ are no larger than 2000.

The corresponding groups are listed in Table~\ref{tab:Tvalues} in
terms of $\r$, $\s$, $\t$. Note that the presence of the symmetries
described in~\cite{kamiyaparkerthompson} allows us to assume that
$a\leq b\leq c\leq d$.
\begin{table}[htbp]
$$
\begin{array}{|l|rrrr|c|ccc|c|}
\hline
& a & b & c & d & o(123) & \rho & \sigma & \tau & \textrm{Lattice for }p=\\
\hline
{\bf S}_1 & 3 & 3 & 4 & 4 & 7 & \frac{1+i\sqrt{7}}{2} & 1 & 1                         & 3,4,5,6,8,12\\
{\bf S}_2 & 3 & 3 & 4 & 5 & 5 & 1+\omega\frac{1+\sqrt{5}}{2} & 1 & 1                  & 3,4,5\\
{\bf E}_1 & 3 & 3 & 4 & 6 & 8 & i\sqrt{2} & 1 & 1                                     & 3,4,6\\
{\bf E}_2 & 3 & 4 & 4 & 4 & 6 & \sqrt{2} & -\bar{\omega} & \sqrt{2}                   & 3,4,6,12\\
{\bf H}_1 & 3 & 3 & 4 & 7 & 42 & \frac{-1+i\sqrt{7}}{2} & e^{-4i\pi/7} & e^{-4i\pi/7} & 2,-7\\
{\bf H}_2 & 3 & 3 & 5 & 5 & 15 & -1-e^{-2i\pi/5} & e^{4i\pi/5} & e^{4i\pi/5}          & 2,3,5,10,-5\\
\hline
\end{array}
$$
\caption{Thompson's list of parameters (up to complex conjugation). In
  the table $\omega$ denotes $(-1+i\sqrt{3})/2$. Negative values of
  $p$ can also be replaced by their absolute value $|p|$, provided we
  take the complex conjugate value of the corresponding parameter
  ${\bf T}$, since $\T(p,{\bf T})=\T(-p,{\bf \bar
    T})$.}\label{tab:Tvalues}
\end{table}

In Table~\ref{tab:Trigid}, we list the corresponding groups coming
from rigid triangle groups (these were not considered
in~\cite{thompson}, but they produce lattices as well).
\begin{table}[htbp]
$$
\begin{array}{|l|rrrr|c|ccc|c|}
\hline
& a & b & c & d & o(123) & \rho & \sigma & \tau                   & \textrm{Lattice for }p=\\
\hline
{\bf S}_3 & 2 & 3 & 3 & 3 & 4      & 1 & 0 & 1                    & 5,6,7,8,9,10,12,18\\
{\bf S}_4 & 2 & 3 & 4 & 4 & 3      & \sqrt{2} & 0 & 1             & 4,5,6,8,12\\
{\bf S}_5 & 2 & 3 & 5 & 5 & 5      & \frac{1+\sqrt{5}}{2} & 0 & 1 & 3,4,5,10\\
{\bf E}_3 & 2 & 3 & 6 & 6 & \infty & \sqrt{3} & 0 & 1             & 3,4,6\\
\hline
\end{array}
$$
\caption{Non-equilateral triangle groups, coming from rigid triangle groups.}\label{tab:Trigid}
\end{table}

\begin{rk}\label{rk:char}
\begin{enumerate}
\item If ${\bf T}=(\rho,\sigma,\tau)$ is a valid parameter, then so is
  ${\bf \bar T}=(\bar\rho,\bar\sigma,\bar\tau)$, and the braid lengths
  corresponding to control words are the same for both groups (but for
  the same value of $\p>2$, the groups are usually not conjugate in
  $\pu(2,1)$). In the table, we list only one representative for each
  complex conjugate pair.
\item It follows from the above discussion that among triangle
  groups with non-loxodromic control words (of orders less than 2000),
  the ones in Table~\ref{tab:Tvalues} are characterized up to complex
  conjugation by 
  \begin{itemize}
    \item the order of $R_1R_2R_3$ and
    \item the braid lengths of pairs of reflections corresponding to
      control words (provided the braid relation is not too large,
      i.e. at most 2000), namely the braid length of the pairs
      $(R_j,R_k)$, $(R_1,R_2R_3R_2^{-1})$, $(R_1,R_3^{-1}R_2R_3)$,
      $(R_3,R_1R_2R_1^{-1})$.
  \end{itemize}
  \item Even though the triangles associated to these groups
  are not equilateral, for some values of ${\bf T}=(\rho,\sigma,\tau)$ the groups
  do possess extra symmetries. For example, when $a=b$ and $c=d$, that
  is the pairs $(R_1,R_2)$, 
  $(R_2R_3)$ and the pairs $(R_3,R_1)$, $(R_1,R_3^{-1}R_3)$ braid to the
  same length, as in the case of ${\bf H}_2$, we may 
  adjoin a square root of $Q=R_1R_2R_3$ conjugating $R_1$ to $R_1R_2R_1^{-1}$,
  $R_2$ to $R_3$ and $R_3$ to $R_3^{-1}R_1R_3$. A more interesting symmetry
  arises for ${\bf E}_2$. Consider the map
  $$
  S=\bar\omega^{1/3}\left(\begin{matrix}
  \omega & 0 & 0 \\ 0 & 0 & u\bar\omega \\ 0 & -\bar{u}\omega & -1
  \end{matrix}\right).
  $$
  The map $S$ has the following effect on the generators:
  $$ 
  SR_1S^{-1}=R_1,\quad SR_2S^{-1}=R_3,\quad SR_3S^{-1}=R_3^{-1}R_2R_3.
  $$
  Therefore $\langle R_1,R_2,R_3\rangle$ is a normal subgroup of
  $\langle R_1,R_2,S\rangle$. Moreover, $S$ is a complex reflection of order 3 whose 
  mirror is orthogonal to the mirror of $Q^3$. In particular, $S$ fixes $p_0$, the fixed 
  point of $Q$.
\end{enumerate}
\end{rk}

Later in the paper, we will not consider the
(non-equilateral) groups for ${\bf S_1}$ and ${\bf E_1}$, since these
are actually conjugate to sporadic (hence equilateral) triangle
groups, see section~\ref{sec:iso-nonrigid}.

We will not give much detail about the rigid Thompson groups. Indeed,
we will check that $\T(p,{\bf S_3})$ are Livn\'e lattices, $\T(p,{\bf
  S_4})$ are all isomorphic to some specific Mostow lattices,
$\T(p,{\bf S_5})$ are isomorphic to the corresponding sporadic groups
$\S(p,\sigma_{10})$ (see section~\ref{sec:iso-rigid}).

We will not consider the groups of the form $\T(p,{\bf E_3})$ either,
because of the following result.
\begin{prop}
  The lattices $\T(p,{\bf E_3})$, $p=3,4,6$ are arithmetic.
\end{prop}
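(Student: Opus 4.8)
The plan is to apply the standard arithmeticity criterion for lattices in $\pu(2,1)$ that preserve a Hermitian form defined over a CM field, exactly the criterion recalled in the introduction for the non-arithmeticity arguments, but now run in the opposite direction. First I would record the explicit data for the parameter ${\bf E_3}=(\sqrt3,0,1)$ from Table~\ref{tab:Trigid}: with $u=e^{2i\pi/3\p}$ the generators $R_1,R_2,R_3$ and the Hermitian form $H$ have entries
$$
\alpha=2-u^3-\bar u^3,\quad \beta_1=(\bar u^2-u)\sqrt3,\quad \beta_2=0,\quad \beta_3=\bar u^2-u,
$$
so that every entry lies in the field $\ell_\p=\Q(u,\sqrt3)$. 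I would then check that $\ell_\p$ is a CM field, i.e. a purely imaginary quadratic extension of its maximal totally real subfield $k_\p=\ell_\p\cap\R$, and identify $k_\p$ with the adjoint trace field computed in section~\ref{sec:tracefields} (this is the relevant commensurability invariant, and it is automatically totally real).

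Next I would invoke the criterion: since $\T(\p,{\bf E_3})$ is already known to be a lattice, it is arithmetic if and only if for every embedding $\varphi\colon k_\p\hookrightarrow\R$ distinct from the identity, the Galois-conjugate Hermitian form $\varphi(H)$, obtained by applying an extension of $\varphi$ to $\ell_\p$ to each entry of $H$, is definite, i.e. of signature $(3,0)$ or $(0,3)$. Concretely this means enumerating the real places of $k_\p$ and, for each non-identity place, computing the signature of $\varphi(H)$ via the signs of its leading principal minors (Sylvester's criterion) or its eigenvalues. Because $\alpha=2-2\cos(2\pi/\p)$ and the off-diagonal entries are conjugates of $(\bar u^2-u)$ times $\sqrt3$ or $1$, these signs reduce to elementary estimates on the Galois conjugates of $\cos(2\pi/3\p)$ and of $\sqrt3$, which I would carry out case by case for $\p=3,4,6$.

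The main obstacle is precisely this third step: verifying that at each of the non-identity archimedean places the conjugate form becomes definite rather than again of signature $(2,1)$. A single place with indefinite $\varphi(H)$ would produce a second noncompact factor after restriction of scalars and hence destroy arithmeticity, so the computation must be exhaustive over all real embeddings of $k_\p$, whose number grows with $\p$ (for $\p=3$ one works inside $\Q(\zeta_9,\sqrt3)$, a degree $12$ CM field, so $k_3$ already has several real places to control). Once definiteness is confirmed at every nontrivial place, the sufficiency direction of the criterion, essentially the observation that restriction of scalars embeds $\T(\p,{\bf E_3})$ as an arithmetic lattice in $\su(H)\times(\mathrm{compact})$, yields the conclusion. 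As a consistency check and an alternative route I would compare against the commensurability invariants tabulated in the appendix; note finally that the parabolic element $R_1R_2R_3$, reflected in $o(123)=\infty$, makes these noncompact lattices, a setting in which the criterion applies without change.
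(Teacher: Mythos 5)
There is a genuine gap, and it is fatal: your identification of $k_\p=\ell_\p\cap\R$ (the maximal totally real subfield of the matrix-entry field $\Q(u,\sqrt3)$) with the adjoint trace field is false, and the whole procedure hinges on it. For ${\bf E_3}$ one has $\sigma=0$, so $\rho\sigma\tau=0$ and Proposition~\ref{prop:specialupperbound} gives $\Q(\tr{\rm Ad}\ \Gamma)\subseteq$ the real subfield of $\Q(a)$ with $a=e^{2\pi i/\p}$, which is $\Q$ for $\p=3,4,6$. That single computation is the paper's entire proof: with $k=\Q$ the Mostow/Vinberg criterion of section~\ref{sec:spectrum} quantifies over the non-trivial automorphisms of $\Q$, of which there are none, so arithmeticity is immediate and no signature computation occurs at all. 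Your ``main obstacle'' (checking definiteness at the several real places of the degree-$6$ field $\Q(\cos\frac{2\pi}{9},\sqrt3)$ when $\p=3$) is an artifact of working with the wrong field.

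Worse, if you actually carried out the computation you propose, you would reach the wrong conclusion. Every automorphism $\varphi$ of $\Q(u,\sqrt3)$ satisfies $|\beta_1^\varphi|^2=3\,\alpha^\varphi$ and $|\beta_3^\varphi|^2=\alpha^\varphi$ (the sign of $\sqrt3$ is irrelevant since only $|\rho|^2=3$ enters), so
$$
\det H^\varphi=\bigl(\alpha^\varphi\bigr)^2\bigl(\alpha^\varphi-4\bigr)<0,
\qquad \alpha^\varphi\in\{3,2,1\}\ \text{for}\ \p=3,4,6,
$$
and since the diagonal entry $\alpha^\varphi>0$ the conjugate form always has signature $(2,1)$, never definite. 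By your stated rule (``a single place with indefinite $\varphi(H)$ destroys arithmeticity'') you would conclude these lattices are non-arithmetic, which is the opposite of the truth. The resolution is conceptual: the criterion must be run over embeddings of the adjoint trace field, after the group has been put in the form described in section~\ref{sec:spectrum}, where $H$ has entries in a CM extension $\mathbb{L}$ of $k$. An automorphism such as $\sqrt3\mapsto-\sqrt3$ fixes $k=\Q$ pointwise; it merely replaces $\rho=\sqrt3$ by $-\sqrt3$, and by the paper's classification of triples $(R_1,R_2,R_3)$ up to conjugacy via $|\rho|,|\sigma|,|\tau|,\arg(\rho\sigma\tau)$, the twisted group is $\pu(2,1)$-conjugate to the original one; such twists carry no arithmeticity information. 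This is precisely the warning in section~\ref{sec:resultsembedded}: the entry field $\ell$ ``is an extension of the adjoint trace field, but it is not always just a quadratic imaginary extension.''
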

\begin{pf}
  One verifies that their adjoint trace fields are $\Q$ (see
  section~\ref{sec:tracefields}), from which arithmeticity follows
  (see sections~\ref{sec:tracefields} and~\ref{sec:spectrum}).
\end{pf}

\section{Description of the algorithm} \label{sec:description}

\subsection{Combinatorial construction}

The general goal of this section is to describe the basic building
blocks of our fundamental domains, which should be bounded by
spherical shells that surround the fixed point of $P=R_1J$ (or
$Q=R_1R_2R_3$ in the non-equilateral case).

By a spherical shell, we mean that the corresponding cell complex
should be an embedded (piecewise smooth) copy of $S^3$, so that it
bounds a well-defined 4-ball. Surrounding a point means that we want
that point to be in the ball component of the complement of that copy
of $S^3$.

We will first discuss the construction on the combinatorial level, and
defer geometric realization to later in the paper
(section~\ref{sec:realization}). Both at the combinatorial and the
geometrical level, we will refer to 0-faces as {\bf vertices}, 1-faces
as {\bf edges}, 2-faces as {\bf ridges}, and 3-faces as {\bf
  sides}. In section~\ref{sec:pyramid}, we explain how sides of our
combinatorial domain are obtained from ordered triangles of complex
lines. We will then explain how to find a suitable list of triangles,
so that the corresponding pyramids form a spherical shell.

\begin{rk}
  Calling sides the 3-dimensional faces of our polytopes induces a
  slight conflict of terminology, since it is customary to talk about
  the sides of a triangle, which are one (real or complex)-dimensional
  in nature. From this point on, we will use the word ``side''
  exclusively for 3-dimensional facets, hence we will replace the word
  ``side'' by the word ``edge'' when referring to the 1-dimensional
  subsets attached to a triangle.
\end{rk}

\subsubsection{Pyramid associated to an ordered triangle} \label{sec:pyramid}

The basic building blocks for our fundamental domain will be pyramids
in bisectors. We start with a simple procedure to build a pyramid with a
given triangle as one of its faces, relying on as little geometric
information as possible.

Let us start with an ordered triangle in complex hyperbolic space,
which we will think of as encoded by its complex edges. We write the
complex edges as $\bf a$, $\bf b$, and $\bf c$, and we denote by $a$,
$b$ and $c$ the corresponding complex reflections (all of the same
rotation angle $2\pi/\p$). In a slight abuse of notation, we will
often use the same notation ${\bf d}$ for a complex line, its
extension to projective space or its polar vector.

We will call ${\bf a}$ the {\bf base} of the triangle, and we call the
intersection point between ${\bf b}$ and ${\bf c}$ the {\bf apex} of
the triangle. Note that the apex may or may not lie in complex
hyperbolic space, but this will be unimportant until we try to realize
the pyramids geometrically.

The action of $b$ and $c$ on the projective line of complex lines
through the apex is depicted in Figure~\ref{fig:triangles}.
\begin{figure}[htbp]
  \includegraphics[width=0.4\textwidth]{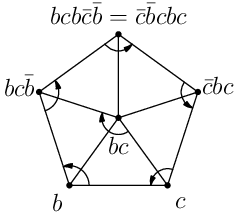}
  \caption{Triangle group picture, seen in the projective line
    through the intersection of ${\bf b}$ and ${\bf
      c}$.}\label{fig:triangles}
\end{figure}
Both $b$ and $c$ act as rotations by angle $2\pi/\p$, and their product
$bc$ acts as a rotation as well; see Proposition~\ref{prop:braiding}. 
We assume that the latter rotation
has finite order, or in other words that $b$ and $c$ satisfy a braid
relation of some finite length $n\in \N$ (we will of course assume
$n>1$). 

Note that when going around the picture in Figure~\ref{fig:triangles}
counter-clockwise, the product of any two successive rotations is
equal to the product $bc$, which gives a mnemonic device for some of
the formulas below. 

Inspired by the picture in Figure~\ref{fig:triangles}, if $b$ and $c$
braid to order $n$, the pyramid associated to the above triangle will
have an $n$-gon as its base, given by the intersection of the base of
the triangle with the mirrors of
$$
\dots,bcbc^{-1}b^{-1},\ bcb^{-1},\ b,\ c,\ c^{-1}bc,\ c^{-1}b^{-1}cbc,\dots
$$ 
Hoping that no confusion will arise, we will often use bars to
denote inverses, so the above sequence also reads
$$
\dots,bcb\bar c\bar b,\ bc\bar b,\ b,\ c,\ \bar cbc,\ \bar c\bar bcbc,\dots
$$ 
The fact that $b$ and $c$ braid to order $n$ says that the above
sequence has period $n$. The case $n=5$ is illustrated in Figure~\ref{fig:pyramid}.
In particular, the relation $bcb\bar{c}\bar{b}=\bar{c}\bar{b}cbc$ is a 
consequence of the generalized braid relation $bcbcb=cbcbc$.
\begin{figure}[htbp]
  \includegraphics[width=0.4\textwidth]{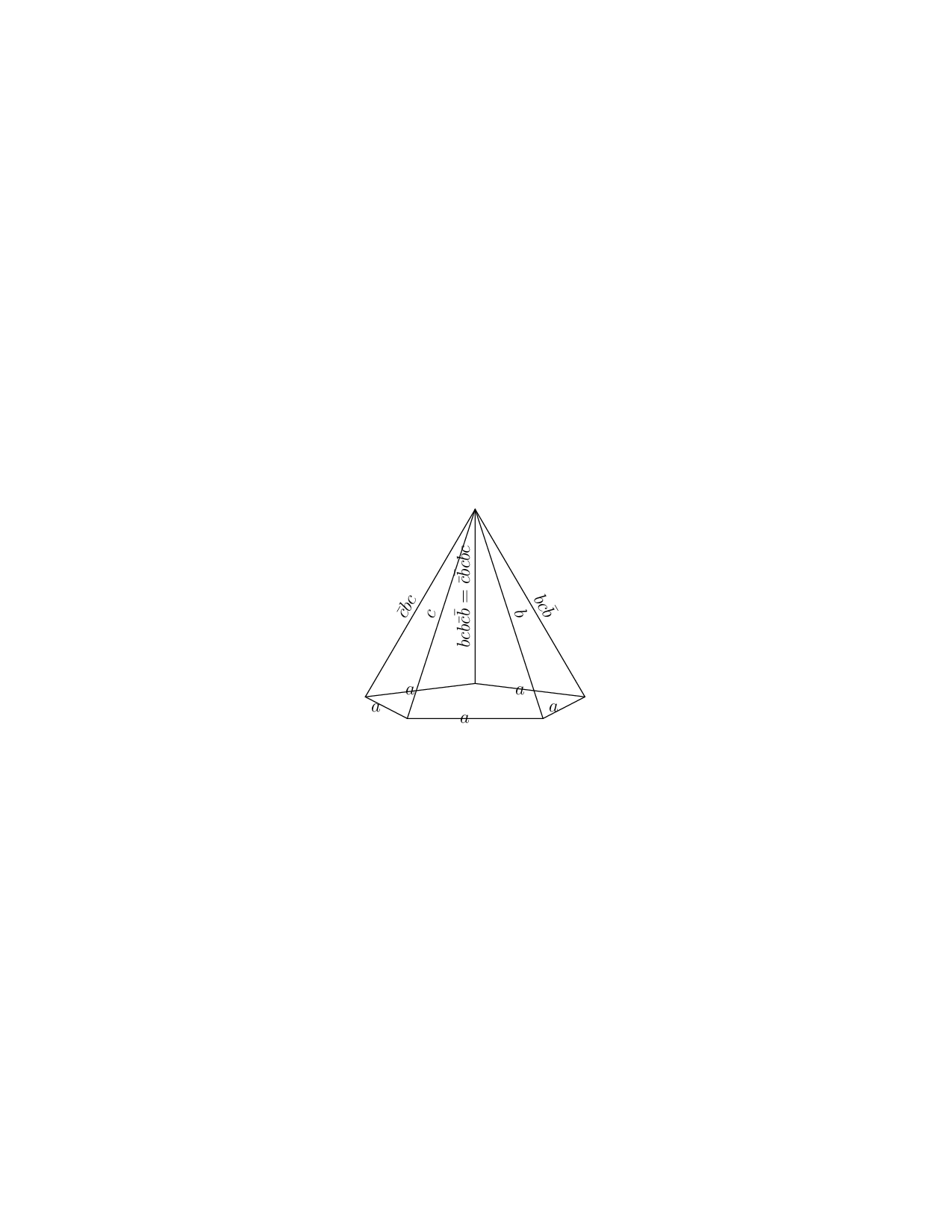}
  \caption{Pyramid with pentagonal base, corresponding to the braid
    relation $(bc)^{5/2}=(cb)^{5/2}$. The edges labelled $a$ are
    referred to as \emph{base edges}, the other ones as \emph{lateral
    edges}.}\label{fig:pyramid}
\end{figure}

Note that the sequence has the property that any successive terms in
the sequence multiply to the same product $bc$. This implies that the
pyramid would be the same (up to rotational symmetry) if we had
started with, say, $a,bc\bar b,b$ instead of $a;b,c$ (more generally
with $a;b_k,b_{k+1}$ with $b_k$, $b_{k+1}$ consecutive lateral edges
of the pyramid). Note in particular that we consider two pyramids the
same precisely when their base labels are the same isometry, and the
ordered labels of lateral edges are the same up to cyclic permutation.

\begin{rk}\label{rk:flatpyramids}
In principle, we allow a slightly degenerate kind of pyramid in the
construction, namely when $b$ and $c$ commute, the pyramid has only
two lateral edges, or equivalently two base vertices. These ``flat'' pyramids
will actually get discarded from the shell when checking that ridges
are on precisely two pyramids, see Section~\ref{sec:algo}.
\end{rk}

One can of course shift a given triangle ${\bf a},{\bf b},{\bf c}$ to
two other ordered triangles with the same orientation, namely ${\bf
  b},{\bf c},{\bf a}$ and ${\bf c},{\bf a},{\bf b}$, but these will, in
general, produce pyramids that differ combinatorially, since the pairs
$(a,b)$, $(b,c)$, $(c,a)$ need not braid with the same order. This will
be exploited in Section~\ref{sec:cycles}.

\subsubsection{Side pairing maps} \label{sec:pairing}

We now think of the pyramid $a;b,c$ associated to the triangle 
${\bf a},{\bf b},{\bf c}$, see Section~\ref{sec:pyramid}, as encoding a
side (i.e. a 3-face) of a fundamental domain for our group. In
particular, the sides should come in pairs, so there should be another
side isometric to it.

We would like to use the reflection $a$, or its inverse, as a
side-pairing map, and construct a side that has the same base as
$a;b,c$. Recall that, by construction, the base of that pyramid is the
mirror of the reflection $a$, so it is fixed by $a$. There are two
natural candidates to create an opposite face, namely those associated
to ${\bf a},{\bf ab\bar a},{\bf a c\bar a}$ and 
${\bf a},{\bf \bar ab a},{\bf \bar a ca}$. 

In order to decide which of the two triangles we choose, we will use
the fact that we want to build a spherical shell around the fixed
point of $R_1R_2R_3$.

\begin{rule123*}\label{123rule}
  We only include the pyramid corresponding to a triangle $a,b,c$
  provided either $abc$ or $cba$ is equal to $123$. If $abc=123$, then
  the corresponding side-pairing map will be $a$, and if $bca=123$,
  the side-pairing map will be $\bar a$.
\end{rule123*}

The equality $abc=123$ is to be understood in the triangle group (not
in the free group in three letters). In order to check such a
relation, it is enough to reduce the corresponding words according to
the braid relations between $a$, $b$ and $c$.

The most basic example is the initial pyramid $1;2,3$, which is paired
by $1$ to $1;12\bar1,13\bar1$ (but we do not use $1;\bar1 21,\bar1
31$).

\subsubsection{Forcing invariance} \label{sec:invariance}

We want our spherical shell to be $P$-invariant, so whenever a pyramid
from a triangle $a,b,c$ is included, we want to include all its
conjugates by either powers of $P=R_1J$ when the triangle group is
symmetric or powers of $Q=R_1R_2R_3$ in the non-symmetric case.

This is easily done using word notation, note that
$$
P1\bar P = 1J1\bar J\bar 1 = 12\bar 1;\quad
P2\bar P = 1J2\bar J\bar 1 = 13\bar 1;\quad
P3\bar P = 1J3\bar J\bar 1 = 1.
$$
and similarly
$$
\bar P1 P = 3;\quad
\bar P2 P = \bar 3 13;\quad
\bar P3 P = \bar 3 23.
$$

\subsubsection{Forcing ridge cyles} \label{sec:cycles}

The discussion in this section is related to the fact that we want our
set of pyramids to form a spherical shell. In particular, for each
pyramid, its ridges (i.e. 2-faces) should lie on precisely two
different pyramids (i.e. 3-faces) in the shell.

At least on the combinatorial level, ridges from two different
pyramids are considered the same provided they have the same
(cyclically ordered) sets of labels. If we ensured that the existence
of side-pairing maps by the selection process explained in
section~\ref{sec:pairing}, then the base ridges are on at least two
pyramids; in the sequel, we will assume they are on precisely those
two.

In fact we want \emph{all} ridges to be on precisely two pyramids of
our invariant shell. Applying this to lateral ridges gives a strong
restriction to produce the shell. If $a;b,c$ appears in the shell,
then it is natural to consider the shifted pyramid $b;c,a$ and
$c;a,b$, but only one of them well satisfy the 123-rule (see
page~\pageref{123rule}). Indeed, if $a;b,c$ has been included, then
either $abc$ or $bca$ is equal to $123$. In the first case, we need to
select $c;a,b$, in the second we select $b;c,a$. Of course, for these
to yield well-defined pyramids, we need $a$ and $b$ (or $c$ and $a$,
respectively) to braid to some finite order.

For example, we could shift the initial pyramid $1;2,3$ to either
$2;3,1$ or $3;1,2$. The first shift gets discarded, since the
corresponding products are $231$ and $312$, neither of which is
$123$. The second one is kept, since $(1\cdot 2)\cdot 3=123$ and its
side-pairing map is $\bar 3$, which maps $3;1,2$ to
$3;\bar313,\bar323$.

Another example is the pyramid $2;1,23\bar2$. We discard
$23\bar2;2,1$, but we keep $1;23\bar2,2$, whose side-pairing map is
$1$. More examples appear in section~\ref{sec:results}.

Provided we use the 123-rule and the corresponding pyramids all have
finite braiding order, all lateral ridges in the shell will lie on
precisely two pyramids in the shell.

\subsubsection{Building an invariant spherical shell}\label{sec:algo}

The previous sections suggest a procedure for building an invariant
spherical shell. We denote by $p_0$ the isolated fixed point of
$P=R_1J$ (or of $Q=R_1R_2R_3$ in the non-symmetric case).

We say a pyramid $a;b,c$ surrounds $p_0$ provided $abc=123$ or
$bca=123$. Note that if $a;b,c$ surrounds $p_0$, then so do all of its
$P$-images.

Now start with a set $\mathcal{P}$ of pyramids that all surround
$p_0$, and force its faces to be paired (see
Section~\ref{sec:pairing}), and invariant (see
Section~\ref{sec:invariance}).

Consider the ridges of pyramids of $\mathcal{P}$ that lie only on one
pyramid; then shift the corresponding triangle according to the rule
in Section~\ref{sec:cycles}; if the corresponding apex isometries
braid to finite order, enlarge $\mathcal{P}$ to contain the
corresponding shifted pyramid.
\begin{hyp} \label{hyp:shell}
  The above process never fails (i.e. apex isometries always braid to
  some finite order), and at some finite stage we get a paired
  $P$-invariant shell $\mathcal{P}$, such that every ridge is on
  precisely two pyramids of the shell.
\end{hyp}
This may seem like a lot to ask, but this hypothesis holds in many
cases, as discussed in Section~\ref{sec:results} below. In particular,
see Theorem~\ref{braid-for-apex}.

In order to obtain the condition that every ridge is on precisely two
pyramids, we need to discard all flat pyramids, i.e. those of the form
$a;b,c$ where $b$ and $c$ commute; see Remark~\ref{rk:flatpyramids}.
Such a pyramid collapses to a single triangle, which is also a lateral
ridge of pyramids of the form $b;c,a$ and $c;b,a$. Since $b$ and $c$
commute these pyramids both satisfy the 123-rule.

\subsection{Geometric realization}\label{sec:realization}

\subsubsection{Realizing vertices}

The first point is that we want to realize vertices of our pyramids in
complex hyperbolic space. Throughout this section, $a;b_1,b_2$ denotes
a given pyramid in the invariant shell, one of whose ridges is the
triangle with sides ${\bf a}$, ${\bf b_1}$ and ${\bf b_2}$. We will
denote by ${\bf b_1},\dots,{\bf b_n}$ the ordered set of lateral edges of
the pyramid.

Note that the lateral ridges of the pyramids are complex triangles,
and two complex lines in $\CH 2$ may or may not intersect in $\CH
2$. The basic idea is that the corresponding projective lines always
intersect in $\CP 2$, and that point is unique provided the
corresponding complex lines are distinct. This brings forward a
genericity assumption:
\begin{hyp} \label{hyp:irreducible}
For every side $a;b,c$ of a pyramid in $\mathcal{P}$, the mirrors of
$a$, $b$ and $c$ are in general position, by which we mean they are
pairwise distinct, and their intersection points are distinct.
\end{hyp}
If that is the case, the vertices of the pyramids have a natural
realization in $\CP 2$. Of course this is not completely satisfactory
in terms of complex hyperbolic geometry, we now explain how to realize
our shell in $\CH 2$.

Each lateral edge will contribute two or three vertices, depending on
where various projective lines intersect (inside or outside
$\CHB 2$). 

Recall that complex lines in $\CH 2$ can be described by a polar
vector $v$ in $\C^3$, in which case the complex line corresponds to
the set of negative lines in $v^\perp$. Moreover, two lines with
disctinct polar vectors $v$ and $w$ respectively meet in a unique
point in $\CP 2$ denoted by $u=v\boxtimes w$, which is inside $\CH 2$
if and only if $\langle u,u\rangle<0$. If they intersect outside
$\CHB 2$ (i.e. if $\langle u,u\rangle>0$), then they have a unique
common perpendicular complex line, which is simply the complex line
polar to $u$ (see~\cite{goldman} for details).

\noindent
\textbf{Top vertices}

\begin{itemize}
\item If ${\bf b_1}$ and ${\bf b_2}$ intersect inside 
$\CHB 2$ 
the pyramid will have a single top vertex, given by their
intersection point. 
\item If not, then the intersection point is polar to a complex line
  ${\bf d}$, which with abuse of notation we write as ${\bf d}={\bf
  b}_1\boxtimes{\bf b}_2$ (in fact ${\bf b}_k\boxtimes{\bf b}_l$ is
  actually independent of $k$ and $l$, of course with $k\neq l$). In
  the latter case, there will be $n$ top vertices, given by the
  intersection of ${\bf d}$ with the mirrors corresponding to the $n$
  lateral edges of the pyramid.
\end{itemize}

\noindent
\textbf{Base and mid vertices}

For each $k$, the $k$-th lateral edge of the pyramid will contribute
two or three vertices, depending on the position in $\CP 2$ (inside or
outside $\CHB 2$) of ${\bf d}_k={\bf a}\boxtimes {\bf b}_k$.
\begin{itemize}
\item If ${\bf d}_k$ is in $\CHB 2$, then there are only
  two vertices on ${\bf b}_k$, namely the top vertex described
  previously, and ${\bf d}_k$ which we call a {\bf bottom vertex}.
\item If ${\bf d}_k$ is outside $\CHB 2$, then it is polar
  to the common perpendicular complex line to ${\bf a}$ and 
  ${\bf b}_k$. In that case, this lateral edge will actually contribute three
  vertices, namely the top vertex, and the two feet of the common
  perpendicular complex line, which are ${\bf d}_k\boxtimes {\bf b}_k$
  and ${\bf d}_k\boxtimes{\bf a}$. The point ${\bf d}_k\boxtimes {\bf b}_k$
  will be called a {\bf mid vertex} of the pyramid, and 
  ${\bf d}_k\boxtimes{\bf a}$ a {\bf bottom vertex}.
\end{itemize}

\subsubsection{Realizing edges}

The 1-skeleton of the realization of the pyramid is obtained by
joining suitable pairs of vertices by geodesic arcs.
\begin{itemize}
\item \textbf{Top edges} The realization of a pyramid has top edges if and
only if the geometric realization of the apex of the pyramid lies
outside $\CHB 2$. 
One simply includes a geodesic arc
between ${\bf d}\cap {\bf b}_k$ and ${\bf d}\cap {\bf b}_{k+1}$, for
$k$ modulo $n$.

\item \textbf{Top to bottom vertices} If ${\bf a}\boxtimes{\bf b}_k$ is in
$\CHB 2$, then we join it either to ${\bf b}_k\boxtimes{\bf
  b}_{k+1}$ (if this point is in $\CHB 2$), or to ${\bf
  d}\boxtimes {\bf b}_k$ where ${\bf d}$ is polar to ${\bf b}_k\boxtimes {\bf
  b}_{k+1}$.

\item \textbf{Top to mid vertices} If ${\bf a}\boxtimes {\bf b}_k$ is
\emph{not} in $\CHB 2$, let ${\bf d}_k$ denote its polar
complex line. Then we join the mid vertex ${\bf d}_k\boxtimes {\bf
  b}_k$ to the top vertex ${\bf b}_k\boxtimes {\bf b}_{k+1}$ (if this
point is in $\CHB 2$), or to the top vertex ${\bf
  d}\boxtimes {\bf b}_k$ where ${\bf d}$ is polar to ${\bf
  b}_k\boxtimes {\bf b}_{k+1}$.

\item \textbf{Mid to bottom vertices} If ${\bf a}\boxtimes {\bf b}_k$ is not
in $\CHB 2$, let ${\bf d}_k$ denote its polar complex
line. Then we join the mid vertex ${\bf d}_k\boxtimes {\bf b}_k$ to
the bottom vertex ${\bf d}_k\boxtimes {\bf a}$.

\item \textbf{Bottom edges} One includes a geodesic arc between 
${\bf  a}\boxtimes {\bf b}_k$ (or ${\bf a}\boxtimes {\bf d}_k$ if the
previous point is outside $\CHB 2$) and ${\bf a}\boxtimes {\bf
  b}_{k+1}$ (or ${\bf a}\boxtimes {\bf d}_{k+1}$), for $k$ modulo $n$.
\end{itemize}

\subsubsection{Realizing ridges}

We make the following
\begin{hyp}\label{hyp:1d-embedded}
The (ordered) polygon obtained by taking the bottom edges joining
the bottom vertices
${\bf  a}\boxtimes {\bf b}_k$ or ${\bf a}\boxtimes {\bf d}_k$ 
is an embedded (piecewise smooth)
topological circle in the (closure in $\CHB 2$ of the)
complex line ${\bf a}$, equivalently this polygon bounds a disk in
that (closed) complex line. 
\end{hyp}
This allows us to define the bottom ridge.

If ${\bf d}={\bf b_1}\boxtimes{\bf b_2}$ is outside complex hyperbolic
space, then there is a similar $n$-gon in ${\bf d}$, which will be a
ridge as well, which we refer to as the \emph{top ridge}. Just as for
the bottom ridge, we assume embeddedness of the top polygon in order
to be able to define a top ridge. 

The lateral ridges are slightly more difficult to describe, since their
combinatorial type depends on the position of intersections of edges
in $\CP 2$. We list the eight possibilities for the combinatorics of a
ridge that contain lateral edges ${\bf b_k}$ and ${\bf b_{k+1}}$ in Figure~\ref{fig:ridgetypes} (we take
indices mod $n$, so when $k=n$, we have $k+1=1$).
\begin{figure}[htbp]
\includegraphics[height=2.2cm]{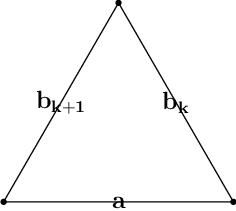}\hfill
\includegraphics[height=2.2cm]{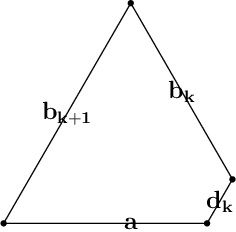}\hfill
\includegraphics[height=2.2cm]{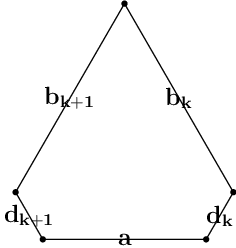}\hfill
\includegraphics[height=2.2cm]{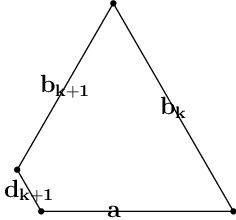}\\[0.4cm]
\includegraphics[height=2cm]{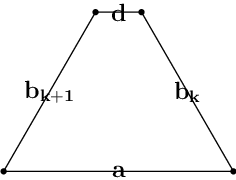}\hfill
\includegraphics[height=2cm]{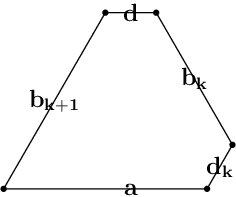}\hfill
\includegraphics[height=2cm]{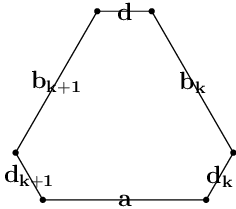}\hfill
\includegraphics[height=2cm]{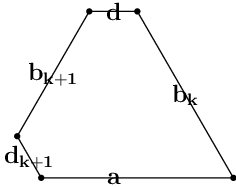}
\caption{Combinatorial types of lateral ridges.}\label{fig:ridgetypes}
\end{figure}

We will need to consider triangles with vertices outside complex
hyperbolic space, so we start by establishing some
terminology. Consider a triple ${\bf e_1},{\bf e_2},{\bf e_3}$ of
pairwise distinct complex lines (as before, ${\bf e_j}$ denotes either
a vector in $\C^3$ which is positive with respect to the Hermitian
form, or its polar complex line). The \emph{vertices} of the triangle
are the intersection points in projective space of its edges, which
are given by ${\bf v_i}={\bf e_j}\boxtimes{\bf e_k}$, where the
indices $i,j,k$ are pairwise distinct. We call ${\bf v_i}$ the vertex
opposite to the edge ${\bf e_i}$.

A \emph{complex height} of the triangle through ${\bf v_i}$ is a
complex geodesic in $\CH 2$ that is orthogonal to one of the complex
edges, and whose extension to projective space contains the opposite
vertex. If a complex height through a given vertex ${\bf v_i}$ exists,
then it is unique (in fact it is given by the complex line polar to
${\bf v_i}\boxtimes {\bf e_i}$).

If the complex height through ${\bf v_i}$ exists, we call its
intersection with the edge ${\bf e_i}$ the \emph{foot} of the complex
height. The foot of the complex height through ${\bf v_i}$ is given by
${\bf f_i}={\bf v_i}-\frac{\langle{\bf v_i},{\bf
      e_i}\rangle}{\langle{\bf e_i},{\bf e_i}\rangle}{\bf e_i}$, and
  one easily checks that the triangle has a complex height through
  ${\bf v_i}$ if and only if ${\bf f_i}$ is a negative vector.
\begin{dfn}\label{dfn:triangle}
  A \emph{complex hyperbolic triangle} is a triple of pairwise
  distinct complex lines that admits three complex heights.
\end{dfn}
From now on, all triangles are assumed to be complex hyperbolic
triangles, by which we mean the edges are pairwise distinct, and there
are three well-defined complex heights.

The basic fact that allows us to construct lateral ridges is the
following.
\begin{prop}\label{prop:ambient}
  Given a complex hyperbolic triangle ${\bf a},{\bf b},{\bf c}$
  in complex hyperbolic space, there is a unique bisector
  $\B_{\bf a}$ such that
  \begin{enumerate}
  \item ${\bf a}$ is a complex slice of $\B_{\bf a}$ and
  \item the extended real spine of $\B_{\bf a}$ contains
    ${\bf b}\boxtimes{\bf c}$.
  \end{enumerate}
\end{prop}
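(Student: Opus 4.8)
The plan is to reduce the statement to the standard structure theory of bisectors (see \cite{goldman}): an honest bisector $\mathcal{B}$ in $\CH 2$ is equivalent data to the pair consisting of its complex spine $\Sigma$ (a complex line meeting $\CH 2$) and its real spine $\gamma$ (a geodesic inside $\Sigma$). Mostow's slice decomposition describes the complex slices of $\mathcal{B}$ as exactly the complex lines orthogonal to $\Sigma$, and each such slice meets $\Sigma$ in the point of $\gamma$ that it lies over. So constructing $\mathcal{B}_{\bf a}$ amounts to producing a \emph{unique} such pair $(\Sigma,\gamma)$ from the data ${\bf a}$ and $p:={\bf b}\boxtimes{\bf c}$, and I will obtain $\Sigma$ first, then $\gamma$.

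First I would extract the complex spine. If $\mathcal{B}_{\bf a}$ satisfies (1), then, complex slices being orthogonal to the complex spine, $\Sigma$ is orthogonal to ${\bf a}$, so its polar vector $\nu$ satisfies $\langle\nu,{\bf a}\rangle=0$. If it satisfies (2), then the extended real spine lies in $\Sigma$ and contains $p$, whence $p\in\Sigma$ and $\langle p,\nu\rangle=0$. By the general position Hypothesis~\ref{hyp:irreducible} the vectors ${\bf a}$ and $p$ are linearly independent, so their orthogonal complement is one dimensional and $\nu$ is forced up to scale: $\nu={\bf a}\boxtimes p$, i.e. $\Sigma$ is the complex line through the polar point of ${\bf a}$ and the apex $p$. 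This pins down the complex spine uniquely. Next I recover the real spine: set $m:={\bf a}\boxtimes\nu={\bf a}\cap\Sigma$; since ${\bf a}$ is the complex slice over this point, $m$ lies on $\gamma$, while (2) forces the extension of $\gamma$ to pass through $p$. As $m$ and $p$ are distinct points of $\Sigma\cong\CP 1$ (they coincide only if ${\bf a},{\bf b},{\bf c}$ are concurrent, excluded by Hypothesis~\ref{hyp:irreducible}), they determine a unique extended geodesic, so $\gamma$ is unique. Existence follows by running this backwards: set $\nu={\bf a}\boxtimes p$, let $\Sigma$ be the polar complex line, $\gamma$ the geodesic of $\Sigma$ through $m$ and $p$, and $\mathcal{B}_{\bf a}$ the bisector with this complex spine and real spine; then ${\bf a}\perp\Sigma$ with $m\in\gamma$ gives (1), and $p\in\gamma$ (extended) gives (2).

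The main obstacle is verifying that $(\Sigma,\gamma)$ really is honest bisector data, i.e. that $\nu$ is a positive vector so that $\Sigma$ genuinely meets $\CH 2$, and that the foot $m$ lies in $\CH 2$ so that $\gamma$ is a genuine geodesic. Here the useful observation is that the second point is \emph{automatic} once $\Sigma$ is a genuine complex line: if ${\bf a}$ and $\Sigma$ are both complex lines meeting $\CH 2$ and are orthogonal, their polar vectors are positive and orthogonal, hence span a positive definite plane whose orthogonal complement is the negative line $m$, so $m\in\CH 2$. Thus everything reduces to checking positivity of $\nu={\bf a}\boxtimes p$, equivalently that the complex line through the polar point of ${\bf a}$ and the apex $p$ meets $\CH 2$. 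This is where I would invoke the standing assumption that ${\bf a},{\bf b},{\bf c}$ form a genuine triangle in $\CH 2$ and analyze the signature of the plane spanned by ${\bf a}$ and $p$ via the Lagrange-type identity expressing $\langle\nu,\nu\rangle$, up to a nonzero factor, as $\langle{\bf a},{\bf a}\rangle\langle p,p\rangle-|\langle{\bf a},p\rangle|^2$; controlling its sign is the one computation that genuinely uses the geometry of the configuration rather than the formal slice description.
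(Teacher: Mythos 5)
Your skeleton is the same as the paper's: the authors' entire proof is the remark that a bisector is determined by its spine data, that the complex spine of $\mathcal{B}_{\bf a}$ is forced to be the complex height of the triangle (the complex line through the pole of ${\bf a}$ and the apex $p={\bf b}\boxtimes{\bf c}$), and that the real spine must join the foot of that height to $p$. Your uniqueness argument is a careful expansion of exactly this, and your observation that the foot ${\bf a}\boxtimes\nu$ automatically lies in $\CH 2$ once $\nu$ is a positive vector is correct and worth making explicit.

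The gap is precisely the step you defer at the end: positivity of $\nu={\bf a}\boxtimes p$, i.e.\ that the complex height actually meets $\CH 2$. You propose to extract this from the Lagrange-type identity ``using the standing assumption that ${\bf a},{\bf b},{\bf c}$ form a genuine triangle,'' but that assumption does not control the sign. The identity does settle the cases where the apex lies in $\CHB 2$: if $p$ is negative, or null with $\langle {\bf a},p\rangle\neq 0$ (which follows from general position, since $\langle {\bf a},p\rangle=0$ would make the three lines concurrent at $p$), the span of ${\bf a}$ and $p$ has signature $(1,1)$ and $\nu$ is positive. But when the apex lies outside $\CH 2$ --- the case the proposition is explicitly meant to cover, since that is when ``extended'' matters and the pyramids acquire top ridges --- positivity of $\nu$ is equivalent to the base ${\bf a}$ and the polar line of the apex being disjoint in $\CHB 2$, and this can fail for triangles in general position. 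Concretely, for the form ${\rm diag}(1,1,-1)$, let ${\bf b},{\bf c}$ be polar to $(\cosh s,0,\sinh s)$ and $(\cosh t,0,\sinh t)$ with $s\neq t$, so the apex is $p=(0,1,0)$, and let ${\bf a}$ be polar to $(1,\epsilon,0)$ with $\epsilon\neq 0$: all three lines meet $\CH 2$ and their pairwise intersection points are distinct, yet ${\bf a}\boxtimes p=(0,0,1)$ is negative, the line through the pole of ${\bf a}$ and the apex is $\{z_3=0\}$, which misses $\CHB 2$ entirely, and hence no bisector satisfies (1) and (2). So the sign analysis you postpone cannot be carried out from the stated hypotheses alone; it needs the additional configuration information (apex in $\CHB 2$, or ${\bf a}$ ultraparallel to the apex's polar line) that holds for the triangles actually arising in the paper's shells and is checked there computationally. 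In fairness, the paper's own one-line proof (``essentially obvious'') ignores this point entirely, so you have isolated a genuine subtlety the authors gloss over --- but as a proof, your write-up stops exactly at the claim that requires, and in the stated generality lacks, a justification.
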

This result follows from the fact that a bisector is uniquely
determined by its real spine. The complex spine of the bisector
$\B_{\bf a}$ in the proposition must be orthogonal to the
base ${\bf a}$ and it must contain the vertex ${\bf b}\boxtimes{\bf
  c}$, so it must be the complex height through that vertex. Its real
spine simply joins the foot of the complex height and the
corresponding vertex.
\begin{prop}\label{prop:ridge-realization}
  The bisector $\B_{\bf a}$ from
  Proposition~\ref{prop:ambient} contains the 1-skeleton of the
  geometric realization for $a;b,c$.
\end{prop}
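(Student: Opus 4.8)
The plan is to reduce the statement to a case check on the finitely many complex lines that carry the edges of the realization, and to control how each of these lines meets the bisector $\mathcal{B}_{\bf a}$. Inspecting the edge construction, every $1$-cell is a geodesic arc lying in one of the complex lines ${\bf a}$, ${\bf b}_k$, ${\bf d}={\bf b}_k\boxtimes{\bf b}_l$ or ${\bf d}_k={\bf a}\boxtimes{\bf b}_k$: bottom edges lie on ${\bf a}$, top edges on ${\bf d}$, the top-to-bottom and top-to-mid edges on the corresponding ${\bf b}_k$, and the mid-to-bottom edges on the common perpendicular ${\bf d}_k$. It therefore suffices to show that, for each such line $L$, the set $L\cap\mathcal{B}_{\bf a}$ is totally geodesic and contains the relevant vertices; the arc joining two such vertices then lies in $L\cap\mathcal{B}_{\bf a}\subset\mathcal{B}_{\bf a}$.

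Next I recall the internal geometry of $\mathcal{B}_{\bf a}$. Writing $\Sigma$ for its complex spine and $\sigma$ for its real spine, a point lies in $\mathcal{B}_{\bf a}$ exactly when its orthogonal projection to $\Sigma$ lands on $\sigma$; consequently $\mathcal{B}_{\bf a}$ is foliated both by its complex slices (complex lines meeting $\Sigma$ orthogonally at points of $\sigma$, each contained entirely in $\mathcal{B}_{\bf a}$) and by its meridians (Lagrangian copies of $\RH2$ containing $\sigma$), all of which are totally geodesic. The structural fact I will use is that a complex line $L$ meeting $\Sigma$ at a point of the extended real spine $\widehat\sigma$ meets $\mathcal{B}_{\bf a}$ in a totally geodesic set: all of $L$ when $L$ is a slice, and a single geodesic otherwise. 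By Proposition~\ref{prop:ambient} and the discussion following it, $\Sigma$ is the complex height through the apex, $s_0:={\bf a}\cap\Sigma$ is the foot of that height, and $\sigma$ joins $s_0$ to the apex.

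First I would check that each of the four families of lines meets $\Sigma$ on $\widehat\sigma$. The base ${\bf a}$ meets $\Sigma$ at $s_0\in\sigma$ and is a slice by hypothesis. Each ${\bf b}_k$ passes through the apex, which lies on $\widehat\sigma$ by construction. The line ${\bf d}$ is polar to the apex; since $\Sigma$ contains the apex we get ${\bf d}\perp\Sigma$, and its foot ${\bf d}\cap\Sigma$ is the point of $\Sigma$ polar to the apex, which again lies on $\widehat\sigma$ because $\widehat\sigma$ is a geodesic and hence invariant under polarity within $\Sigma$; thus ${\bf d}$ is a slice. Finally ${\bf d}_k$ is polar to ${\bf a}\cap{\bf b}_k$, a point of ${\bf a}$; since ${\bf a}$ is the slice through $s_0$ this forces ${\bf d}_k\cap\Sigma$ to be the point polar to $s_0$ in $\Sigma$, again on $\widehat\sigma$. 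Hence ${\bf a}$ and ${\bf d}$ are slices contained in $\mathcal{B}_{\bf a}$, while each ${\bf b}_k$ and ${\bf d}_k$ meets $\mathcal{B}_{\bf a}$ in a single geodesic.

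It remains to locate the vertices. The bottom vertices lie on ${\bf a}$, and the top vertices either coincide with the apex (a point of $\sigma$) or lie on ${\bf d}$, so all of these are automatically in $\mathcal{B}_{\bf a}$, and the corresponding edges are arcs of slices or of the geodesics ${\bf b}_k\cap\mathcal{B}_{\bf a}$. The one vertex not visibly lying on a slice is the mid vertex ${\bf d}_k\boxtimes{\bf b}_k$, and showing that it too lies in $\mathcal{B}_{\bf a}$ is the main obstacle. I expect to settle it by a direct computation with the Hermitian form, verifying that the orthogonal projection of ${\bf d}_k\boxtimes{\bf b}_k$ to $\Sigma$ falls on $\sigma$; equivalently, one exhibits the bottom vertex ${\bf a}\boxtimes{\bf d}_k$, the mid vertex ${\bf d}_k\boxtimes{\bf b}_k$ and the geodesic joining them inside the single meridian of $\mathcal{B}_{\bf a}$ determined by the geodesic of ${\bf a}$ through $s_0$ and ${\bf a}\boxtimes{\bf d}_k$. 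Granting this, the mid vertex lies on both geodesics ${\bf b}_k\cap\mathcal{B}_{\bf a}$ and ${\bf d}_k\cap\mathcal{B}_{\bf a}$, so the top-to-mid and mid-to-bottom edges are again arcs of geodesics contained in $\mathcal{B}_{\bf a}$, which completes the argument.
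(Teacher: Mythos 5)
Your proposal follows essentially the same route as the paper's proof: you sort the edges by the complex lines that carry them (${\bf a}$, ${\bf b}_k$, ${\bf d}$, ${\bf d}_k$), place the bottom and top edges inside complex slices of $\mathcal{B}_{\bf a}$ (the base and the line polar to the apex), and handle the remaining edges by showing the relevant lines cut $\mathcal{B}_{\bf a}$ in geodesics. That is exactly the published argument, which delegates the last step to Proposition~\ref{prop:orthslice}. Your "structural fact" is a correct mild generalization of that proposition: a point of $\widehat\sigma$ outside $\CHB 2$ is the polar point of a slice, so for such points your hypothesis is equivalent to orthogonality to a slice; your version additionally covers lines meeting $\sigma$ at an interior point, and this extra case is genuinely needed when the apex lies inside $\CH 2$ (then ${\bf b}_k$ meets $\widehat\Sigma$ only at that interior point and is orthogonal to no slice of $\mathcal{B}_{\bf a}$, so the paper's citation of Proposition~\ref{prop:orthslice} does not literally apply there). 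Your polarity computations — that ${\bf d}$ is a slice, and that ${\bf d}_k$ meets $\Sigma$ at the point polar to $s_0$ — are also correct.

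The one incomplete step is the one you flag yourself: membership of the mid vertex ${\bf d}_k\boxtimes{\bf b}_k$ in $\mathcal{B}_{\bf a}$ is asserted with a plan ("a direct computation with the Hermitian form") but never carried out, and it is precisely the only nontrivial verification in the whole statement. Knowing that ${\bf b}_k\cap\mathcal{B}_{\bf a}$ and ${\bf d}_k\cap\mathcal{B}_{\bf a}$ are geodesics does not by itself imply that these geodesics pass through the single point ${\bf b}_k\cap{\bf d}_k$. The claim is true, and the computation you propose does work, but there is a cleaner synthetic closure using the meridian part of Proposition~\ref{prop:orthslice}: let $M$ be a meridian of $\mathcal{B}_{\bf a}$ containing the geodesic ${\bf b}_k\cap\mathcal{B}_{\bf a}$, and let $\iota$ be the antiholomorphic reflection fixing $M$ pointwise. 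Since $M$ contains $\sigma$, the map $\iota$ preserves $\Sigma$, fixes $\sigma$ pointwise, and hence preserves every complex slice, in particular ${\bf a}$; since it fixes ${\bf b}_k\cap\mathcal{B}_{\bf a}$ pointwise, it preserves the complex line ${\bf b}_k$. Being an isometry preserving both ${\bf a}$ and ${\bf b}_k$, it preserves their unique common perpendicular ${\bf d}_k$, hence fixes the single intersection point ${\bf d}_k\boxtimes{\bf b}_k$, which therefore lies on $M\subset\mathcal{B}_{\bf a}$. To be fair to you, the paper itself is terse on exactly this point (it calls the case check "only slightly tedious"); so once your deferred computation is supplied, your argument is at least as complete as the published one.
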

In order to prove this, we review the following fact, which appears as
Lemma 2.3 in~\cite{dpp}.
\begin{lem}\label{lem:orthslice}
  Let $L$ be a complex line orthogonal to a complex slice of a
  bisector $\B$. Then $L\cap\B$ is a geodesic,
  contained in a meridian of $\B$.
\end{lem}

\begin{pf} (of Prop.~\ref{prop:ridge-realization})
  This is only slightly tedious because of the diversity of cases for
  the combinatorial types of lateral ridges, see
  Figure~\ref{fig:ridgetypes}. The bottom edges are in the bisectors
  because by construction $\B_{\bf a}$ has ${\bf a}$ as one of its
  slices. The top edges, if any, are also in a slice of $\B_{\bf a}$,
  polar to the apex. The fact that the other edges are in the bisector
  follows from Lemma~\ref{lem:orthslice}.
\end{pf}

Note that the choice of the base ${\bf a}$ of the triangle is of
course artificial. In fact, as discussed in Section~\ref{sec:cycles},
there should be two sides containing a given ridge, and the other one
should be constructed by the same process, but using ${\bf b}$ or
${\bf c}$ as the base.

We call the bisectors $\B_{\bf a}$, $\B_{\bf b}$ and $\B_{\bf c}$ the
\emph{natural bisectors} associated to the triangle. We will say that
the triangle ${\bf a},{\bf b},{\bf c}$ is \emph{real} if it is the
complexification of a triangle in a copy of $\RH 2$, or equivalently
if the three corresponding polar vectors can be scaled so that their
pairwise inner products are real.

\begin{prop} \label{prop:giraud}
  Let ${\bf a},{\bf b},{\bf c}$ be a \emph{non-real} complex hyperbolic
  triangle. The natural bisectors satisfy the following properties.
  \begin{enumerate}
  \item $\B_{\bf a}\cap \B_{\bf b}= \B_{\bf
    b}\cap \B_{\bf c}= \B_{\bf c}\cap
    \B_{\bf a}$. 
  \item The above intersections have at most two connected components,
    and each component is a proper smooth disk in $\CH 2$.  
  \item The 1-skeleton of the corresponding ridge is contained in
    (the closure of) only one of the connected components.
  \end{enumerate}
\end{prop}

\begin{rk} \label{rk:real_triangles}
  Among the groups studied in this paper, only the sporadic groups with
$\tau=\sigma_{10}$ require real triangles. In fact, for
  $\S(p,\sigma_{10})$ with $p=3,4,5,10$, one checks by direct computation
  that the relevant real spines actually intersect inside $\CH 2$, so
  the natural bisectors are not cospinal either, and
  Lemma~9.1.5 of \cite{goldman} applies. The situation is in fact simpler then, as the
  intersection is connected; indeed both bisectors are then linear in
  coordinates centered on the common point of their real spines.
\end{rk}

Note also that by construction the 1-skeleton of the corresponding ridge is
embedded in a (non-totally geodesic) disk, so it bounds a piecewise
smooth disk. In other words, Proposition~\ref{prop:giraud} gives a
well-defined realization of the lateral ridges.

\medskip

\begin{pf} (of Prop.~\ref{prop:giraud})
Since the complex lines are pairwise distinct, the vectors polar to
  the edges are linearly independent, and we choose them as a basis of
  $\C^3$. Consequently, we denote by ${\bf e_i}$, $i=1,2,3$ the
  standard basis vectors of $\C^3$, and take these as polar to the
  edges of the triangle. We may assume $\langle{\bf e_j},{\bf
    e_j}\rangle=1$ for all $j=1,2,3$, and $\langle {\bf e_1},{\bf
    e_2}\rangle = a_{12}\varphi$, $\langle {\bf e_2},{\bf e_3}\rangle
  = a_{23}\varphi$, $\langle {\bf e_3},{\bf e_1}\rangle =
  a_{31}\varphi$, with $a_{jk}$ real and $|\varphi|=1$.

  We write
  $$
  \varphi^3+\bar\varphi^3=2r
  $$ for some $r\in[-1,1]$.

  Let $H$ denote the matrix of the relevant Hermitian form in the
  standard basis, which is given by
  $$
  H = \left(
  \begin{matrix}
    1                 & a_{12}\varphi     & a_{31}\bar\varphi\\
    a_{12}\bar\varphi &       1           &   a_{23}\varphi \\
    a_{31}\varphi     & a_{23}\bar\varphi &        1
  \end{matrix}
  \right).
  $$ 

  Since the Hermitian form must have signature $(2,1)$, writing
  $d=\det H$, we must have
  \begin{equation}\label{eq:d}
    d=2ra_{12}a_{23}a_{31}-a_{12}^2-a_{23}^2-a_{31}^2+1<0,
  \end{equation}
  which we assume in what follows.

  As above, we denote by ${\bf e_i}$ the vectors polar to the edges, by
    ${\bf v_i}$ the vertex opposite to ${\bf e_i}$, by ${\bf f_i}$ the
    corresponding foot of the complex height, and by $\B_i$ the
    corresponding natural bisector.

  One computes that the vertices are given by
  {\small
  $$
  {\bf v_1}=
  \left(
  \begin{matrix}
    1-a_{23}^2\\
    a_{23}a_{31}\varphi^2-a_{12}\bar\varphi\\
    a_{12}a_{23}\bar\varphi^2-a_{31}\varphi\\
  \end{matrix}
  \right),
  \quad
  {\bf v_2}=
  \left(
  \begin{matrix}
    a_{23}a_{31}\bar\varphi^2-a_{12}\varphi\\
    1-a_{31}^2\\
    a_{12}a_{31}\varphi^2-a_{23}\bar\varphi\\
  \end{matrix}
  \right),
  \quad
  {\bf v_3}=
  \left(
  \begin{matrix}
    a_{12}a_{23}\varphi^2-a_{31}\bar\varphi\\
    a_{12}a_{31}\bar\varphi^2-a_{23}\varphi\\
    1-a_{12}^2
  \end{matrix}
  \right).
  $$
  }
  and the feet of the complex heights are given by
  {\tiny
  $$
  {\bf f_1}=
  \left(
  \begin{matrix}
     -2 r a_{12}a_{23}a_{31} + a_{12}^2+a_{31}^2\\
     a_{23}a_{31}\varphi^2-a_{12}\bar\varphi\\
     a_{12}a_{23}\bar\varphi^2-a_{31}\bar\varphi\\
  \end{matrix}
  \right),
  \quad
  {\bf f_2}=
  \left(
  \begin{matrix}
     a_{23}a_{31}\bar\varphi^2-a_{12}\varphi\\
     -2 r a_{12}a_{23}a_{31} + a_{12}^2+a_{23}^2\\
    a_{12}a_{31}\varphi^2-a_{23}\bar\varphi\\
  \end{matrix}
  \right),
  \quad
  {\bf f_3}=
  \left(
  \begin{matrix}
    a_{12}a_{23}\varphi^2-a_{31}\bar\varphi\\
    a_{12}a_{31}\bar\varphi^2-a_{23}\varphi\\
     -2 r a_{12}a_{23}a_{31} + a_{23}^2+a_{31}^2\\    
  \end{matrix}
  \right).
  $$
  }

  The condition that the complex heights be well-defined translates
  into the following inequalities:
  \begin{equation}\label{eq:key_inequalities}
  \begin{array}{c}
    2 r a_{12}a_{23}a_{31} - a_{12}^2 - a_{23}^2 < 0\\
    2 r a_{12}a_{23}a_{31} - a_{23}^2 - a_{31}^2 < 0\\
    2 r a_{12}a_{23}a_{31} - a_{31}^2 - a_{12}^2 < 0
  \end{array}
  \end{equation}

  We denote by ${\bf s_j}$ a vector polar to the complex spine of
  $\B_j$ (which is also the complex height through ${\bf v_j}$).
  We have
  $$
  {\bf s_1} = 
  \left(
  \begin{matrix}
    -a_{23}\bar\varphi\\
    a_{31}\varphi\\
    0
  \end{matrix}
  \right),
  \quad
  {\bf s_2} = 
  \left(
  \begin{matrix}
    0\\
    -a_{31}\bar\varphi\\
    a_{12}\varphi
  \end{matrix}
  \right),
  \quad
  {\bf s_3} = 
  \left(
  \begin{matrix}
    a_{23}\varphi\\
    0\\
    -a_{12}\bar\varphi
  \end{matrix}
  \right).
  $$

  Since the real spine of $\B_j$ contains ${\bf v_j}$ and ${\bf f_j}$,
  and these two vectors are not orthogonal, the vector ${\bf
    z}=\left(\begin{matrix} z_1\\z_2\\z_3
  \end{matrix}\right)$ is on $\B_j$ if and only if the triple Hermitian inner product 
  $\langle {\bf z}, {\bf v_j}, {\bf f_j}\rangle = \langle {\bf z},
  {\bf v_j}\rangle \langle {\bf v_j}, {\bf f_j}\rangle \langle {\bf
    f_j}, {\bf z}\rangle$ is real.

  The natural bisectors can then be described by the following equations
  $$
  \begin{array}{c}
  \B_1: \Im((a_{31}v_1\bar v_3 - a_{12}v_2\bar v_1)\varphi)=0,\\
  \B_2: \Im((a_{12}v_2\bar v_1 - a_{23}v_3\bar v_2)\varphi)=0,\\
  \B_3: \Im((a_{23}v_3\bar v_2 - a_{31}v_1\bar v_3)\varphi)=0.
  \end{array}
  $$
  It follows that $\B_1\cap \B_2$ is contained in $\B_3$, which proves (1).

  We now prove (2), which follows from the fact that the natural
  bisectors are not cotranchal (see Lemma~9.1.5 in~\cite{goldman}).
  In order to prove non-cotranchality, we want to prove that ${\bf u}$
  is not on the real spine of $\B_1$ nor of $\B_2$. Since the
  intersection of any bisector with its complex spine is precisely its
  real spine, it is equivalent to show that ${\bf u}$ is not on $\B_1$
  nor on $\B_2$.

  Now consider the intersection in projective space of the extended
  complex spines of $\B_1$ and $\B_2$, which is represented by ${\bf u}
  = {\bf s_1}\boxtimes {\bf s_2}$. One computes
  $$
  {\bf u} = 
  \left(
  \begin{matrix}
  (a_{12}a_{23}\bar\varphi^2-a_{31}\varphi)(a_{23}a_{31}\bar\varphi^2-a_{12}\varphi) \\
  (a_{31}a_{12}\varphi^2-a_{23}\bar\varphi)(a_{23}a_{31}\varphi^2-a_{12}\bar\varphi) \\
  (a_{12}a_{23}\bar\varphi^2-a_{31}\varphi)(a_{31}a_{12}\varphi^2-a_{23}\bar\varphi)
  \end{matrix}
  \right),
  $$
  and the triple Hermitian inner products
  {\small
  \begin{equation}\label{eq:test}
  \begin{array}{c}
  \langle {\bf u}, {\bf v_1}, {\bf f_1}\rangle = 
  d(a_{12}a_{23}\bar\varphi^2-a_{31}\varphi)(a_{12}a_{31}\bar\varphi^2-a_{23}\varphi)(a_{23}a_{31}\bar\varphi^2-a_{12}\varphi)(2ra_{12}a_{23}a_{31}-a_{12}^2-a_{31}^2)^2,\\
  \langle {\bf u}, {\bf v_2}, {\bf f_2}\rangle = 
  d(a_{12}a_{23}\varphi^2-a_{31}\bar\varphi)(a_{12}a_{31}\varphi^2-a_{23}\bar\varphi)(b_{23}a_{31}\varphi^2-a_{12}\bar\varphi)(2ra_{12}b_{23}a_{31}-a_{12}^2-a_{23}^2)^2.
  \end{array}
  \end{equation}
  }
  For the meaning of $d$ in~\eqref{eq:test}, see equation~\eqref{eq:d}.
  This shows that ${\bf u}$ is on the real spine of $\B_1$ if and only
  if is is on the real spine of $\B_2$, and this happens if and only if
  $$
  (a_{12}a_{23}\bar\varphi^2-a_{31}\varphi)(a_{12}a_{31}\bar\varphi^2-a_{23}\varphi)(a_{23}a_{31}\bar\varphi^2-a_{12}\varphi)\in\R.
  $$ 
  Using the inequalities~\eqref{eq:key_inequalities}, we see that
  the last condition is equivalent to the requirement that
  $$
  a_{12}a_{23}a_{31}(\varphi^6-1)=0.
  $$ 
  This occurs if and only if the triangle is real.

  Part (3) follows from the fact that the closures of the components
  contain at most one point (see Lemma~9.1.5 in~\cite{goldman} again),
  and the fact that the vertices of the ridge are pairwise distinct.
 \end{pf}

We can now state the next assumption.
\begin{hyp}
  \label{hyp:2d-embedded}
  The 2-skeleton of the geometric realization of every side $a;b,c$ is
  embedded in the closure $\overline{\B}_{\bf a}$ in
  $\CHB 2$ of the bisector of Proposition~\ref{prop:ambient}.
\end{hyp}
This allows us to define the geometric realization of sides of the shell,
since the realization of the 2-skeleton of a side is a 2-ball, hence
it bounds a (piecewise smooth) 3-ball in the closure of the bisector.

As a final embeddedness hypothesis, we require:
\begin{hyp}
  \label{hyp:sphere}
  The 3-skeleton of the geometric realization is a manifold
  homeomorphic to $S^3$, embedded in $\CHB 2$.
\end{hyp}
In view of the solution of the Poincar\'e conjecture, checking
the ``manifold homeomeomorphic to $S^3$'' part is a completely
combinatorial check: we need to check that all links of the
corresponding cell complex are spheres, and that its fundamental group
is trivial. The embeddeding part of the assumption can be verified by
a large (but finite!) amount of computation, as explained
in~\cite{dpp2}. 

\begin{hyp}
  \label{hyp:poincare}
  The geometric realization of the invariant shell satisfies the
  hypotheses of the Poincar\'e polyhedron theorem for the cosets of
  the cyclic subgroup generated by $R_1R_2R_3$ (or $R_1J$ in the
  symmetric case).
\end{hyp}
Recall that the Poincar\'e polyhedron theorem for cosets of $H$ in
$\Gamma$ produces a polytope $\Pi$ that is a fundamental domain only
modulo the action of $H$, i.e. it produces a polytope that tiles $\CH
2$, but $\Pi$ is $H$-invariant and such that if the interior of two
images $\gamma_1 \Pi$ and $\gamma_2 \Pi$ intersect (for
$\gamma_j\in\Gamma$), then the cosets $\gamma_1H$ and $\gamma_2H$
coincide.

\subsection{The Poincar\'e polyhedron theorem}\label{sec:poincare}

For the general formulation of the Poincar\'e polyhedron theorem for
coset decompositions, see section~3.2 in~\cite{dpp2}. The only
additional difficulty, compared with the domains that appear
in~\cite{dpp2}, is that it can happen that some power of $P=R_1J$ or
$Q=R_1R_2R_3$ is a complex reflection, in which case it can stabilize
some ridges of the polyhedron. For simplicity, in the following
discussion, we use only $P$, but the same applies to $Q$ in the case
of non-symmetric triangle groups.

When implementing the Poincar\'e polyhedron we need to keep track of 
cycles of ridges, and we now recall this process. A given ridge $e_0$ is in 
exactly two sides $s_0$ and $s_1$ of the polyhedron. Suppose that the 
side pairing map corresponding to the side $s_1$ is $\gamma_1$ and that 
$\gamma_1(e_0)$ is the ridge $e_1$. Now, $e_1$ is in precisely two sides,
namely $\gamma_1(s_1)$ and a second side $s_2$. Suppose the side pairing
map associated to $s_2$ is $\gamma_2$ and the image of $e_1$ under
$\gamma_2$ is $e_2=\gamma_2(e_1)=\gamma_2\gamma_1(e_0)$. Repeating 
this process gives a sequence of sides $e_j=\gamma_j\cdots\gamma_1(e_0)$
and we call $\gamma_j\cdots\gamma_1$ the partial cycle associated
to $e_0$. We stop this process whenever $e_j$
is in the $P$-orbit of the original ridge, i.e. there exists a
$k\in\N$ such that $P^k(e_j)=e_0$, in which case the cycle
transformation is given by $P^k\gamma_j\dots\gamma_1$.

In case some ridges have non-trivial stabilizers under the action of
$\langle P\rangle$, there is some ambiguity in choosing $k$ as above,
and the rotation angles of $P^k\gamma_j\dots\gamma_1$ and
$P^l\gamma_j\dots\gamma_1$ will of course in general be different.

When this happens, we consider all possible choices of $k$, and verify
that the corresponding images of the polytope $D$ under powers of
$A=P^k\gamma_j\dots\gamma_1$ do not overlap. More precisely, if the
interiors of $D$ and $A^j(D)$ overlap, then these should be equal (but
$A^j$ need not be the identity, it may correspond to a symmetry of
$D$).

The presence of non-trivial stabilizers of ridges in the action of
$\langle P\rangle$ also has some consequences when writing explicit
presentations for our lattices in terms of generators and relations,
based on the tiling of $\CH 2$ by images of $D$.

Specifically, if a ridge $e$ has stabilizer in $\langle P\rangle$
generated by $P^k$, we need to include a presentation for the group
generated by $P^k$ and the corresponding cycle transformation $A$. For
the groups that occur in this paper, this occurs only for complex
ridges that are stabilized by a complex reflection $P^k$, and we need
to include a commutation relation 
$$
[A,P^k]=id.
$$

In the next two sections, we list some important information that can
be gathered by applying the Poincar\'e polyhedron theorem, namely
Vertex stabilizers (and more generally facet stabilizers) and
singularities of the quotient.

Two different sets of presentations for our lattices in terms of
generators and relations will be given in
sections~\ref{sec:geometric_presentations}
and~\ref{sec:natural_presentations}.

\subsubsection{Vertex stabilizers, cusps} \label{sec:vertex-stabs}

In the ``Vertex stabililizers'' tables in the appendix, for each group
where the algorithm produces a fundamental domain, we give a list of
representatives for vertices of the fundamental domain, under the
equivalence relation generated by side pairings. These are obtained by
tracking cycles of vertices, in the sense of the Poincar\'e polyhedron
theorem.

The corresponding groups are finite for vertices in $\CH 2$, and cusps
for ideal vertices. For many finite groups, the order of the group can
be obtained by Proposition~\ref{prop:braiding}. In all generality, the
full vertex stabilizers are computed from our fundamental polytopes,
by tracking orbits of vertices under side-pairing maps and the cyclic
group generated by $P$ (or $Q)$. This amounts to constructing a
directed graph whose vertices correspond to the vertices in the orbit,
with edges labelled by isometries that map the origin to the endpoint;
the stabilizer is then generated by the isometries corresponding to
generators of the fundamental group of that graph.

In particular, sometimes the stabilizer of a vertex is larger than just the group
generated by the complex reflections attached to complex faces of the
domain through that point. This happens for instance when some power of $P$ (or
$Q$) is a complex reflection with mirror through that vertex. In the
tables, we indicate this phenomenon by an asterisk.

In order to describe the finite groups, we use the Shephard-Todd
notation~\cite{shephardtodd} (or product of cyclic groups, when the
stabilizer is generated by two complex reflections with orthogonal
mirrors), i.e. $G_{k}$ denotes the $k$-th group in the Shephard-Todd
list, and the $G(m,p,n)$ are so-called \emph{imprimitive} groups, see
section~2 of~\cite{shephardtodd}.

\subsubsection{Singularities of the quotient} \label{sec:singularities}

In the appendix, we list the singular points of the quotient, for each
of the lattice $\Gamma$ where our algorithm produces a fundamental
domain $D$. The basic observation is that, by the definition of a
fundamental domain, for any element $\gamma\in\Gamma$ with a fixed
point in $\CHB 2$, there is a conjugate $\gamma'\in\Gamma$ that fixes
a point on the boundary $\partial D$ of $D$. Hence, in order to
determine conjugacy classes of fixed points of $\Gamma$, it is enough
to study stabilizers of facets of $D$.

The second basic tool used to list singular points of the quotient is
a theorem of Chevalley, according to which the quotient of $\C^2$ by a
finite subgroup of ${\rm GL}(2,\C)$ is smooth if and only if the group is
generated by complex reflections (see chapter~4 in~\cite{springer}).

Now for each facet $f$ of $D$, we determine the stabilizer $G_f$ of
$f$ (this is done computing cycles in the Poincar\'e polyhedron
theorem), and determine the reflection subgroup $R_f$, generated by
the set of complex reflections in $G_f$. The quotient has a singular
point on $f$ if and only if $R_f\subsetneq G_f$.

Note that all singularities turn out to be cyclic quotient
singularities, even though some facet stabilizers are not -- in those
cases, only the reflection subgroup of the facet stabilizer is
slightly complicated.

\section{Results} \label{sec:results}

\subsection{Good cases} 

It may seem unlikely that the above assumptions would ever be
satisfied, but in fact they turn out to be satisfied for most known
geometric constructions of lattices in $\pu(2,1)$.
The three classes of lattices we have in mind are:
\begin{itemize}
  \item Mostow/Deligne-Mostow groups;
  \item Sporadic triangle groups;
  \item Non-symmetric triangle groups from James Thompson's thesis.
\end{itemize}

We denote these three families of groups by $\Gamma(p,t)$ (where $t$
is a rational number), $\S(p,\tau)$ (where $\tau$ is a complex
number), and $\T(p,{\bf T})$ (where ${\bf T}$ is a triple of complex
numbers) respectively.

The values of $p$ such that these groups are lattices are listed, for
each value of the parameter, for sporadic groups in
Tables~\ref{tab:sporadicLattices}, and for Thompson groups
in Tables~\ref{tab:Tvalues},~\ref{tab:Trigid},
respectively.

In the next few sections, we describe the results of the algorithm for
the lattices in the above three families, in terms of general
structure and combinatorics of the shell
(sections~\ref{sec:resultsshell} and~\ref{sec:resultsembedded}) and
the verification of the hypotheses of the Poincar\'e polyhedron
theorem (section~\ref{sec:resultspoincare}). Technically, we defer
part of the proof of discreteness to calculations that are practically
impossible to perform by hand, but we have described in quite a bit of
detail the necessary computer verifications (see
section~\ref{sec:description}, and also~\cite{dpp2}).  A computer
program that implements the corresponding methods is available to
verify our claims, see~\cite{spocheck}.

\subsection{Combinatorial invariant shell} \label{sec:resultsshell}

It turns out that all these groups satisfy
Assumption~\ref{hyp:shell}, which says there exists a finite invariant shell,
at least on the combintorial level, and Assumption~\ref{hyp:irreducible},
which says all triangles are non degenerate. In fact, these assumptions are
satisfied for a much wider class of groups, discreteness is by no
means necessary at that stage.

We will describe the rough structure of the invariant shell by giving
a list of (ordered) triangles, each of which generates a side, by the
process described in Section~\ref{sec:pyramid}; we write
$$
[k]\ a;\ b,\ c
$$ 
to denote a $k$-gon pyramid with base $a$ and two consecutive
alteral edges given by $b$ and $c$. For each group, we list only side
representatives for each $P$-orbit of sides, and we only list one side
for each pair of opposite sides -- that is sides that are paired in
the sense of the Poincar\'e polyhedron theorem. The results are listed
in the ``Combinatorics'' tables in the appendix.

\subsection{Detailed combinatorics and embeddedness} \label{sec:resultsembedded}

Even though this rough description of the shell (in particular, the
number of sides) depends only on the shape parameter of the group, the
detailed combinatorics depend on the order $p$ of the generators. This
is illustrated in the pictures of the appendix, sections~\ref{app-s1}
through~\ref{app-S2}.
The pictures show the 1-skeleton of sides in geographical coordinates,
we label ideal vertices by red dots, finite vertices by blue dots, and
we label edges in word notation (at least when the label is not too
long to keep the pictures readable).

We only list side representatives, i.e. we pick a representative for
each $P$-orbit, and one for each pair of faces with inverse
side-pairing transformation.

As explained in~\cite{dpp2}, the embeddedness of the skeleton of the
domain can be reduced to a finite number of computations in the
relevant number field. Here the relevant field is the smallest field
$\ell$ such that the group can be represented as a subgroup of
$\pu(2,1,\mathcal{O}_\ell)$, see Section \ref{sec:spectrum}. 

Perhaps surprisingly, even when the triangle group is a lattice, the
invariant shell we build is not always embedded in $\CHB 2$. By
performing the computations as in~\cite{dpp2} (one way to do this is to
run our computer program~\cite{spocheck}), we obtain the following.
\begin{prop}\label{prop:embedded}
  The invariant shells of sporadic lattice triangle groups are all
  embedded in $\CHB 2$.  The invariant shells of Thompson lattice
  triangle groups are all embedded in $\CHB 2$ except $\T(5,{\bf
    \overline{H}_2})$.  The invariant shells of Mostow lattices
  $\Gamma(p,t)$ are all embedded except for those for the groups with
  parameters $(p,t)=(5,1/2)$, $(7,3/14)$ and $(9,1/18)$.
\end{prop}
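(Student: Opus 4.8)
The plan is to treat each of the three families as a finite, explicitly given list of groups (the lattices being those recorded in Table~\ref{tab:sporadicLattices} for the sporadic case, in Tables~\ref{tab:Tvalues} and~\ref{tab:Trigid} for the Thompson case, and the known finite list of Mostow lattices $\Gamma(p,t)$), and to verify for each of them the three embeddedness hypotheses, Assumptions~\ref{hyp:1d-embedded},~\ref{hyp:2d-embedded} and~\ref{hyp:sphere}, by exact computation. For a fixed group, Assumption~\ref{hyp:shell} already supplies a finite $P$-invariant (or $Q$-invariant) list of pyramids, and Assumption~\ref{hyp:irreducible} guarantees that every vertex arises as a genuine intersection point $\boxtimes$ of two mirrors. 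First I would compute, for every pyramid $a;b_1,\dots,b_n$ in the shell, the polar vectors of the relevant mirrors together with all the points ${\bf b}_k\boxtimes{\bf b}_{k+1}$, ${\bf a}\boxtimes{\bf b}_k$ and ${\bf d}_k\boxtimes{\bf b}_k$; the sign of $\langle\cdot,\cdot\rangle$ on each such point fixes the combinatorial type of the side among the eight possibilities of Figure~\ref{fig:ridgetypes}. All of these quantities are algebraic numbers lying in the field $\ell$ of Section~\ref{sec:spectrum}, so every sign and every incidence can be decided exactly.

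The embeddedness checks then proceed skeleton by skeleton. For Assumption~\ref{hyp:1d-embedded} one works inside the single complex line ${\bf a}$, a copy of $\CH 1$, lists the bottom vertices in cyclic order around the base, and checks that the geodesic arcs joining consecutive ones form an embedded circle; the analogous check applies to the top polygon inside the complex line ${\bf d}$ when the apex lies outside $\CHB 2$. This is a one-dimensional verification. For Assumption~\ref{hyp:2d-embedded} one uses that, by the propositions preceding it, the entire $1$-skeleton of a side lies in the bisector $\mathcal{B}_{\bf a}$; parametrizing $\mathcal{B}_{\bf a}$ by its slice--meridian coordinates reduces the embeddedness of the $2$-skeleton to that of a planar polygon, again a finite exact computation over $\ell$.

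The genuinely global and delicate step is Assumption~\ref{hyp:sphere}: that the resulting $3$-skeleton is an embedded copy of $S^3$ in $\CHB 2$. Following \cite{dpp2}, I would separate this into a combinatorial part and a geometric part. The combinatorial part, that the cell complex is a manifold homeomorphic to $S^3$, reduces to checking that all vertex, edge and ridge links are spheres and that the complex is simply connected, which becomes automatic once the ridge-cycle bookkeeping of Sections~\ref{sec:pairing} and~\ref{sec:cycles} has closed up every ridge onto exactly two sides. The geometric part is to show that distinct sides meet only along the shared faces prescribed by this combinatorics, and this is the hard part: two sides lie in different bisectors $\mathcal{B}_{\bf a}$ and $\mathcal{B}_{\bf a'}$, so ruling out spurious intersections between non-adjacent sides requires controlling $\mathcal{B}_{\bf a}\cap\mathcal{B}_{\bf a'}$ through the analysis of bisector intersections (coequidistance and Giraud-type intersections), after which one verifies that no extra intersection point lies in $\CHB 2$. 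Since each such test is a finite computation over $\ell$, the whole verification amounts to a finite, if lengthy, certificate, carried out by the computer program accompanying the paper.

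Finally, for the exceptional groups $\T(5,{\bf \overline{H}_2})$, $\Gamma(5,1/2)$, $\Gamma(7,3/14)$ and $\Gamma(9,1/18)$ the same computation \emph{fails} precisely at this last step: one exhibits a pair of non-adjacent sides whose bisectors meet in an extra component, so that the shell has a genuine self-intersection and is not embedded. I emphasize that this does not contradict these four groups being lattices, since their discreteness is established by other means; the content of the proposition is exactly to record for which lattices in the three families the clean embedded model produced by the algorithm is available.
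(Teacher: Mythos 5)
Your positive verification strategy is exactly the paper's: the paper also reduces embeddedness to a finite list of exact computations in the field $\ell$ over which the group is integral, carried out by the accompanying computer program as in~\cite{dpp2}, and your breakdown into the checks of Assumptions~\ref{hyp:1d-embedded},~\ref{hyp:2d-embedded} and~\ref{hyp:sphere} follows the paper's framework. The problem is your account of the four exceptional groups. You assert that for all of them the computation ``fails precisely at this last step,'' i.e.\ that one exhibits a pair of non-adjacent sides whose ambient bisectors have a spurious (Giraud-type) intersection inside $\CHB 2$. That is not what happens, and the certificates you propose to exhibit do not exist in that form. For the three Mostow groups $\Gamma(5,1/2)$, $\Gamma(7,3/14)$, $\Gamma(9,1/18)$ the paper states that it is Assumption~\ref{hyp:1d-embedded} -- the very first, one-dimensional check -- that fails: the bottom polygon of some face is already not an embedded circle inside its complex line (see Figure~\ref{fig:nonembedded}), so one never even reaches the stage of comparing distinct sides. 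The paper moreover explains this failure conceptually: these are exactly the $\Sigma$-INT Deligne--Mostow groups with $\Sigma=S_4$, i.e.\ with four equal weights.

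For $\T(5,{\bf \overline{H}_2})$ the failure mechanism is again different from what you describe: the isolated fixed point $p_0$ of $Q$ lies \emph{on} the bottom ridge of the side $123\bar212\bar3\bar2\bar1;\,123\bar2\bar1,\,2$. Since the shell is $Q$-invariant and is supposed to surround $p_0$, having $p_0$ on a ridge forces the $Q$-orbit of that ridge to meet at $p_0$, so the shell self-intersects at the central fixed point rather than along an extra bisector intersection component between non-adjacent sides. Your general framework would still detect all four failures if the computation were actually run (the Mostow ones at the 1-skeleton check, the Thompson one when testing incidence of $p_0$ with the ridges), so the gap is localized; but as written, the paragraph claiming to establish non-embeddedness for the exceptional groups asserts a failure mode that is simply incorrect, and a proof of Proposition~\ref{prop:embedded} must identify where the verification genuinely breaks down.
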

For the three Mostow groups that are excluded in
Proposition~\ref{prop:embedded}, Assumption~\ref{hyp:1d-embedded}
fails.  A typical non-embedded 1-skeleton face is shown in
Figure~\ref{fig:nonembedded}.
\begin{figure}[htbp]
      \includegraphics[width = 0.4\textwidth]{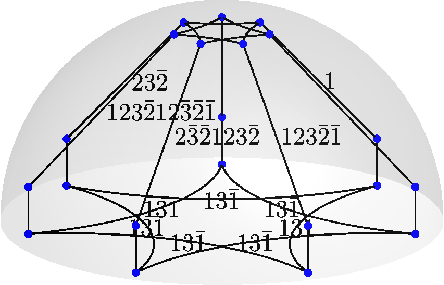}  
\caption{The 1-skeleton of pyramids is not always embedded, as here in
  the picture $\Gamma(7,3/14)$.}\label{fig:nonembedded}
\end{figure}
Note that the fact that these three groups do not fit well in the
framework of our paper is not a surprise, it can easily be explained
by Deligne-Mostow theory (see~\cite{delignemostow},~\cite{mostowihes}
or~\cite{thurstonshapes}).

In fact, the construction in our paper produces fundamental domains
for Deligne-Mostow groups corresponding to 5-tuples of weights that
satisfy condition $\Sigma$-INT with $\Sigma=S_3$ (for the relation
between our groups and Deligne-Mostow theory, see~\cite{parkersurvey}
for instance).
The Mostow groups where our polytopes are \emph{not} fundamental
polytopes are those corresponding to Deligne-Mostow groups
corresponding to 5-tuples that satisfy $\Sigma$-INT with $\Sigma=S_4$
rather than $S_3$. For such groups, one does not expect the quotient
to have the same structure, and a fundamental domain should be very
different from ours.

\subsection{Hypotheses of Poincar\'e} \label{sec:resultspoincare}

The hypotheses of the Poincar\'e polyhedron theorem can be checked as
explained in~\cite{dpp2}. This is done systematically in our computer
code~\cite{spocheck}, we summarize the result of the computations in
the statement of Proposition~\ref{prop:goodcases}. It turns out the
theorem applies in most, but not all, cases where the invariant shell
is embedded.
\begin{prop}\label{prop:goodcases}
  For all sporadic triangle groups and all Mostow groups apart from the
  $\Sigma$-INT examples with 4-fold symmetry, the hypotheses of the
  Poincar\'e polyhedron theorem are satisfied.

  The hypotheses also hold for all Thompson triangle groups except for
  $\T(12,{\bf E_2})$, $\T(7,{\bf \overline{H}_1})$,
  $\T(10,{\bf H_2})$ and $\T(5,{\bf \overline{H}_2})$.
\end{prop}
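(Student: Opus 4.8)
\noindent
The plan is to verify the hypotheses of the Poincar\'e polyhedron theorem (Assumption~\ref{hyp:poincare}) one group at a time, for each of the finitely many lattice candidates listed in Tables~\ref{tab:sporadicLattices},~\ref{tab:Tvalues} and~\ref{tab:Trigid} and for the relevant Mostow groups $\Gamma(\p,t)$. For each group the data defining the shell (vertices, edges, ridges and side-pairing maps) have entries in the field $\ell$ of section~\ref{sec:spectrum}, so every condition below reduces to a finite computation in $\ell$.

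First I would separate off the cases that already fail the embeddedness assumptions. By Proposition~\ref{prop:embedded}, the only groups in our three families whose invariant shell is \emph{not} embedded are the Mostow groups with $(\p,t)=(5,1/2),(7,3/14),(9,1/18)$ and the Thompson group $\T(5,{\bf \overline{H}_2})$; in each of these Assumption~\ref{hyp:1d-embedded} fails, so the geometric realization of the shell, and hence the Poincar\'e theorem, cannot even be set up. The three exceptional Mostow groups are precisely the $\Sigma$-INT examples with four equal weights, which is why they are excluded in the statement. All remaining sporadic and Mostow groups, and all Thompson groups except the four named, have embedded shells by Proposition~\ref{prop:embedded}, so for those the question is genuinely about the Poincar\'e conditions.

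For each embedded shell the verification has three parts. One checks that the side-pairing maps produced by the 123-rule (section~\ref{sec:pairing}) pair the sides of $D$ isometrically; this is immediate from the construction, since each side $a;b,c$ with $abc=123$ is carried to its partner by $a^{\pm1}$. Next one tracks each ridge cycle in the sense of section~\ref{sec:poincare}: starting from a ridge $e_0$ one follows the images $e_j=\gamma_j\cdots\gamma_1(e_0)$ until some power $P^k$ (or $Q^k$) returns $e_j$ to $e_0$, and records the cycle transformation $A=P^k\gamma_j\cdots\gamma_1$. Finally one verifies the local tessellation condition: finitely many copies $A^i(D)$ must tile a full neighborhood of the ridge, meeting along faces with disjoint interiors, so that the dihedral angles around the ridge add up correctly and the cycle relation holds.

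The main obstacle is this last tessellation condition, and in particular the subtlety flagged in section~\ref{sec:poincare}: when some power $P^k$ (or $Q^k$) is itself a complex reflection stabilizing a ridge, the integer $k$ in the cycle is not unique, and one must check that for \emph{every} admissible choice the images $A^i(D)$ either coincide with $D$ or have disjoint interiors (with $A^i$ possibly a nontrivial symmetry of $D$). I expect this non-overlap check to be exactly what fails for $\T(12,{\bf E_2})$, $\T(7,{\bf \overline{H}_1})$ and $\T(10,{\bf H_2})$: in these three cases the ridge stabilizers coming from powers of $Q$ force overlapping images, so although the shell is embedded the Poincar\'e polyhedron theorem does not apply. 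For all other groups the three parts above go through, and the cycle transformations yield precisely the braid, order and commutation relations of section~\ref{sec:presentations}, completing the verification. These computations are finite but not feasible by hand; they are carried out with the computer program described in section~\ref{sec:description} and in~\cite{dpp2}.
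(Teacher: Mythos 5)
Your overall strategy---a finite, group-by-group computational verification of the Poincar\'e conditions as in \cite{dpp2}, with Proposition~\ref{prop:embedded} used first to dispose of the non-embedded cases---is the same as the paper's, and the positive half of the statement is handled correctly. The gap is in the negative half: the ``except'' list in the proposition can only be certified by exhibiting the actual failure for each excluded Thompson group, and your diagnosis is wrong for two of the three groups whose shells \emph{are} embedded. For $\T(7,{\bf \overline{H}_1})$ the paper shows that the \emph{integrality} condition fails: the complex ridge common to the pyramids $3;1,2$ and $123\bar2\bar1;1,2$ has cycle transformation $R_1R_2$, a regular elliptic element with angles $(3\pi/14,\pi)$ whose square is a complex reflection of angle $3\pi/7$, which is not of the form $2\pi/k$; no power of $Q$ and no non-trivial ridge stabilizer enters at all. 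Likewise for $\T(10,{\bf H_2})$: the ridge on the mirror of $R_2^{-1}R_1R_2$ has cycle transformation $Q^2R_2^{-1}R_1R_2$, a complex reflection of rotation angle $2\pi\cdot 3/10$, again an integrality failure. Only $\T(12,{\bf E_2})$ fails in the way you describe, through overlap caused by the cycle transformation $(12\bar13)^{-1}$, whose fourth power fixes $p_0$ without being a power of $Q$ (it equals $(SQ)^2$ for the extra symmetry $S$ of Remark~\ref{rk:char}).

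A second, smaller misstatement: for $\T(5,{\bf \overline{H}_2})$ the failure is not of Assumption~\ref{hyp:1d-embedded} (that is the failure mode of the three Mostow groups $\Gamma(5,1/2)$, $\Gamma(7,3/14)$, $\Gamma(9,1/18)$, whose pyramid 1-skeletons are not embedded); rather, the shell fails to be embedded because the point $p_0$ itself lies on the bottom ridge of $123\bar212\bar3\bar2\bar1;\,123\bar2\bar1,\,2$, so the shell cannot surround $p_0$. To be fair, your three-part checklist (side-pairings, ridge cycles, tessellation with correct angle sums) would detect all four failures if the computation were actually run, so the method is sound; but as written the proof commits to mechanisms the computation would contradict, and the integrality condition---the one that actually kills two of the four excluded groups---should be named explicitly rather than folded silently into the ``dihedral angles add up correctly'' clause.
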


We now give some detail about how the hypotheses fail for the four
problematic Thompson groups listed in
Proposition~\ref{prop:goodcases}.
\begin{enumerate}
\item For $\T(12,{\bf E_2})$, the hypotheses of the Poincar\'e
  polyhedron fail, more specifically the local tiling near some ridges
  give extra overlap. In fact, the ridge which is the intersection of
  the pyramids $\bar212;12\bar1,3$ and $\bar313;12\bar1,3$ 
  gives a cycle transformation 
  $Q^{-1}\bar212=(12\bar13)^{-1}$, whose fourth power fixes $p_0$,
  without being a power of $Q$ (recall that $p_0$ denotes the isolated 
  fixed point of $Q$). What happens in this case is 
  that $(12\bar13)^4=(SQ)^2$ where $S$ is the extra symmetry given
  in Remark~\ref{rk:char} (3). In other words in this case
  $S=Q^2(12\bar13)^{-4}$ and so
  $\langle R_1,R_2,R_3\rangle=\langle R_1,R_2,S\rangle$. In fact, the
  algorithm does work with the spherical shell given by the algorithm, but 
  with the larger group $\langle S,Q\rangle$ as its stabilizer. One must 
  make several simple modifications, including adjoining 
  more relations when applying the Poincar\'e polyhedron theorem. Since 
  this group is arithmetic, we will not go into the details of the necessary 
  changes here.
\item For $\T(7,{\bf \bar H_1})$, the integrality condition
  fails. Indeed, the complex ridge given by the intersection of the
  two pyramids $3;1,2$ and $123\bar 2\bar 1;1,2$ has cycle
  transformation given by $12$. The isometry $R_1R_2$ is a regular
  elliptic element with angles $(3\pi/14,\pi)$, and its square is a
  complex reflection with angle $3\pi/7$, which is not of the form
  $2\pi/k$ for any $k\in\N$.
\item For $\T(10,{\bf H_2})$, the integrality condition fails. More
  specifically, the ridge on the mirror of $R_2^{-1}R_1R_2$ has cycle
  transformation given by $Q^2R_2^{-1}R_1R_2$, and this is a complex
  reflection with rotation angle $2\pi\cdot 3/10$.
\item For $\T(5,{\bf \bar H_2})$, the spherical shell is not
  embedded. In fact, in that case, the point $p_0$ (which is the
  isolated fixed point of $Q$) lies on the bottom ridge of
  $123\bar212\bar3\bar2\bar1;\,123\bar2\bar1,\,2$.
\end{enumerate}

For $\T(7,{\bf \bar H_1})$ and $\T(5,{\bf \bar H_2})$, note that the groups 
are each conjugate to a Mostow group where the algorithm runs fine,
see Proposition~\ref{prop:mostow_saves_us} (this commensurability
corresponds to a change of generators). Similarly, the group
$\T(10,{\bf H_2})$ has two alternative descriptions that allow to use
the algorithm, see Proposition~\ref{prop:isoS5_10}. 

Among Mostow groups, the problematic ones are $\Gamma(5,1/2)$,
$\Gamma(7,3/14)$, $\Gamma(9,1/18)$. 
Every one of these three groups is known to be conjugate to a Mostow
group without the extra 4-fold symmetry, by work of Sauter
(see~\cite{sauter} and also Corollary~10.18
in~\cite{delignemostowbook}). 

Specifically, we have that $\Gamma(5,1/2)$ is conjugate to the group
$\Gamma_\mu$ with $\mu=(3,3,3,3,8)/10$, which gives the same group as
$\mu=(2,3,3,3,9)/10$, which in turn gives the group $\Gamma(5,7/10)$.
Similarly, $\Gamma(7,3/14)$ corresponds to exponents
$(2,5,5,5,11)/14$, which gives the same group as $(5,5,5,5,8)/14$,
which gives the Mostow group $\Gamma(7,9/14)$. Finally,
$\Gamma(9,1/18)$ corresponds to $(7,7,7,7,8)/18$, which gives the same
group as $(2,7,7,7,13)/18$, which is $\Gamma(9,11/18)$.

\subsection{Geometric presentations} \label{sec:geometric_presentations}

In this section we list the presentations obtained from the Poincar\'e
polyhedron theorem (see section~\ref{sec:natural_presentations}) for
each of the groups where the hypotheses of the Poincar\'e poyhedron
theorem are satisfied, see section~\ref{sec:resultspoincare}.

We will call these presentations \emph{geometric presentations}, as
opposed to the \emph{natural presentations}, as described in
section~\ref{sec:natural_presentations}.
Rather than listing the actual computer output for presentations (which
is available via~\cite{spocheck}), we will list slightly modified
versions where we have used simple Tietze transformations to make the
presentation more readable.

As an example, the presentations for $\sigma_1$ groups obtained by the
computer are equivalent to the following. For $p=3$, we get
\begin{equation}\label{eq:pres_s1_3}
\begin{array}{c}
  \left\langle R_1, R_2, R_3, J \ | \ 	(R_1 J)^8, J^3, J R_1 J^{-1}R_2^{-1}, JR_2 J^{-1} R_3^{-1},\right.\\
         \br_6(R_1,R_2), \br_3(R_1, R_2 R_3 R_2 R_3^{-1} R_2^{-1}),\\
         \left.R_1^3 \right\rangle.
\end{array}
\end{equation}
For $p=4$, we get
\begin{equation}\label{eq:pres_s1_4}
\begin{array}{c}
   \left\langle R_1, R_2, R_3, J \ | \ (R_1 J)^8, J^3, J R_1 J^{-1} R_2^{-1}, J R_2 J^{-1} R_3^{-1},\right.\\
   \br_6(R_1,R_2), \br_3(R_1, R_2 R_3 R_2 R_3^{-1} R_2^{-1}),\\
   \left. R_1^4, (R_1 R_2)^{12}\right\rangle.  
\end{array}
\end{equation}
For $p=6$
\begin{equation}\label{eq:pres_s1_6}
\begin{array}{c}
   \left\langle 
   R_1, R_2, R_3, J \ | \  (R_1 J)^8, J^3, J R_1 J^{-1} R_2^{-1}, J R_2 J^{-1} R_3^{-1},\right.\\
   \br_6(R_1,R_2), \br_3(R_1, R_2 R_3 R_2 R_3^{-1} R_2^{-1}),\\
   \left. R_1^6, (R_1 R_2)^6, (R_1 R_2 R_3 R_2^{-1})^{12}\right\rangle.
\end{array}
\end{equation}
The three presentations in equations~\eqref{eq:pres_s1_3}
through~\eqref{eq:pres_s1_6} can be written in a uniform way, see the
$\sigma_1$ entry in Table~\ref{tab:geometric_presentations_1}, where by
convention the relations giving the order of $R_1 R_2$ and $R_1
R_2 R_3 R_2^{-1}$ can be omitted when the
exponents $3p/(p-3)$ or $4p/(p-4)$ are negative or infinite.

All geometric presentations are listed in the left part of
Tables~\ref{tab:geometric_presentations_1}
and~\ref{tab:geometric_presentations_2}.

\begin{table}[htbp]
{\tiny
\hspace{-1cm}
$$
\begin{array}{c}
  \begin{array}{|c|}
    \hline
    \S(\sigma_1,p), p=3,4,6\\ 
    \hline
    \begin{array}{c|c}
      \left\langle \vphantom{(R_1 R_2)^{\frac{3p}{p-3}}} R_1, R_2, R_3, J \ | \  (R_1 J)^8, J^3, J R_1 J^{-1} R_2^{-1}, J R_2 J^{-1} R_3^{-1},\right. & \left\langle \vphantom{(R_1 R_2)^{\frac{3p}{p-3}}} R_1, R_2, R_3, J \ | \ (R_1 J)^8, J^3, J R_1 J^{-1} R_2^{-1}, J R_2 J^{-1} R_3^{-1}, \right.\\
      \br_6(R_1,R_2),                                                 & \br_6(R_1,R_2), 
      \\
                       \br_3(R_1, R_2 R_3 R_2 R_3^{-1} R_2^{-1}),                                                                      & \br_4(R_1, R_3^{-1} R_2 R_3)\\ 
      \left. R_1^p, (R_1 R_2)^{\frac{3p}{p-3}}, (R_1 R_3^{-1} R_2 R_3)^{\frac{4p}{p-4}} \right\rangle           & \left.R_1^p, (R_1 R_2)^{\frac{3p}{p-3}}, (R_1 R_3^{-1} R_2 R_3)^{\frac{4p}{p-4}} \right\rangle
    \end{array} 
    \\
    \hline
  \end{array}
\\[1.5cm]
  \begin{array}{|c|}
    \hline
    \S(\bar\sigma_4,p), p=3,4,5,6,8,12\\ 
    \hline
    \begin{array}{c|c}
      \left\langle \vphantom{(R_1 R_2)^{\frac{4p}{p-4}}} R_1, R_2, R_3, J \ | \  (R_1 J)^7, J^3, J R_1 J^{-1} R_2^{-1}, J R_2 J^{-1} R_3^{-1},\right. & \left\langle \vphantom{(R_1 R_2)^{\frac{4p}{p-4}}} R_1, R_2, R_3, J \ | \  (R_1 J)^7, J^3, J R_1 J^{-1} R_2^{-1}, J R_2 J^{-1} R_3^{-1}, \right.\\
      \br_4(R_1,R_2),                                                                                           & \br_4(R_1,R_2), \\
                                                                                                                & \br_3(R_1,R_3^{-1} R_2 R_3), \\
      \left.R_1^p, (R_1 R_2)^{\frac{4p}{p-4}}, (R_1 R_3^{-1} R_2 R_3)^{\frac{6p}{p-6}} \right\rangle            & \left.R_1^p, (R_1 R_2)^{\frac{4p}{p-4}}, (R_1 R_3^{-1} R_2 R_3)^{\frac{6p}{p-6}} \right\rangle 
    \end{array}\\
    \hline
  \end{array}
\\[1.5cm]
  \begin{array}{|c|}
    \hline
    \S(\sigma_5,p), p=2,3,4\\ 
    \hline
    \begin{array}{c|c}
      \left\langle \vphantom{R_1 R_3^{-1} R_2 R_3)^{\frac{10p}{3p-10}}} R_1, R_2, R_3, J \ | \  (R_1 J)^{30}, J^3, J R_1 J^{-1} R_2^{-1}, J R_2 J^{-1} R_3^{-1},\right.   & \left\langle \vphantom{R_1 R_3^{-1} R_2 R_3)^{\frac{10p}{3p-10}}} R_1, R_2, R_3, J \ | \  (R_1 J)^{30}, J^3, J R_1 J^{-1} R_2^{-1}, J R_2 J^{-1} R_3^{-1},\right.\\
      \br_4(R_1,R_2),                                                                                                                                        & \br_4(R_1,R_2), \\ 
      \br_2((R_1J)^5, R_2 R_3^{-1} R_2^{-1} R_1 R_2 R_3 R_2^{-1}),                                                                                           & \br_5(R_1,R_3^{-1} R_2 R_3),
      \\
      R_1^p, (R_1 R_3^{-1} R_2 R_3)^{\frac{10p}{3p-10}},                                                                                                     & R_1^p, (R_1 R_3^{-1} R_2 R_3)^{\frac{10p}{3p-10}},\\ 
      \left .(R_2 R_3^{-1} R_2^{-1} R_1^{-1} R_2 R_3 R_2^{-1} R_1 R_2 R_3)^{\frac{3p}{p-3}} \right\rangle,                                                   & \left. (R_2 R_3^{-1} R_2^{-1} R_1^{-1} R_2 R_3 R_2^{-1} R_1 R_2 R_3)^{\frac{3p}{p-3}} \right\rangle,
    \end{array}\\
    \hline
  \end{array}
\\[1.5cm]
  \begin{array}{|c|}
    \hline
    \S(\sigma_{10},p),p=3,4,5,10\\ 
    \hline
    \begin{array}{c|c}
      \left\langle \vphantom{(R_1 R_2)^{\frac{10p}{3p-10}}} R_1, R_2, R_3, J \ | \  (R_1 J)^{5}, J^3, J R_1 J^{-1} R_2^{-1}, J R_2 J^{-1} R_3^{-1},\right . & \left\langle \vphantom{(R_1 R_2)^{\frac{10p}{3p-10}}} R_1, R_2, R_3, J \ | \  (R_1 J)^{5}, J^3, J R_1 J^{-1} R_2^{-1}, J R_2 J^{-1} R_3^{-1},\right. \\
      \br_5(R_1,R_2),                                                                                              & \br_5(R_1,R_2), \\ 
      \br_2(R_1, R_3^{-1} R_2^{-1} R_3 R_2 R_3),                                                                   & \br_3(R_1, R_3^{-1} R_2 R_3),\\
      \left. R_1^p, (R_1 R_2)^{\frac{10p}{3p-10}}, (R_1 R_3^{-1} R_2 R_3)^{\frac{6p}{p-6}} \right\rangle      & \left. R_1^p, (R_1 R_2)^{\frac{10p}{3p-10}}, (R_1 R_3^{-1} R_2 R_3)^{\frac{6p}{p-6}} \right\rangle
    \end{array}\\
    \hline
  \end{array}
\end{array}
$$
}
\caption{Geometric (left) and natural (right) presentations for all
  sporadic groups where the hypotheses of the Poincar\'e polyhedron
  theorem hold. Apart from inverses, relations involving negative or
  infinite exponents can be removed from the
  presentation.}\label{tab:geometric_presentations_1}
\end{table}

\begin{table}[htbp]
{\tiny
$$
\begin{array}{c}
  \begin{array}{|c|}
    \hline
    \T({\bf S_2},p), p=3,4,5\\ 
    \hline
    \begin{array}{c|c}
      \left\langle \vphantom{(R_1 R_3^{-1} R_2 R_3)^{\frac{10p}{3p-10}}} R_1, R_2, R_3 \ | \ (R_1 R_2 R_3)^5,\right.   & \left\langle  \vphantom{(R_1 R_3^{-1} R_2 R_3)^{\frac{10p}{3p-10}}} R_1, R_2, R_3 \ | \ (R_1 R_2 R_3)^5, \right.\\
      \br_3(R_2,R_3), \br_3(R_3,R_1), \br_4(R_1,R_2),                                                                  & \br_3(R_2,R_3), \br_3(R_3,R_1), \br_4(R_1,R_2),\\
                                                                                                                       &  \br_5(R_1,R_3^{-1} R_2 R_3)\\ 
      \left.R_1^p, R_2^p, R_3^p, (R_1 R_2)^{\frac{4p}{p-4}}, (R_1 R_3^{-1} R_2 R_3)^{\frac{10p}{3p-10}}\right\rangle   & \left.R_1^p, R_2^p, R_3^p, (R_1 R_2)^{\frac{4p}{p-4}}, (R_1 R_3^{-1} R_2 R_3)^{\frac{10p}{3p-10}} \right\rangle
    \end{array}\\
    \hline
  \end{array}
\\[1.5cm]
  \begin{array}{|c|}
    \hline
    \T({\bf E_2},p), p=3,4,6\\ 
    \hline
    \begin{array}{c|c}
      \left\langle  \vphantom{(R_1 R_3)^{\frac{4p}{p-4}}} R_1, R_2, R_3 \ | \ (R_1 R_2 R_3)^6,\right.                              & \left\langle \vphantom{(R_1 R_3)^{\frac{4p}{p-4}}} R_1, R_2, R_3 \ | \ (R_1 R_2 R_3)^6,\right. \\
      \br_3(R_2,R_3), \br_4(R_3,R_1), \br_4(R_1,R_2),                                                                              & \br_3(R_2,R_3), \br_4(R_3,R_1), \br_4(R_1,R_2),\\ 
      \br_2((R_1 R_2 R_3)^3, R_2^{-1} R_1 R_2),                                                                                    & \br_4(R_1 , R_3^{-1} R_2 R_3), 
      \\
      R_1^p, R_2^p, R_3^p, (R_1 R_2)^{\frac{4p}{p-4}}, (R_1 R_3)^{\frac{4p}{p-4}},                                                 & R_1^p, R_2^p, R_3^p, (R_1 R_2)^{\frac{4p}{p-4}}, (R_1 R_3)^{\frac{4p}{p-4}},\\
      \left.(R_1 R_3^{-1} R_2 R_3)^{\frac{4p}{p-4}}, (R_3 R_1 R_2 R_1^{-1})^{\frac{3p}{p-3}} \right\rangle                         & \left.(R_1 R_3^{-1} R_2 R_3)^{\frac{4p}{p-4}}, (R_3 R_1 R_2 R_1^{-1})^{\frac{3p}{p-3}} \right\rangle
    \end{array}\\
    \hline
  \end{array}
\\[1.5cm]
  \begin{array}{|c|}
    \hline
    \T({\bf H_1},p), p=2\\ 
    \hline
    \begin{array}{c|c}
      \left\langle \vphantom{(R_1 R_2)^{\frac{4p}{p-4}}} R_1, R_2, R_3 \ | \ (R_1 R_2 R_3)^{42},\right.                          &   \left\langle \vphantom{(R_1 R_2)^{\frac{4p}{p-4}}} R_1, R_2, R_3 \ | \ (R_1 R_2 R_3)^{42},\right.\\
      \br_3(R_2,R_3), \br_3(R_3,R_1), \br_4(R_1,R_2),                                                                            &   \br_3(R_2,R_3), \br_3(R_3,R_1), \br_4(R_1,R_2),\\
      \br_2( (R_1 R_2 R_3)^3, R_2^{-1}R_1R_2R_3R_2^{-1}R_1^{-1}R_2),   &  \br_7(R_1,R_3^{-1}R_2R_3), \\ 
      \left. R_1^p, R_2^p, R_3^p\right\rangle                                                                                    &   \left. R_1^p, R_2^p, R_3^p \right\rangle 
    \end{array}\\
    \hline
  \end{array}
\\[1.5cm]
  \begin{array}{|c|}
    \hline
    \T({\bf H_2},p), p=2,3,5\\ 
    \hline
    \begin{array}{c|c}
      \left\langle \vphantom{(R_1 R_2)^{\frac{10p}{3p-10}}} R_1, R_2, R_3 \ | \ (R_1 R_2 R_3)^{15},\right.                    & \left\langle \vphantom{(R_1 R_2)^{\frac{10p}{3p-10}}} R_1, R_2, R_3 \ | \ (R_1 R_2 R_3)^{15},\right.             \\
      \br_3(R_2,R_3), \br_3(R_3,R_1),                                                          & \br_3(R_2,R_3), \br_3(R_3,R_1), \br_5(R_1,R_2),      \\
      \br_5(R_1, R_3^{-1} R_2 R_3), \br_2( (R_1 R_2 R_3)^3, R_2^{-1}R_1R_2),                          & \br_5(R_1, R_3^{-1} R_2 R_3) \\ 
      R_1^p, R_2^p, R_3^p, (R_1 R_2)^{\frac{10p}{3p-10}},                                                                     & R_1^p, R_2^p, R_3^p, (R_1 R_2)^{\frac{10p}{3p-10}},\\ 
      \left.(R_1 R_3^{-1} R_2 R_3)^{\frac{10p}{3p-10}}, (R_2 R_1 R_2^{-1} R_3 R_1 R_3^{-1})^{\frac{5p}{2p-5}} \right\rangle   & \left.(R_1 R_3^{-1} R_2 R_3)^{\frac{10p}{3p-10}}, (R_2 R_1 R_2^{-1} R_3 R_1 R_3^{-1})^{\frac{5p}{2p-5}} \right\rangle 
    \end{array}\\
    \hline
  \end{array}
\\[1.5cm]
\end{array}
$$
}
\caption{Geometric (left) and natural (right) presentations for all
  Thompson groups where the hypotheses of the Poincar\'e polyhedron
  theorem hold.}\label{tab:geometric_presentations_2}
\end{table}

\subsection{Natural presentations} \label{sec:natural_presentations}

The list of braid relations that occur in geometric presentations (see
section~\ref{sec:geometric_presentations}) may seem strange to the
reader. Recall that one of our assumptions is that for every side
bounding our polytope, the complex reflections attached to every pair
of consecutive lateral edges satisfy a braid relation (see
Assumption~\ref{hyp:shell}). Not every such braid relation occurs in
the geometric presentation, however.

In this section, we give a systematic way to produce a presentation
for every lattice in our list (more precisely, lattices where the 
hypotheses of the Poincar\'e polyhedron theorem are astisfied). 
Motivated by the fact that our lattices
are essentially uniquely determined by the four braid lengths
$\br(R_1,R_2),\br(R_2,R_3),\br(R_3,R_1),\br_d(R_1,R_3^{-1}R_2R_3)$
(see~\cite{thompson},~\cite{kamiyaparkerthompson}), one naturally
expects that these braid relations should be enough to reconstruct all
others in the geometric presentations.

Accordingly, we construct \emph{natural presentations}, having the same
generators as the geometric presentations, and the following relations:
\begin{enumerate}
\item Basic relations between generators (in the equilateral case, $J$
  has order 3 and conjugates $R_j$ into $R_{j+1}$, and in the
  non-equilateral case, $Q$ is equal to $R_1R_2R_3$), and relations
  giving the order of $P=R_1J$ or $Q$;
\item Orders of complex reflections stabilizing each complex face.
  These correspond to the bases of pyramids, or top faces when they
  are truncated; see the ``Combinatorics'' tables in the appendix;
\item 
  The following braid relations corresponding to the parameters $(a,b,c;d)$:
  $$
  \br_a(R_1,R_2),\quad \br_b(R_2,R_3),\quad \br_c(R_3,R_1),\quad \br_d(R_1,R_3^{-1}R_2R_3).
  $$
\end{enumerate}
The corresponding presentations are listed in the right part of
Tables~\ref{tab:geometric_presentations_1}
and~\ref{tab:geometric_presentations_2} and~\ref{tab:geometric_presentations_2}.

Note that we could use Proposition~\ref{prop:braiding}
and~\ref{tab:geometric_presentations_2} to deduce the order of complex
reflections stabilizing the top faces in (2), but we do not know how
to deduce these orders directly from the other relations.

At this stage, it is not at all clear that the geometric and the
natural presentations for one given group are equivalent, i.e. the
corresponding finitely presented groups are isomorphic. We will prove
this in section~\ref{sec:geometric=natural}.

\subsection{Equivalence of the geometric and the natural presentations} \label{sec:geometric=natural}

The goal of this section is to prove that for every lattice where the
hypotheses of the Poincar\'e polyhedron theorem are satified, the
natural presentation is indeed a presentation for the lattice, i.e. it
is equivalent to the geometric presentation.

\begin{thm}\label{compare_presentations}
  For each lattice where the hypotheses of the Poincar\'e
  polyhedron theorem hold, the natural presentation is indeed a presentation
  of the corresponding lattice.
\end{thm}

We will prove this by a case-by-case analysis, compare the left and
right halves of Tables~\ref{tab:geometric_presentations_1}
and~\ref{tab:geometric_presentations_2} to see what we need to prove.

We start by proving that the natural braid relations (together with
basic relations between generators) imply all vertex braid relations
(we call \emph{vertex braid relations} the braid relations that appear
in Assumption~\ref{hyp:shell}).
\begin{thm}\label{braid-for-apex}
  For each lattice in our list the natural relations imply the
  appropriate braid relation at each vertex.
\end{thm}

Recall that for every triangle group lattice $\Gamma$, the geometric
and natural presentations have the same generating set, which we
denote by $X$ ($X=\{R_1,R_2,R_3,J\}$ if $\Gamma$ is a sporadic or
Mostow group, and $X=\{R_1,R_2,R_3\}$ if it is a Thompson group). We
write $G$ (resp. $N$) for the set of geometric (resp. natural)
relations. If $G\subset N$, then there is nothing to prove, since we
know $\langle X|G\rangle$ is a presentation by the Poincar\'e
polyhedron theorem, and we also know that the relations in $N$
hold. If $G\not\subset N$, we will enlarge the set of relations $N$ to
a new set $N'$ where $G\subset N'$ and $\langle X|N'\rangle$ is
isomorphic to $\langle X|N\rangle$.

It turns out that the calculations needed to prove
Theorem~\ref{compare_presentations} are essentially the same as those
needed to prove Theorem~\ref{braid-for-apex}.  In some cases, all the
vertices are $P$-images (or $Q$-images) of a vertex where we have one
of the four braid relations in the natural presentation. In this case
there is nothing to prove.  To find which extra braid relations are
required we refer to the description of the pyramids given in
Appendix~\ref{app-s5}.

We now summarize what we need to prove in each case and where we do so:
  \begin{enumerate}
  \item $\sigma_1$: for both Theorem~\ref{compare_presentations} and Theorem~\ref{braid-for-apex} 
  we need to show that the natural presentation implies $\br_3(R_1,R_2R_3R_2R_3^{-1}R_2^{-1})$
  and for Theorem~\ref{braid-for-apex} we need to show $\br_3(R_1,R_3^{-1}R_2^{-1}R_2R_3R_2)$ as well;
  see Proposition~\ref{prop-braid-s1}.
  \item $\bar\sigma_4$:  we have $G\subset N$ and there is nothing to prove for either theorem.
  \item $\sigma_5$: we show the natural presentation implies
  $\br_2\bigl((R_1J)^5,R_2R_3^{-1}R_2^{-1}R_1R_2R_3R_2^{-1}\bigr)$ for Theorem~\ref{compare_presentations} and 
  $\br_6(R_2,R_3^{-1}R_2^{-1}R_1^{-1}R_2R_3R_2^{-1}R_1R_2R_3)$ for Theorem~\ref{braid-for-apex};
  see Corollary~\ref{cor-extra-braiding} (1).
  \item $\sigma_{10}$:  we need to show that the natural presentation implies 
  $\br_2(R_1,R_2R_3R_2R_3^{-1}R_2^{-1})$ for 
  Theorem~\ref{compare_presentations}; see Proposition~\ref{prop-braid-s10}.
  There is nothing to prove for Theorem~\ref{braid-for-apex}.
  \item ${\bf S}_2$: we have $G\subset N$ and there is nothing to prove for either theorem.
  \item ${\bf E}_2$: we need to show that the natural presentation implies 
  $\br_2\bigl((R_1R_2R_3)^2,R_2^{-1}R_1R_2\bigr)$
   for Theorem~\ref{compare_presentations} and $\br_6(R_1R_2R_1^{-1}, R_3)$;
  for Theorem~\ref{braid-for-apex}; see Corollary~\ref{cor-extra-braiding} (2).
  \item ${\bf H}_1$: we show the natural presentation implies 
  $\br_2\bigl((R_1R_2R_3)^3, R_2^{-1}R_1R_2R_3R_2^{-1}R_1^{-1}R_2\bigr)$
  for Theorem~\ref{compare_presentations} and   $\br_{14}(R_2^{-1}R_1R_2,R_3^{-1}R_1R_2R_1^{-1}R_3)$
  for Theorem~\ref{braid-for-apex}; see Corollary~\ref{cor-extra-braiding} (3).
  \item ${\bf H}_ 2$: we need to show that the natural presentation implies 
  $\br_2\bigl((R_1R_2R_3)^3, R_2^{-1}R_1R_2\bigr)$ 
   for Theorem~\ref{compare_presentations} and   $\br_{10}(R_1R_2R_3R_2^{-1}R_1^{-1},R_2)$
   for Theorem~\ref{braid-for-apex}; see Corollary~\ref{cor-extra-braiding} (4).
    \end{enumerate}

Observe that in many cases we need to prove a braid relation of length 2 and a braid relation of length $n$.
It turns out that these are both consequences of a braid relation of length 4, as shown in the following
general lemma.

\begin{lem}\label{lem-pre-braid}
  Let $G$ be a group and suppose $A,B\in G$ satisfy $\br_4(A,B)$.
  \begin{enumerate}
  \item If $C\in G$ is any element that commutes with $B$ then
    $\br_2(B,\,CBA^{-1}B^{-1}A^{-1})$.
  \item If $B^n=Id$ then  $\br_n(A,BAB^{-1})$.
  \end{enumerate}
\end{lem}

\begin{proof}
We first prove (1). The condition $\br_4(A,B)$ implies $B$ commutes with $(BA)^2$. We can write
$$
CBA^{-1}B^{-1}A^{-1}=CB(BA)^{-2}B.
$$
Since $B$ commutes with $C$, $B$ and $(BA)^2$ we see that it commutes with $CBA^{-1}B^{-1}A^{-1}$
as required.

  For part (2), observe that $(BA)^2$ lies in the center of $\langle
  A,B\rangle$. Note that for every integer $m$, we have
  \begin{eqnarray*}
    (ABAB^{-1})^{m}(BAB^{-1}A)^{-m}
    & = & (ABAB^{-1})^{m}A^{-1}(ABAB^{-1})^{-m}A \\
    & = & \bigl(B^{-1}(BA)^2B^{-1})^{m}A^{-1}(B^{-1}(BA)^2B^{-1})^{-m}A \\
    & = & B^{-2m}A^{-1}B^{2m}A.
  \end{eqnarray*}
  We used the fact that $(BA)^2$ commutes with $A$ and $B$ on the last
  line.  If $n$ is even then setting $m=n/2$ immediately gives the
  result. If $n$ is odd then setting $m=(n-1)/2$ we have
  \begin{eqnarray*}
    BAB^{-1}(ABAB^{-1})^{(n-1)/2}(BAB^{-1}A)^{-(n-1)/2}A^{-1}
    & = &  BAB^{-1}(B^{-(n-1)}A^{-1}B^{n-1}A)A^{-1} \\
    & = & BAB^{-n}A^{-1}B^{n-1}.
  \end{eqnarray*}
  Since $B$ has order $n$ the last line is the identity, which
  completes the proof.
\end{proof}

\medskip

Lemma~\ref{lem-pre-braid} has the following geometric
interpretation. In all the cases where we use it, the map $A$ will be
a complex reflection of angle $2\pi/p$, in which case $\langle
A,B\rangle$ is a central extension of a $(2,p,n)$ triangle group, see
Lemma~\ref{prop:braiding}.  The apex of a pyramid with $A$ and
$BAB^{-1}$ as lateral edges is also fixed by $B$ and there is an
$n$-gon in the projective line of complex lines through the apex (see
section~\ref{sec:pyramid}).
The top ridge will be an $n$-gon whose center is fixed by $B$ and
whose vertices are the intersection points of this complex line with
the mirrors of $B^jAB^{-j}$. An example is the top hexagon in the
lower right hand pyramid of the pictures in Appendix~\ref{app-s5}.

We now describe how to apply this lemma to obtain extra braid
relations for some of the families of groups. We show that some power
of $P$, $Q$ or another symmetry braids with length 4 with one of the
complex reflections.

\begin{lem}\label{lem-length4-braiding}
  \begin{enumerate}
  \item The natural presentation for $\sigma_5$ implies $\br_4(R_2,P^{-5})$.
  \item Let $S$ satisfy $SR_1S^{-1}=R_1$, $SR_2S^{-1}=R_3$ and
    $SR_3S^{-1}=R_3^{-1}R_2R_3$ (as the symmetry $S$ of ${\bf E}_2$
    groups, see Remark~\ref{rk:char}). Then $\br_4(R_1,R_2)$ implies
    $\br_4(R_3,R_3^{-1}R_2^{-1}R_1^{-1}S^{-1})$.
  \item The natural presentation for $\overline{\bf H}_1$ implies
    $\br_4(R_2^{-1}R_1R_2,Q^{-3})$.
  \item Let $Q_{1/2}$ satisfy $Q_{1/2}R_1Q_{1/2}^{-1}=R_1R_2R_1^{-1}$,
    $Q_{1/2}R_2Q_{1/2}^{-1}=R_3$ and
    $Q_{1/2}R_3Q_{1/2}^{-1}=R_3^{-1}R_1R_3$ (as does the symmetry of
    ${\bf H}_2$ groups described in Remark~\ref{rk:char}). Then the
    braid relations in the natural presentations for ${\bf H}_2$ imply
    $\br_4(R_1R_2R_3R_2^{-1}R_1^{-1},\,Q_{1/2}^{-3})$. 
    \item The natural presentation for Mostow groups implies $\br_4(R_1,P^{-2})$.
  \end{enumerate}
\end{lem}

\begin{proof}
  We give the proof of (1).  First note that $P^5=J^{-1}23123=12312J^{-1}$. Now using 
  $JR_2=R_3J$ and $J^{-1}R_2^{-1}=R_1^{-1}J^{-1}$, then $\br_4(R_2,R_3)$, 
  and $\br_5(R_1,R_2R_3R_2^{-1})$ we have:
  \begin{eqnarray*}
    (P^{-5}2P^{-5}2)(P^5\bar{2}P^5\bar{2}) 
    & = & (J\bar{2}\bar{1}\bar{3}\bar{2}\bar{1})2(\bar{3}\bar{2}\bar{1}\bar{3}\bar{2}J)2(J^{-1}23123)\bar{2}(12312J^{-1})\bar{2} \\
    & = & (J\bar{2}\bar{1}\bar{3}\bar{2}\bar{1})2\bar{3}\bar{2}\bar{1}(\bar{3}\bar{2}323)123\bar{2}(12312\bar{1}J^{-1}) \\
    & = & (J\bar{2}\bar{1}\bar{3}\bar{2}\bar{1})(2\bar{3}\bar{2}\bar{1}23\bar{2}123\bar{2})(12312\bar{1}J^{-1}) \\
    & = & (J\bar{2}\bar{1}\bar{3}\bar{2}\bar{1})123\bar{2}12\bar{3}\bar{2}\bar{1}(12312\bar{1}J^{-1}) \\
    & = & J\bar{2}\bar{1}\bar{2}1212\bar{1}J^{-1}.
  \end{eqnarray*}
  The last line is the identity using $\br_4(R_1,R_2)$. Therefore $\br_4(R_2,P^{-5})$.

Now consider (2). Recall (see Remark~\ref{rk:char}) that $S\in
\pu(2,1)$ satisfies
\begin{equation}\label{eq:S_E2}
  SR_1S^{-1}=R_1,\quad SR_2S^{-1}=R_3,\quad SR_3S^{-1}=R_3^{-1}R_2R_3.
\end{equation}
First, we simplify and then we use $S^{-1}R_3S=R_2$ and $S^{-1}R_1R_2R_1^{-1}S=R_1R_2R_3R_2^{-1}R_1^{-1}$.
\begin{eqnarray*}
(\bar{3}\bar{2}\bar{1}\bar{S}3\bar{3}\bar{2}\bar{1}\bar{S}3)(S123\bar{3}S123\bar{3}) 
& = & \bar{3}\bar{2}\bar{1}\bar{S}\bar{2}\bar{1}(\bar{S}3S)12S12 \\
& = &  \bar{3}\bar{2}\bar{1}\bar{S}(\bar{2}\bar{1}212)S12 \\
& = &  \bar{3}\bar{2}\bar{1}(\bar{S}12\bar{1}S)12 \\
& = &  \bar{3}\bar{2}\bar{1}123\bar{2}\bar{1}12 .
\end{eqnarray*}
All the terms in the last line cancel.

For (3) we argue similarly using the braid relations.
\begin{eqnarray*}
\lefteqn{ (Q^{-3}\bar{2}12Q^{-3}\bar{2}12)(Q^3\bar{2}\bar{1}2Q^3\bar{2}\bar{1}2) } \\
& \stackrel{\br_4(1,2)}{=} & (\bar{3}\bar{2}\bar{1})^2\bar{3}(\bar{2}\bar{1}\bar{2}12)
(\bar{3}\bar{2}\bar{1})^2\bar{3}(\bar{2}\bar{1}\bar{2}1212)3(123)^2(\bar{2}\bar{1}212)3(123)^2\bar{2}\bar{1}2 \\
& \stackrel{\br_3(1,3)}{=} & (\bar{3}\bar{2}\bar{1})^2\bar{3}1\bar{2}\bar{1}\bar{3}\bar{2}\bar{1}\bar{3}\bar{2}
(\bar{1}\bar{3}131)2312312\bar{1}3(123)^2\bar{2}\bar{1}2 \\
& \stackrel{\br_3(2,3)}{=} & (\bar{3}\bar{2}\bar{1})^2\bar{3}1\bar{2}\bar{1}\bar{3}\bar{2}\bar{1}
(\bar{3}\bar{2}323)12312\bar{1}3(123)^2\bar{2}\bar{1}2 \\
& \stackrel{\br_4(1,2)}{=} & (\bar{3}\bar{2}\bar{1})^2\bar{3}1\bar{2}\bar{1}\bar{3}
(\bar{2}\bar{1}212)312\bar{1}3(123)^2\bar{2}\bar{1}2 \\
& \stackrel{\br_3(1,3)}{=} & \bar{2}(2\bar{3}\bar{2})\bar{1}\bar{3}\bar{2}(\bar{1}\bar{3}1)\bar{2}(\bar{1}\bar{3}1)2(\bar{1}31)2(\bar{1}31)231(23\bar{2})\bar{1}2 \\
& \stackrel{\br_3(2,3)}{=} & \bar{2}(\bar{3}\bar{2}3)\bar{1}(\bar{3}\bar{2}3)\bar{1}(\bar{3}\bar{2}3)\bar{1}(\bar{3}23)1(\bar{3}23)1(\bar{3}23)1(\bar{3}23)\bar{1}2.
\end{eqnarray*}
The last line is the identity using $\br_7(R_1,R_3^{-1}R_2R_3)$.

Now consider (4). Recall that 
\begin{equation}\label{eq:S_H2}
  Q_{1/2}R_1Q_{1/2}^{-1}=R_1R_2R_1^{-1},\quad 
  Q_{1/2}R_2Q_{1/2}^{-1}=R_3,\quad 
  Q_{1/2}R_1Q_{1/2}^{-1}=R_3^{-1}R_1R_3.
\end{equation}
First use $Q_{1/2}^2=R_1R_2R_3$ to write $Q_{1/2}^{-3}R_1R_2R_3=Q_{1/2}^{-1}$.
Then use $Q_{1/2}^{-1}R_3Q_{1/2}=R_2$ and finally 
$Q_{1/2}^{-1}R_1R_2R_1R_2^{-1}R_1^{-1}Q_{1/2}=R_1R_3^{-1}R_2R_3R_1^{-1}$:
\begin{eqnarray*}
(Q_{1/2}^{-3}123\bar{2}\bar{1}Q_{1/2}^{-3}123\bar{2}\bar{1})(Q_{1/2}^312\bar{3}\bar{2}\bar{1}Q_{1/2}^312\bar{3}\bar{2}\bar{1})  
& = & \bar{Q}_{1/2}\bar{2}\bar{1}\bar{Q}_{1/2}3Q_{1/2}12Q_{1/2}12\bar{3}\bar{2}\bar{1} \\
& = & \bar{Q}_{1/2}\bar{2}\bar{1}212Q_{1/2}12\bar{3}\bar{2}\bar{1} \\
& = & \bar{Q}_{1/2}121\bar{2}\bar{1}Q_{1/2}12\bar{3}\bar{2}\bar{1} \\
& = & 1\bar{3}23\bar{1}12\bar{3}\bar{2}\bar{1}. 
\end{eqnarray*}
The last line is the identity using $\br_3(R_2,R_3)$.

Part (5)
\begin{eqnarray*}
(P^{-1}1P^{-2}1)(P^2\bar{1}P^2\bar{1}) & = & P^{-2}1\bar{3}23\bar{1}P^2\bar{1} \\
& = & \bar{3}\bar{1}313\bar{1}.
\end{eqnarray*}
The last line is the identity using $\br_3(R_1,R_3)$.

\end{proof}

\medskip

Since we know the order of $P$, $Q$ or the other symmetry in each
case, we combine Lemmas~\ref{lem-pre-braid}
and~\ref{lem-length4-braiding} to obtain the following corollary.

\begin{cor}\label{cor-extra-braiding}
  \begin{enumerate}
  \item In $\sigma_5$: The natural presentation implies
  \begin{itemize}
  \item $\br_2(P^5,\,R_2R_3^{-1}R_2^{-1}R_1R_2R_3R_2^{-1})$;
  \item $\br_6(R_2,R_3^{-1}R_2^{-1}R_1^{-1}R_2R_3R_2^{-1}R_1R_2R_3)$.
  \end{itemize}
   \item In ${\bf E}_2$: Let $R_1$, $R_2$ and $R_3$ satisy the braid
     relations in the natural presentation for ${\bf E}_2$ groups, and
     let $S$ have the conjugation relations of
     equation~\eqref{eq:S_E2}. Then we have
     \begin{itemize}
     \item $\br_2(SR_1R_2R_3,\,R_2^{-1}R_1R_2)$, in particular, $\br_2((R_1R_2R_3)^3,\,R_2^{-1}R_1R_2)$;   
     \item $\br_6(R_1R_2R_1^{-1}, R_3)$. 
     \end{itemize}
  \item In $\overline{\bf H}_1$: The natural presentation implies
  \begin{itemize}
  \item  $\br_2((R_1R_2R_3)^3,\,R_2^{-1}R_1R_2R_3R_2^{-1}R_1^{-1}R_2)$;
  \item     $\br_{14}(R_2^{-1}R_1R_2, R_3^{-1}R_1R_2R_1^{-1}R_3)$.
  \end{itemize}
  \item In ${\bf H}_2$: Let $R_1$, $R_2$ and $R_3$ satisfy the braid
    relations in the natural presentation for ${\bf H}_2$ groups and the
    conjugation relations in equation~\eqref {eq:S_H2}. Then we have
 \begin{itemize}
  \item $\br_2(Q_{1/2}^3,\,R_2^{-1}R_1R_2)$, in particular
    $\br_2\bigl((R_1R_2R_3)^3,R_2^{-1}R_1R_2)$;
  \item $\br_{10}(R_1R_2R_3R_2^{-1}R_1^{-1}, R_2)$.
  \end{itemize}
  \end{enumerate}
\end{cor}

\begin{proof}
In each case, we use Lemma~\ref{lem-pre-braid} with $C=R_1R_2R_3$. 
\begin{enumerate}
\item We set $A=R_2$ and $B=P^{-5}$. We have 
$P^{-5}R_2P^5=R_3^{-1}R_2^{-1}R_1^{-1}R_2R_3R_2^{-1}R_1R_2R_3$ and
so
$$
2\bar3\bar2123\bar2=(123)(\bar3\bar2\bar12\bar3\bar2123)\bar{2}=C(BA^{-1}B^{-1})A^{-1}.
$$
This proves the first part. For the second, observe $P$ has order 30 and so $P^5$
has order 6.
\item We write $\Gamma$ for the group generated by $R_1,R_2$ and $R_3$
  and relations given by the natural presentation. There is a unique
  $\sigma\in Aut(\Gamma)$ that satisfies
  $$ 
  \sigma(R_1)=R_1, \sigma_1(R_2)=R_3, \sigma_1(R_3)=R_3^{-1}R_2R_3,
  $$ 
  and $\sigma^3=Id$.  Now consider the corresponding morphism
  $\varphi:\Z/3\Z\rightarrow {\textrm Aut}(\Gamma)$, where
  $\varphi(k)=\sigma^k$, and write $\Gamma'=\Gamma\rtimes_\varphi
  \Z/3\Z$ for the corresponsing semi-direct product.

  Finally, we set $R_j'=(R_j,0)$ and $S'=(0,1)\in\Gamma'$, and
  consider $A=R_1'R_2'R_1'^{-1}$,
  $B=R_3'^{-1}R_2'^{-1}R_1'^{-1}S'^{-1}$. Then we have
  $$
  BAB^{-1}=(S'R_1'R_2'R_3')^{-1}R_1'R_2'R_1'^{-1}(S'R_1'R_2'R_3')=R_3'
  $$ 
  and so
  $$
  R_2'^{-1}R_1'R_2' = (R_1'R_2'R_3')(R_3'^{-1})R_1'R_2'^{-1}R_1'^{-1}=C(BAB^{-1})A^{-1}.
  $$
  Since $S'$ and $R_1'R_2'R_3'$ commute and $R_1'R_2'R_3'$ has order
  6, we see that $S'R_1'R_2'R_3'$ has order 6. Hence part~(2) of
  Lemma~\ref{lem-pre-braid} implies
  $\br_6(R_1'R_2'R_1'^{-1},R_3')$, which gives
  $\br_6(R_1R_2R_1^{-1},R_3)$ after projecting onto the
  first factor of the semi-direct product. 

  Also, part~(1) of Lemma~\ref{lem-pre-braid} gives
  $\br_2(S'R_1'R_2'R_3',\,R_2'^{-1}R_1'R_2')$, i.e. $S'R_1'R_2'R_3'$
  commutes with $R_2'^{-1}R_1'R_2'$. Now $S'$ has order 3 and
  $R_1R_2R_3$ has order 6, so $(S'R_1'R_2'R_3')^3=(R_1'R_2'R_3')^3$,
  so $(R_1'R_2'R_3')^3$ also commutes with $R_2'^{-1}R_1'R_2'$,
  hence the first part.

\item 
  We set $A=R_2^{-1}R_1R_3$ and $B=Q^{-3}$. We have
  $Q^{-3}R_2^{-1}R_1R_3Q^3=R_3^{-1}R_1R_2R_1^{-1}R_3$. This means that
  $$
  \bar{2}123\bar{2}\bar{1}2=(123)(\bar{3}1\bar{2}\bar{1}3)(\bar{2}\bar{1}2)=C(BA^{-1}B^{-1})A^{-1}.
  $$
  This proves the first part.
  For the second part, observe that $Q$ has order 42 and so $Q^3$ has order 14.
\item The main difficulty is similar to the one in case~(2), and it
  comes from the fact that the symmetry $Q_{1/2}$ is not in the group
  generated by $R_1$, $R_2$ and $R_3$. Once again, we construct the
  symmetry inside a semi-direct product constructed
  group-theoretically from the group given by the natural
  presentation.
  
  Start by noting that equation~\eqref{eq:S_H2} defines an
  automorphism $\alpha$ of the group $F$ defined by the ${\bf H_2}$
  natural presentation. To see that the map $R_1\mapsto
  R_1R_2R_1^{-1}$, $R_2\mapsto R_3$, $R_3\mapsto R_3^{-1}R_1R_3$
  induces a homomorphism $\alpha:F\rightarrow F$, one needs to verify
  that $\br_3(3,\bar313)$, $\br_3(\bar313,12\bar1)$,
  $\br_5(12\bar1,3)$, $\br_5(12\bar1,\bar3\bar13\cdot 3\cdot \bar313)$. The only
  non-trivial verification is the fact that $\br_5(1,\bar323)$ implies
  $\br_5(3,12\bar1)$, and this follows from an easy computation using
  $\br_3(1,3)$ and $\br_3(2,3)$.

  One easily verifies that $\alpha:F\rightarrow F$ is bijective, so it
  is an automorphism of $F$. Explicit computation shows that
  $\alpha^2$ is given by the conjugation in $F$ by $R_1R_2R_3$, so
  $\alpha$ has order 30, and $\alpha^{15}$ has order $2$. We denote by
  $\varphi:Z/2\Z\rightarrow {\textrm Aut}(F)$ the corresponding
  morphism, and $F'=F\rtimes_\varphi Z/2\Z$.

  In $F'$, we have a symmetry playing the same role as $Q_{1/2}$, so
  from this point on we ignore the fact that $Q_{1/2}$ is not in $F$.

  We set $A=R_3$ and $B=Q_{1/2}^{-3}$. 
  Moreover, $Q_{1/2}^{-3}R_3Q_{1/2}^3=R_3^{-1}R_1R_2R_1^{-1}R_3$ and so
  $$
    \bar{2}12=(123)(\bar{3}1\bar{2}\bar{1}3)(\bar{3})=C(BA^{-1}B^{-1})A^{-1}.
  $$ 
  This proves the first part.  Finally, $Q_{1/2}$ has order 30 and so
  $Q_{1/2}^3$ has order 10, which proves the second part.
\end{enumerate}
\end{proof}

\medskip

We need to treat $\sigma_1$ and $\sigma_{10}$ separately, since they do not fit into the general framework described
above.

\begin{prop} \label{prop-braid-s1}
For $\sigma_1$: If $JR_iJ^{-1}=R_{i+1}$, $\br_6(R_2,R_3)$ and $P^8$ then the braid relations
$\br_4(R_1,R_3^{-1}R_2R_3)$,
$ \br_3(R_1,R_3^{-1}R_2^{-1}R_3R_2R_3)$ and
$ \br_3(R_1,R_2R_3R_2R_3^{-1}R_2^{-1})$ are equivalent.
\end{prop}

\begin{proof}
Since $P=R_1J$ has order 8 we see that $R_1R_2R_3=P^3=P^{-5}=R_3^{-1}R_2^{-1}R_1^{-1}R_3^{-1}R_2^{-1}J$. Thus
\begin{eqnarray*}
1(23\bar{2})1(23\bar{2})\bar{1}(2\bar{3}\bar{2})\bar{1}(2\bar{3}\bar{2})
& = & (\bar{3}\bar{2}\bar{1}\bar{3}\bar{2}J)\bar{2}123\bar{2}\bar{1}2(\bar{J}23123)
(2\bar{3}\bar{2}) \\
& = & \bar{3}\bar{2}\bar{1}\bar{3}\bar{2}\bar{3}231\bar{3}\bar{2}3231(232\bar{3}\bar{2}) \\
& = & (\bar{3}\bar{2})\bigl(\bar{1}(\bar{3}\bar{2}\bar{3}23)1(\bar{3}\bar{2}323)1(\bar{3}\bar{2}\bar{3}23)\bigr)(23).
\end{eqnarray*}
Hence $\br_4(R_1,R_2R_3R_2^{-1})$ and $ \br_3(R_1,R_3^{-1}R_2^{-1}R_3R_2R_3)$ are equivalent.
Similarly 
\begin{eqnarray*}
1(232\bar{3}\bar{2})1(23\bar{2}\bar{3}\bar{2})\bar{1}(23\bar{2}\bar{3}\bar{2}) 
& = & (\bar{3}\bar{2}\bar{1}\bar{3}\bar{2}J)2\bar{3}\bar{2}123\bar{2}
(\bar{J}23123)23\bar{2}\bar{3}\bar{2} \\
& = & \bar{3}\bar{2}\bar{1}\bar{3}\bar{2}3\bar{1}\bar{3}231\bar{3}
231(2323\bar{2}\bar{3}\bar{2}) \\
& = & (\bar{3}\bar{2})\bigl(\bar{1}(\bar{3}\bar{2}3)\bar{1}(\bar{3}23)1(\bar{3}23)1(\bar{3}\bar{2}3)\bigr)(23).
\end{eqnarray*}
Hence $ \br_3(R_1,R_2R_3R_2R_3^{-1}R_2^{-1})$ and $\br_4(R_1,R_3^{-1}R_2R_3)$ are equivalent.
Finally, since 
$$
P^2R_1P^{-2}=R_1(R_2R_3R_2^{-1})R_1^{-1},\quad P^2(R_3^{-1}R_2R_3)P^{-2}=R_1
$$
we see that $\br_4(R_1,R_2R_3R_2^{-1})$ and $\br_4(R_1,R_3^{-1}R_2R_3)$ are equivalent.
\end{proof}

\medskip

\begin{prop} \label{prop-braid-s10}
For $\sigma_{10}$: If $JR_iJ^{-1}=R_{i+1}$, $\br_5(R_2,R_3)$ and $P^5$ then
\begin{itemize}
  \item $\br_2(R_1,R_3^{-1}R_2^{-1}R_3R_2R_3)$;
  \item $\br_3(R_1,R_3^{-1}R_2R_3)$.
\end{itemize}
\end{prop}

\begin{proof}
Since $P=R_1J$ has order 5, we have 
$$
R_1^{-1}R_3^{-1}R_2^{-1}=J(R_1J)^{-3}J^{-1}=J(R_1J)^2J^{-1}=R_2R_3J^{-1}
$$ 
and so
\begin{eqnarray*}
\bar{1}(\bar{3}\bar{2}\bar{3}23)1(\bar{3}\bar{2}323) 
& = & (23\bar{J})\bar{3}(J\bar{3}\bar{2})\bar{3}\bar{2}323 \\
& = & 23\bar{2}\bar{3}\bar{2}\bar{3}\bar{2}323.
\end{eqnarray*}
The last line is the identity since $\br_5(R_2,R_3)$.
Similarly 
\begin{eqnarray*}
\bar{1}(\bar{3}\bar{2}3)\bar{1}(\bar{3}23)1(\bar{3}23) 
& = & (23\bar{J})3\bar{1}\bar{3}(J\bar{3}\bar{2})\bar{3}23 \\
& = & 232\bar{3}\bar{2}\bar{3}\bar{2}\bar{3}23.
\end{eqnarray*}
Again, the last line is the identity since $\br_5(R_2,R_3)$.
\end{proof}

\medskip

\section{Commensurability invariants}

\subsection{Adjoint trace fields} \label{sec:tracefields}

The basic commensurability invariant we will use it the adjoint trace
field, i.e. the field generated by the traces of $Ad(\gamma)$, for all
group elements $\gamma$. Because our groups preserve a Hermitian form, this is simply the field generated by $|\tr \gamma|^2$,
where $\gamma$ runs over all group elements. It is well-known that this field is finitely generated,
and is a commensurability invariant (see section~2.5
of~\cite{mostowpacific}, Proposition~(12.2.1)
of~\cite{delignemostow} or \cite{paupert}).

In order to compute these fields explicitly, we first use an upper
bound given by Pratoussevitch's trace formula, i.e. Theorems~4 and~10
of~\cite{pratoussevitch} (see also section~17.2
of~\cite{mostowpacific}). A convenient formulation is given in
Corollary~5.9 of~\cite{parkertraces}, which gives the following:
\begin{prop}\label{prop:generalupperbound}
  For every triangle group generated by three reflections of order
  $p$, the field of traces $\Q(\tr{\rm Ad}\ \Gamma)$ is totally real,
  contained in $\Q(\vert\rho\vert^2, \vert\sigma\vert^2,
  \vert\tau\vert^2, \rho\sigma\tau, \bar\rho\bar\sigma\bar\tau, a)$,
  where $a=e^{2\pi i/p}$.
\end{prop}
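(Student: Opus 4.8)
The plan is to combine the standard reduction of the adjoint trace field to squared moduli of ordinary traces with the explicit shape of Pratoussevitch's trace formula. First I would record two reductions. Since $\Gamma$ preserves the Hermitian form $H$, every $\gamma\in\Gamma$ satisfies $\gamma^{-1}=H^{-1}\gamma^{\ast}H$, whence ${\rm tr}(\gamma^{-1})=\overline{{\rm tr}\,\gamma}$ and therefore $\tr{\rm Ad}(\gamma)=|{\rm tr}\,\gamma|^2-1$. Thus $\Q(\tr{\rm Ad}\,\Gamma)=\Q(|{\rm tr}\,\gamma|^2:\gamma\in\Gamma)$ is generated by real numbers, and the statement splits into (i) containment in $K:=\Q(|\rho|^2,|\sigma|^2,|\tau|^2,\rho\sigma\tau,\bar\rho\bar\sigma\bar\tau,a)$ and (ii) total reality.

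For containment I would exploit the reflection–projection formula. Each generator can be written $R_j=\bar u\,(\mathrm{Id}+(a-1)\Pi_j)$, where $\Pi_j$ is the $H$-orthogonal projection onto the polar vector ${\bf n}_j$ and $a=u^3=e^{2\pi i/\p}$; its inverse is $R_j^{-1}=u\,(\mathrm{Id}+(\bar a-1)\Pi_j)$. Hence a word $\gamma$ of signed length $E$ factors as $\gamma=u^{-E}M$ with $M=\prod_l(\mathrm{Id}+(a^{\epsilon_l}-1)\Pi_{i_l})$, and expanding gives ${\rm tr}\,M=\sum_S\bigl(\prod_{l\in S}(a^{\epsilon_l}-1)\bigr)\,{\rm tr}(\Pi_{j_1}\cdots\Pi_{j_m})$, the sum ranging over subsequences $S$ of the word. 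Each factor ${\rm tr}(\Pi_{j_1}\cdots\Pi_{j_m})$ is a cyclic product of the normalized inner products $\langle{\bf n}_a,{\bf n}_b\rangle/(\|{\bf n}_a\|\,\|{\bf n}_b\|)$, i.e. a closed walk on the triangle with vertices $1,2,3$.

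The combinatorial heart is a cycle-decomposition lemma: any closed walk on the triangle is a union of the elementary directed cycles, namely the three back-and-forth $2$-cycles and the two oriented $3$-cycles. I would check that each $2$-cycle contributes $|\rho|^2/\alpha$ (resp. $|\sigma|^2/\alpha$, $|\tau|^2/\alpha$) with $\alpha=2-a-\bar a\in\Q(a)$, while the two $3$-cycles contribute $\rho\sigma\tau/(\bar a(a-1)^3)$ and $\bar\rho\bar\sigma\bar\tau/(a(\bar a-1)^3)$. Every building block lies in $K$, so ${\rm tr}\,M\in K$; crucially the $2$-cycles are neutral in $u$ and the $3$-cycles carry $u^{\pm3}=a^{\pm1}$, so no non-cube power of $u$ survives. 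As $|u|=1$ we get $|{\rm tr}\,\gamma|^2={\rm tr}\,M\cdot\overline{{\rm tr}\,M}$, and since $K$ is stable under complex conjugation (which swaps $\rho\sigma\tau$ with $\bar\rho\bar\sigma\bar\tau$ and $a$ with $\bar a=a^{-1}$), this product lies in $K$. This is precisely the content of Pratoussevitch's formula as packaged in Corollary~5.9 of \cite{parkertraces} (see also \cite{pratoussevitch}), which one may alternatively invoke directly.

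For total reality, each generator $|{\rm tr}\,\gamma|^2$ is a nonnegative real number, so $F:=\Q(\tr{\rm Ad}\,\Gamma)\subseteq K\cap\R$. I would then argue that $K$ is a CM field: its maximal real subfield $K^{+}$ is generated by the real quantities $|\rho|^2,|\sigma|^2,|\tau|^2$, $a+\bar a=2\cos(2\pi/\p)$ and $2\,\re(\rho\sigma\tau)$, while $K$ is a totally imaginary quadratic extension of $K^{+}$; consequently $K\cap\R=K^{+}$ is totally real and so is its subfield $F$. I expect the main obstacle to be exactly this last step: establishing that the moduli together with $\re(\rho\sigma\tau)$ really generate a totally real field (so that complex conjugation is central in the ambient Galois group) uses the finite-order or parabolic conditions $|\rho|^2-2=2\cos(2\pi/c)$, etc.\ of Section~\ref{sec:noneq}. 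Equivalently, one verifies that every Galois conjugate of each $|{\rm tr}\,\gamma|^2$ is again of the form $|{\rm tr}\,\gamma'|^2$ for a Galois-twisted representation that still preserves a Hermitian form, forcing the conjugate to be real.
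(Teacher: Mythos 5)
Your containment argument is correct, but note that for this half the paper gives no proof at all: the proposition is quoted from the literature, namely Pratoussevitch's trace formula (Theorems 4 and 10 of \cite{pratoussevitch}) as packaged in Corollary 5.9 of \cite{parkertraces}, combined with the remark at the start of Section~\ref{sec:tracefields} that $\Q(\tr{\rm Ad}\ \Gamma)$ is generated by the quantities $|{\rm tr}\,\gamma|^2$. Your expansion $R_j=\bar u\,(\mathrm{Id}+(a-1)\Pi_j)$, the reduction of ${\rm tr}(\Pi_{j_1}\cdots\Pi_{j_m})$ to closed walks on the vertices $1,2,3$, and the decomposition of closed walks into $2$-cycles (values $|\rho|^2/\alpha$, $|\sigma|^2/\alpha$, $|\tau|^2/\alpha$) and $3$-cycles (values $\rho\sigma\tau/\bar a(a-1)^3$ and $\bar\rho\bar\sigma\bar\tau/a(\bar a-1)^3$) is a faithful reconstruction of the content of that cited formula, and your bookkeeping ($\bar a=a^{-1}\in\Q(a)$, conjugation-stability of $K$, $|u|=1$) is right. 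So for containment you have, in effect, opened the black box that the paper cites.

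The genuine gap is in the total reality claim, exactly where you suspected, and the repair you propose does not close it. The conditions of Section~\ref{sec:noneq} do make $|\rho|^2$, $|\sigma|^2$, $|\tau|^2$, $a+\bar a$, and (via the $|\sigma\tau-\bar\rho|^2$ condition) also $\re(\rho\sigma\tau)$ totally real, so $K^+$ is totally real. But your conclusion needs the equality $K\cap\R=K^+$, i.e.\ that $K$ is CM, and for that the discriminant $4\re(\rho\sigma\tau)^2-4|\rho|^2|\sigma|^2|\tau|^2=-4\,\im(\rho\sigma\tau)^2$ of the quadratic satisfied by $\rho\sigma\tau$ over $K^+$ must be \emph{totally} non-positive. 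Concretely, every Galois conjugate of the data $\bigl(|\rho|,\,|\sigma\tau|,\,|\sigma\tau-\bar\rho|\bigr)$ must again satisfy the triangle inequality, and nothing in the four pairwise conditions forces this: for instance, braid data of type $(3,3,5;7)$ satisfies the triangle inequality at the identity embedding ($2\cos(\pi/7)\le 1+2\cos(\pi/5)$) but violates it at a conjugate embedding ($2\cos(\pi/7)> 1+2\cos(2\pi/5)$), so $\im(\rho\sigma\tau)^2$ acquires a negative conjugate, $K$ is not CM, and the adjoint trace field of such a group is in general not totally real. In other words, at the level of generality you allow yourself, the statement you are trying to prove is false, so no argument can close this step.

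What actually closes it, in the setting of the paper, is the cyclotomic nature of the parameters. The standing requirement that $Q=R_1R_2R_3$ (and the other control words) be elliptic of finite order makes ${\rm tr}(R_1R_2R_3)=3-|\rho|^2-|\sigma|^2-|\tau|^2+\rho\sigma\tau$ a sum of three roots of unity, hence places $\rho\sigma\tau$, and with it every generator of $K$ and every matrix entry, inside a cyclotomic field. Cyclotomic fields are abelian over $\Q$, so complex conjugation commutes with every embedding, and then the argument sketched in your last sentence becomes a proof: any embedding $\varphi$ of $\Q(\tr{\rm Ad}\ \Gamma)$ extends to the entry field and satisfies $\varphi(|{\rm tr}\,\gamma|^2)=|\varphi({\rm tr}\,\gamma)|^2\ge 0$, whence total reality. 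This cyclotomic input---supplied in the paper by the Conway--Jones classification and the explicit parameter tables, and built into the hypotheses of the cited Corollary 5.9---is what is missing from your proposal.
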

We have phrased this in terms of non-symmetric triangle groups, but
the symmetric case is of course a special case obtained by taking
$\rho=\sigma=\tau$.

In fact, for all groups we consider, the upper bound in
Proposition~\ref{prop:generalupperbound} can be simplified. Indeed,
$\vert\rho\vert^2$, $\vert\sigma\vert^2$, $\vert\tau\vert^2$ are all
rational except for the Thompson groups of type ${\bf H_2}$, where
$\vert\sigma\vert^2=(3+\sqrt{5})/2$, but the latter is then contained
in $\Q(\rho\sigma\tau)$. Moreover, all number fields we consider are
Galois, so the complex conjugate $\bar\rho\bar\sigma\bar\tau$ is also
contained in $\Q(\rho\sigma\tau)$. We then get the following result.
\begin{prop}\label{prop:specialupperbound}
  If $\Gamma$ is any of the sporadic, Thompson or Mostow lattices,
  the field $\Q(\tr{\rm Ad}\ \Gamma)$ is contained in
  $\Q(\rho\sigma\tau,a)$.
\end{prop}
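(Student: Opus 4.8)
The plan is to begin from the upper bound already established in Proposition~\ref{prop:generalupperbound}, namely
\[
\Q(\tr{\rm Ad}\,\Gamma)\subseteq\Q(|\rho|^2,|\sigma|^2,|\tau|^2,\rho\sigma\tau,\bar\rho\bar\sigma\bar\tau,a),
\]
and to show that each of the four ``extra'' generators $|\rho|^2$, $|\sigma|^2$, $|\tau|^2$ and $\bar\rho\bar\sigma\bar\tau$ already lies in the smaller field $\Q(\rho\sigma\tau,a)$. Dropping these generators then yields the claimed inclusion.

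First I would treat the three moduli by inspecting the parameter tables. For the Mostow groups $\rho=\sigma=\tau$ has modulus $1$, so the three moduli all equal $1$. For the sporadic groups $\rho=\sigma=\tau$ equals one of the lattice values of Table~\ref{tab:sporadicLattices}, and a direct computation gives $|\sigma_1|^2=3$, $|\bar\sigma_4|^2=2$, $|\sigma_5|^2=2$ (all rational) and $|\sigma_{10}|^2=(3+\sqrt5)/2$. For the Thompson groups of Table~\ref{tab:Tvalues} the three moduli are rational in every row (for instance $|\rho|^2=2$ for ${\bf S}_2$ and ${\bf E}_2$) except for ${\bf H}_2$, where one modulus equals $(3+\sqrt5)/2$. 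Thus in all cases the moduli are either rational or lie in $\Q(\sqrt5)$, and the only thing left to check is that in the two exceptional families the value $(3+\sqrt5)/2$ is absorbed by $\Q(\rho\sigma\tau)$: for $\sigma_{10}$ one computes $\rho\sigma\tau=\tau^3=2+\sqrt5$, so $\Q(\rho\sigma\tau)=\Q(\sqrt5)$, while for ${\bf H}_2$ one finds $\rho\sigma\tau\in\Q(e^{2\pi i/5})$, which contains $\sqrt5$. Hence $|\rho|^2,|\sigma|^2,|\tau|^2\in\Q(\rho\sigma\tau)\subseteq\Q(\rho\sigma\tau,a)$ throughout.

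The conjugate term then comes for free, which is the point of organizing the argument this way. Since complex conjugation is multiplicative we have $\bar\rho\bar\sigma\bar\tau=\overline{\rho\sigma\tau}$, and because $\rho\sigma\tau$ is a nonzero element of $\Q(\rho\sigma\tau,a)$ we may write
\[
\bar\rho\bar\sigma\bar\tau=\overline{\rho\sigma\tau}=\frac{(\rho\sigma\tau)\overline{(\rho\sigma\tau)}}{\rho\sigma\tau}=\frac{|\rho|^2|\sigma|^2|\tau|^2}{\rho\sigma\tau}.
\]
The numerator lies in $\Q(\rho\sigma\tau,a)$ by the previous step and the denominator is a nonzero element of the same field, so the quotient lies in it as well. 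In particular this avoids having to verify separately that $\Q(\rho\sigma\tau,a)$ is Galois over $\Q$.

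Putting the two steps together, all six generators appearing in Proposition~\ref{prop:generalupperbound} lie in $\Q(\rho\sigma\tau,a)$, which gives the proposition. The only genuine labour is the finite verification in the second paragraph: computing the moduli and the product $\rho\sigma\tau$ for each relevant row of the tables, together with the observation that the two irrational moduli both sit in $\Q(\sqrt5)$ and that $\sqrt5\in\Q(\rho\sigma\tau)$ in precisely those two cases. I would finally note that the rigid Thompson groups of Table~\ref{tab:Trigid} (where $\sigma=0$, so $\rho\sigma\tau=0$) require no separate argument: they are identified in Section~\ref{sec:noneq} with Livn\'e, Mostow or sporadic lattices already covered, and since the adjoint trace field is an invariant of the abstract group one simply uses the parametrization in which $\rho\sigma\tau\neq0$.
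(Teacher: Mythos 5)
Your proof is correct, and it follows the same overall strategy as the paper's: start from the bound of Proposition~\ref{prop:generalupperbound} and show that the extra generators $|\rho|^2$, $|\sigma|^2$, $|\tau|^2$, $\bar\rho\bar\sigma\bar\tau$ already lie in $\Q(\rho\sigma\tau,a)$. The differences are in how the conjugate term is absorbed and in the case bookkeeping. The paper disposes of $\bar\rho\bar\sigma\bar\tau$ by observing that all the number fields in play are Galois, hence stable under complex conjugation; your identity $\bar\rho\bar\sigma\bar\tau=|\rho|^2|\sigma|^2|\tau|^2/(\rho\sigma\tau)$ is more elementary and self-contained, since it needs nothing beyond the moduli step and $\rho\sigma\tau\neq 0$, so it removes the need to verify Galois-ness family by family. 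You are also more careful than the paper's text on two points: the paper asserts the moduli are rational ``except for the Thompson groups of type ${\bf H_2}$'', overlooking the sporadic family $\sigma_{10}$, where $|\tau|^2=(3+\sqrt{5})/2$ is irrational as well; your observation that there $\rho\sigma\tau=\tau^3=2+\sqrt{5}$, so $\Q(\rho\sigma\tau)=\Q(\sqrt{5})$ absorbs it, repairs this. Your closing remark on the rigid Thompson parameters (where $\sigma=0$ forces $\rho\sigma\tau=0$, and the statement would fail as literally read) also matches the paper's implicit convention of treating those groups through their identifications with Livn\'e, Mostow and sporadic groups.

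One small point to tighten: for ${\bf H_2}$ you write $\rho\sigma\tau\in\Q(e^{2\pi i/5})$, ``which contains $\sqrt{5}$''. What is actually needed (for instance when $p=2,3$, so that $\Q(a)$ does not contain $\sqrt{5}$) is that $\sqrt{5}\in\Q(\rho\sigma\tau)$ itself, and membership of $\rho\sigma\tau$ in some larger field containing $\sqrt{5}$ does not by itself give this. The fact does hold: writing $\zeta=e^{2\pi i/5}$ one computes $\rho\sigma\tau=1+\zeta+\zeta^2$, a non-real element of $\Q(\zeta)$, and since the only subfields of $\Q(\zeta)$ are $\Q$, $\Q(\sqrt{5})$ and $\Q(\zeta)$, one gets $\Q(\rho\sigma\tau)=\Q(\zeta)\supset\Q(\sqrt{5})$. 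Adding this line closes the only gap in your argument.
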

This gives an upper bound for the adjoint trace field, which is given
by the real subfield of $\Q(\rho\sigma\tau,a)$. In fact we check that
this upper bound is sharp. In order to do this, one needs to compute a
few explicit traces, see the formulas in section~\ref{sec:noneq}. We
also use
$$
\tr(R_3R_2R_1)=3-|\rho|^2-|\sigma|^2-|\tau|^2-u^3\bar\rho\bar\sigma\bar\tau,
$$ 
since $|\tr(R_3R_2R_1)|^2$ often gives a generator of the adjoint
trace field. In particular, since $\tr(R_1)=u^2+2\overline{u}$ where
$a=u^3$, we get:
\begin{prop}\label{prop:lowerbound}
  Let $\mu=3-|\rho|^2-|\sigma|^2-|\tau|^2$.  The field
  $\Q(|a+2|^2,|\mu-\bar a \rho\sigma\tau|^2)$ is contained in
  $\Q(\tr{\rm Ad}\ \Gamma)$.
\end{prop}
The values of $|\tr(R_3R_2R_1)|^2$ for all sporadic and Thompson
triangle lattices are gives in Tables~\ref{tab:tracesSporadic}
and~\ref{tab:tracesThompson}. Note that $|\mu-\bar a\rho\sigma\tau|^2$
generates the adjoint trace field for most groups, i.e. all but
$\T(5,{\bf \overline{H}_2})$. For that group, $|a+2|^2$ gives a generator.

\begin{table}[htbp]
\begin{tabular}{ccc}
  $\tau$        & $p$     & $|\tr(R_3R_2R_1)|^2$   \\\hline
  $\sigma_1 $   &  3      & $3(11+2\sqrt{6})$\\
                &  4      & $3(21+4\sqrt{2})$\\
                &  6      & $3(31+2\sqrt{6})$\\\hline
  $\overline{\sigma}_4$& 3 & $(19+3\sqrt{21})/2$\\
                &  4      & $17+3\sqrt{7}$\\
                &  5      & $\lambda$\\
                &  6      & $(49+3\sqrt{21})/2$\\
                &  8      & $(34+15\sqrt{2}+3\sqrt{14})/2$\\
                &  12     & $(34+15\sqrt{3}+3\sqrt{7})/2$\\\hline
  $\sigma_5$    &  2      & $(7+3\sqrt{5})/2$\\
                &  3      & $17+3\sqrt{5}$\\
                &  4      & $(24+9\sqrt{3}+3\sqrt{15})/2$\\\hline
  $\sigma_{10}$ &  3      & $4(3+\sqrt{5})$  \\
                &  4      & $(45+17\sqrt{5})/2$   \\
                &  5      & $(57+23\sqrt{5})/2$   \\
                &  10     & $39+16\sqrt{5}$
\end{tabular}
\caption{Values of traces giving a generator for the adjoint trace
  field. In the table, $\lambda$ is a generator for
  $\Q(\sqrt{14}\sqrt{5+\sqrt{5}})$, as can be seen from the fact that
  $\sqrt{14}\sqrt{5+\sqrt{5}}=(50\lambda^3-2040\lambda^2+18414\lambda-18538)/2403$.}\label{tab:tracesSporadic}
\end{table}
\begin{table}[htbp]
\begin{tabular}{cccc}
  ${\bf T}$     & $p$              & $|\tr(R_3R_2R_1)|^2$               & $|\tr(R_1)|^2$\\\hline
  ${\bf S_2}$   &  3               & $3+\sqrt{5}$                       & \\
                &  4               & $(6+\sqrt{3}+\sqrt{15})/2$         & \\
                &  5               & $-2\alpha_{15}+\alpha_{15}^2+\alpha_{15}^3$ & \\\hline
  ${\bf E_2}$   &  4               & $8+4\sqrt{3}$                      & \\
                &  6               & $16$                               & \\
                &  12              & $8+4\sqrt{3}$                      & \\\hline
                &                  &                                    &  \\[-11pt]
  ${\bf \overline{H}_1}$  &  2    & $2+2\cos(2\pi/7)$                   & \\
                &  7               & $(1+2\cos(2\pi/7))^2$              & \\\hline
  ${\bf H_2}$   &  2               & $(5+\sqrt{5})/2$                   & \\
                &  3               & $-1-5\alpha_{15}+2(\alpha_{15}^2+\alpha_{15}^3)$       & \\
                &  5               & $6+2\sqrt{5}$                      & \\
                &  10              & $5+2\sqrt{5}$                      & \\\hline
                &                  &                                    &  \\[-11pt]
  ${\bf \overline{H}_2}$  &  5    & $1$                                & $4+\sqrt{5}$\\
\end{tabular}
\caption{Values of traces giving a generator for the adjoint trace
  field, for Thompson lattices.  In the table, $\alpha_n$ stands for
  $2\cos(2\pi/n)$. We list $|\tr(R_1)|^2$ only if the first column
  does not already generate the adjoint trace
  field.}\label{tab:tracesThompson}
\end{table}

From these values, it is a bit cumbersome (but not really difficult)
to check that for each of the lattices we consider, the lower bound
given by Proposition~\ref{prop:lowerbound} has the same degree as the
upper bound given by the real subfield of $\Q(\rho\sigma\tau,a)$, see
Proposition~\ref{prop:specialupperbound}.

The adjoint trace fields for sporadic, Thompson and Mostow lattices
are listed in the appendix.  Those in the Appendix
(section~\ref{sec:mostowinvariants}) can be obtained in section~17.3
of Mostow's original paper~\cite{mostowpacific}, or more efficiently
by converting the groups into hypergeometric monodromy groups using
equation~\eqref{eq:converttomu} and applying Lemma~(12.5)
of~\cite{delignemostow}.

\subsection{Signature spectrum and non-arithmeticity index} \label{sec:spectrum}

Let $\Gamma$ be a lattice of $\pu(2,1)$, and assume $k=\Q(\tr{\rm
  Ad}\ \Gamma)$ is a totally real number field. It follows from the
discussion in section~12 in~\cite{delignemostow} that, up to complex
conjugation, $\Gamma$ is contained in a unique $k$-group whose real
points give a group isomorphic to $\pu(2,1)$. Moreover, $k$ is the
smallest possible number field with that property.

For the lattices considered in this paper, the $k$-structure is
obvious, since they are all contained in the integer points
$\su(H,\mathcal{O}_\mathbb{L})$ of groups the form $G=\su(H)$, where
$H$ is a Hermitian matrix with entries in $\mathbb{L}$, where
$\mathbb{L}$ is a CM-field with maximal totally real subfield given by
$k$. In particular, the automorphisms of $\mathbb{L}$ commute with
complex conjugation, they all preserve $k$, and they come in complex
conjugate pairs $\left\{
\varphi_{1},\overline{\varphi}_{1},...,\varphi_{r},\overline{\varphi}_{r}\right\}$,
where $\varphi_i$ and $\overline{\varphi}_i$ induce the same
automorphism of $k$, but for $i\neq j$, $\varphi_i$ and $\varphi_j$
have different restrictions to $k$. Note also that the restriction of
the automorphisms $\varphi_j$ to $k$ give all the automorphisms of
$k$.

For every automorphism $\varphi$ of $k$ which is the restriction of
some $\varphi_j$ as above, the Galois conjugate group $G^\varphi$ is
given by $\su(H^{\varphi_j})$, where $H^{\varphi_j}$ is obtained
from $H$ by applying $\varphi_j$ to every entry of $H$. Note that this
is only well-defined up to complex conjugation.

The following arithmeticity criterion is well known (we often refer to
this statement as the Mostow/Vinberg arithmeticity criterion), see
section~4 of~\cite{mostowpacific} or section~12
of~\cite{delignemostow}.
\begin{prop}
  $\Gamma$ is arithmetic if and only if for every non-trivial
  automorphism $\varphi$ of $k$, $G^\varphi$ preserves a definite
  Hermitian form.
\end{prop}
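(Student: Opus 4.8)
The plan is to realize $\Gamma$ inside a $\Q$-group obtained by restriction of scalars and to read off arithmeticity from the archimedean places of $k$. First I would record the setup already established in this section: up to finite index $\Gamma\subset \su(H,\mathcal{O}_{\mathbb{L}})$, where $\mathbb{L}$ is a CM field with maximal totally real subfield $k=\Q(\tr\,{\rm Ad}\ \Gamma)$, and $G=\su(H)$ is viewed as an algebraic group over $k$ via the $\mathbb{L}/k$-Hermitian structure, with $G(\mathbb{R})\cong\su(2,1)$ at the identity embedding. Since $\Gamma$ is a lattice it is Zariski dense in $\su(2,1)$, and because $k$ is precisely the adjoint trace field it is the minimal field of definition of the adjoint group; this is what guarantees that $G$, and hence the arithmetic structure produced below, is canonically attached to $\Gamma$ and not an artifact of a larger field.

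Next I would form $\mathbf{G}=\mathrm{Res}_{k/\Q}\,G$, a semisimple $\Q$-group whose real points factor as $\mathbf{G}(\mathbb{R})\cong\prod_{\varphi}\su(H^{\varphi})(\mathbb{R})$, the product running over the embeddings $\varphi$ of $k$ (equivalently, since $k$ is totally real and Galois, over the automorphisms $\varphi$ of $k$). The identity embedding contributes the non-compact factor $\su(2,1)$ containing $\Gamma$, while each non-trivial $\varphi$ contributes the Galois conjugate $\su(H^{\varphi})(\mathbb{R})$, which is compact exactly when $H^{\varphi}$ is definite. By Borel--Harish-Chandra, $\mathbf{G}(\mathcal{O}_k)=\su(H,\mathcal{O}_{\mathbb{L}})$ is a lattice in $\mathbf{G}(\mathbb{R})$, and $\Gamma$ is commensurable with its image under projection to the identity factor.

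For the sufficiency direction I would argue as follows. If $H^{\varphi}$ is definite for every non-trivial $\varphi$, then all factors of $\mathbf{G}(\mathbb{R})$ other than the identity one are compact; a lattice in a product $K\times L$ with $K$ compact projects with finite kernel onto a lattice in $L$, so the image of $\mathbf{G}(\mathcal{O}_k)$ in $\su(2,1)$ is a lattice commensurable with $\Gamma$, which by definition exhibits $\Gamma$ as arithmetic. Conversely, suppose some $H^{\varphi}$ with $\varphi\neq\mathrm{id}$ is indefinite; then the corresponding factor is non-compact, so $\mathbf{G}(\mathbb{R})$ carries at least two non-compact simple factors. Here I would invoke irreducibility: $\mathbf{G}(\mathcal{O}_k)$ is an irreducible lattice in this product (it is not commensurable to a product of lattices, again because $k$ is the minimal field of definition), and the projection of an irreducible lattice to a proper sub-product of non-compact factors is dense rather than discrete. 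Hence the image of $\mathbf{G}(\mathcal{O}_k)$ in $\su(2,1)$ is non-discrete, so it cannot be commensurable with the lattice $\Gamma$; since uniqueness of the field of definition forces any arithmetic model of $\Gamma$ to be this very $\mathbf{G}$, we conclude that $\Gamma$ is non-arithmetic.

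The hard part will be the necessity direction, and specifically its two structural inputs. First, one must know that the adjoint trace field $k$ really is the minimal field of definition, so that the arithmetic structure attached to $\Gamma$ is forced to be the one coming from $G=\su(H)$ over $k$; this is where one leans on the cited results of section~12 of~\cite{delignemostow} together with the fact that $k$ is a commensurability invariant. Second, one needs the density of the projection of an irreducible lattice onto a proper product of non-compact factors, which is what produces the contradiction with discreteness. By contrast the sufficiency direction is comparatively soft, resting only on Borel--Harish-Chandra and the elementary behaviour of lattices under quotients by compact factors.
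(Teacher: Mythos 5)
Your proposal is correct in substance, but you should know that the paper does not prove this proposition at all: it is quoted as the well-known Mostow/Vinberg arithmeticity criterion, with references to section~4 of \cite{mostowpacific} and section~12 of \cite{delignemostow}. What you wrote is essentially the standard argument underlying those references: identify $\su(H,\mathcal{O}_{\mathbb{L}})$ with the integer points of $\mathbf{G}=\mathrm{Res}_{k/\Q}\,\su(H)$, so that $\mathbf{G}(\R)\cong\prod_\varphi \su(H^{\varphi})(\R)$; Borel--Harish-Chandra makes $\mathbf{G}(\mathcal{O}_k)$ a lattice in this product; if all non-identity factors are compact, the projection to $\su(2,1)$ is a lattice containing (a finite-index subgroup of) $\Gamma$, hence commensurable with it, giving sufficiency; if some non-identity factor is non-compact, the projection of an irreducible lattice to a proper sub-product of the non-compact factors is non-discrete, and uniqueness of the arithmetic model then gives necessity. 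Two precisions are in order. First, the irreducibility of $\mathbf{G}(\mathcal{O}_k)$ is not a consequence of $k$ being the minimal field of definition of $\Gamma$, as your parenthetical suggests; it follows from the fact that $\mathrm{Res}_{k/\Q}\,\su(H)$ is almost $\Q$-simple because $\su(H)$ is absolutely almost simple over $k$ --- a standard feature of restriction of scalars. Second, the ``uniqueness of the arithmetic model'' step, which you correctly single out as the hard structural input of the necessity direction, is exactly what the paper extracts from section~12 of \cite{delignemostow} in the paragraph preceding the proposition (up to complex conjugation, $\Gamma$ is contained in a unique $k$-group whose real points give $\pu(2,1)$, and $k$ is the smallest field with this property). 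Your proof is complete precisely modulo that citation, which puts it on the same footing as the treatment in the references the paper points to.
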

This suggests a way to measure how far a given lattice is from being
arithmetic, see the following definition.
\begin{dfn}
  Let $\Gamma$ be as above.
  \begin{enumerate}
  \item The signature spectrum of $\Gamma$ is the set of signatures
    $(p_i,q_i)$ of the Hermitian form preserved by $G^{\varphi_i}$,
    where $\varphi_i$ ranges over all automorphisms of $k=\Q(\tr{\rm
      Ad}\ \Gamma)$.
  \item The non-arithmeticity index of $\Gamma$ is the number of
    non-trivial automorphisms $\varphi$ of $k$ such that $G^\varphi$
    preserves an indefinite Hermitian form.
  \end{enumerate}
\end{dfn}
Note that the signature spectrum is not completely well-defined, since
$\su(H)=\su(\lambda H)$ for any real number $\lambda\neq 0$ (in
particular one could take $\lambda<0$), but this is really the only
ambiguity. Observe also that the signature spectrum clearly determines
the non-arithmeticity index.

Now the key observation is that a lattice in $\su(2,1)$ acts
irreducibly on $\C^3$, so it preserves a unique Hermitian form (up to
scaling). This is of course also true for the Galois conjugates of a
given lattice. Since a subgroup of finite index in a lattice is also a
lattice, we get that the non-arithmeticity index is a commensurability
invariant. For future reference, we summarize this discussion in the
statement Proposition~\ref{prop:index}.
\begin{prop} \label{prop:index}
  Let $\Gamma$ be as above, and let $\Gamma'\subset \Gamma$ be a
  subgroup of finite index. Then $\Gamma$ and $\Gamma'$ have the same
  signature spectrum, and the same non-arithmeticity index.
\end{prop}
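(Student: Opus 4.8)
The plan is to exploit the key observation highlighted just before the statement: a lattice in $\su(2,1)$ acts irreducibly on $\C^3$, hence preserves a unique Hermitian form up to real scaling, and the same rigidity holds for each Galois conjugate. The entire proof reduces to showing that the data defining the signature spectrum --- the field $k=\Q(\tr{\rm Ad}\ \Gamma)$, its automorphisms $\varphi$, and the signatures of the forms preserved by the conjugate groups $G^\varphi$ --- are unchanged upon passing to a finite-index subgroup $\Gamma'\subset\Gamma$.

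First I would observe that $\Q(\tr{\rm Ad}\ \Gamma)=\Q(\tr{\rm Ad}\ \Gamma')$, since the adjoint trace field is a commensurability invariant (as recalled at the start of section~\ref{sec:tracefields}). Thus both groups share the same base field $k$, so the indexing set of automorphisms $\varphi$ over which the signature spectrum is defined is literally the same for $\Gamma$ and $\Gamma'$; there is nothing to match up. It then remains to compare, for each such $\varphi$, the signature of the form preserved by $G^\varphi$ versus by $(G')^\varphi$, where $G$ (resp.\ $G'$) is the unique $k$-group containing $\Gamma$ (resp.\ $\Gamma'$) whose real points give $\pu(2,1)$.

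The main step is to argue these coincide. Since $\Gamma'$ has finite index in $\Gamma$, it is itself a lattice in $\su(2,1)$, and both are Zariski-dense in the same $k$-algebraic group $G$ (a finite-index subgroup of a lattice is still Zariski-dense by Borel density); consequently the minimal $k$-structure is identical, $G'=G$. Applying a fixed automorphism $\varphi$ to the entries of the defining data is a functorial operation, so the Galois conjugate groups agree, $(G')^\varphi=G^\varphi$, and a fortiori they preserve forms of the same signature. Equivalently, one may argue directly: $\Gamma'\subset\Gamma$ forces $(\Gamma')^\varphi\subset\Gamma^\varphi$ acting on the same $\C^3$; by irreducibility each preserves a unique form up to scaling, and since $(\Gamma')^\varphi$ is a finite-index (hence Zariski-dense, hence still irreducible) subgroup, the form it preserves must be a real multiple of the one preserved by $\Gamma^\varphi$, giving the same signature. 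This establishes that $\Gamma$ and $\Gamma'$ have identical signature spectra, and therefore, since the non-arithmeticity index is read off from the spectrum, identical non-arithmeticity indices as well.

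The hard part is really the bookkeeping that the two lattices sit inside \emph{the same} $k$-group rather than merely isomorphic ones, i.e.\ that conjugating $\Gamma$ by an element of $\pu(2,1)$ to land $\Gamma'$ inside does not secretly change the form's signature under Galois conjugation; this is handled by the irreducibility/uniqueness argument, which forces any two invariant forms to differ by a real scalar and hence share a signature (up to the sign ambiguity already noted after the definition, which does not affect the spectrum as a set of signatures). Once this rigidity is in place, every piece of the signature spectrum is manifestly shared, and the commensurability invariance of the non-arithmeticity index follows immediately.
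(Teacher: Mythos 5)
Your proof is correct and follows essentially the same route as the paper: the paper's justification is precisely the paragraph preceding the proposition (a lattice, and any finite-index subgroup, acts irreducibly on $\C^3$ and hence preserves a unique Hermitian form up to real scaling, and the same holds for Galois conjugates), which you have simply fleshed out with the supporting details (commensurability invariance of the adjoint trace field, Zariski density, and the resulting identification of the $k$-groups and their conjugate forms).
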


\subsection{Commensurators}

In order to refine the partition into commensurability classes, it is
also useful to consider properties of the commensurator proved by
Margulis.  Recall that the commensurator of $\Gamma$ in $G$ is the
group $C_G(\Gamma)$ of elements $g\in G$ such that $\Gamma \cap
g\Gamma g^{-1}$ has finite index in both $\Gamma$ and $g\Gamma
g^{-1}$. The following result follows from Theorem~IX.1.13
in~\cite{margulisBook}.
\begin{thm}\label{thm:commensurator}
  Let $\Gamma$ be a non-arithmetic lattice in $G=\pu(2,1)$. Then
  $\Gamma$ has finite index in $C_G(\Gamma)$, in particular
  $C_G(\Gamma)$ is a lattice.
\end{thm}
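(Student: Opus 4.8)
The plan is to deduce the statement directly from Margulis' commensurator dichotomy, which is the content of Theorem~IX.1.13 in~\cite{margulisBook}. First I would set up the framework: $G=\pu(2,1)$ is a connected simple Lie group of real rank one, with trivial center and no compact factors, and any lattice $\Gamma$ in it is automatically irreducible (there is no genuine irreducibility issue in the rank-one case, since $G$ has no proper connected normal subgroups). I would also record the elementary fact that $\Gamma\subseteq C_G(\Gamma)$, since for every $\gamma\in\Gamma$ one has $\Gamma\cap\gamma\Gamma\gamma^{-1}=\Gamma$, which has finite index in both factors; thus $\gamma$ lies in the commensurator.

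The heart of the matter is Margulis' dichotomy for an irreducible lattice $\Gamma$ in such a $G$: either $C_G(\Gamma)$ is dense in $G$, which happens precisely when $\Gamma$ is arithmetic, or $C_G(\Gamma)$ is discrete, which happens precisely when $\Gamma$ is non-arithmetic. Since by hypothesis $\Gamma$ is non-arithmetic, the first alternative is excluded, and I conclude that $C_G(\Gamma)$ is discrete in $G$.

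It then remains to promote discreteness to the full conclusion, namely that $C_G(\Gamma)$ is a lattice containing $\Gamma$ with finite index. Here I would use that $\Gamma$ itself is a lattice, hence $\mathrm{vol}(G/\Gamma)<\infty$. Writing $C=C_G(\Gamma)$, the inclusion $\Gamma\subseteq C$ of discrete subgroups yields a covering $G/\Gamma\to G/C$ with fiber $C/\Gamma$, so that $\mathrm{vol}(G/C)\leq\mathrm{vol}(G/\Gamma)<\infty$. Since any discrete subgroup of $G$ has strictly positive covolume (the quotient map is a local isometry for Haar measure), $C$ is a lattice, and the number of sheets of the covering, namely the index $[C:\Gamma]$, equals the finite positive ratio $\mathrm{vol}(G/\Gamma)/\mathrm{vol}(G/C)$. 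Hence $\Gamma$ has finite index in $C_G(\Gamma)$, which is what we wanted.

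The only real subtlety, and the step I expect to require the most care in a fully written proof, is the faithful extraction of the ``discrete commensurator'' half of the dichotomy from the precise statement in~\cite{margulisBook}, which is phrased in terms of the density (or non-density) of the commensurability subgroup; once the hypotheses on $G$ and the irreducibility of $\Gamma$ are checked and this identification is made, the closing volume comparison is entirely routine.
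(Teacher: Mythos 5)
Your proposal is correct and takes essentially the same route as the paper, which gives no argument beyond observing that the statement follows from Theorem IX.1.13 of Margulis's book. The details you supply — checking that $G=\pu(2,1)$ is simple with trivial center and no compact factors (so the lattice is automatically irreducible), invoking the dense-or-discrete dichotomy for the commensurator, and the volume comparison promoting discreteness of $C_G(\Gamma)$ to finite index over $\Gamma$ — are precisely the routine steps implicit in that citation, and you carry them out correctly.
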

In section~\ref{sec:numberOfClasses}, we will use the following
reformulation of Theorem~\ref{thm:commensurator} (obtained from the
latter by taking $\Gamma$ to be the common commensurator of $\Gamma_1$
and $\Gamma_2$).
\begin{prop}\label{prop:commonsupgroup}
  Suppose $\Gamma_1$ and $\Gamma_2$ are commensurable non-arithmetic
  lattices in $\pu(2,1)$. Then there exists a lattice $\Gamma$ and a
  $g\in\pu(2,1)$ such that $\Gamma_1$ and $g\Gamma_2g^{-1}$ are both
  finite index subgroups of $\Gamma$.
\end{prop}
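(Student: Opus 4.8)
The plan is to take $\Gamma$ to be the commensurator of $\Gamma_1$ and to check that, after a suitable conjugation, $\Gamma_2$ also sits inside it with finite index; Theorem~\ref{thm:commensurator} then does essentially all the work. First I would unwind the definition of commensurability: since $\Gamma_1$ and $\Gamma_2$ are commensurable, there is some $g\in\pu(2,1)$ such that, writing $\Gamma_2'=g\Gamma_2g^{-1}$, the intersection $\Gamma_1\cap\Gamma_2'$ has finite index in both $\Gamma_1$ and $\Gamma_2'$. After this conjugation the two lattices are \emph{directly} commensurable, which is the form in which the commensurator is easiest to control. Note that $\Gamma_2'$ is again a non-arithmetic lattice, being conjugate to the non-arithmetic lattice $\Gamma_2$.

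The one point that requires an argument is that directly commensurable subgroups have the same commensurator, i.e. $C_G(\Gamma_1)=C_G(\Gamma_2')$. I would prove this through the intermediate group $\Lambda=\Gamma_1\cap\Gamma_2'$, using the elementary fact that a finite-index subgroup $\Lambda$ of a lattice $\Delta$ satisfies $C_G(\Lambda)=C_G(\Delta)$. For the inclusion $C_G(\Delta)\subseteq C_G(\Lambda)$, one takes $h\in C_G(\Delta)$ and observes that $\Lambda\cap h\Lambda h^{-1}$ is obtained from the finite-index subgroup $\Delta\cap h\Delta h^{-1}$ of $\Delta$ by two further finite-index intersections (cutting $\Delta$ down to $\Lambda$ and $h\Delta h^{-1}$ down to $h\Lambda h^{-1}$), so it has finite index in $\Lambda$, and symmetrically in $h\Lambda h^{-1}$; the reverse inclusion is the same bookkeeping. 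Applying this with $\Delta=\Gamma_1$ and then with $\Delta=\Gamma_2'$ yields $C_G(\Gamma_1)=C_G(\Lambda)=C_G(\Gamma_2')$; call this common group $\Gamma$.

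It then remains only to assemble the pieces. Every lattice is contained in its own commensurator, so $\Gamma_1\subseteq\Gamma$ and $\Gamma_2'\subseteq\Gamma$. Since $\Gamma_1$ and $\Gamma_2'$ are non-arithmetic lattices and $\Gamma=C_G(\Gamma_1)=C_G(\Gamma_2')$, Theorem~\ref{thm:commensurator} shows that $\Gamma$ is itself a lattice and that both $\Gamma_1$ and $\Gamma_2'=g\Gamma_2g^{-1}$ have finite index in $\Gamma$, which is exactly the assertion. There is essentially no deep obstacle here: the content lies entirely in Margulis' theorem (Theorem~\ref{thm:commensurator}), and the only thing to verify by hand is the commensurator-invariance of the previous paragraph, which is routine index bookkeeping. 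The one place to stay careful is the reduction to direct commensurability at the start, so that a single conjugating element $g$ is used consistently throughout and the final statement is correctly phrased with $g\Gamma_2g^{-1}$ rather than with $\Gamma_2$ itself.
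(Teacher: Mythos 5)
Your proof is correct and follows essentially the same route as the paper, which simply observes that the statement is an immediate consequence of Theorem~\ref{thm:commensurator} applied to the common commensurator $\Gamma=C_G(\Gamma_1)=C_G(g\Gamma_2g^{-1})$. The index-bookkeeping you supply (that directly commensurable lattices have equal commensurators, via the intermediate group $\Lambda=\Gamma_1\cap g\Gamma_2g^{-1}$) is exactly the routine verification the paper leaves implicit in the phrase ``the common commensurator.''
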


\section{Commensurability relations} \label{sec:commensurability}

The goal of this section is to prove
Theorem~\ref{thm:numberofclasses}. The detailed statement, giving
explicit representatives for each commensurability class, is given in
the form of a table, see Table~\ref{tab:commclasses}. The end result
is that among 2-dimensional non-arithmetic Deligne-Mostow, sporadic
and Thompson lattices, there are precisely 22 commensurability
classes.

\subsection{Some isomorphisms between triangle groups} \label{sec:isomorphisms}

\subsubsection{Non-rigid non-equilateral triangle groups} \label{sec:iso-nonrigid}

\begin{prop}
\begin{enumerate}
\item  For every $\p$, the group $\T(\p,{\bf S_1})$ is
  conjugate to the sporadic triangle group $\S(\p,\bar\sigma_4)$.
\item  For every $\p$, the group $\T(\p,{\bf E_1})$ is
  conjugate to the sporadic triangle group $\S(\p,\sigma_1)$.
\end{enumerate}
\end{prop}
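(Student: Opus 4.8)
The plan is to exhibit, for each of the two families, an explicit conjugating matrix (or, equivalently, a change of basis for the three polar vectors $\mathbf n_1,\mathbf n_2,\mathbf n_3$) that carries the generators $R_1,R_2,R_3$ of the Thompson group into generators of the corresponding sporadic group, and then to verify that the parameters match. The key structural fact I would exploit is the parametrization results quoted above: a symmetric triangle group $\mathcal S(p,\tau)$ is determined up to $\pu(2,1)$-conjugacy by $p$ and $\tau$ (up to a cube root of unity), while a general triple $(R_1,R_2,R_3)$ is determined by $|\rho|$, $|\sigma|$, $|\tau|$ and $\arg(\rho\sigma\tau)$. So rather than manipulate matrices directly, I would reduce everything to a computation of these invariants.

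First I would read off, from Table~\ref{tab:Tvalues}, the triples for ${\bf S_1}$ and ${\bf E_1}$: namely ${\bf S_1}=(\tfrac{1+i\sqrt7}{2},1,1)$ and ${\bf E_1}=(i\sqrt2,1,1)$. For each I would compute the conjugacy invariants $|\rho|^2,|\sigma|^2,|\tau|^2$ and $\arg(\rho\sigma\tau)$. For ${\bf S_1}$ these give $|\rho|^2=2$, $|\sigma|^2=|\tau|^2=1$, and $\rho\sigma\tau=\tfrac{1+i\sqrt7}{2}$; for ${\bf E_1}$ they give $|\rho|^2=2$, $|\sigma|^2=|\tau|^2=1$, and $\rho\sigma\tau=i\sqrt2$. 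Next I would compute the corresponding invariants for the \emph{equilateral} presentations $\mathcal S(p,\overline\sigma_4)$ and $\mathcal S(p,\sigma_1)$, where $\rho=\sigma=\tau$ equals the single parameter. Here $\overline\sigma_4=\tfrac{1+i\sqrt7}{2}$ and $\sigma_1=-1+i\sqrt2$, and I must check that although the symmetric group has $|\rho|^2=|\sigma|^2=|\tau|^2=|\tau_{\mathrm{spor}}|^2$ (which is $2$ for $\overline\sigma_4$ and $3$ for $\sigma_1$), the triple of \emph{moduli} need not match coordinatewise — only the unordered data together with $\arg(\rho\sigma\tau)$ is a conjugacy invariant.

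The main obstacle — and the crux of the matter — is that the moduli do \emph{not} match coordinatewise: the Thompson triples have moduli $(\sqrt2,1,1)$ while the sporadic ones are genuinely equilateral with all three moduli equal. The resolution is that the parametrization proposition controls conjugacy only for a \emph{fixed ordering and normalization} of the generators; the honest invariants that survive arbitrary change of generating triple are the traces of the short words, i.e. the values $\mathrm{tr}(R_iR_j)$ and $\mathrm{tr}(R_1R_2R_3)$, together with the braid lengths recorded in Remark~\ref{rk:char}(2). So the robust strategy is: I would compute, using the trace formulas of section~\ref{sec:noneq}, the multiset of traces $\{\mathrm{tr}(R_1R_2),\mathrm{tr}(R_2R_3),\mathrm{tr}(R_3R_1)\}$, the value $\mathrm{tr}(R_1R_2R_3)$, and the trace of $1\bar323$, for both the Thompson and the sporadic presentations, and check they agree as unordered data. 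By Remark~\ref{rk:char}(2), a triangle group with non-loxodromic control words is characterized up to complex conjugation by the order of $R_1R_2R_3$ and the braid lengths of the control-word pairs; verifying these agree forces a conjugacy. I expect the genuine work to be confirming that the \emph{unordered} braid data coincide — this is where the apparent asymmetry $(\sqrt2,1,1)$ versus equilateral is reconciled, because cycling and re-choosing the generators of the sporadic group produces a non-equilateral-looking but conjugate triple. Finally, once the invariants match, the parametrization proposition (in its strong form, accounting for the cube-root-of-unity ambiguity) yields the desired $\pu(2,1)$-conjugacy for every $p$, since the braid/trace conditions were already shown to be independent of the rotation angle $2\pi/p$.
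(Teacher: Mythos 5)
There is a genuine gap, and it sits exactly at the point you flag as the crux. Your proposed resolution is to compute the traces of the short words (equivalently the braid lengths and the order of $Q$) for the two \emph{given} presentations and "check they agree as unordered data". This check fails: for the equilateral generators of $\S(\p,\bar\sigma_4)$ the pairwise braid lengths are $\{4,4,4\}$ and the control words braid with length $3$ (type $4,4,4;3,3,3;7$), while for the ${\bf T}={\bf S_1}$ generators they are $\{3,3,4\}$ with $d=4$ (and the remaining control braids also equal $4$, giving type $3,3,4;4,4,4;7$). No reordering or cyclic relabelling reconciles $\{4,4,4\}$ with $\{3,3,4\}$, because permuting the three reflections cannot change the multiset of pairwise braid lengths. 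The underlying error is the claim that traces of short words are "invariants that survive arbitrary change of generating triple": they are invariants of a \emph{triple} up to conjugation, not of the group, and here the two triples are genuinely non-conjugate — only the groups they generate are conjugate. For the same reason Remark~\ref{rk:char}(2) cannot be invoked directly: it characterizes the parameter ${\bf T}$ of a triple realizing given braid data, so it only helps once you have produced, inside the sporadic group, some generating triple with the ${\bf S_1}$ braid data.

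Producing that triple is the actual content of the proof, and it is the step your proposal leaves as a hope ("cycling and re-choosing the generators produces a non-equilateral-looking but conjugate triple"). The paper does it explicitly: for part (1) one takes $M_1=R_3^{-1}R_2R_3$, $M_2=R_2R_3R_2^{-1}$, $M_3=R_1$ inside $\S(\p,\bar\sigma_4)$ — note these are conjugated generators, not a permutation — checks they generate the same group (by writing $R_1=M_3$, $R_2=M_2^{-1}M_1M_2$, $R_3=M_1M_2M_1^{-1}$), and computes the parameters of the new triple from the polar vectors ${\bf m}_1=R_3^{-1}{\bf n}_2$, ${\bf m}_2=R_2{\bf n}_3$, ${\bf m}_3={\bf n}_1$, getting $|\rho'|=\sqrt{2}$, $|\sigma'|=|\tau'|=1$, $\rho'\sigma'\tau'=(1+i\sqrt{7})/2$; only then does the parametrization proposition for triples yield the conjugacy. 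Part (2) needs the still less guessable words $M_1=R_2R_3R_2R_3^{-1}R_2^{-1}$, $M_2=R_1R_3^{-1}R_2^{-1}R_3R_2R_3R_1^{-1}$, $M_3=R_1$. Without exhibiting such words (and verifying both that they generate the whole group and that their parameters match), your argument does not go through; with them, the invariant comparison you describe becomes the easy final step.
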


\begin{pf}
We start with the proof of part~(1).

We write $R_1$, $R_2$, $R_3$ for standard generators of a sporadic
triangle group for $\bar\sigma_4$. Recall that this is characterized
up to conjugation by $\tr R_1J=\bar\sigma_4$, and that this implies
that $R_1J$ has order 7,
\begin{eqnarray}\label{eq:braid4}
  &(R_iR_j)^2=(R_jR_i)^2
\end{eqnarray}
i.e. $\br(R_i,R_j)=4$ (for $i\neq j$), and
\begin{eqnarray}\label{eq:braid3}
  &R_1(R_2R_3R_2^{-1})R_1=(R_2R_3R_2^{-1})R_1(R_2R_3R_2^{-1})\\
  &R_1(R_3^{-1}R_3R_2)R_1=(R_3^{-1}R_3R_2)R_1(R_3^{-1}R_3R_2)
\end{eqnarray}
i.e. $\br(R_1,R_2R_3R_2^{-1})=\br(R_1,R_3^{-1}R_2R_3)=3$.

Now consider the group elements $M_1=R_3^{-1}R_2R_3$,
$M_2=R_2R_3R_2^{-1}$ and $M_3=R_1$. These three matrices generate the
sporadic group, since
$$
R_1=M_3,\quad R_2=M_2^{-1}M_1M_2,\quad R_3=M_1M_2M_1^{-1}.
$$

We claim that the three isometries $M_1,M_2,M_3$ (can be
simultaneously conjugated to) generate an ${\bf S_1}$ group.
Let ${\bf m}_1=R_3^{-1}{\bf n}_2$, ${\bf m}_2=R_2{\bf n}_3$ and
${\bf m}_3={\bf n}_1$ be polar vectors to the mirrors of 
$R_1$, $R_2$ and $R_3$. Then the parameters associated with
$\langle M_1,M_2,M_3\rangle$ are
\begin{eqnarray*}
\rho' & = & (u^2-\bar{u})\frac{\langle{\bf m}_2,{\bf m}_1\rangle}
{\Vert{\bf m}_2\Vert\,\Vert{\bf m}_1\Vert}=-\bar{u}^2\bar\sigma_4,\\
\sigma' &= &(u^2-\bar{u})\frac{\langle{\bf m}_3,{\bf m}_2\rangle}
{\Vert{\bf m}_3\Vert\,\Vert{\bf m}_2\Vert}
=\tau'=(u^2-\bar{u})\frac{\langle{\bf m}_1,{\bf m}_3\rangle}
{\Vert{\bf m}_1\Vert\,\Vert{\bf m}_3\Vert}
=u(\bar\sigma_4-\sigma_4^2).
\end{eqnarray*}
Since $\bar\sigma_4-\sigma_4^2=1$ we see that 
$|\rho'|=\sqrt{2}$, $|\sigma'|=|\tau'|=1$ and 
$\rho'\sigma'\tau'=-\bar\sigma_4=(1+i\sqrt{7})/2$. 
These are the same parameters as for ${\bf S}_1$. Therefore the two 
groups are conjugate.

The proof of part~(2) is similar. In that case the sporadic group is
defined by $\tau=-1+i\sqrt{2}$, $(R_1R_2R_3)$ has order 8 and 
$$
\br(R_j,R_k)=6,\ \br(R_1,R_2R_3R_2^{-1})=4,\ \br(R_1,R_2R_3R_2R_3^{-1}R_2^{-1})=3.
$$

Explicit generators of type ${\bf E_1}$ are given by 
$$
M_1=R_2R_3R_2R_3^{-1}R_2^{-1},\quad M_2=R_1R_3^{-1}R_2^{-1}R_3R_2R_3R_1^{-1},\quad M_3=R_1,
$$
and these generate the same group because
$$
R_1=M_3,\ R_2=(M_3^{-1}M_2^{-1}M_3)M_1(M_3^{-1}M_2M_3),\ R_3=M_1(M_3^{-1}M_2M_3)M_1^{-1}.
$$
Setting ${\bf m}_1=R_2R_3{\bf n}_2$, ${\bf m}_2=R_1R_3^{-1}R_2^{-1}{\bf n}_3$
and ${\bf m}_3={\bf n}_1$ we have
$$
\rho'=4\bar\tau^2-11\tau+\tau^4=i\sqrt{2},\quad 
\sigma'=-\bar{u}(2\tau-\bar{\tau}^2)=\bar{u},\quad
\tau'=-u(2\tau-\bar{\tau}^2)=u.
$$
\end{pf}

The Thompson groups with $\p=2$ were shown to be commensurable to
explicit Mostow groups in~\cite{thompson}. In a similar vein, we have
the following.
\begin{prop}\label{prop:mostow_saves_us}
\begin{enumerate}
\item The group $\T(7,{\bf \bar H_1})$ is conjugate to the Mostow
  group $\Gamma(7,9/14)$.
\item The group $\T(5,{\bf \bar H_2})$ is conjugate to the Mostow
  group $\Gamma(5,7/10)$.
\end{enumerate}
\end{prop}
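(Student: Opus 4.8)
The plan is to run the same change-of-generators argument as in the preceding proposition, but now arranged so that the new triple is \emph{equilateral}: recall that a Mostow group $\Gamma(p,t)$ is precisely the equilateral group $\S(p,\tau)$ with $\tau=e^{\pi i(3/2-1/(3p)-t/3)}$ (see Section~\ref{sec:equilateral}), and that on the Mostow curve $|\tau|=1$. For part~(1), I would first write down the standard generators $R_1,R_2,R_3$ of $\T(7,{\bf \bar H_1})$ using the explicit matrices of Section~\ref{sec:noneq}, with $u=e^{2\pi i/21}$ and parameters $(\rho,\sigma,\tau)=\overline{\bf H_1}$. I would then look for three words $M_1,M_2,M_3$ in the $R_j$ such that: (a) they generate the same subgroup of $\pu(2,1)$, which I would certify by exhibiting inverse word expressions recovering $R_1,R_2,R_3$ from the $M_j$, exactly as in the previous proof; and (b) each $M_j$ is a conjugate of one of the $R_k$, hence again a complex reflection of angle $2\pi/7$.

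Next, writing ${\bf m}_1,{\bf m}_2,{\bf m}_3$ for the polar vectors obtained by applying the relevant words to the ${\bf n}_j$, I would compute the associated parameters $\rho',\sigma',\tau'$ from the formulas of Section~\ref{sec:noneq}. The goal is to verify that $|\rho'|=|\sigma'|=|\tau'|=1$ (so the new triangle is equilateral and lies on the Mostow curve) and that $\rho'\sigma'\tau'$ equals the cube of the Mostow parameter $\tau=e^{\pi i(3/2-1/21-3/14)}$ of $\Gamma(7,9/14)$. By the parametrization of reflection triples up to conjugacy recalled in Section~\ref{sec:noneq} — a triple of fixed reflection angle is determined up to $\pu(2,1)$-conjugacy by $(|\rho|,|\sigma|,|\tau|,\arg(\rho\sigma\tau))$ — this matching of invariants immediately yields conjugacy to $\Gamma(7,9/14)$. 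Part~(2) is handled by the identical scheme applied to $\T(5,{\bf \bar H_2})$, with $u=e^{2\pi i/15}$ and parameters $\overline{\bf H_2}$, matching against the Mostow parameter $\tau=e^{\pi i(3/2-1/15-7/30)}$ of $\Gamma(5,7/10)$.

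The main obstacle is finding the correct words in step (a)--(b); the parameter computation afterwards is a finite, if tedious, matrix calculation that is best confirmed on a computer. The braid data of $\overline{\bf H_1}$ and $\overline{\bf H_2}$ (the order of $R_1R_2R_3$ and the braid lengths of the control pairs, recorded in Table~\ref{tab:Tvalues}) sharply constrain which short words can serve as equilateral generators, and the very failure of the algorithm for these two groups — noted in Section~\ref{sec:resultspoincare}, where the offending ridge cycles carry transformations such as $R_1R_2$ for $\T(7,{\bf \bar H_1})$ — is a strong hint that a ``hidden'' order-three isometry cyclically permutes the mirrors of the sought-after $M_j$. I therefore expect the right words to be guided precisely by the elements appearing in those problematic cycles, and once a candidate triple is written down the verification reduces to the routine identities above.
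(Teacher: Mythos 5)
Your plan is the paper's own change-of-generators argument run in the opposite direction: the paper starts inside the Mostow group and exhibits explicit words ($M_1=R_1$, $M_2=(R_2R_1^{-1}R_2)^{-1}R_1(R_2R_1^{-1}R_2)$, $M_3=R_3$ for part (1); $M_1=R_1$, $M_2=R_2^{-1}R_3R_2$, $M_3=R_2$ for part (2)) whose parameters $(\rho',\sigma',\tau')$ match the Thompson data $\overline{\bf H}_1$, $\overline{\bf H}_2$, whereas you start inside the Thompson group and look for an equilateral triple matching the Mostow data. Both directions are viable in principle, and your verification scheme (compute $\rho',\sigma',\tau'$, invoke the classification of triples by $|\rho|,|\sigma|,|\tau|,\arg(\rho\sigma\tau)$, and certify generation by inverse word identities) is exactly the one the paper uses. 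But as you concede, the entire mathematical content of the proof is the exhibition of the correct words together with the resulting trace computation; a strategy that defers this to a computer search, with only heuristic guidance about where to look, is not yet a proof of either part.

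There is also a concrete logical gap at the end of your argument. Matching the invariants of your triple $(M_1,M_2,M_3)$ with those of the standard equilateral triple $(R_1,R_2,R_3)$ of $\S(p,\tau)$ gives conjugacy of $\T(p,{\bf T})$ to $\langle R_1,R_2,R_3\rangle$ only; but the Mostow group $\Gamma(p,t)$ is by definition $\langle R_1,J\rangle$, and $\langle R_1,R_2,R_3\rangle$ may a priori have index $3$ in it (as the paper notes in section~\ref{sec:spectrum}). So your assertion that matching invariants ``immediately yields conjugacy to $\Gamma(7,9/14)$'' is not quite complete: one must additionally check that $\langle R_1,R_2,R_3\rangle=\langle R_1,J\rangle$ for these two Mostow groups. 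The paper does this in part (2) by observing that $3$ does not divide the order of $R_1J$ (which is $4$ for both $\Gamma(7,9/14)$ and $\Gamma(5,7/10)$), so that $J$ is a word in $Q=(R_1J)^3$ and $R_1$; your write-up needs the same step, otherwise the conclusion could fail by an index-$3$ discrepancy.
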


\begin{pf}
\begin{enumerate}
\item
  In the group $\Gamma(7,9/14)$, one
  verifies that $M_1=R_1$,
  $M_2=(R_2R_1^{-1}R_2)^{-1}R_1(R_2R_1^{-1}R_2)$ and $M_3=R_3$ are
  conjugate to standard generators for $\T(7,{\bf \bar H_1})$. 
  Writing ${\bf m}_1={\bf n}_1$, ${\bf m}_2=(R_2R_1^{-1}R_2)^{-1}{\bf n}_1$,
  ${\bf m}_3={\bf n}_3$ and $\tau^3=1$, we find
  $$
  \rho'=e^{6\pi i/7}(-1-i\sqrt{7})/2,\quad
  \sigma'=e^{6\pi i/7}\bar\tau,\quad \tau'=\tau.
  $$
  Since $\tau^3=1$ this means
  $$
  |\rho'|=\sqrt{2},\quad |\sigma'|=|\tau'|=1,\quad \rho'\sigma'\tau'=
  e^{-2\pi i/7}(-1-i\sqrt{7})/2
  $$
  as required.
  
  One can check that
  $$
    R_2=M_3(M_2M_3M_2^{-1}M_1)^{-3}M_1^{-1}M_2M_1,
  $$ 
  which shows that $M_1$, $M_2$, $M_3$ generate the same group as
  $R_1$, $R_2$, $R_3$.
\item
  In the group $\Gamma(5,7/10)$, one verifies that $M_1=R_1$,
  $M_2=R_2^{-1}R_3R_2$ and $M_3=R_2$ are conjugate to standard
  generators for $\T(5,{\bf \bar H_2})$. Indeed, writing ${\bf m}_1={\bf n}_1$,
  ${\bf m}_2=R_2^{-1}{\bf n}_3$, ${\bf m}_3={\bf n}_2$ and arguing as above:
  $$
  \rho'=-\bar{u}\bar{\tau}-u^2\tau^2,\quad
  \sigma'=-u^3\bar{\tau},\quad
  \tau'=-u\bar{\tau}.
  $$
  In $\Gamma(5,7/10)$ we have $u=e^{2\pi i/15}$ and $\tau=-e^{-i\pi/3}$.
  Hence $|\rho'|=2\cos(\pi/5)$, $|\sigma'|=|\tau'|=1$ and 
  $\rho'\sigma'\tau'=-e^{2\pi i/5}-e^{4\pi i/5}$.
  
  Moreover, $R_1$, $R_2$ and $R_3$
  generate the corresponding Mostow group (the clearly
  generate the subgroup generated by $R_1$, $R_2$ and $R_3$, and that
  subgroup is equal to the group generated by $R_1$ and $J$, because
  3 does not divide the order of $R_1J$, which is 4,
  see~\cite{sauter} for instance).
\end{enumerate}
\end{pf}

\subsubsection{Rigid non-equilateral triangle groups} \label{sec:iso-rigid}

In this section by explain some relations of rigid triangle groups
(Thompson groups with parameters ${\bf S_2}$, ${\bf S_3}$, ${\bf S_4}$
or ${\bf E_3}$, see Table~\ref{tab:Trigid}) with other triangle
groups.
\begin{prop}
  For every $p=5,6,7,8,9,10,12,18$, $\T(p,{\bf S_3})$ is the Livn\'e
  group with parameter $p$.
\end{prop}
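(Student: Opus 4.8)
The plan is to follow the template of the proofs of Proposition~\ref{prop:mostow_saves_us}: exhibit three order-$p$ complex reflections sitting inside the Livné lattice that generate it and whose mirror configuration reproduces the parameters of ${\bf S_3}$, and then invoke the fact (recalled in Section~\ref{sec:noneq}, following~\cite{pratoussevitch} and~\cite{parkertraces}) that a triple of reflections of fixed angle $2\pi/p$ is determined up to conjugacy in $\pu(2,1)$ by $|\rho|$, $|\sigma|$, $|\tau|$ and $\arg(\rho\sigma\tau)$. First I would make $\T(p,{\bf S_3})$ fully explicit: setting $\rho=1$, $\sigma=0$, $\tau=1$ in the matrices of Section~\ref{sec:noneq}, one reads off the defining structure. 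The vanishing $\sigma=0$ forces the mirrors of $R_2$ and $R_3$ to be orthogonal, so $R_2$ and $R_3$ commute; the equalities $|\rho|=|\tau|=1$ give $\br(R_1,R_2)=\br(R_3,R_1)=3$ by Proposition~\ref{prop:braiding}; and $o(R_1R_2R_3)=4$. Thus $\T(p,{\bf S_3})$ is the group generated by three order-$p$ reflections subject to these relations together with the order of the central element.

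Next I would fix a concrete model of the Livné lattice with parameter $p$. In the Deligne--Mostow framework this is the lattice $\Gamma_\mu$ attached to the $4$-fold symmetric weight vector $\mu=\frac1{2p}(p-2,p-2,p-2,p-2,8)$; for $p=5$ this is exactly $\mu=(3,3,3,3,8)/10$, the group conjugate to the Mostow group $\Gamma(5,1/2)$. More generally it is a Mostow group $\Gamma(p,t_p)$ carrying standard order-$p$ reflection generators $\hat R_1,\hat R_2,\hat R_3$ and an order $3$ symmetry $J$ with explicit matrices. A reassuring consistency check is that the quantity $(1-\mu_i-\mu_j)^{-1}$ for the two equal weights equals $p/2$, which is a half-integer precisely when $p$ is odd: this recovers exactly the split between the $\Sigma$-INT cases $p=5,7,9$ (the three problematic Mostow groups of Section~\ref{sec:resultsembedded}) and the INT cases $p=6,8,10,12,18$.

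Working inside $\Gamma(p,t_p)$, I would then search for words $M_1,M_2,M_3$ in $\hat R_i$ and $J$ that are order-$p$ reflections with $M_2,M_3$ commuting and $\br(M_1,M_2)=\br(M_3,M_1)=3$, exactly as the words $M_1,M_2,M_3$ were produced in the proof of Proposition~\ref{prop:mostow_saves_us}. Computing the Hermitian products $\langle{\bf m}_i,{\bf m}_j\rangle$ of their polar vectors, I would verify that the associated triple satisfies $|\rho'|=|\tau'|=1$, $|\sigma'|=0$ and that $\arg(\rho'\sigma'\tau')$ matches that of $(1,0,1)$ up to the allowed cube-root and (for $p=2$) complex-conjugation ambiguity, and that $M_1M_2M_3$ has order $4$. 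By the determinacy result this identifies $\langle M_1,M_2,M_3\rangle$ with $\T(p,{\bf S_3})$; a separate computation expressing $\hat R_1,\hat R_2,\hat R_3$ back as words in the $M_j$ then shows that the $M_j$ generate all of $\Gamma(p,t_p)$.

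The hard part will be twofold. First, one wants words $M_1,M_2,M_3$ realizing $(\rho,\sigma,\tau)\sim(1,0,1)$ uniformly across all eight values of $p$, rather than through eight separate searches. Second, and more seriously, the generation claim must be established: matching the invariants only shows that $\langle M_1,M_2,M_3\rangle$ is conjugate to $\T(p,{\bf S_3})$, so one still has to verify by an explicit word computation that these three reflections recover the \emph{full} Livné lattice and not merely a finite-index subgroup. Finally, pinning down the correct Mostow parameter $t_p$ for each $p$, and confirming that the resulting group is genuinely Livné's lattice rather than a Galois conjugate of it, is the remaining bookkeeping.
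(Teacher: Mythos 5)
Your strategy---exhibiting an ${\bf S_3}$-triple inside the Livn\'e lattice and matching the parameters $(1,0,1)$, in the spirit of Proposition~\ref{prop:mostow_saves_us}---is legitimate in principle, but it is built on a false model of the Livn\'e lattice, and this sinks five of the eight cases. Via equation~\eqref{eq:converttomu}, your $4$-fold symmetric weight vector $\mu=\frac{1}{2p}(p-2,p-2,p-2,p-2,8)$ corresponds to the Mostow group $\Gamma\bigl(p,\tfrac{5}{p}-\tfrac{1}{2}\bigr)$, whereas the Livn\'e group with parameter $p$ is the $k=2$ Mostow group $\Gamma\bigl(p,\tfrac{1}{2}+\tfrac{1}{p}\bigr)$, whose weights are $\frac{1}{2p}(p-2,p-2,p-2,2,p+4)$ and have only $3$-fold symmetry. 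These two lattices coincide (up to conjugacy, by Sauter's isomorphisms) exactly for the odd values $p=5,7,9$; for $p=6,8,10,12,18$ they are different lattices. Concretely, for $p=6$ your vector $(4,4,4,4,8)/12$ gives $\Gamma(6,1/3)$, which has $o(P)=6$ and $\chi^{orb}=1/18$, while the Livn\'e group $\Gamma(6,2/3)$ has $o(P)=4$ and $\chi^{orb}=1/72$: the covolumes differ by a factor of $4$, so no change of generators can identify them. Your ``consistency check'' (INT for even $p$, $\Sigma$-INT for odd $p$) is therefore misleading rather than reassuring: for even $p$ the INT group you wrote down is simply a different (arithmetic) Deligne--Mostow lattice, and carrying out your program inside it would prove the proposition for the wrong group.

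The paper's proof goes in the opposite direction, which makes the two steps you flag as hard disappear. Inside $\T(p,{\bf S_3})$ one takes the reflections $R_1$, $R_1R_2R_1^{-1}$, $R_3$: they visibly generate the group (no index issue can arise), and by the change-of-parameters computations of~\cite{kamiyaparkerthompson} they pairwise braid with length $3$. That is precisely the $3,3,3;\,k,k,k;\,2k$ structure of the Mostow family, and since $R_1R_2R_3$ has order $4$ one is in the $k=2$ case, i.e.\ the Livn\'e family, with the parameter determined by $p$ (see section~\ref{sec:mostowinvariants}). By contrast, even after replacing your weight vector by the correct one, your proposal still defers its essential content---producing the words $M_j$ uniformly in $p$, verifying the parameter match, and proving generation of the full lattice---so as it stands it is an outline whose decisive steps are unexecuted.
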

\begin{pf}
  This follows from changes of parameters as
  in~\cite{kamiyaparkerthompson}. More specifically, in $\T(p,{\bf
    S_3})$, the complex reflections $R_1$, $R_1R_2R_1^{-1}$, $R_3$
  are generators that pairwise have braid length 3. This allows us to
  identify as Mostow groups, see section~\ref{sec:mostowinvariants} for more
  details.
\end{pf}

\begin{prop}
  The lattices $\T(p,{\bf S_4})$, $p=4,5,6,8,12$ are isomorphic to
  arithmetic Mostow groups.
\end{prop}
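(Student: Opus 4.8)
The plan is to follow the change-of-generators strategy used for the Livné groups $\T(p,{\bf S_3})$ and in Proposition~\ref{prop:mostow_saves_us}, namely to exhibit inside $\T(p,{\bf S_4})$ a new triple $M_1,M_2,M_3$ of complex reflections of angle $2\pi/p$ that pairwise satisfy a braid relation of length $3$. By Proposition~\ref{prop:braiding} with $q=3$, a pair of order-$p$ reflections braids to length $3$ when the associated parameter has modulus $1$, and an equilateral such triple lies on the Mostow curve $\tau=-e^{i\phi/3}$ of Section~\ref{sec:equilateral}; so a triple $(M_1,M_2,M_3)$ with all three moduli $|\rho'|=|\sigma'|=|\tau'|=1$ is conjugate to a Mostow group $\Gamma(p,t)$, with $t$ read off from $\arg(\rho'\sigma'\tau')$ via $\tau=e^{\pi i(\frac32-\frac{1}{3p}-\frac t3)}$.

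The usable structural input is the degeneracy of ${\bf S_4}=(2,3,4;4)$: since $\sigma=0$ in Table~\ref{tab:Trigid}, the mirrors of $R_2$ and $R_3$ are orthogonal, so $R_2$ and $R_3$ commute, while $\br(R_3,R_1)=3$. I would search among short conjugates of the generators (in the spirit of $M_2=R_1R_2R_1^{-1}$ used for ${\bf S_3}$) for the correct $M_1,M_2,M_3$; since each candidate is a conjugate of some $R_j$, it is automatically a complex reflection of angle $2\pi/p$, so the resulting Mostow group shares the same value of $p$. For the chosen triple I would then compute the parameters $\rho',\sigma',\tau'$ from the Hermitian-product formulas of Section~\ref{sec:noneq}, check that all three have modulus $1$, and evaluate $\arg(\rho'\sigma'\tau')$; by the parametrization of triples up to conjugacy by $(|\rho|,|\sigma|,|\tau|,\arg(\rho\sigma\tau))$ recalled after the generator matrices in Section~\ref{sec:noneq}, this pins down a specific Mostow parameter $t=t_p$ for each $p\in\{4,5,6,8,12\}$.

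To complete the identification I would verify that $M_1,M_2,M_3$ generate all of $\T(p,{\bf S_4})$ by writing each $R_j$ explicitly as a word in the $M_i$, exactly as in the proofs of the preceding propositions; together with the parameter match this gives a conjugacy $\T(p,{\bf S_4})\cong\Gamma(p,t_p)$ in $\pu(2,1)$, hence an isomorphism. Arithmeticity of the resulting $\Gamma(p,t_p)$ then follows either from the known classification of arithmetic Deligne-Mostow lattices or, within the framework of Sections~\ref{sec:tracefields} and~\ref{sec:spectrum}, from computing the adjoint trace field and checking that the non-arithmeticity index is $0$, so that the Mostow/Vinberg criterion is satisfied. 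The main obstacle is the first step: guessing the correct words $M_1,M_2,M_3$ and matching $\tau'$ to the precise Mostow parameter for all five values of $p$ at once. The generation check and the final arithmeticity verification are then routine, the latter amounting to a table lookup once each $t_p$ is known (possibly after replacing $p$ by $-p$ and ${\bf S_4}$ by its conjugate via the identity $\T(p,{\bf T})=\T(-p,{\bf \bar T})$).
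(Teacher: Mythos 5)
Your overall strategy (change generators, match the $(|\rho'|,|\sigma'|,|\tau'|,\arg(\rho'\sigma'\tau'))$ invariants to a Mostow value, then look up arithmeticity) is sound in outline, but as written it has a genuine gap: the entire content of the statement is concentrated in the step you defer, namely producing explicit words $M_1,M_2,M_3$ in $\T(p,{\bf S_4})$ that pairwise braid with length $3$ and verifying their parameters. ``I would search among short conjugates'' is a program, not a proof, and you flag this yourself as the main obstacle. There is also a secondary issue at the end: matching the invariants only identifies $\langle M_1,M_2,M_3\rangle$ with the reflection subgroup $\langle R_1,R_2,R_3\rangle$ of a Mostow group, which may have index $3$ in $\Gamma(p,t)=\langle R_1,J\rangle$; to conclude an isomorphism with the Mostow group itself you need an extra argument (compare the end of the proof of Proposition~\ref{prop:mostow_saves_us}, where this point is handled via the order of $R_1J$).

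The paper sidesteps exactly the difficulty you identify by running the argument in the opposite direction: it starts from the five arithmetic Mostow groups $\Gamma(4,1/4)$, $\Gamma(5,1/10)$, $\Gamma(6,0)$, $\Gamma(8,1/8)$, $\Gamma(12,1/4)$ and exhibits $(2,3,4)$-triangle generators inside them, namely $R_1$, $J(R_1R_2)^{-1}$, $R_2$ (or $R_1$, $(R_1R_3J)^{-1}$, $R_3$ for $p=8,12$). This direction has several payoffs. First, $J(R_1R_2)^{-1}$ and $R_1R_3J$ are already known to be complex reflections of known angle in Mostow groups, so no candidate-hunting is needed. Second, the required braid relations follow from short word manipulations using only $\br(R_1,R_2)=3$: commutation of $J(R_1R_2)^{-1}$ with $R_2$ is a one-line computation, and $\br(R_1,J(R_1R_2)^{-1})=4$ follows from showing $(R_1JR_2^{-1}R_1^{-1})^2=(JR_2^{-1})^2$ and ruling out commutation; no Hermitian-product or matrix computation enters. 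Third, generation of the whole Mostow group by the new triple is immediate, since the product of the triple recovers $J$. Finally, because $(2,b,c)$-triangle groups are rigid in $\pu(2,1)$, the braid data $(2,3,4)$ together with the reflection order $p$ already pins down the conjugacy class — there is no continuous parameter $t_p$ to match, which is precisely what makes ``all five values of $p$ at once'' tractable. Arithmeticity is then, as you say, a table lookup for these specific Mostow groups.
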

\begin{pf}
In the Mostow group generated by $R_1$ and $J$, the elements
$J(R_1R_2)^{-1}$ and $R_1R_3J$ are complex reflections, with known angle
(see~\cite{mostowpacific}, or~\cite{parkersurvey}).

Note also that $J(R_1R_2)^{-1}$ commutes with $R_2$, since
$$
  J R_2^{-1}R_1^{-1}\cdot R_2\cdot R_1R_2 J^{-1}\cdot R_2^{-1}=JR_1J^{-1}R_2^{-1}=Id,
$$ 
where we have used the braid relation $\br(R_1,R_2)=3$. Similarly, one
checks that $R_1R_3J$ commutes with $R_3$.

Also, we have
$$
(R_1JR_2^{-1}R_1^{-1})^2=R_1(JR_2^{-1})^2R_1^{-1}=(JR_2^{-1})^2.
$$ 
This implies that the braid length $\br(R_1,J(R_1R_2)^{-1})$ is either
2 or 4, but one easily checks that these two complex reflections do
not commute, so $\br(R_1,J(R_1R_2)^{-1})=4$.
 
Above, we have used the fact that $R_1$ commutes with $(JR_2^{-1})^2$,
which is true since
$$
\bigl[R_1,(JR_2^{-1})^2\bigr]
=R_1JR_2^{-1}J^{-1}J^{-1}R_2^{-1}R_1^{-1}R_2JJR_2J^{-1}
=R_1R_3^{-1}R_1^{-1}R_3^{-1}R_1R_3=Id.
$$

In the five Mostow groups listed in Table~\ref{tab:mostow234}, the
corresponding elements (either $J(R_1R_2)^{-1}$ or $(R_1R_3J)^{-1}$
depending on the order of generators) have the same order as $R_j$,
and the elements in the second column are $(2,3,4)$ triangle group
generators.
\begin{table}
  \begin{tabular}{c|c}
    Mostow group        &    (2,3,4)-generators\\
\hline
     $\Gamma(4,1/4)$    &   $R_1$, $J(R_1R_2)^{-1}$, $R_2$\\    
     $\Gamma(5,1/10)$   &   $R_1$, $J(R_1R_2)^{-1}$, $R_2$\\    
     $\Gamma(6,0)$      &   $R_1$, $J(R_1R_2)^{-1}$, $R_2$\\    
     $\Gamma(8,1/8)$    &   $R_1$, $(R_1R_3J)^{-1}$, $R_3$\\    
     $\Gamma(12,1/4)$   &   $R_1$, $(R_1R_3J)^{-1}$, $R_3$\\    
  \end{tabular}
  \caption{We write these 5 Mostow groups as (2,3,4)-triangle groups,
    by considering the triple of reflections in the second
    column.}\label{tab:mostow234}
\end{table}
  \end{pf}

\begin{prop} \label{prop:isoS5}
  For every $p>2$, $\T(p,{\bf S_5})$ is isomorphic to the group
  $\S(p,\sigma_{10})$. 
\end{prop}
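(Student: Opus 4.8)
The plan is to realize the isomorphism as an explicit change of generators, certified by the conjugacy classification recalled above: a triple of reflections of order $p$ is determined up to $\pu(2,1)$-conjugacy by the three moduli $|\rho|$, $|\sigma|$, $|\tau|$ together with $\arg(\rho\sigma\tau)$. Since $\S(p,\sigma_{10})$ is equilateral with $\tau=\sigma_{10}=(1+\sqrt5)/2$, its standard generators have $\rho=\sigma=\tau=\sigma_{10}$, so $|\rho|=|\sigma|=|\tau|=\sigma_{10}$ and $\arg(\rho\sigma\tau)=0$. The aim is therefore to produce three reflections $M_1,M_2,M_3$ inside $\T(p,{\bf S_5})$ realizing exactly these invariants, cyclically permuted by an order-3 element $J$, and generating the whole group.

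First I would record the defining data of $\T(p,{\bf S_5})$: since $\sigma=0$, the mirrors of $R_2$ and $R_3$ are orthogonal, so $R_2$ and $R_3$ commute; by Proposition~\ref{prop:braiding} one has $\br(R_1,R_2)=5$ and $\br(R_3,R_1)=3$, and $R_1R_2R_3$ has order $5$. A useful consistency check, computed from the trace formulas of Section~\ref{sec:noneq}, is that ${\rm tr}(R_1R_2R_3)=3-|\rho|^2-|\sigma|^2-|\tau|^2+\rho\sigma\tau=(1-\sqrt5)/2$, a value independent of $p$; the same formula for the equilateral group gives $3-3\sigma_{10}^2+\sigma_{10}^3=(1-\sqrt5)/2$ as well, so in both descriptions $R_1R_2R_3$ has characteristic polynomial with roots $1,e^{\pm 4\pi i/5}$ and projective order $5$. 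This is a reassuring necessary condition, although the permuting element $J$ must be located by other means.

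Next I would exhibit a triple $M_1,M_2,M_3$, each a short word in $R_1,R_2,R_3$, pairwise braiding to $5$ and permuted cyclically by $J$ (a natural place to look for such extra symmetry is, as in the ${\bf H_2}$ and ${\bf E_2}$ discussion of Remark~\ref{rk:char}, among roots of $R_1R_2R_3$ and the normalizer of the reflection subgroup). For the chosen triple I would write polar vectors ${\bf m}_i$ as the images of the ${\bf n}_j$ under these words, compute the Hermitian products $\langle{\bf m}_{i+1},{\bf m}_i\rangle$ and the norms $\Vert{\bf m}_i\Vert$, and verify $|\rho'|=|\sigma'|=|\tau'|=\sigma_{10}$ and $\arg(\rho'\sigma'\tau')=0$; by the classification this forces $(M_1,M_2,M_3)$ to be conjugate to the standard equilateral generators of $\S(p,\sigma_{10})$. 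Finally I would check that the two descriptions yield the same subgroup of $\pu(2,1)$ by expressing each $R_j$ as a word in the $M_i$ and reducing with the commutation and braid relations above, and by confirming that $J$ lies in the reflection subgroup (equivalently $\langle R_1,J\rangle=\langle R_1,R_2,R_3\rangle$ for $\S(p,\sigma_{10})$, which is settled by the order of $P=R_1J$).

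The main obstacle is precisely the selection of the equilateral triple together with its permuting element: the standard generators of $\T(p,{\bf S_5})$ braid to $5,2,3$ rather than $5,5,5$, so the three-fold symmetry is invisible on the given generators, and one must pass to the correct conjugates, with $J$ a priori only in the normalizer of $\langle R_1,R_2,R_3\rangle$. Once the triple is correctly guessed, the remaining work is routine and, crucially, independent of $p$: exactly as elsewhere in the paper, the relevant inner-product ratios and braid lengths are functions of $\rho,\sigma,\tau$ alone and not of $u=e^{2\pi i/3p}$, so a single verification handles all $p>2$ simultaneously.
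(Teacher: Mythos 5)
Your strategy coincides with the paper's: produce inside $\T(p,{\bf S_5})$ a new generating triple of reflections whose parameters are those of $\S(p,\sigma_{10})$, and invoke the classification of triples up to conjugacy by $|\rho|,|\sigma|,|\tau|,\arg(\rho\sigma\tau)$. Your bookkeeping is also correct: the standard generators braid to $5,2,3$ (with $R_2,R_3$ commuting since $\sigma=0$), the trace check ${\rm tr}(R_1R_2R_3)=(1-\sqrt5)/2$ on both sides is right, a single symbolic computation in $\rho,\sigma,\tau$ does handle all $p$ at once, and $3\nmid o(P)=5$ does settle $\langle R_1,J\rangle=\langle R_1,R_2,R_3\rangle$ on the sporadic side. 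However, the proposal never produces the one thing the proof actually requires: the explicit triple $M_1,M_2,M_3$. You flag its selection as ``the main obstacle'' and defer it to a correct guess, so the decisive step is missing, and nothing in the proposal shows that such an equilateral triple exists inside the group at all; the trace identity you verify is only a necessary condition.

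The paper fills exactly this gap by two successive changes of generators as in \cite{kamiyaparkerthompson}, each replacing one generator by a conjugate of another (hence preserving the generated group and invertible): the $(2,3,5;5)$ group $\T(p,{\bf S_5})$ is rewritten as a $(3,5,5;2)$ group, which is then rewritten as a $(5,5,5;3)$ group. Note that at this point no symmetry $J$ has to be hunted for separately, which removes the second difficulty you raise: braid data $(5,5,5;3)$ gives $|\rho'|=|\sigma'|=|\tau'|=(1+\sqrt5)/2$ and $|\sigma'\tau'-\bar\rho'|=1$, and a short computation with $\phi=(1+\sqrt5)/2$ (using $\phi^2=\phi+1$, $\phi^4=3\phi+2$) yields $\re(\rho'\sigma'\tau')=2\phi+1=|\rho'\sigma'\tau'|$, so $\arg(\rho'\sigma'\tau')=0$; by the classification the new triple is then conjugate to the standard generators of $\S(p,\sigma_{10})$, and the order-3 symmetry comes with that identification (it lies in the reflection group, e.g.\ via the relation $J=R_1R_2R_3R_1R_2$ valid for $\sigma_{10}$, or by Sauter's divisibility criterion). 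So your plan is the right one, but to become a proof it must be completed by exhibiting these two generator substitutions (or an equivalent explicit triple) and carrying out the verification you only describe.
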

\begin{pf}
  This follows from two successive changes of generators as
  in~\cite{kamiyaparkerthompson}. One checks that the $2,3,5;5$
  triangle groups are the same as $3,5,5;2$ triangle groups, which are
  the same as $5,5,5;3$ triangle groups. The latter correspond to
  sporadic $\sigma_{10}$ groups.
\end{pf}

For the special case $p=10$ in Proposition~\ref{prop:isoS5}, we have
an extra isomorphism.
\begin{prop}\label{prop:isoS5_10}
  The group $\T(10,{\bf S_5})$ is isomorphic to $\T(10,{\bf H_2})$
  (and also to $\S(10,\sigma_{10})$).
\end{prop}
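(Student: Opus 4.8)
The plan is to prove the two asserted isomorphisms separately, getting one of them essentially for free. Since Proposition~\ref{prop:isoS5} already gives $\T(10,{\bf S_5})\cong\S(10,\sigma_{10})$, it suffices to establish $\T(10,{\bf H_2})\cong\T(10,{\bf S_5})$; the isomorphism with $\S(10,\sigma_{10})$ then follows by composition. I would therefore concentrate on producing an explicit change of generators realizing an ${\bf S_5}$ triple inside $\T(10,{\bf H_2})$, exactly in the style of the proofs in Sections~\ref{sec:iso-nonrigid} and~\ref{sec:iso-rigid}. The general template is: exhibit words $M_1,M_2,M_3$ in the standard generators $R_1,R_2,R_3$ that are complex reflections of rotation angle $2\pi/10$; check they generate the whole group by writing $R_1,R_2,R_3$ back as words in the $M_i$; compute the associated parameters $\rho',\sigma',\tau'$ from the Hermitian-product formulas of Section~\ref{sec:noneq}, using polar vectors ${\bf m}_i$ obtained by applying the relevant words to ${\bf n}_1,{\bf n}_2,{\bf n}_3$; and then invoke the fact (Section~\ref{sec:noneq}) that a triple is determined up to $\pu(2,1)$-conjugacy by $|\rho'|,|\sigma'|,|\tau'|$ and $\arg(\rho'\sigma'\tau')$.

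Concretely, the target values are those of ${\bf S_5}$, namely $|\rho'|=(1+\sqrt5)/2$, $|\sigma'|=0$ and $|\tau'|=1$. Note that because $\sigma'=0$ one has $\rho'\sigma'\tau'=0$, so the Cartan invariant degenerates and the triple is pinned down by the three magnitudes alone; thus matching $(|\rho'|,|\sigma'|,|\tau'|)$ will be enough to force $(M_1,M_2,M_3)$ to be conjugate to the ${\bf S_5}$ triple. Both ${\bf H_2}$ and ${\bf S_5}$ already share $c=5$ (so $|\rho|^2=(3+\sqrt5)/2$) and $b=3$ (so $|\tau|^2=1$) together with $d=5$; the entire discrepancy is the value of $a$, i.e. the braid length of the second pair, which is $3$ for ${\bf H_2}$ and $2$ for ${\bf S_5}$.

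The main obstacle, and the point where the restriction to $p=10$ is essential, is precisely this constraint $\sigma'=0$: an ${\bf S_5}$ triple requires two reflections with orthogonal (hence commuting) mirrors, whereas all three standard generators of ${\bf H_2}$ braid with length $\ge 3$. One therefore cannot prove the statement by a generic change of parameters as in~\cite{kamiyaparkerthompson}, since such an argument would be independent of $p$ and would prove far too much. Instead one must exploit a coincidence special to $p=10$: as recorded in the discussion following Proposition~\ref{prop:goodcases}, at $p=10$ certain control words (for instance $Q^2R_2^{-1}R_1R_2$) become genuine complex reflections that are absent for generic $p$, and the extra symmetry $W$ with $W^2=Q$ of Remark~\ref{rk:char}(3) interacts with the group in a way that has no analogue for other $p$. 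I expect the needed commuting pair of order-$10$ reflections to be built from suitable powers of such $p=10$-specific reflections, and locating the precise words $M_1,M_2,M_3$ that realize the orthogonal pair together with the correct magnitudes is the computational heart of the argument; I would find them by a guided search through short words and verify the parameter identities by direct matrix computation over $\Q(\sqrt5,u)$.

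Finally, for the bookkeeping advertised in the text (two alternative descriptions of $\T(10,{\bf H_2})$), I would either simply compose the ${\bf S_5}$ description with Proposition~\ref{prop:isoS5} to obtain the $\S(10,\sigma_{10})$ one, or record a second, direct change of generators exhibiting an \emph{equilateral} triple with $\rho'=\sigma'=\tau'=\sigma_{10}=(1+\sqrt5)/2$; logically the latter is redundant once $\T(10,{\bf H_2})\cong\T(10,{\bf S_5})$ is in hand, but it is what makes the algorithm of Section~\ref{sec:description} run.
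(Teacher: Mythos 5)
Your plan is essentially the paper's proof: the paper likewise reduces to showing $\T(10,{\bf H_2})\cong\T(10,{\bf S_5})$ (the identification with $\S(10,\sigma_{10})$ coming from Proposition~\ref{prop:isoS5}) and realizes an ${\bf S_5}$ triple inside $\T(10,{\bf H_2})$ by a change of generators built from exactly the $p=10$-specific reflection you single out. Concretely, the paper sets $M=((R_1R_2R_3)^2R_2^{-1}R_1R_2)^{-3}$, a power of $Q^2R_2^{-1}R_1R_2$ and hence a complex reflection of angle $\pi/5$, and checks by explicit computation that $R_2$, $M$, $R_3$ generate a $(2,3,5)$-triangle group, i.e.\ $\T(10,{\bf S_5})$ — so the commuting pair your search would need to locate is $(M,R_3)$, confirming your prediction.
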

\begin{pf}
  In the group $\T(10,{\bf H_2})$, one considers
  $M=((R_1R_2R_3)^2R_2^{-1}R_1R_2)^{-3}$, which is a complex
  reflection with angle $\pi/5$. 
  
  One checks by explicit computation that the matrices $R_2,M,R_3$
  generate a $(2,3,5)$-triangle group, i.e. the group $\T(10,{\bf
    S_5})$.
\end{pf}

\subsection{Determination of the number of commensurability classes} \label{sec:numberOfClasses}

In this section, we summarize the current lower bound on the number of
commensurability classes of non-arithmetic lattices in $\pu(2,1)$.
A lot of this can be done by using only
rough commensurability invariants, i.e. cocompactness, adjoint
trace fields, and non-arithmeticity index (see
section~\ref{sec:spectrum}).

The table for Mostow and Deligne-Mostow groups show that there are at
most 13 commensurability classes of Deligne-Mostow lattices in
$\pu(2,1)$. As mentioned above, the results
in~\cite{delignemostowbook},~\cite{kappesmoller} and
~\cite{mcmullengaussbonnet} imply that there are in fact precisely 9
commensurability classes there.

\subsubsection{Cocompact groups}

Among the non-arithmetic Thompson lattices, the groups $\T(5,{\bf S_2})$ and
$\T(3,{\bf H_2})$ cannot be commensurable to any Deligne-Mostow
lattice nor to any sporadic group, but in principle they could be
commensurable with each other. We will now exclude that possibility:
\begin{prop}\label{prop:notcomm_compact}
  The groups $\Gamma_1=\T(5,{\bf S_2})$ and $\Gamma_2=\T(3,{\bf H_2})$
  are not commensurable.
\end{prop}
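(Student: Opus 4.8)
The first thing to notice is that the coarse invariants developed earlier do not separate these two groups, so the proof must use something finer. By section~\ref{sec:tracefields} both $\Gamma_1=\T(5,{\bf S_2})$ and $\Gamma_2=\T(3,{\bf H_2})$ have adjoint trace field $k=\Q(\cos\tfrac{2\pi}{15})$; both are cocompact; and both have non-arithmeticity index one (this is exactly the input used for Theorem~\ref{thm:thompson}), so by the discussion in section~\ref{sec:spectrum} they have the same signature spectrum. Hence Proposition~\ref{prop:index} is silent. The ambient $k$-group is of no help either: since the associated Hermitian forms have rank three and equal signatures at every place, they are similar over the CM field $\mathbb{L}=\Q(\zeta_{15})$ (whose maximal real subfield is $k$, and which contains the relevant roots of unity once one works projectively in $\pu$), so $\Gamma_1$ and $\Gamma_2$ sit inside one and the same special unitary $k$-group $\su(H)$. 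Lying in the same $k$-group is necessary but far from sufficient for commensurability, which is precisely why a more delicate argument is required.

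The plan is to argue by contradiction using Margulis rigidity. Assume $\Gamma_1$ and $\Gamma_2$ are commensurable. Being non-arithmetic, Proposition~\ref{prop:commonsupgroup} furnishes a lattice $\Gamma$ and an element $g\in\pu(2,1)$ with $\Gamma_1$ and $g\Gamma_2g^{-1}$ both of finite index in $\Gamma$; in particular $\Gamma$ is a non-arithmetic lattice with adjoint trace field $k$. Because a commensurating element must preserve the $\mathbb{L}$-structure, $\Gamma$ may be taken inside the rational points $\pu(H)(\mathbb{L})$, so the whole configuration is defined over $\Q(\zeta_{15})$ and all three groups can be written with entries in $\mathcal{O}_{\mathbb{L}}$ up to finite index (section~\ref{sec:spectrum}). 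This reduces the question to a single countable group, $\pu(H)(\mathbb{L})$.

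The decisive step, which is the main obstacle, is to extract from the explicit fundamental domains a numerical commensurability invariant and show that it differs for the two groups. The natural choice is the orbifold Euler characteristic $\chi(\CH 2/\Gamma_i)$, equivalently the covolume, computed from the side-pairing data and vertex stabilizers in the appendix. Since every wide commensurability class of non-arithmetic lattices contains a unique minimal element, namely its commensurator (a lattice by Theorem~\ref{thm:commensurator}), commensurability would force $\chi(\CH 2/C_G(\Gamma_1))=\chi(\CH 2/C_G(\Gamma_2))$, with each $\chi(\CH 2/C_G(\Gamma_i))$ an integer submultiple of $\chi(\CH 2/\Gamma_i)$. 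The hard part is therefore twofold: first, controlling the commensurators tightly enough to pin down these minimal covolumes, using the extra symmetries recorded in Remark~\ref{rk:char}(3) and the isomorphisms of section~\ref{sec:isomorphisms} to bound each index $[C_G(\Gamma_i):\Gamma_i]$; and second, carrying out the comparison and exhibiting that the two minimal Euler characteristics are unequal.

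Once the minimal covolumes are shown to differ, the commensurators cannot be conjugate, so no common over-lattice $\Gamma$ can exist, contradicting the assumption. This proves that $\T(5,{\bf S_2})$ and $\T(3,{\bf H_2})$ are not commensurable.
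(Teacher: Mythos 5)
Your setup coincides with the paper's first step: assume commensurability, invoke Proposition~\ref{prop:commonsupgroup} to put $\Gamma_1$ and a conjugate of $\Gamma_2$ inside a common lattice $\Gamma$, and compare orbifold Euler characteristics. But everything after that in your proposal is a statement of intent rather than an argument. You write that the hard part is ``controlling the commensurators tightly enough to pin down these minimal covolumes'' and ``exhibiting that the two minimal Euler characteristics are unequal,'' and the only tools you offer --- the extra symmetry of Remark~\ref{rk:char}(3) and the isomorphisms of section~\ref{sec:isomorphisms} --- cannot do this: they produce specific extra elements or alternative generating sets, i.e.\ \emph{lower} bounds on a commensurator, but no \emph{upper} bound on the index of $\Gamma_i$ in an arbitrary lattice containing it. Moreover, computing the two commensurators and showing their covolumes differ is essentially a restatement of the proposition (commensurable non-arithmetic lattices share a commensurator), so without an independent handle on those covolumes the plan is circular. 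The side claim that the two Hermitian forms are similar over $\Q(\zeta_{15})$, so that both groups sit in one $\su(H)$, is also unjustified (Hermitian forms over a CM field are not classified by their signatures alone), though it is inessential.

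The paper closes exactly the gap you leave open, in two steps. First, the cheap arithmetic: from $\chi(\Gamma_1\backslash\CH 2)=133/300$, $\chi(\Gamma_2\backslash\CH 2)=26/75$ and coprimality of the numerators, the common lattice must satisfy $\chi(\Gamma\backslash\CH 2)=1/300d$, in particular $\chi\le 1/300$. Second, the substantive geometric input you are missing: Proposition~\ref{prop:volest_refl}, a covolume lower bound for any lattice containing a complex reflection of order $n\ge 7$, proved by inserting a precisely invariant tube around the mirror via the J\o rgensen-type inequality of~\cite{parkerjorgensen}. Applying it to the order-$10$ complex reflection generating the center of the Fuchsian subgroup $F_1=\langle R_1,R_2\rangle\subset\Gamma_1$ gives: either no $\Gamma$-image of the mirror $m_1$ is orthogonal to $m_1$, in which case $\chi(\Gamma\backslash\CH 2)>1/216$, contradicting $\chi\le 1/300$; or some image is orthogonal to $m_1$, in which case one obtains an elliptic element acting as a rotation of order $10$ on the mirror of the Fuchsian group $F\supset F_1$, while $F$ is a central extension of a $(2,5,5)$ or $(2,4,5)$ triangle group, and such triangle groups contain no element of order $10$ --- again a contradiction. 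Some a priori bound of this kind, extracted from the reflection structure of $\Gamma_1$ itself, is what any proof along your lines must supply; as written, your proposal does not contain it.
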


We give an argument that relies on the following volume
estimate for lattices containing complex reflections of large order.
\begin{prop}\label{prop:volest_refl}
Let $\Gamma$ be a discrete subgroup of ${\rm PU}(2,1)$ containing a
complex reflection $A$ of order $n\ge 7$. Let $m_A$ denote the mirror
of $A$, and let $\Gamma_A$ denote the stabilizer of $m_A$ in $\Gamma$. Then
$$
{\rm Vol}(\Gamma\backslash{\bf H}^2_\C)\ge 
\frac{\pi(1-2\sin\frac{\pi}{n})}{2 n\sin\frac{\pi}{n}}{\rm Vol}(m_A/\Gamma_A).
$$
Moreover, if there is no $g\in\Gamma$ such that $g(m_A)$ is orthogonal to $m_A$, then 
$$
{\rm Vol}(\Gamma\backslash{\bf H}^2_\C)\ge 
\frac{\pi(1-2\sin\frac{\pi}{n})}{n\sin\frac{\pi}{n}}{\rm Vol}(m_A/\Gamma_A).
$$
\end{prop}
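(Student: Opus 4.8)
The plan is to bound $\mathrm{Vol}(\Gamma\backslash\CH 2)$ from below by the volume of an embedded tube around the mirror $m_A$, exploiting the fact that a complex reflection of large order keeps the other $\Gamma$-translates of its mirror far away. First I would record the geometry of such a tube. The mirror $m_A$ is a complex geodesic, i.e.\ a copy of $\CH 1$ of holomorphic curvature $-4$, and its normal bundle in $\CH 2$ is a complex line bundle on which $A$ acts fibrewise as a rotation by $2\pi/n$. For a radius $\rho$ the tube $T_\rho(m_A)=\{x:d(x,m_A)<\rho\}$ is foliated by the orthogonal complex geodesics through points of $m_A$, which are again copies of $\CH 1$ of curvature $-4$, so each transverse disk of radius $\rho$ has area $\pi\sinh^2\rho$; dropping the (everywhere $\ge 1$) base-distortion factor coming from the Jacobi fields tangent to $m_A$, this yields the honest lower bound
$$\mathrm{Vol}(T_\rho(m_A))\ge \pi\sinh^2(\rho)\,\mathrm{Vol}(m_A).$$
The stabiliser $\Gamma_A$ acts on this disk bundle: its pointwise-fixing subgroup is cyclic of some order $N\ge n$ (containing $\langle A\rangle$) acting by fibre rotations, while the induced action on $m_A$ has quotient of area $\mathrm{Vol}(m_A/\Gamma_A)$. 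Hence $\mathrm{Vol}(T_\rho(m_A)/\Gamma_A)\ge \tfrac{\pi}{N}\sinh^2(\rho)\,\mathrm{Vol}(m_A/\Gamma_A)$.

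The crux is to produce a definite embedding radius. Writing $a,a'$ for polar vectors to $m_A$ and $g(m_A)$ and $c=\frac{|\langle a,a'\rangle|^2}{\langle a,a\rangle\langle a',a'\rangle}$, the tubes $T_\rho(m_A)$ and $gT_\rho(m_A)$ are disjoint exactly when $d(m_A,g(m_A))\ge 2\rho$, i.e.\ when $c\ge\cosh^2\rho$. I would claim that discreteness forces, unless $g(m_A)$ equals or is orthogonal to $m_A$, the inequality $c\ge\frac{1-\sin(\pi/n)}{\sin(\pi/n)}$; equivalently the critical radius $\rho_n$ satisfies
$$\sinh^2(\rho_n)=\frac{1}{\sin(\pi/n)}-2=\frac{1-2\sin(\pi/n)}{\sin(\pi/n)},$$
which is positive precisely when $\sin(\pi/n)<\tfrac12$, i.e.\ $n\ge 7$ — this is the origin of the hypothesis. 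This is a J\o rgensen-type statement: the order-$n$ reflections $A$ and $A'=gAg^{-1}$ commute if and only if their mirrors coincide or are orthogonal, so if the mirrors are distinct, non-orthogonal and too close then $[A,A']$ and its iterates are nontrivial but tend to the identity, contradicting discreteness. To make the threshold quantitative I would feed the configuration into the trace formula of Lemma~\ref{lem:tr-AB}: with $u=e^{2\pi i/3n}$ one has $|u^3-1|^2=4\sin^2(\pi/n)$, so $\mathrm{tr}(AA')=\bigl(2-4c\sin^2(\pi/n)\bigr)u+\bar u^2$, and via Proposition~\ref{prop:pair-cx-ref} the value $c=\frac{1-\sin(\pi/n)}{\sin(\pi/n)}$ is exactly where the discreteness analysis of $\langle A,A'\rangle$ degenerates. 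I expect this quantitative J\o rgensen inequality (equivalently, the collar lemma for mirrors of high-order reflections) to be the main obstacle; the rest is geometry and bookkeeping.

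Granting the embedding radius $\rho_n$, in the absence of orthogonal translates every $\Gamma_A$-coset gives a disjoint translate of $T_{\rho_n}(m_A)$, so $T_{\rho_n}(m_A)/\Gamma_A$ embeds in $\Gamma\backslash\CH 2$ and, taking first $N=n$,
$$\mathrm{Vol}(\Gamma\backslash\CH 2)\ge \frac{\pi\sinh^2(\rho_n)}{n}\,\mathrm{Vol}(m_A/\Gamma_A)=\frac{\pi\bigl(1-2\sin(\pi/n)\bigr)}{n\sin(\pi/n)}\,\mathrm{Vol}(m_A/\Gamma_A),$$
which is the second inequality. If the pointwise stabiliser has order $N>n$ I would instead run the argument with the primitive reflection of order $N$; since a short computation shows $n\mapsto\frac1n\bigl(\frac{1}{\sin(\pi/n)}-2\bigr)$ is increasing for $n\ge 7$, the bound only improves and the stated estimate still holds. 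Finally, for the general (first) inequality I must allow translates with $g(m_A)\perp m_A$: such a translate meets $m_A$ in a point, and near that point the commuting reflection $A'$ carries $m_A$ to a nearby rotated mirror, so $T_{\rho_n}(m_A)$ overlaps its neighbouring translates there. A local analysis at these orthogonal intersections shows each point of the quotient is covered at most twice by the image of the tube, so halving the tube volume still gives a valid lower bound, which is exactly the factor $\tfrac12$ distinguishing the two estimates.
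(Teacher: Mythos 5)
Your overall architecture is the same as the paper's: a tube around $m_A$, made precisely invariant by a separation statement for $\Gamma$-translates of the mirror, then quotient bookkeeping with the factor $1/n$ from the fibre rotation and a factor $1/2$ to absorb orthogonal translates. Your volume bookkeeping is sound: the fibrewise bound $\pi\sinh^2(\rho)$ per unit area of the base, the monotonicity argument when the pointwise stabiliser has order $N>n$, and the multiplicity-two argument at orthogonal intersections are all fine, and on the last two points you are actually more explicit than the paper, whose written proof establishes only the ``moreover'' inequality, via an exact computation of the tube volume in Siegel-domain coordinates (giving $2\pi(\cosh\delta-1)$ per unit area of $m_A/\Gamma_A$, with critical radius $\cosh\delta=\frac{1}{2\sin(\pi/n)}$) rather than a fibrewise estimate at a smaller radius.

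The genuine gap is the step you yourself flag as the main obstacle: the quantitative separation claim that discreteness forces $c\ge\frac{1-\sin(\pi/n)}{\sin(\pi/n)}$ unless $g(m_A)$ equals or is orthogonal to $m_A$. You do not prove this, and the remark that Lemma~\ref{lem:tr-AB} and Proposition~\ref{prop:pair-cx-ref} place $\mathrm{tr}(AA')$ at ``exactly where the discreteness analysis degenerates'' is not an argument: those trace identities tell you when $AA'$ is elliptic and with what angle, but by themselves they yield no discreteness threshold (and your particular constant does not correspond to any clean algebraic degeneration, e.g.\ to a braid relation of small order). This is precisely the point where the paper invokes an external result, Theorem~5.2 of Jiang--Kamiya--Parker \cite{parkerjorgensen}, a J\o rgensen-type inequality for complex reflections: if the group is discrete and $g(m_A)$ is neither equal nor orthogonal to $m_A$, then $\cosh\bigl(d(m_A,g(m_A))/2\bigr)\ge\frac{1}{2\sin(\pi/n)}$. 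That known bound implies your claimed one, since $\frac{1}{4\sin^2(\pi/n)}\ge\frac{1-\sin(\pi/n)}{\sin(\pi/n)}$ is equivalent to $\bigl(1-2\sin(\pi/n)\bigr)^2\ge 0$; so your proof becomes complete (your smaller critical radius then certainly suffices) once the unproved claim is replaced by a citation of that theorem. As written, however, the central analytic input of the proposition is missing.
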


\begin{proof}
Normalize in the Siegel domain so that 
$m_A=\bigl\{(\zeta,v,u)\in{\bf H}^2_\C : \zeta = 0\bigr\}$. 
Then, a point $(\zeta_1,v_1,u_1)\in{\bf H}^2_\C$ a distance $\delta$ 
from $L_A$ satisfies
$$
\cosh^2\left(\frac{\delta}{2}\right)
=\frac{|\zeta_1|^2+u_1}{u_1}.
$$
In other words $|\zeta_1|^2=u_1\bigl(\cosh(\delta)-1\bigr)/2$.
Let $N(\delta)$ be the $\delta$-neighborhood of $m_A$. 
Then
\begin{eqnarray*}
{\rm Vol}\bigl(N(\delta)/\Gamma_A\bigr) & = &
\frac{1}{n}\int_u\int_v\int_x\int_y \frac{4}{u^3} \, du\, dv\, dx\, dy \\
& = & \frac{1}{n}\int_u\int_v \frac{4\pi|\zeta_1|^2}{u^3} \, du\,  dv\\
& = & \frac{2\pi(\cosh\delta-1)}{n}\int_u\int_v\frac{1}{u^2}\,du\,dv \\
& = & \frac{2\pi(\cosh\delta-1)}{n}{\rm Vol}(m_A/\Gamma_A) \\
\end{eqnarray*}
Using Theorem~5.2 of~\cite{parkerjorgensen}, 
we see that if there is no $g\in\Gamma$ so that $g(m_A)$
is orthogonal to $m_A$ and if $\cosh(\delta)\ge \frac{1}{2\sin(\pi/n)}$ then
$N(\delta)$ does not intersect its images under elements of
$\Gamma-\Gamma_A$.
For such a $\delta$ we have
\begin{eqnarray*}
{\rm Vol}(\Gamma\backslash{\bf H}^2_\C)
& \ge & {\rm Vol}\bigl(N(\delta)/\Gamma_A\bigr) \\
& \ge & \frac{\pi(1-2\sin\frac{\pi}{n})}{n\sin\frac{\pi}{n}}{\rm Vol}(m_A/\Gamma_A).
\end{eqnarray*}
\end{proof}

\begin{pf} (of Proposition~\ref{prop:notcomm_compact})
  We argue by contradiction, let us assume they are
  commensurable. Then by Proposition~\ref{prop:commonsupgroup} we may
  assume that both of them are contained in a common lattice $\Gamma$.

  Recall that $\Gamma_1$ has Euler characteristic 133/300, and
  $\Gamma_2$ has Euler characteristic 26/75. Let us denote by $d_j$
  the index of $\Gamma_j$ in $\Gamma$.  Since $133/300d_1=26/75d_2$,
  and 133 and 26 are relatively prime, we must have $d_1=133d_1'$ and
  $d_2=26d_2'$ for some integers $d_1',d_2'$. In other words, the
  Euler characteristic of $\Gamma$ is of the form $1/300d$ for some
  integer $d$.

  We denote by $R_j^{(k)}$ the $j$-th standard generator of
  $\Gamma_k$.  Consider the $\C$-Fuchsian subgroup $F_1$ of
  $\Gamma_1$, generated by $R_1^{(1)}$ and $R_2^{(1)}$. Since these
  two reflections braid with length 5, $F_1$ is central extension of a
  $(2,5,5)$-triangle group, with center generated by a complex
  reflection $(R_1^{(1)}R_1^{(2)})^2$ of order 10 (see
  Proposition~\ref{prop:braiding}).
  
  The commensurator $\Gamma$ contains a possibly larger Fuchsian
  subgroup $F\supset F_1$, which is a central extension of either a
  (2,5,5)-triangle group or a (2,4,5)-triangle group, with center of
  order $n$, where $n$ is a multiple of 10.

  Suppose first that there is no $g\in\Gamma$ such that $g(m_1)$ is
  orthogonal to $m_1$. Then by Proposition~\ref{prop:volest_refl},
  $$
  Vol(\Gamma\setminus \CH 2)
  \geq V\pi\frac{1-2\sin\frac{\pi}{n}}{n\cdot\sin\frac{\pi}{n}}
  \geq V\pi\frac{1-2\sin\frac{\pi}{10}}{10\cdot\sin\frac{\pi}{10}},
  $$
  where $V=\pi/10$ is the co-area of the $(2,4,5)$-triangle group.

  It follows that 
  $$
  \chi(\Gamma\setminus \CH 2) = \frac{3}{8\pi^2} Vol(\Gamma\setminus \CH 2) \geq \frac{3}{400}\frac{\sqrt{5}-1}{2}> \frac{1}{216},
  $$
  which is impossible, since $\Gamma$ has Euler characteristic $1/300d_1<1/216$.

  Hence we assume there exists a $g\in\Gamma$ such that $g(m_1)$ is
  orthogonal to $m_1$. In that case, $gRg^{-1}$ gives an element of
  order 10 acting on $m_F$ as a rotation of order 10, where
  $R=(R_1^{(1)}R_1^{(2)})^2$ generates the center (i.e. pointwise
  stabilizer) of $m_F$.

  Now $F$ is a central extension of either a (2,4,5) or a
  (2,5,5)-triangle group, but these triangle groups contain no element
  of order 10, which is a contradiction.
\end{pf}

\subsubsection{Non-cocompact groups}

Among the non-cocompact lattices we constructed, there are three pairs
of non-arithmetic lattices with the same rough commensurability
invariants; the following result shows that these pairs are actually
in different commensurability classes.
\begin{prop}\label{prop:non-comm-cusped}
\begin{enumerate}
\item[(1)] The groups ${\mathcal S}(3,\sigma_1)$ and ${\mathcal S}(6,\sigma_1)$ are not commensurable.
\item[(2)] The groups ${\mathcal S}(4,\sigma_5)$ and ${\mathcal T}(4,{\bf S}_2)$ are not commensurable.
\item[(3)] The groups $\Gamma(6,1/6)$ and ${\mathcal T}(4,{\bf E}_2)$ are not commensurable.
\end{enumerate}
\end{prop}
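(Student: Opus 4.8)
The three pairs all consist of non-cocompact lattices, so the relevant finer invariant is the geometry of the cusps, which for a lattice in $\pu(2,1)$ are modelled on the Heisenberg group. The plan is to argue by contradiction: if one of the pairs were commensurable, then by Proposition~\ref{prop:commonsupgroup} both lattices $\Gamma_1$ and $\Gamma_2$ would sit as finite-index subgroups inside a single lattice $\Gamma\subset\pu(2,1)$. Each cusp of $\Gamma_1$ would then finitely cover a cusp of $\Gamma$, as would each cusp of $\Gamma_2$, so the cusp cross-sections of $\Gamma_1$ and $\Gamma_2$ would have to be commensurable as closed $\mathrm{Nil}$-orbifolds. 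The strategy is to attach to each lattice a numerical invariant of its cusp cross-sections and show that it differs across each pair.

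To each cusp I would associate its cross-section, a Seifert-fibered $\mathrm{Nil}$-orbifold: a circle bundle over a closed Euclidean or spherical $2$-orbifold $\Sigma$. Its commensurability class is recorded by two pieces of data. First, the commensurability class of $\Sigma$, equivalently the similarity class of the rank-two lattice $\overline{\Lambda}\subset\C$ that the cusp stabilizer induces on the horizontal factor of the Heisenberg group; since these cusp lattices are of complex-multiplication type, this class is pinned down by the imaginary quadratic field $\Q(\tau)$ of its shape $\tau$, and commensurable lattices in $\C$ must have shapes in the same $\mathrm{GL}_2(\Q)$-orbit, hence in the same field. Second, the (rational, suitably normalized) Euler number of the central circle fibration, together with the order of the rotational part acting on $\Sigma$. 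Concretely, for each group I would locate an explicit parabolic subgroup from the ideal vertices of the fundamental domain (the red dots of the ``Vertex stabilizers'' tables in the appendix), put it in Heisenberg normal form, and read off $\overline{\Lambda}$, the vertical translation length, and the Euler number.

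I would then compare pair by pair. For the two pairs whose members differ in parameter type, $\S(4,\sigma_5)$ versus $\T(4,{\bf S}_2)$ and $\Gamma(6,1/6)$ versus $\T(4,{\bf E}_2)$, I expect the horizontal cusp lattices to have CM shapes lying in different imaginary quadratic fields (for instance a square versus a hexagonal Euclidean structure, living in $\Q(i)$ versus $\Q(\sqrt{-3})$), which already obstructs commensurability of the cross-sections. For $\S(3,\sigma_1)$ versus $\S(6,\sigma_1)$ the underlying fields coincide (both sit inside $\Q(\zeta_{18})=\Q(\zeta_9)$), so here the discriminator must instead be the normalized Euler number of the fibration or the order of the rotational part at the cusp, which changes when $p$ is replaced by $2p$. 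As an auxiliary constraint I can also use orbifold Euler characteristics: writing $d_1,d_2$ for the indices of $\Gamma_1,\Gamma_2$ in $\Gamma$, the relation $\chi(\Gamma_1)/\chi(\Gamma_2)=d_1/d_2$ forces divisibility conditions on the admissible indices that can be played against the cusp data.

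The main obstacle is the explicit computation rather than the conceptual framework: one must reliably identify the parabolic cusp subgroups from the matrix generators, normalize them in the Heisenberg group, and extract the lattice shape and Euler number in each of the six cases. A secondary subtlety is that the cusp cross-sections are genuine orbifolds---the generators $R_j$ have finite order and a power of $P$ or $Q$ may act by a rotation fixing the cusp---so I must ensure that whichever invariant I ultimately use is insensitive to these finite decorations and genuinely survives passage to finite covers.
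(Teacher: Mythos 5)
Your overall frame (pass to a common lattice via Proposition~\ref{prop:commonsupgroup}, then exploit the cusps) matches the paper's, but the invariant you propose to extract from the cusps cannot close the argument for parts (1) and (2), and one of your concrete expectations is false. The cusp groups here are generated by two reflections of order $p$ braiding with length $n$, and by Proposition~\ref{prop:braiding} the horizontal lattice is hexagonal (field $\Q(\sqrt{-3})$) for $(p,n)=(3,6)$ or $(6,3)$, and square (field $\Q(i)$) for $(p,n)=(4,4)$. For pair (2) \emph{both} $\S(4,\sigma_5)$ and $\T(4,{\bf S}_2)$ have their unique cusp of type $(p,n)=(4,4)$, so both cusp shapes are square and live in $\Q(i)$; your guess of ``square versus hexagonal'' fails, exactly as the field coincides for pair (1). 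Only for pair (3) do the fields differ ($\Gamma(6,1/6)$ has $(6,3)$-type, hence hexagonal, cusps while all cusps of $\T(4,{\bf E}_2)$ are $(4,4)$-type, hence square), so your argument does yield a valid alternative proof of part (3) --- and only of part (3).

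The deeper problem is that for pairs (1) and (2) \emph{no} invariant of the cusp cross-sections can work, so your fallback (``normalized Euler number'' or ``order of the rotational part'') is not repairable. Passing to a finite-index subgroup kills the rotational part and replaces the vertical generator $T$ by a power and the horizontal lattice by a sublattice; moreover, for any Heisenberg lattice the ratio of the vertical translation length to the covolume of the horizontal lattice is automatically rational (commutators of horizontal elements are vertical translations lying in the center). Consequently, two cusp groups whose horizontal shapes generate the same imaginary quadratic field already contain, up to the Heisenberg similarities induced by $\pu(2,1)$-conjugation, isomorphic finite-index subgroups: the cusp cross-sections of $\S(3,\sigma_1)$ and $\S(6,\sigma_1)$ (and of the pair (2) groups) are commensurable, so any Euler-number-type quantity is rescaled at will in finite covers. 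What is missing is a \emph{quantitative} mechanism bounding the index $d_1$ of the cusped group in the putative common lattice from above. This is precisely what the paper supplies: the structure constants of the cusp lattice ($m=p$, the commutator index $q$) together with a J{\o}rgensen-type inequality give an explicit upper bound on the volume of a precisely invariant horoball quotient (Proposition~\ref{prop:from-jrp-vol} and Corollary~\ref{cor:vol-upper-bd}), while Proposition~\ref{prop:vol-lower-bd} gives the lower bound $d/4$; combined, $d\le 11,\,10,\,4$ in the three cases. The Euler characteristic ratio you mention as an ``auxiliary constraint'' is then the other half of the paper's proof, forcing $d_1\ge 16,\,17,\,11$ respectively and hence a contradiction. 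Without some such volume estimate your outline cannot be completed for (1) and (2).
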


Our proof of Proposition~\ref{prop:non-comm-cusped} relies on studying
the cusps of the groups ${\mathcal S}(3,\sigma_1)$, ${\mathcal
  S}(4,\sigma_5)$ and $\Gamma(6,1/6)$. An alternative proof of
part~(3) follows from work of Kappes and M\"oller \cite{kappesmoller},
because of Proposition~\ref{prop:t4E2mostow}.
\begin{prop}\label{prop:t4E2mostow}
  The group $\T(4,{\bf E_2})$ is (conjugate to) a subgroup of index 3
  in the Deligne-Mostow group $\Gamma_{\mu,\Sigma}$ for
  $\mu=(3,3,5,6,7)/12$, $\Sigma=\Z_2$
\end{prop}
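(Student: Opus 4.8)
The plan is to realize $\T(4,{\bf E_2})$ as an explicit index-$3$ normal subgroup of the larger group obtained by adjoining the order-$3$ symmetry $S$ of Remark~\ref{rk:char}~(3), and then to identify that larger group with the Deligne--Mostow lattice $\Gamma_{\mu,\Sigma}$. The identification, not the index computation, is where the real work lies.

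First I would set $\widehat\Gamma=\langle R_1,R_2,S\rangle$, which contains $\Gamma=\langle R_1,R_2,R_3\rangle=\T(4,{\bf E_2})$ since $SR_2S^{-1}=R_3$. By Remark~\ref{rk:char}~(3) the reflection $S$ normalizes $\Gamma$, so $\Gamma\triangleleft\widehat\Gamma$; as $S^3=\mathrm{id}$, the quotient $\widehat\Gamma/\Gamma$ is cyclic of order dividing $3$. Just as in the equilateral case, where $\langle R_1,R_2,R_3\rangle$ has index $1$ or $3$ in $\langle R_1,J\rangle$, the index here depends on $\p$: for $\p=12$ one in fact has $S\in\Gamma$ (see the discussion of $\T(12,{\bf E_2})$ in Section~\ref{sec:resultspoincare}), so the index is $1$, whereas the claim for $\p=4$ is that $S\notin\Gamma$, so the index is $3$. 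I would verify $S\notin\T(4,{\bf E_2})$ by a direct matrix computation with the explicit generators of Section~\ref{sec:noneq}, e.g. by comparing $S$ against the finite stabilizer of the common fixed point $p_0$ of $Q$ and $S$ inside $\Gamma$ (as computed by the Poincar\'e polyhedron analysis) and checking that $S$ is not one of its elements.

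Next comes the identification of $\widehat\Gamma$ with $\Gamma_{\mu,\Sigma}$, $\mu=(3,3,5,6,7)/12$, $\Sigma=\Z_2$. I would write down the standard hypergeometric monodromy generators of $\Gamma_{\mu,\Sigma}$ (the braidings of pairs of marked points together with the transposition generating $\Sigma$, which swaps the two weights $\tfrac{3}{12}$) and exhibit $R_1,R_2,R_3$ and $S$ as explicit words in them, and conversely. The braiding data makes the match plausible: a pair $i,j$ braids to a complex reflection of order $(1-\mu_i-\mu_j)^{-1}$, and for $\mu=(3,3,5,6,7)/12$ these orders are $2,3,4,6,12$; in particular the pair summing to $\tfrac34$ produces the order-$4$ reflections matching the generators of $\T(4,{\bf E_2})$, while the pair $\tfrac{5}{12}+\tfrac{7}{12}=1$ produces a parabolic element, consistent with $\T(4,{\bf E_2})$ being non-cocompact. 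To pin down the correct conjugacy class I would use the invariants already in hand: $\T(4,{\bf E_2})$ is non-cocompact with adjoint trace field $\Q(\sqrt3)$ (Table~\ref{tab:tracesThompson}, since $|\tr(R_3R_2R_1)|^2=8+4\sqrt3$), which agrees with $\Gamma_{\mu,\Sigma}$ and rules out competing Deligne--Mostow classes.

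The main obstacle will be this explicit generator dictionary together with the verification that $\widehat\Gamma$ equals $\Gamma_{\mu,\Sigma}$ on the nose, rather than merely being commensurable to it: one must check both that the monodromy generators lie in $\widehat\Gamma$ and that $R_1,R_2,R_3,S$ generate all of $\Gamma_{\mu,\Sigma}$, and simultaneously that the two lattices preserve proportional Hermitian forms so that they sit in a common copy of $\pu(2,1)$. Once $\widehat\Gamma=\Gamma_{\mu,\Sigma}$ is established, the stated index-$3$ relation $[\Gamma_{\mu,\Sigma}:\T(4,{\bf E_2})]=3$ follows immediately from $[\widehat\Gamma:\Gamma]=3$ proved in the second paragraph. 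This matrix bookkeeping is most safely carried out with the computer assistance used throughout the paper.
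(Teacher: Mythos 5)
Your proposal is correct and is essentially the paper's own argument read in the opposite direction: the paper likewise identifies $\Gamma_{\mu,\Sigma}$ with the extension of $\T(4,{\bf E_2})$ by an order-3 element playing exactly the role of your $S$, namely it takes monodromy generators $M_1=\rho(\gamma_{23})$ (order 3), $M_2=\rho(\sigma_{12})$ and $M_3=\rho(\gamma_{14})$ (order 4) satisfying $M_1M_3=M_3M_1$, $\br_4(M_1,M_2)$, $\br_4(M_2,M_3)$, and then shows $\T(4,{\bf E_2})\cong\langle M_3,\,M_2,\,M_1M_2M_1^{-1}\rangle$. The only differences are organizational: the paper derives these relations structurally from braid-group facts rather than explicit matrix bookkeeping, and it leaves the index-3 verification implicit (it also follows from comparing the orbifold Euler characteristics $17/32$ and $17/96$ in the paper's tables), whereas you check $S\notin\T(4,{\bf E_2})$ directly.
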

\begin{pf}
A presentation for Deligne-Mostow groups without three-fold symmetry
is given by Pasquinelli in \cite{Pasquinelli2}. The group 
$\Gamma_{\mu,\Sigma}$ is $(3,4,4)$ in her notation.
The most convenient presentation is the alternative one given at the end 
of Section 4.1 of \cite{Pasquinelli2}, which for this group is
$$
\left\langle A,\,B,\,R\ :\ 
\begin{array}{c}
A^4,\ B^3,\ R^4,\ (BRA)^{24},\ (ARBR)^6, \\
\br_4(B,R),\ \br_2((BRA)^{-2},R),\ \br_2(A,B)
\end{array}
\right\rangle.
$$
One then easily verifies that writing
$R_1=A$, $R_2=R$, $R_3=BRB^{-1}$ gives a group isomorphic
to $\T(4,{\bf E_2})$. This is done by showing each of these
two presentations implies the other.
Furthermore, one can extend this isomorphism by writing
$S=B$ where $S$ is the extra symmetry from Remark~\ref{rk:char} (3).
Using the fact that $B$ has order 3, the relation 
$\br_4(B,R)$ is equivalent to $\br_3(R,BRB^{-1})$.
Then using this relation, the relation $\br_2((BRA)^{-2},R)$ is
equivalent to $\br_4(A,R)$. Together with $\br_2(A,B)$ this
immediately gives $\br_4(A,BRB^{-1})$ and $\br_4(A,R(BRB^{-1})R^{-})$.
Moreover, $B$ commutes with $ARBR$ and so $(ARBR)^6$ is 
equivalent to $(AR(BRB^{-1}))^6$. Finally, using $\br_4(A,R)$, 
$\br_4(B,R)$ and $\br_2(A,B)$
we have $(BRA)^{24}=(BRAR)^{24}(\bar{A}BR\bar{B}AR)^{-12}$. Thus
$(BRA)^{24}$ is equivalent to $(\bar{A}BR\bar{B}AR)^{12}$, where we also
use $(BRAR)^6$.
\end{pf}

\medskip

We remark that exactly the same argument shows that
$\T(3,{\bf E_2})$ is (conjugate to) a subgroup of index 3
  in the Deligne-Mostow group $\Gamma_{\mu,\Sigma}$ for
  $\mu=(1,1,3,3,4)/6$, and $\Sigma=\Z_2$ acting by swapping
  just the factors $1/6$.

For each of the groups ${\mathcal S}(3,\sigma_1)$, ${\mathcal
  S}(4,\sigma_5)$ and $\Gamma(6,1/6)$, the mirrors $m_1$ and $m_2$ of
$R_1$ and $R_2$ intersect in a point $p_{12}$ on $\partial{\bf
  H}^2_{\mathbb C}$. This means that the group $\langle R_1,
R_2\rangle$ is a parabolic group. We give an upper bound on the
largest cusp neighborhood that is precisely invariant under this
parabolic group. We discuss volume bounds for cusp groups in a more
general context, since we believe that these results could have wider
applications.

We begin by considering the general structure of parabolic groups
$\Gamma_\infty$ generated by two complex reflections $A$ and $B$ both
with order $p$.  The center $Z(\Gamma_\infty)$ of
$\Gamma_\infty=\langle A,B\rangle$ is generated by a vertical
translation, which we denote by $T$. We need to consider the following
three cases (see Proposition \ref{prop:braiding}).
\begin{enumerate}
\item[(1)] $p=3$: In this case, $A$ and $B$ braid with length 6, $\Gamma_\infty$ is a central extension of 
the rotation subgroup of a $(3,3,3)$ triangle group and $T=(AB)^3$.
\item[(2)] $p=4$: In this case, $A$ and $B$ braid with length 4, $\Gamma_\infty$ is a central extension of 
the rotation subgroup of a $(2,4,4)$ triangle group and $T=(AB)^2$.
\item[(3)] $p=6$: In this case, $A$ and $B$ braid with length 3, $\Gamma_\infty$ is a central extension of 
the rotation subgroup of a $(2,3,6)$ triangle group and $T=(AB)^3$.
\end{enumerate}
We will actually use some more detailed information about $\Gamma_\infty$.
This information can be deduced from the presentations of Heisenberg lattices 
in Section 7.1 Dekimpe \cite{Dekimpe}.
Instead of giving details of this, we give a geometrical proof instead.
To that end, let $\Lambda_\infty$ denote the subgroup of
$\Gamma_\infty$ consisting of Heisenberg translations. Note that
$\Lambda_\infty$ is a central extension of the translation subgroup of
the corresponding triangle group.

Since $Z(\Gamma_\infty)$ is a group of Heisenberg translations, it is
contained in $\Lambda_\infty$, and in fact
$Z(\Gamma_\infty)=Z(\Lambda_\infty)$. Moreover, the commutator
subgroup of $\Lambda_\infty$ is a finite index subgroup of
$Z(\Gamma_\infty)$. Our next goal is to determine that index in each
of the three cases.
\begin{lem}
  Let $\Gamma_\infty=\langle A,B\rangle$ be as above. Then, the
  Heisenberg lattice $\Lambda_\infty$ has index $p$ in $\Gamma_\infty$
  and is generated by $A^{-1}B$, $AB^{-1}$ and $T$.  The commutator
  subgroup of $\Lambda_\infty$ is all of $Z(\Gamma_\infty)$ when $p=3$
  or $6$ and has index 2 in $Z(\Gamma_\infty)$ when $p=4$.
\end{lem}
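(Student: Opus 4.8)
The plan is to descend to the Euclidean level via the central extension
$$
1 \longrightarrow \langle T\rangle \longrightarrow \Gamma_\infty \stackrel{\pi}{\longrightarrow} \bar\Gamma \longrightarrow 1,
$$
where $\langle T\rangle=Z(\Gamma_\infty)$ and $\bar\Gamma=\Gamma_\infty/\langle T\rangle$ is the rotation subgroup of the relevant Euclidean triangle group, acting on the horizontal plane $\C$ as a wallpaper group of type $p3$, $p4$ or $p6$ (for $p=3,4,6$). The key observation is that an element of $\Gamma_\infty$ is a Heisenberg translation if and only if its image under $\pi$ is a Euclidean translation, since both conditions say that the horizontal rotational part is trivial. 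Thus $\Lambda_\infty$ is exactly the $\pi$-preimage of the Euclidean translation lattice $\bar\Lambda\subset\bar\Gamma$, and
$$
[\Gamma_\infty:\Lambda_\infty]=[\bar\Gamma:\bar\Lambda]=|\bar\Gamma/\bar\Lambda|.
$$
The point groups of $p3$, $p4$, $p6$ are cyclic of orders $3$, $4$, $6$, which gives the index $p$ in all three cases.

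For the generators, set $\theta=2\pi/p$ and let $\mathbf{v}_1,\mathbf{v}_2$ be the images $\pi(A^{-1}B),\pi(AB^{-1})$, regarded as vectors in $\bar\Lambda$. Since $A$ and $B$ are complex reflections of the \emph{same} angle $2\pi/p$, their images $\pi(A)$ and $\pi(B)$ have the same class in the point group $\bar\Gamma/\bar\Lambda$, so indeed $\mathbf{v}_1,\mathbf{v}_2\in\bar\Lambda$; a one-line computation with planar rotations about the two rotation centres $a,b$ gives $\mathbf{v}_1=(1-e^{-i\theta})(a-b)$ and $\mathbf{v}_2=(1-e^{i\theta})(a-b)=-R_\theta\mathbf{v}_1$, where $R_\theta$ is rotation by $\theta$. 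Now $\bar\Lambda$ is generated as an abelian group by $\mathbf{v}_1$ together with its conjugates under a generator of the point group, i.e. under $R_\theta$; since $R_\theta$ preserves the rank-two lattice $\bar\Lambda$, Cayley--Hamilton gives a monic degree-two relation with integer coefficients (its trace $2\cos\theta$ is $-1,0,1$ for $p=3,4,6$), so the sublattice $\langle\mathbf{v}_1,R_\theta\mathbf{v}_1\rangle=\langle\mathbf{v}_1,\mathbf{v}_2\rangle$ is $R_\theta$-invariant and hence equals $\bar\Lambda$. As $A^{-1}B$ and $AB^{-1}$ lift $\mathbf{v}_1,\mathbf{v}_2$ and $\ker\pi=\langle T\rangle$, we conclude $\Lambda_\infty=\langle A^{-1}B,AB^{-1},T\rangle$.

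Finally, $\Lambda_\infty$ is two-step nilpotent, $A^{-1}B$ and $AB^{-1}$ project to a basis of $\bar\Lambda$, and $T$ is central; hence $[\Lambda_\infty,\Lambda_\infty]$ is the cyclic group generated by the single commutator $[A^{-1}B,AB^{-1}]$, which (as recalled just before the lemma) has finite index in $\langle T\rangle=Z(\Gamma_\infty)$. The required index is therefore $|n|$, where $n$ is the integer with $[A^{-1}B,AB^{-1}]=T^{n}$, and the whole statement reduces to pinning down this one integer. This final step is the crux, and the one I expect to be genuinely delicate. I would carry it out in Heisenberg coordinates, normalising $A$ and $B$ as rotations by $2\pi/p$ about the vertical axes over $a$ and $b$: then $[A^{-1}B,AB^{-1}]$ is a purely vertical translation whose length is the symplectic area $4\,\mathrm{Im}(\overline{\mathbf{v}}_1\mathbf{v}_2)=-4|\mathbf{v}_1|^2\sin\theta$, while $T=(AB)^{3}$ for $p=3,6$ and $T=(AB)^{2}$ for $p=4$ is the vertical translation obtained by cubing (resp. squaring) the screw motion $AB$. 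Comparing the two vertical lengths gives $|n|=1$ for $p=3,6$ and $|n|=2$ for $p=4$. The real obstacle is the bookkeeping of the Heisenberg cocycle: it is precisely the vertical cross-terms accumulated in forming $(AB)^{k}$ and the commutator, not the horizontal lattice data (which behaves uniformly), that produce the factor $2$ in the $p=4$ case and must be tracked with care.
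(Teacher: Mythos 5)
Your first three steps are correct, and they follow essentially the same route as the paper: project $\Gamma_\infty$ to the rotation subgroup of the Euclidean triangle group, identify $\Lambda_\infty$ as the full preimage of the translation lattice (so the index equals the order $p$ of the point group), and reduce the commutator subgroup to the single commutator $[A^{-1}B,AB^{-1}]$. Your two local variants are both valid and arguably cleaner: the Cayley--Hamilton argument showing $\langle\mathbf{v}_1,R_\theta\mathbf{v}_1\rangle$ is $R_\theta$-invariant (hence equals the whole translation lattice) replaces the paper's direct identification of that lattice with $\mathbb{Z}[e^{2\pi i/p}]$, and the bilinearity of commutators in the $2$-step nilpotent group $\Lambda_\infty$ replaces the paper's minimal-area argument for why $[\Lambda_\infty,\Lambda_\infty]$ is generated by $[A^{-1}B,AB^{-1}]$.

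The genuine gap is the final step, and you have flagged it yourself: the identity $[A^{-1}B,AB^{-1}]=T^{n}$ with $|n|=1$ for $p=3,6$ and $|n|=2$ for $p=4$ is asserted, not proved. This is the only place where the three cases differ, so it is the actual content of the lemma; everything before it behaves uniformly in $p$. ``Comparing the two vertical lengths'' is precisely the computation you defer: the vertical component of $T=(AB)^3$ (resp.\ $(AB)^2$) is not determined by the horizontal rotation data, so it must be extracted from an explicit normalization of $A$ and $B$ as Heisenberg screw motions, with the cocycle cross-terms tracked through $(AB)^k$ --- exactly the bookkeeping you acknowledge not having done. The paper sidesteps all of this with short algebraic computations that use only the orders of $A,B$ and the braid relation, two lines per case: for $p=3$, using $A^{-1}=A^{2}$ and $B^{-2}=B$, one gets $(A^{-1}B)(AB^{-1})(B^{-1}A)(BA^{-1})=A(ABABAB)A^{-1}=T$; for $p=4$, using $A^{-1}=A^{3}$, $B^{-2}=B^{2}$ and $(AB)^2=(BA)^2$, one gets $A(AB)^4A^{-1}=T^2$; for $p=6$, using $B^{-4}=B^{2}$ and $ABA=BAB$, one gets $(BAB)(BAB)=(AB)^3=T$. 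If you wish to keep your analytic route, you must actually write $A$ and $B$ in Heisenberg coordinates and carry the cocycle through both the commutator and the power of $AB$; until then the factor $2$ in the $p=4$ case --- the heart of the statement --- has not been established.
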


\begin{pf}
This lemma could be deduced from Dekimpe \cite{Dekimpe} as indicated
above.

Let $\Gamma_*$ denote the rotation subgroup of one of the above
Euclidean triangle groups, and let $\Lambda_*$ be its translation
subgroup. In each case $\langle A^{-1}B, AB^{-1}\rangle$ projects to
$\Lambda_*$ and the index of $\Lambda_*$ in $\Gamma_*$ is $p$.  This
may easily be checked using Euclidean geometry, for example by
normalizing the projection of $A$ to be $z\longmapsto e^{2\pi i/p}z$
and the projection of $B$ to be $z\longmapsto e^{2\pi i/p}z+1$. This gives
an obvious isomorphism between $\Lambda_*$ and the discrete ring
${\mathbb Z}[e^{2\pi i/p}]$ (recall that $p=3$, $4$ or $6$). 
Thus $\Lambda_\infty$ has index $p$ in $\Gamma_\infty$ and is 
generated by $A^{-1}B$, $AB^{-1}$ and $T$.

Each commutator $[C,D]$ in $\Lambda_\infty$ is a vertical translation whose
length is proportional to the area of the parallelogram spanned by 
the projections of $C$ and $D$ in ${\mathbb C}$; see page 446 of \cite{parkervolumes}
for example. 
With the above
normalization, it is clear that the parallelogram spanned by the projections
of  $A^{-1}B$ and $AB^{-1}$ has the smallest area among any positive
area parallelograms spanned by elements of $\Lambda_*$.  Hence,
the commutator subgroup of $\Lambda_\infty$ is generated by
$[A^{-1}B,AB^{-1}]$. It remains to write this
commutator as a power of the generator $T$ of $Z(\Gamma_\infty)$. We
split this into three cases:
\begin{enumerate}
\item[(1)] $p=3$: Since $A$ and $B$ have order 3, we see that
\begin{eqnarray*}
[A^{-1}B,AB^{-1}] &= & (A^{-1}B)(AB^{-1})(B^{-1}A)(BA^{-1}) \\
& = & A(ABABAB)A^{-1}=T.
\end{eqnarray*}
\item[(2)] $p=4$: In this case $A$ and $B$ have order 4 and $(AB)^2=(BA)^2$.
Therefore
\begin{eqnarray*}
[A^{-1}B,AB^{-1}] & = & (A^{-1}B)(AB^{-1})(B^{-1}A)(BA^{-1}) \\
& = & A^2(ABAB)BABA^{-1}=A(AB)^4A^{-1}=T^2.
\end{eqnarray*}
\item[(3)] $p=6$. In this case, $A$ and $B$ have order 6 and satisfy the classical braid relation. Therefore:
\begin{eqnarray*}
[A^{-1}B,AB^{-1}] & = & (A^{-1}BA)B^{-1}B^{-1}(ABA^{-1}) \\
& = & BAB^{-4}AB=(BAB)(BAB)=T.
\end{eqnarray*}
\end{enumerate}
\end{pf}

\medskip

We now give a formula for the volumes of certain cusp neighborhoods
associated to the group $\Gamma_\infty$. This follows the methods
in~\cite{parkervolumes}.

\begin{prop}\label{prop:from-jrp-vol}
  Let $\Gamma$ be a discrete subgroup of ${\rm SU}(2,1)$ and let
  $\Gamma_\infty$ be a parabolic subgroup of $\Gamma$ fixing a point
  of $\partial{\bf H}^2_{\mathbb C}$.  Let $\Lambda_\infty$ be the
  maximal lattice of Heisenberg translations in $\Gamma_\infty$ and
  let $m$ be the index of $\Lambda_\infty$ in $\Gamma_\infty$.  Let
  $T$ be a generator of $Z(\Gamma_\infty)$ and let $q$ be the positive
  integer so that the shortest non-trivial commutator in
  $\Lambda_\infty$ is $T^q$.  Let $C$ be any element of
  $\Gamma-\Gamma_\infty$.  If $B_\infty$ is any horoball that is precisely
  invariant under $\Gamma_\infty$ in $\Gamma$, then 
  $$
  Vol(\Gamma_\infty\backslash B_\infty) \le \frac{\bigl(3-{\rm tr}(TCTC^{-1})\bigr)q}{2m}.
  $$
\end{prop}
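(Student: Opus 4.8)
The plan is to work in the Siegel domain model with horospherical coordinates $(\zeta,v,u)$, where $\zeta=x+iy\in\mathbb{C}$, $v\in\mathbb{R}$, $u>0$, normalized so that $\infty$ is the parabolic fixed point of $\Gamma_\infty$ and the volume form is $\frac{4}{u^3}\,du\,dv\,dx\,dy$ (exactly as in the proof of Proposition~\ref{prop:volest_refl}); in these coordinates any horoball precisely invariant under $\Gamma_\infty$ is of the form $B_\infty=\{u\ge u_0\}$. First I would compute ${\rm Vol}(\Gamma_\infty\backslash B_\infty)$ directly. Since $\Lambda_\infty$ has index $m$ in $\Gamma_\infty$, it suffices to compute ${\rm Vol}(\Lambda_\infty\backslash B_\infty)$ and divide by $m$. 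Writing $\ell$ for the vertical period (so $T$ translates $v$ by $\ell$) and $\mathcal A$ for the Euclidean area of a fundamental parallelogram of the horizontal projection $\Lambda_*$ of $\Lambda_\infty$ in $\mathbb{C}$, a fundamental domain for $\Lambda_\infty$ meets each horosphere in a set of $(v,x,y)$-measure $\ell\mathcal A$, so that
$$
{\rm Vol}(\Lambda_\infty\backslash B_\infty)=\ell\mathcal A\int_{u_0}^\infty\frac{4}{u^3}\,du=\frac{2\ell\mathcal A}{u_0^2}.
$$

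Next I would relate $\mathcal A$ to $q$ and $\ell$. In the Heisenberg group the commutator of two horizontal translations with projections $\zeta_1,\zeta_2\in\mathbb{C}$ is a vertical translation whose length is $4|{\rm Im}(\bar\zeta_1\zeta_2)|$, i.e.\ four times the area of the parallelogram they span. Hence the shortest nontrivial commutator of $\Lambda_\infty$ is a vertical translation of length $4\mathcal A$; since by hypothesis it equals $T^q$, which has length $q\ell$, we get $\mathcal A=q\ell/4$, and therefore ${\rm Vol}(\Gamma_\infty\backslash B_\infty)=\frac{q\ell^2}{2mu_0^2}$. This already reduces the problem to bounding $u_0$ from below.

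The lower bound on $u_0$ comes from precise invariance. For $C\in\Gamma-\Gamma_\infty$, write $C$ as an $\su(2,1)$ matrix and set $g=C_{31}$; since $C$ does not fix $\infty$ we have $g\ne 0$. Using $u(C(p))=u(p)/|(Cz)_3|^2$ for the height function, a standard isometric-sphere computation (following~\cite{parkervolumes}) shows that $C(B_\infty)$ is a Cygan ball based at $C(\infty)$ whose highest point has $u$-coordinate $\frac{4}{|g|^2u_0}$; I would verify the constant by maximizing $u/|(Cz)_3|^2$ over $B_\infty$ (for the inversion, where $g=1$, this gives top height $4/u_0$). Precise invariance forces $C(B_\infty)\cap B_\infty=\emptyset$, hence $\frac{4}{|g|^2u_0}\le u_0$, i.e.\ $u_0^2\ge\frac{4}{|g|^2}$. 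It remains to recognise the trace. Writing $T=I+N$ with $N=\frac{i\ell}{2}E_{13}$ the only nonzero entry and $N^2=0$, one expands $TCTC^{-1}=I+N+CNC^{-1}+NCNC^{-1}$; the first three terms contribute $3$ to the trace, and since $E_{13}CE_{13}=C_{31}E_{13}$ one gets $NCNC^{-1}=-\frac{\ell^2}{4}g\,E_{13}C^{-1}$. Because $C$ preserves the antidiagonal Hermitian form $J$ we have $C^{-1}=JC^\ast J$, so $(C^{-1})_{31}=\overline{C_{31}}=\bar g$, giving ${\rm tr}(NCNC^{-1})=-\frac{\ell^2}{4}|g|^2$ and thus $3-{\rm tr}(TCTC^{-1})=\frac{\ell^2}{4}|g|^2$. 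Combining everything,
$$
{\rm Vol}(\Gamma_\infty\backslash B_\infty)=\frac{q\ell^2}{2mu_0^2}\le\frac{q\ell^2}{2m}\cdot\frac{|g|^2}{4}=\frac{q}{2m}\cdot\frac{\ell^2|g|^2}{4}=\frac{\bigl(3-{\rm tr}(TCTC^{-1})\bigr)q}{2m}.
$$

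The geometric ingredients here are all classical, so the genuine difficulty is not conceptual but a matter of keeping every normalization constant straight: the coefficient in the volume form, the factor $4$ relating a Heisenberg commutator to the Euclidean area it encloses, the coefficient in the image-height formula $\frac{4}{|g|^2u_0}$, and the entry $\frac{i\ell}{2}$ of $T$ all have to conspire to collapse into the clean coefficient $\frac{q}{2m}$. The hard part is therefore fixing one consistent set of conventions (Hermitian form, horospherical coordinates, the matrix form of vertical translations) and carrying it through all four steps, since a single misplaced factor of $2$ or sign would destroy the exact match; once the conventions are pinned down, the result drops out as above.
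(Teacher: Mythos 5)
Your proposal is correct and follows essentially the same route as the paper's proof, which itself follows the methods of the cited volumes paper of Parker: compute the volume of the horoball quotient in terms of $q$, $m$ and the vertical period, bound the horoball height using disjointness from its image under $C$, and convert the resulting bound into the invariant form via the identity $3-{\rm tr}(TCTC^{-1})=|c|^2t^2/4$. The only differences are cosmetic: where the paper cites \cite{parkervolumes} for the tangency condition $h=2/|c|$ and the quotient-volume formula $\frac{1}{2h^2}\cdot\frac{t^2q}{m}$ (and normalizes $C$ so that $C(q_\infty)=q_o$ before computing $CTC^{-1}$), you re-derive these two ingredients by direct integration together with the commutator--area relation in the Heisenberg group, and you avoid normalizing $C$ by using $C^{-1}=J_0C^{*}J_0$ to identify $(C^{-1})_{31}=\overline{C_{31}}$; your intermediate constants agree exactly with the cited ones.
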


\begin{proof}
We construct a horoball $B'_\infty$ that intersects its image under $C$, and so is
not precisely invariant, and so that
$$
Vol(\Gamma_\infty\backslash B'_\infty) = \frac{\bigl(3-{\rm tr}(TCTC^{-1})\bigr)q}{2m}.
$$ 
Following the normalization in~\cite{parkervolumes}, we use the second
Hermitian form (denoted by $J_0$ in \cite{parkervolumes})
and we suppose that $\Gamma_\infty$
fixes $q_\infty$, which corresponds to $[1,0,0]^t$.  Without loss of
generality, suppose that $C(q_\infty)=q_o$ is the origin in Heisenberg
cordinates, which corresponds to $[0,0,1]^t$.  Let $B'_\infty$ be a
horoball based at $q_\infty$ and consider its image
$B'_o=C(B'_\infty)$ under $C$ based at the point $q_o$. Suppose that
the height of $B'_{\infty}$ is chosen so that $B'_\infty$ and $B'_o$
are disjoint, but their boundaries intersect in a single point. If $C$
has the form given in equation (1.3) of~\cite{parkervolumes}, this
condition is precisely that the height of $B'_\infty$ is
$h=2/|c|$. Again, following the normalization in~\cite{parkervolumes},
suppose that
  $$
  T=\left(\begin{matrix} 1 & 0 & it/2 \\ 0 & 1 & 0 \\ 0 & 0 & 1 \end{matrix}\right).
  $$ 
  Therefore, using the formula for $Vol(B'_\infty/\Gamma_\infty)$
  given on page 446 of~\cite{parkervolumes}, we have:
  $$
  Vol(\Gamma_\infty\backslash B'_\infty)
  =\frac{1}{2h^2}\cdot\frac{t^2q}{m}=\frac{|c|^2t^2q}{8m}.
  $$ 
  In order to express this in an invariant way, we want to write
  $|c|^2t^2$ in terms of traces.  Since we suppose that $C$ sends
  $q_\infty$ to $q_o$, this means that
  $$
  CTC^{-1}=\left(\begin{matrix} 1 & 0 & 0 \\ 0 & 1 & 0 \\ |c|^2it/2 & 0 & 1 \end{matrix}\right).
  $$
  Hence $|c|^2t^2/4=3-{\rm tr}(TCTC^{-1})$, which gives the result.
\end{proof}

\medskip

We want to apply this result in the case where $\Gamma$ is one of
${\mathcal S}(3,\sigma_1)$, ${\mathcal S}(4,\sigma_5)$ or $\Gamma(6,1/6)$;
the parabolic subgroup is $\Gamma_\infty=\langle R_1, R_2\rangle$ and the map $C$ is $J$.
We have already found the integers $m$ (which is $p$ in each case) and $q$ needed 
to apply the theorem. It remains to find ${\rm tr}(TCTC^{-1})$.

\begin{lem}
  Suppose that $R_1$, $R_2$, $u$ and $\tau$ are as given in Section
  \ref{sec:equilateral}.  If $R_1R_2$ is parabolic then
  $|\tau|^2=2-u^3-\bar{u}^3$.
\end{lem}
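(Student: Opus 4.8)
The plan is to read off the eigenvalues of $R_1R_2$ from Proposition~\ref{prop:pair-cx-ref}, decide which one the parabolicity hypothesis forces to be repeated, and then compare traces. Write ${\bf n}_1$, ${\bf n}_2$ for polar vectors to the mirrors of $R_1$, $R_2$. As recalled in Section~\ref{sec:pair-cx-ref}, $\bar u^2$ is always an eigenvalue of $R_1R_2$, with eigenvector the intersection point $p_{12}={\bf n}_1\boxtimes{\bf n}_2$ of the two mirrors, and by Proposition~\ref{prop:pair-cx-ref} the other two eigenvalues are $-ue^{2i\theta}$ and $-ue^{-2i\theta}$, where $|\tau|=2\cos\theta$. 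Since all three eigenvalues have modulus $1$, the element $R_1R_2$ is never loxodromic; it is parabolic precisely when it fails to be semisimple, and this forces a repeated eigenvalue.

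First I would identify the repeated eigenvalue. A parabolic element of ${\rm SU}(2,1)$ has a unique fixed point on $\partial\CH 2$, given by a null eigenvector whose eigenvalue has algebraic multiplicity at least two. In the setting of the lemma (and in particular in the cusp groups to which it is applied) the two mirrors meet at this boundary fixed point, so that $p_{12}$ is null and its eigenvalue $\bar u^2$ is the repeated one; equivalently $-ue^{2i\theta}=\bar u^2$. Granting this, the third eigenvalue is pinned down by $\det(R_1R_2)=1$ to be $(\bar u^2)^{-2}=u^4$, so that
$$
{\rm tr}(R_1R_2)=2\bar u^2+u^4.
$$

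On the other hand, Section~\ref{sec:equilateral} gives ${\rm tr}(R_1R_2)=u(2-|\tau|^2)+\bar u^2$. Equating the two expressions, subtracting $\bar u^2$, and dividing by $u$ yields $2-|\tau|^2=u^3+\bar u^3$, that is $|\tau|^2=2-u^3-\bar u^3$, as claimed. As a cross-check this value is exactly $|u^3-1|^2$, and the same conclusion drops out of a purely geometric route: $\langle p_{12},p_{12}\rangle$ is proportional to the Gram determinant $\|{\bf n}_1\|^2\|{\bf n}_2\|^2-\bigl|\langle{\bf n}_1,{\bf n}_2\rangle\bigr|^2$, which vanishes exactly when $\bigl|\langle{\bf n}_1,{\bf n}_2\rangle\bigr|^2/(\|{\bf n}_1\|^2\|{\bf n}_2\|^2)=1$, i.e. when $|\tau|^2=|u^3-1|^2=2-u^3-\bar u^3$.

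The hard part is the middle step: justifying that it is $\bar u^2$, and not $-ue^{2i\theta}$, that is repeated. This is exactly where parabolicity is used. The competing coincidence $e^{4i\theta}=1$ would make $R_1R_2$ act parabolically on a complex geodesic with $p_{12}$ lying \emph{outside} $\CH 2$, a configuration that does not occur here because the fixed point of the parabolic is the common point of the mirrors on $\partial\CH 2$. I would make this precise either by computing the signature of the plane spanned by ${\bf n}_1,{\bf n}_2$ (forcing it to be degenerate rather than indefinite), or by appealing directly to the fact that in the groups under consideration $p_{12}$ is the parabolic fixed point; once that is settled, the trace comparison above finishes the proof.
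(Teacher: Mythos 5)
Your argument is essentially the paper's: the paper likewise equates $u(2-|\tau|^2)+\bar u^2$ with $u^4+2\bar u^2$ after declaring that parabolicity forces the repeated eigenvalue to be $\bar u^2$ with eigenspace spanned by ${\bf p}_{12}$, and then solves for $|\tau|^2$. The step you isolate as ``the hard part'' is precisely the step the paper passes over by assertion, and your caution is warranted: parabolicity alone does not rule out the screw-parabolic alternative---for instance, for any $p\geq 3$ the value $\tau=2$ satisfies the signature condition and makes $R_1R_2$ non-semisimple with repeated eigenvalue $-u$, so that $|\tau|^2=4\neq 2-u^3-\bar u^3$---hence the lemma is really being invoked under the standing fact, recorded in the paper immediately before its statement, that for the three groups in question the mirrors of $R_1$ and $R_2$ meet at a point of $\partial\CH 2$. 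Consequently, of your two proposed ways to finish, only the second (appealing to $p_{12}$ being the parabolic fixed point) works, and it is exactly what the paper's proof implicitly does; the first (deducing degeneracy of ${\rm span}({\bf n}_1,{\bf n}_2)$ from parabolicity alone) cannot succeed.
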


\begin{proof}
  The trace of $R_1R_2$ is $u(2-|\tau|^2)+\bar{u}^2$ and the
  intersection of the mirrors of $R_1$ and $R_2$, denoted $p_{12}$,
  corresponds to a $\bar{u}^2$-eigenvector ${\bf p}_{12}$.  In order
  for $R_1R_2$ to be parabolic, it must have a repeated eigenvalue
  $\bar{u}^2$ whose eigenspace is spanned by ${\bf p}_{12}$. In
  particular, the trace of $R_1R_2$ is $u^4+2\bar{u}^2$. The result
  follows by solving for $|\tau|^2$ in
  $$
  u(2-|\tau|^2)+\bar{u}^2={\rm tr}(R_1R_2)=u^4+2\bar{u}^2.
  $$
\end{proof}

\medskip

We will be interested in three cases:
\begin{enumerate}
\item[(1)] $p=3$: This means $u^3+\bar{u}^3=-1$ and $|\tau|^2=3$. 
\item[(2)] $p=4$: This means $u^3+\bar{u}^3=0$ and $|\tau|^2=2$. 
\item[(3)] $p=6$: This means $u^3+\bar{u}^3=1$ and $|\tau|^2=1$. 
\end{enumerate}

\begin{prop}
The center of $\Gamma_\infty=\langle R_1,R_2\rangle$ is generated by a scalar multiple of
$$
T_{12}=\left(\begin{matrix} 
1 & 0 & (2+u^6)\bigl((u^5-u^2)\bar{\tau}+u^2\tau^2\bigr) \\
0 & 1 & -(2+\bar{u}^6)\bigl((\bar{u}^5-\bar{u}^2)\tau+\bar{u}^2\bar{\tau}^2\bigr) \\
0 & 0 & 1 \end{matrix}\right).
$$
\end{prop}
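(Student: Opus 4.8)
The plan is to pin down the central generator as essentially the unique vertical Heisenberg translation fixing the boundary point $p_{12}$, and to exploit the fact that any such translation is completely determined by a single null eigenvector together with the Hermitian form $H$. First I would record what is already in place: by the three cases above and Proposition~\ref{prop:braiding}, $Z(\Gamma_\infty)$ is infinite cyclic, generated by a vertical translation $T$ (concretely $T=(R_1R_2)^3$ when $p=3,6$ and $T=(R_1R_2)^2$ when $p=4$). By the preceding lemma, parabolicity of $R_1R_2$ forces $|\tau|^2=2-u^3-\bar u^3=\alpha$, and the repeated eigenvalue $\bar u^2$ of $R_1R_2$ has eigenline spanned by ${\bf p}_{12}$, the common (null) intersection point of the mirrors of $R_1$ and $R_2$. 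It therefore suffices to express $T$ in terms of ${\bf p}_{12}$ and $H$.

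Next I would establish a normal form. The central generator is a vertical Heisenberg translation fixing $[{\bf p}_{12}]$, i.e. a unipotent with $(T-I)^2=0$ and $\mathrm{im}(T-I)=\langle{\bf p}_{12}\rangle$, so $T=I+N$ with $Nv=\phi(v){\bf p}_{12}$ for a linear functional $\phi$. Imposing that $T$ preserve $H$ and using $\langle{\bf p}_{12},{\bf p}_{12}\rangle=0$ kills the quadratic term $\langle Nv,Nw\rangle$ and collapses the remaining condition $\langle Nv,w\rangle+\langle v,Nw\rangle=0$ to $\phi(v)=c\langle v,{\bf p}_{12}\rangle$ with $c$ purely imaginary. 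Thus $T$ is determined by ${\bf p}_{12}$ up to the single real parameter $c$, which is precisely the scalar ambiguity in the statement, and the antisymmetry $(2,3)=-\overline{(1,3)}$ will be automatic from $c\in i\R$.

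Then I would compute ${\bf p}_{12}$ concretely. Since the $u^2$-eigenvectors are ${\bf n}_1=e_1$ and ${\bf n}_2=e_2$, the two mirrors are $e_1^\perp$ and $e_2^\perp$, so ${\bf p}_{12}$ spans $H^{-1}e_3$; explicitly it is proportional to the cofactor vector $(\beta^2-\alpha\bar\beta,\ \bar\beta^2-\alpha\beta,\ \alpha^2-|\beta|^2)$, with $\beta=(\bar u^2-u)\tau$. The crucial simplification is that $|\beta|^2=|\bar u^2-u|^2|\tau|^2=\alpha|\tau|^2$, so the third coordinate equals $\alpha^2-\alpha|\tau|^2=\alpha(\alpha-|\tau|^2)$, which vanishes exactly because of the parabolicity relation $|\tau|^2=\alpha$. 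This is what forces the third row of $T$ to vanish off the diagonal and the $(3,3)$ entry to equal $1$. Since $N$ kills $e_1$ and $e_2$ while $\langle e_3,{\bf p}_{12}\rangle=\det H\neq0$ (by the signature condition~\eqref{eq:signature}), the only nonzero off-diagonal entries of $T=I+N$ sit in the third column and equal $c\det H$ times the first two coordinates of ${\bf p}_{12}$.

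Finally, substituting $\beta=(\bar u^2-u)\tau$ into $\beta^2-\alpha\bar\beta$ and $\bar\beta^2-\alpha\beta$ and simplifying with $\alpha=|\tau|^2$ should reproduce the displayed entries $(2+u^6)\bigl((u^5-u^2)\bar\tau+u^2\tau^2\bigr)$ and its conjugate partner, up to the overall factor $c\det H$ absorbed into the scalar multiple. The main obstacle is exactly this last algebraic reduction: matching the raw cofactor expressions to the packaged form carrying the $(2+u^6)$ factor requires careful bookkeeping with $u\bar u=1$ and the parabolicity relation, and is most safely confirmed by expanding the relevant power $(R_1R_2)^2$ or $(R_1R_2)^3$ directly as an independent cross-check.
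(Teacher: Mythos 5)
Your approach is genuinely different from the paper's. The paper's proof is a direct case-by-case expansion: for $p=3,4,6$ it computes $(R_1R_2)^3$, $(R_1R_2)^2$, $(R_1R_2)^3$ respectively, using $|\tau|^2=3,2,1$ and $u^3+\bar{u}^3=-1,0,1$, and recognizes the result as $u^3T_{12}$, $-u^2T_{12}$, $-u^3T_{12}$. You instead characterize all $H$-unitary unipotent maps fixing the null point ${\bf p}_{12}$, and most of this structural work is correct: ${\bf p}_{12}\propto H^{-1}e_3$ with the cofactor entries you state, the identity $|\beta|^2=\alpha|\tau|^2$ (since $|\bar{u}^2-u|^2=\alpha$), the vanishing of the third coordinate exactly under the parabolicity relation $|\tau|^2=\alpha$, and the normal form $T=I+ic\langle\,\cdot\,,{\bf p}_{12}\rangle{\bf p}_{12}$ with $c\in\R$ for the one-parameter group of vertical translations preserving $H$. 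Even the algebraic matching you flag as the ``main obstacle'' does go through cleanly: writing $\bar{u}^2-u=\bar{u}^2(1-u^3)$, $u^2-\bar{u}=\bar{u}(u^3-1)$ and $\alpha=-\bar{u}^3(1-u^3)^2$, the cofactor vector is a single constant times the displayed entries of $T_{12}$, consistently in both coordinates.

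The genuine gap is your claim that this constant, the factor $c\det H$, can be ``absorbed into the scalar multiple.'' It cannot: a scalar rescales the diagonal together with the off-diagonal entries, so $\lambda(I+N)=I+\lambda N$ forces $\lambda=1$, and two members of your family $I+ic\langle\,\cdot\,,{\bf p}_{12}\rangle{\bf p}_{12}$ with distinct real parameters $c$ are never scalar multiples of one another. Hence your argument only shows that the central generator is a unit scalar times $I+ic_0\langle\,\cdot\,,{\bf p}_{12}\rangle{\bf p}_{12}$ for \emph{some} real $c_0\neq0$; it does not show $c_0$ equals the specific value that produces the entries of $T_{12}$. That normalization --- the translation length of the generator --- is precisely the content of the proposition, and it is what feeds into the cusp-volume bounds of Corollary~\ref{cor:vol-upper-bd}: a generator with half or twice the translation part would change $\mathrm{tr}(T_{12}T_{23})$ and hence the index bounds. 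Determining $c_0$ requires computing at least one entry of the actual group element $(R_1R_2)^2$ or $(R_1R_2)^3$; this is not an ``independent cross-check'' but the essential step, and it is exactly the computation the paper performs in its three cases. Your normal form does reduce that computation to a single matrix entry (a real economy over the paper's full products), but as written the proposal stops short of the step that makes the statement true with the stated entries.
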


\begin{proof}
We perform the calculation in each of the three cases.

\begin{enumerate}
\item[(1)] When $p=3$ the center is generated by $(R_1R_2)^3$. Just using
$|\tau|^2=3$, we obtain:
$$
(R_1R_2)^3
=\left(\begin{matrix} 
u^3 & 0 & \bar{u}^6(1-u^3)\bigl((u^5-u^2)\bar{\tau}+u^2\tau^2\bigr) \\
0 & u^3 & -u^3(1-\bar{u}^3)\bigl((\bar{u}^5-\bar{u}^2)\tau+\bar{u}^2\bar{\tau}^2\bigr) \\
0 & 0 & \bar{u}^6 \end{matrix}\right).
$$
Using $u^3+\bar{u}^3=-1$, we see that $u^3=\bar{u}^6$ and $1-u^3=2+u^6$, and so 
$(R_1R_2)^3=u^3T_{12}$.
\item[(2)] When $p=4$ the center is generated by $(R_1R_2)^2$. Just  using $|\tau|^2=2$,
we obtain:
$$
(R_1R_2)^2
=\left(\begin{matrix} 
-u^2 & 0 & \bar{u}^4\bigl((u^5-u^2)\bar{\tau}+u^2\tau^2\bigr) \\
0 & -u^2 & u^2\bigl((\bar{u}^5-\bar{u}^2)\tau+\bar{u}^2\bar{\tau}^2) \\
0 & 0 & \bar{u}^4 \end{matrix}\right).
$$
Using $u^3+\bar{u}^3=0$, we see that $-u^2=\bar{u}^4$ and $1=2+u^6$, and so 
$(R_1R_2)^2=-u^2T_{12}$. 
\item[(3)] When $p=6$ the center is generated by $(R_1R_2)^3$. Just using $|\tau|^2=1$,
we obtain:
$$
(R_1R_2)^3
=\left(\begin{matrix} 
-u^3 & 0 & \bar{u}^6(1+u^3)\bigl((u^5-u^2)\bar{\tau}+u^2\tau^2\bigr) \\
0 & -u^3 & u^3(1+\bar{u}^3)\bigl((\bar{u}^5-\bar{u}^2)\tau+\bar{u}^2\bar{\tau}^2) \\
0 & 0 & \bar{u}^6 \end{matrix}\right).
$$
Using $u^3+\bar{u}^3=1$, we see that $-u^3=\bar{u}^6$ and $1+u^3=2+u^6$, and so 
$(R_1R_2)^3=-u^3T_{12}$.
\end{enumerate}
\end{proof}

\begin{cor}\label{cor:vol-upper-bd}
Let $\Gamma_\infty=\langle R_1,R_2\rangle$. If $B_\infty$ is any horoball that 
is precisely invariant under $\Gamma_\infty$ in $\Gamma$ then
$$
Vol(\Gamma_\infty\backslash B_\infty)
\le \frac{q}{2p}\Bigl|(2+u^6)\bigl((u^5-u^2)\bar{\tau}+u^2\tau^2\bigr)\Bigr|^2
$$
where $q=1$ when $p=3$ or $6$ and $q=2$ when $p=4$.
\end{cor}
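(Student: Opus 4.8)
The plan is to derive the corollary as a direct specialization of Proposition~\ref{prop:from-jrp-vol}, supplying the three auxiliary quantities $m$, $q$, $T$ from the two preceding results and taking $C=J$. First I would record the bookkeeping. By the preceding Lemma the index $m=[\Gamma_\infty:\Lambda_\infty]$ equals $p$, and the shortest non-trivial commutator in $\Lambda_\infty$ is $T$ when $p=3,6$ and $T^2$ when $p=4$; hence the integer $q$ of Proposition~\ref{prop:from-jrp-vol} is exactly $1$ for $p=3,6$ and $2$ for $p=4$, matching the statement. For the center generator I would take $T=T_{12}$, the unipotent matrix produced by the preceding Proposition. This is the correct representative, because the trace formula of Proposition~\ref{prop:from-jrp-vol} is written for the unipotent lift of the vertical translation (the one with $1$'s on the diagonal), and $T_{12}$ is precisely that lift; the scalar factors $u^3$, $-u^2$, $-u^3$ relating $T_{12}$ to $(R_1R_2)^3$ or $(R_1R_2)^2$ are cube roots of unity, hence irrelevant in $\pu(2,1)$. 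Finally I would take $C=J$: since $J$ cyclically permutes the three mirrors it does not fix the cusp point $p_{12}=m_1\cap m_2$, so $J\in\Gamma-\Gamma_\infty$, as the proposition requires.

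The heart of the argument is then the single trace computation $3-{\rm tr}(T_{12}JT_{12}J^{-1})$. Since ${\rm tr}(TCTC^{-1})$ is unchanged under simultaneous conjugation of $T$ and $C$, it is a coordinate-free quantity, so it may be evaluated in the standard basis of polar vectors used throughout, even though Proposition~\ref{prop:from-jrp-vol} is phrased in Siegel coordinates. Writing $z=(2+u^6)\bigl((u^5-u^2)\bar\tau+u^2\tau^2\bigr)$ and $T_{12}=I+N$ with
$$
N=\left(\begin{matrix} 0 & 0 & z \\ 0 & 0 & -\bar z \\ 0 & 0 & 0 \end{matrix}\right),
$$
a short calculation gives $JNJ^{-1}$ with its only non-zero entries in the first column, namely $z$ in position $(2,1)$ and $-\bar z$ in position $(3,1)$. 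Expanding $T_{12}\,JT_{12}J^{-1}=(I+N)(I+JNJ^{-1})$ and using ${\rm tr}(N)={\rm tr}(JNJ^{-1})=0$, the only surviving contribution to the trace is ${\rm tr}(N\cdot JNJ^{-1})=-|z|^2$. Hence ${\rm tr}(T_{12}JT_{12}J^{-1})=3-|z|^2$, and so $3-{\rm tr}(T_{12}JT_{12}J^{-1})=|z|^2$.

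Substituting $m=p$, the value of $q$ above, and $3-{\rm tr}(TCTC^{-1})=|z|^2$ into Proposition~\ref{prop:from-jrp-vol} yields exactly
$$
Vol(\Gamma_\infty\backslash B_\infty)\le \frac{q}{2p}\,\bigl|(2+u^6)\bigl((u^5-u^2)\bar\tau+u^2\tau^2\bigr)\bigr|^2,
$$
which is the claimed bound. I expect the only genuinely delicate point to be the justification that $T_{12}$, rather than one of its scalar multiples, is the representative demanded by the trace formula; once that normalization is pinned down the matrix computation is entirely mechanical and the remaining steps are pure substitution.
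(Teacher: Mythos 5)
Your proposal is correct and follows essentially the same route as the paper: apply Proposition~\ref{prop:from-jrp-vol} with $C=J$, $m=p$ and $q$ as determined by the preceding lemma, and reduce everything to the single trace computation ${\rm tr}(T_{12}\,JT_{12}J^{-1})=3-|z|^2$, which you carry out (via elementary matrices) exactly as the paper does by exhibiting $JT_{12}J^{-1}=T_{23}$. Your extra care on the two points the paper leaves implicit --- that the trace formula demands the unipotent lift $T_{12}$ rather than the scalar multiples $(R_1R_2)^k$ (the trace is not invariant under rescaling $T$), and that $J\notin\Gamma_\infty$ since it moves $p_{12}$ to $p_{23}$ --- is sound and only strengthens the argument.
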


\begin{proof}
We apply Proposition \ref{prop:from-jrp-vol} with $C=J$. This means that 
$$
JT_{12}J^{-1}=T_{23}=
\left(\begin{matrix} 
1 & 0 & 0 \\
(2+u^6)\bigl((u^5-u^2)\bar{\tau}+u^2\tau^2\bigr) & 1 & 0 \\
-(2+\bar{u}^6)\bigl((\bar{u}^5-\bar{u}^2)\tau+\bar{u}^2\bar{\tau}^2\bigr) & 0 & 1 \end{matrix}\right).
$$
Hence 
$$
{\rm tr}(T_{12}T_{23})=3-\Bigl|(2+u^6)\bigl((u^5-u^2)\bar{\tau}+u^2\tau^2\bigr)\Bigr|^2.
$$
The result follows since we know $m=p$ and $q=1$ for $p=3,6$ and $q=2$ for $p=4$.
\end{proof}

\medskip

On the other hand, we have the following lower bound for $Vol(\Gamma_\infty\backslash B_\infty)$:

\begin{prop}\label{prop:vol-lower-bd}
Let $\Gamma_1$ be a lattice in ${\rm SU}(2,1)$. Suppose that
$\Gamma_1$ is an index $d$ subgroup of a lattice $\Gamma$ in ${\rm
  SU}(2,1)$.  Suppose that $\Gamma_1\backslash{\bf H}^2_{\mathbb C}$
has only one cusp and let $\Gamma_1^{\infty}$ be the corresponding
parabolic subgroup. If $B_1$ is the largest horoball that is
precisely invariant under $\Gamma_1^\infty$ in $\Gamma_1$ then
$$
Vol(\Gamma_1^\infty\backslash B_1)\ge d/4.
$$
\end{prop}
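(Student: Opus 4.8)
The plan is to reduce the desired inequality to a universal lower bound on the volume of the \emph{maximal cusp} of the overgroup $\Gamma$, with the index $d$ entering through a covering argument. The two geometric inputs will be the maximality of $B_1$ and the discreteness of $\Gamma$.

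First I would analyse the cusps of $N=\Gamma\backslash\CH 2$. Since $N_1=\Gamma_1\backslash\CH 2$ is a degree $d$ cover of $N$ and every cusp of $N$ lifts to at least one cusp of $N_1$ (with cusps over distinct cusps of $N$ being distinct), the hypothesis that $N_1$ has a single cusp forces $N$ to have exactly one cusp, lying under the cusp of $N_1$. Write $p_\infty$ for the parabolic fixed point stabilised by $\Gamma_1^\infty$ and $\Gamma^\infty$ for its stabiliser in $\Gamma$; note $\Gamma_1^\infty=\Gamma_1\cap\Gamma^\infty$. Restricting the covering to the cusp cross-sections then has degree equal to the full degree $d$; equivalently, since there is a single double coset in $\Gamma_1\backslash\Gamma/\Gamma^\infty$, one gets $[\Gamma^\infty:\Gamma_1^\infty]=d$.

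Next I would compare maximal horoballs. Let $B$ be the largest horoball precisely invariant under $\Gamma^\infty$ in $\Gamma$. Because $\Gamma_1-\Gamma_1^\infty=\Gamma_1\cap(\Gamma-\Gamma^\infty)\subseteq\Gamma-\Gamma^\infty$ and $\Gamma_1^\infty\subseteq\Gamma^\infty$ preserves $B$, the horoball $B$ is also precisely invariant under $\Gamma_1^\infty$ in $\Gamma_1$; maximality of $B_1$ then gives $B\subseteq B_1$. Since $\Gamma_1^\infty$ has index $d$ in $\Gamma^\infty$, the quotient $\Gamma_1^\infty\backslash B$ is a $d$-fold cover of $\Gamma^\infty\backslash B$, so
$$
{\rm Vol}(\Gamma_1^\infty\backslash B_1)\ge{\rm Vol}(\Gamma_1^\infty\backslash B)=d\cdot{\rm Vol}(\Gamma^\infty\backslash B).
$$
It therefore suffices to prove the index-free statement that the maximal cusp satisfies ${\rm Vol}(\Gamma^\infty\backslash B)\ge 1/4$ for every lattice $\Gamma\le\su(2,1)$ possessing a cusp.

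Finally I would establish this universal bound. By maximality of $B$ there is some $C\in\Gamma-\Gamma^\infty$ with $C(B)$ tangent to $B$; normalising as in the proof of Proposition~\ref{prop:from-jrp-vol} (so that $B$ has height $h=2/|c|$ and ${\rm Vol}(\Gamma^\infty\backslash B)=|c|^2t^2q/(8m)$), the tangency turns the upper bound of Proposition~\ref{prop:from-jrp-vol} into an equality. The remaining inequality $|c|^2t^2q\ge 2m$ is precisely a Shimizu/J\o rgensen-type constraint forced by discreteness of $\langle\Gamma^\infty,C\rangle$, quantifying the conflict between the vertical translation length $t$, the Heisenberg covolume data $q,m$ of $\Gamma^\infty$, and the off-diagonal size $|c|$ of $C$; this is the content of Parker's cusp-volume estimates in~\cite{parkervolumes} (see also~\cite{parkerjorgensen}). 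I expect this last step — extracting the explicit constant $1/4$ uniformly over all cusped lattices from the discreteness inequality — to be the main obstacle, the preceding steps being essentially formal consequences of covering theory and the maximality of $B_1$.
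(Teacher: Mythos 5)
Your proposal is correct and follows essentially the same route as the paper's proof: the one-cusp hypothesis gives $[\Gamma_\infty:\Gamma_1^\infty]=d$, a horoball precisely invariant under $\Gamma_\infty$ in $\Gamma$ is precisely invariant under $\Gamma_1^\infty$ in $\Gamma_1$ and hence contained in $B_1$ by maximality, and the degree-$d$ covering multiplies the cusp volume by $d$. The universal bound ${\rm Vol}(\Gamma_\infty\backslash B)\ge 1/4$, which you flag as the main obstacle and sketch via tangency and a Shimizu/J\o rgensen-type inequality, is exactly what the paper imports wholesale as Theorem~4.1 of~\cite{parkervolumes}, so no new argument is needed there.
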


\begin{proof}
Let $\Gamma_\infty$ be a parabolic subgroup of $\Gamma$, which we may
and will assume contains $\Gamma_1^\infty$.  Since
$\Gamma_1\backslash{\bf H}^2_{\mathbb C}$ has only one cusp, the index
of $\Gamma_1^\infty$ in $\Gamma_\infty$ is the same as the index of
$\Gamma_1$ in $\Gamma$, namely $d$.  From Theorem 4.1
of~\cite{parkervolumes} we know that there is a horoball $B$ so that
$Vol(\Gamma_\infty\backslash B)\ge 1/4$.  Clearly a horoball precisely
invariant under $\Gamma_\infty$ in $\Gamma$ is also precisely
invariant under $\Gamma_1^\infty$ in $\Gamma_1$, and hence $B\subset
B_1$. Since $\Gamma_1^\infty$ has index $d$ in $\Gamma_\infty$, the
corresponding covering $\Gamma_1^\infty\backslash B\rightarrow
\Gamma_\infty\backslash B$ has degree $d$. This implies that
$$
Vol(\Gamma_1^\infty\backslash B_1)\geq Vol(\Gamma_1^\infty\backslash B)
= d \, Vol(\Gamma_\infty\backslash B)\geq d/4.
$$
\end{proof}

\medskip

Combining Corollary~\ref{cor:vol-upper-bd} and Proposition~\ref{prop:vol-lower-bd}
we get the following bound on the index $d$ of a lattice containing $\Gamma$: 
$$
d\le \frac{2q}{p}\Bigl|(2+u^6)\bigl((u^5-u^2)\bar{\tau}+u^2\tau^2\bigr)\Bigr|^2.
$$
For the groups we are interested in, this bound is:
\begin{enumerate}
\item[(1)] ${\mathcal S}(3,\sigma_1)$: 
$$
d\le \frac{2}{3}\Bigl|(2+u^6)\bigl((u^5-u^2)\bar{\tau}+u^2\tau^2\bigr)\Bigr|^2
=6+2\sqrt{6}<11.
$$
\item[(2)] ${\mathcal S}(4,\sigma_5)$:
$$
d\le \Bigl|(2+u^6)\bigl((u^5-u^2)\bar{\tau}+u^2\tau^2\bigr)\Bigr|^2
=\frac{7+\sqrt{5}+3\sqrt{3}+\sqrt{15}}{2}<10.
$$
\item[(3)] $\Gamma(6,1/6)$:
$$
d\le \frac{1}{3}\Bigl|(2+u^6)\bigl((u^5-u^2)\bar{\tau}+u^2\tau^2\bigr)\Bigr|^2
=2+\sqrt{3}<4.
$$
\end{enumerate}

\begin{proof} (Proposition \ref{prop:non-comm-cusped})
\begin{enumerate}
\item[(1)] Suppose that the groups ${\mathcal S}(3,\sigma_1)$ and ${\mathcal S}(6,\sigma_1)$ are commensurable. Then by Proposition~\ref{prop:commonsupgroup} we may assume that they are contained in a common lattice $\Gamma$, say with indices $d_1$ and $d_2$ respectively. The corresponding orbifold Euler characteristics are:
$$
\chi\bigl({\mathcal S}(3,\sigma_1)\backslash{\bf H}^2_{\mathbb C}\bigr)=2/9,\quad 
\chi\bigl({\mathcal S}(6,\sigma_1)\backslash{\bf H}^2_{\mathbb C}\bigr)=43/72.
$$ 
Therefore the orbifold Euler characteristic of the commensurator $\Gamma$ is
$$
\frac{2}{9d_1}=\frac{43}{72d_2}.
$$
That is, $43d_1=16d_2$ and so $d_1\ge 16$. This contradicts the above bound of 11 on the index of any lattice containing ${\mathcal S}(3,\sigma_1)$.
\item[(2)] Suppose the groups ${\mathcal S}(4,\sigma_5)$ and ${\mathcal T}(4,{\bf S}_2)$ 
are commensurable. Their orbifold Euler characteristics are:
$$
\chi\bigl({\mathcal S}(4,\sigma_5)\backslash{\bf H}^2_{\mathbb C}\bigr)=17/36,\quad 
\chi\bigl({\mathcal T}(4,{\bf S}_2)\backslash{\bf H}^2_{\mathbb C}\bigr)=1/3.
$$ 
Arguing as before, the index $d_1$ of ${\mathcal S}(4,\sigma_5)$ in its commensurator must
be at least 17, contradicting the above bound of 10.
\item[(3)] Suppose the groups $\Gamma(6,1/6)$ and ${\mathcal T}(4,{\bf E}_2)$ are commensurable. Their orbifold Euler characteristics are:
$$
\chi\bigl(\Gamma(6,1/6)\backslash{\bf H}^2_{\mathbb C}\bigr)=11/144,\quad 
\chi\bigl({\mathcal T}(4,{\bf E}_2)\backslash{\bf H}^2_{\mathbb C}\bigr)=17/32.
$$ 
As before, $d_1$ must be at least 11, contradicting the above bound of 4.
\end{enumerate}
\end{proof}

\begin{appendices}

\section{Combinatorial data, commensurability invariants, presentations} \label{sec:data}

For each of the lattice considered in this paper (where our algorithm
produces a fundamental domain), we list
\begin{enumerate}
  \item The type of the triangle group, i.e. six braid lengths
    together with the order of $Q=R_1R_2R_3$. We will use the notation
    $a,b,c; d,e,f; g$ for
    $$
    \br(2,3),\,\br(3,1),\,\br(1,2);\; 
    \br(1,\bar323),\,\br(1,23\bar2),\,\br(3,12\bar1);\; o(Q),
    $$
    where $o(Q)$ means the order of $Q$.
  \item The orbifold Euler characteristic, and basic commensurability
    invariants (adjoint trace field, cocompactness, arithmeticity and
    non-arithmeticity index);
  \item The values of the order $p$ of reflections such that the group
    is a lattice. Values in parentheses indicate that our algorithm
    fails to give a fundamental domain for that group (see
    section~\ref{sec:resultspoincare} for details of how the algorithm fails
    in each case);
  \item The rough structure of the invariant shell, in the form of a
    list of side representatives. Recall that $[k]\,a;\,b,\,c$ stands
    for a pyramid with a $k$-gon as its base, which occurs when
    $\br(b,c)=k$;
  \item A presentation in terms of generators and relations (for the
    sake of brevity and clarity, we write the braid relation
    $(ab)^{n/2}=(ba)^{n/2}$ as $\br_n(a,b)$). 
    We give slightly more relations than in the relations present in
    the natural presentations (see
    sections~\ref{sec:natural_presentations}), so that the reader
    can reconstruct the fundamental domain from the
    presentation. More specifically, we list every braid relation that
    corresponds to the apex of some side representative in the
    domain. If there is truncation of that apex for some values of
    $p$, then we also list a power relation next to the corresponding
    braid relation.
  \item A list of conjugacy classes of vertex stabilizers;
  \item A list of singular points in the quotient, with the type of (cyclic) singularity.
\end{enumerate}
Dashed tables list groups that we know to be lattices, but  
where our algorithm does not produce a fundamental domain. If that is
the case, and we know an alternative description for the group that
makes the algorithm work, we give it in the commensurability invariant
table.


\subsection{Sporadic $\sigma_1$} \label{app-s1}

\begin{center}
Triangle group type: 6,6,6; 4,4,4; 8
\end{center}

\begin{center}
Lattice for $p=3,4,6$.
\end{center}

\begin{center}
Commensurability invariants:\\
\begin{tabular}{|c|c|c|c|c|c|}\hline
  $p$ & $\chi^{orb}$  & $\Q(\tr{\rm Ad}\ \Gamma)$ & CM field                      & C?    & A? \\ \hline
  3   &     2/9       & $\Q(\sqrt{6})$            & $\Q(i\sqrt{2},i\sqrt{3})$     &  NC   &  NA(1)       \\
  4   &     7/16      & $\Q(\sqrt{2})$            & $\Q(i,\sqrt{2})$              &  NC   &  NA(1)      \\
  6   &     43/72     & $\Q(\sqrt{6})$            & $\Q(i\sqrt{2},i\sqrt{3})$     &  NC   &  NA(1)      \\\hline
\end{tabular}
\end{center}

\begin{center}
Presentations: \label{natural-presentation-s1}
\begin{equation*}
 \begin{array}{c}
   \left \langle\,R_1, R_2, R_3, J\vphantom{(R_1R_2R_3R_2^{-1})^{\frac{4p}{p-4}}}\,\right.\left\vert\,  
     R_1^p,\, 
     J^3,\, 
     (R_1J)^8,\,
     R_3=JR_2J^{-1}=J^{-1}R_1J,\, \qquad\qquad\qquad\qquad\qquad \right.\\
  \br_6(R_1,R_2),\, (R_1R_2)^{\frac{3p}{p-3}},\,
         \br_4(R_1,R_2R_3R_2^{-1}),\, 
                   (R_1R_2R_3R_2^{-1})^{\frac{4p}{p-4}}\\
         \left.\br_3(R_1,R_2R_3R_2R_3^{-1}R_2^{-1}),\br_3(R_1,R_3^{-1}R_2^{-1}R_3R_2R_3)\vphantom{(R_1R_2R_3R_2^{-1})^{\frac{4p}{p-4}}} \right\rangle
 \end{array}
\end{equation*}
\end{center}

\begin{center}
Combinatorics:\\
\begin{tabular}{|c|c|c|c|c|}
\hline
  Triangle & \#($P$-orb) & Top trunc.                                      & Top ideal \\
\hline
   $[6]\ 1;\ 2,\ 3$ &  8  & $p=4,6$                                          & $p=3$\\                   
   $[4]\ 2;\ 1,\ 23\bar2$ &  8  & $p=6$                                      & $p=4$\\
   $[3]\ 23\bar2;\ 1,\ 232\bar3\bar2$ & 8 &                                  & $p=6$\\
   $[3]\ 232\bar3\bar2;\ 1,\ \bar3\bar2323$ & 8 &                            & $p=6$\\
\hline
\end{tabular}
\end{center}
\begin{figure}[htbp]
  \begin{center}
    \begin{tabular}{c}
      $p=3$\\
    \end{tabular}
    \hfill
    \begin{minipage}[c]{0.2\textwidth}
      \includegraphics[width = \textwidth]{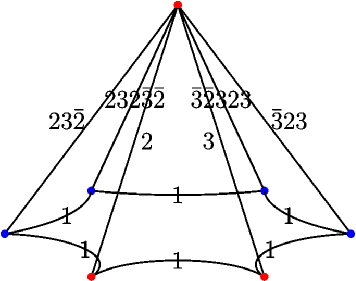}
    \end{minipage}\hfill
    \begin{minipage}[c]{0.2\textwidth}
      \includegraphics[width = 0.9\textwidth]{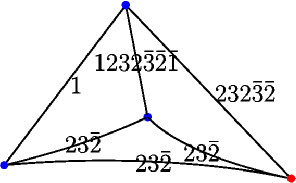}
    \end{minipage}\hfill
    \begin{minipage}[c]{0.2\textwidth}
      \includegraphics[width = \textwidth]{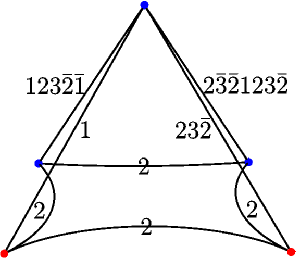}
    \end{minipage}\hfill
    \begin{minipage}[c]{0.2\textwidth}
      \includegraphics[width = \textwidth]{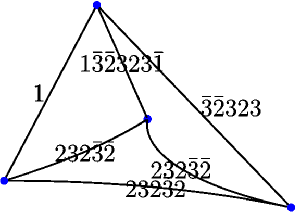}
    \end{minipage}
    \hfill\
  \end{center}
  \hfill
  \begin{center}
    \begin{tabular}{c}
      $p=4$\\
    \end{tabular}
    \hfill
    \begin{minipage}[c]{0.2\textwidth}
      \includegraphics[width=\textwidth]{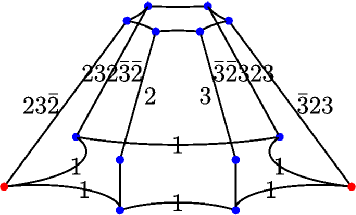}
    \end{minipage}\hfill
    \begin{minipage}[c]{0.2\textwidth}
      \includegraphics[width=0.9\textwidth]{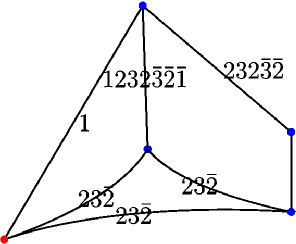}
    \end{minipage}\hfill
    \begin{minipage}[c]{0.2\textwidth}
      \includegraphics[width=\textwidth]{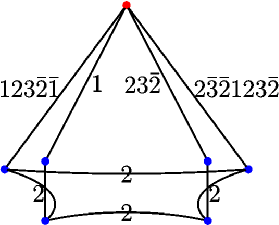}
    \end{minipage}\hfill
    \begin{minipage}[c]{0.2\textwidth}
      \includegraphics[width=\textwidth]{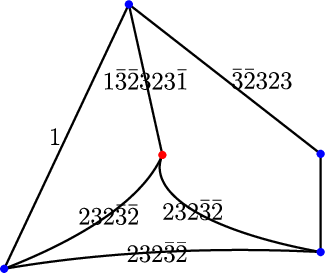}
    \end{minipage}\hfill\
  \end{center}
  
  \begin{center}
    \begin{tabular}{c}
      $p=6$\\
    \end{tabular}
    \hfill
    \begin{minipage}[c]{0.22\textwidth}
      \includegraphics[width=\textwidth]{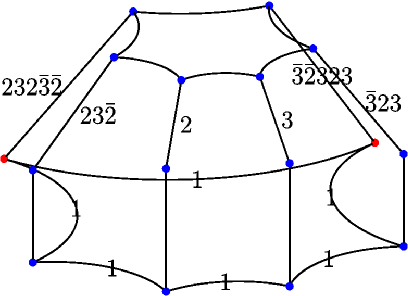}
    \end{minipage}\hfill
    \begin{minipage}[c]{0.22\textwidth}
      \centering
      \includegraphics[width=0.6\textwidth]{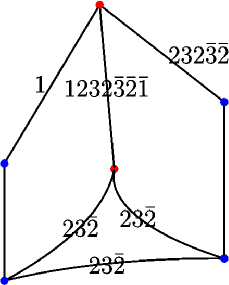}
    \end{minipage}\hfill
    \begin{minipage}[c]{0.22\textwidth}
      \centering
      \includegraphics[width=\textwidth]{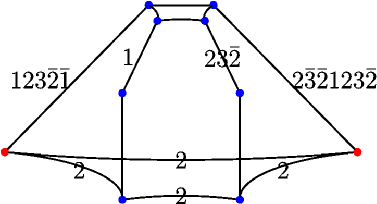}
    \end{minipage}\hfill
    \begin{minipage}[c]{0.22\textwidth}
      \includegraphics[width=0.8\textwidth]{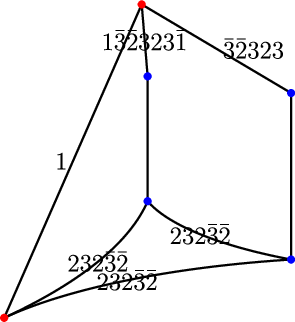}
    \end{minipage}\hfill\
  \end{center}
\end{figure}

\begin{center}
Vertex stabilizers:\\
\begin{tabular}{|c|ccc|ccc|}
\hline
  $p$   &   Vertex                &  Order   &   Nature                 &    Vertex                       &   Order    & Nature\\ 
  \hline
   3    &   $p_{12}$              & $\infty$ &    Cusp              &&&\\
        &   $p_{1,23\bar2}$       &   72     &    $G_5$              &&&\\
        &   $p_{1,232\bar3\bar2}$ &   24     &    $G_4$              
        &   $p_{1,\bar3\bar2323}$ &   24     &    $G_4$        \\      
 \hline
   4    &   $p_{1,(12)^3}$        &   16     &    $\Z_4\times\Z_4$
        &   $p_{1,(13)^3}$        &   16     &    $\Z_4\times\Z_4$\\
        &   $p_{1,23\bar2}$       & $\infty$ &    Cusp              &&&\\
        &   $p_{1,232\bar3\bar2}$ &   96     &    $G_8$
        &   $p_{1,\bar3\bar2323}$ &   96     &    $G_8$\\
\hline
  6    &   $p_{1,(12)^3}$        &   12     &    $\Z_6\times\Z_2$
       &   $p_{1,(13)^3}$        &   12     &    $\Z_6\times\Z_2$\\
       &   $p_{1,(123\bar2)^2}$  &   36     &    $\Z_6\times\Z_6$
       &   $p_{1,(1\bar323)^2}$  &   36     &    $\Z_6\times\Z_6$\\
       &   $p_{1,232\bar3\bar2}$ & $\infty$ &    Cusp
       &   $p_{1,\bar3\bar2323}$ & $\infty$ &    Cusp\\
 \hline
\end{tabular}
\end{center}

\begin{center}
  Singular points\\
  \begin{tabular}{|c|c|c|}
    \hline
    $p$ & Element &  Type\\
    \hline
    $3,4,6$   &  $J$    & $\frac{1}{3}(1,2)$\\
          &  $P$    & $\frac{1}{8}(1,3)$\\
    \hline
    $4,6$ & $R_1R_2$ & $\frac{1}{3}(1,1)$\\
    \hline
    $6$   & $R_1R_2R_3R_2^{-1}$ & $\frac{1}{2}(1,1)=A_1$\\
    \hline
  \end{tabular}
\end{center}


\subsection{Sporadic $\overline{\sigma}_4$} \label{app-s4c}

\begin{center}
Triangle group type: 4,4,4; 3,3,3; 7
\end{center}

\begin{center}
Lattice for $p=3,4,5,6,8,12$.
\end{center}

\begin{center}
Commensurability invariants:\\
\begin{tabular}{|c|c|c|c|c|c|}\hline
  $p$  & $\chi^{orb}$ & $\Q(\tr{\rm Ad}\ \Gamma)$          &  CM field                     &                  C?    & A? \\ \hline
  3    & 2/63         & $\Q(\sqrt{21})$                    & $\Q(i\sqrt{3},i\sqrt{7})$     & C    &  A           \\
  4    & 25/224       & $\Q(\sqrt{7})$                     & $\Q(i,\sqrt{7})$              & NC   &  NA(1)      \\
  5    & 47/280       & $\Q(\sqrt{\frac{5+\sqrt{5}}{14}})$ & $\Q(\zeta_5,i\sqrt{7})$       & C    &  NA(2)      \\
  6    & 25/126       & $\Q(\sqrt{21})$                    & $\Q(i\sqrt{3},i\sqrt{7})$     & NC   &  NA(1)      \\
  8    & 99/448       & $\Q(\sqrt{2},\sqrt{7})$            & $\Q(i,\sqrt{2},\sqrt{7})$     & C    &  NA(2)      \\
  12   & 221/1008     & $\Q(\sqrt{3},\sqrt{7})$            & $\Q(i,\sqrt{3},\sqrt{7})$     & C    &  NA(2)      \\\hline
\end{tabular}
\end{center}

\begin{center}
Presentations:
\begin{eqnarray*}
&\left\langle\,R_1, R_2, R_3, J\vphantom{(R_1R_2R_3R_2^{-1})^{\frac{6p}{p-6}}}\,\right.\left\vert\,  
     R_1^p,\, 
     J^3,\, 
     (R_1J)^7,\,
     R_3=JR_2J^{-1}=J^{-1}R_1J,\, 
     \right. \qquad\qquad\qquad\qquad\qquad & \\ 
&\left.\br_4(R_1,R_2),\,     (R_1R_2)^{\frac{4p}{p-4}},\, 
     \br_3(R_1,R_2R_3R_2^{-1}),\, (R_1R_2R_3R_2^{-1})^{\frac{6p}{p-6}} 
\,\right\rangle&
\end{eqnarray*}
\end{center}

\begin{center}
Combinatorics:\\
\begin{tabular}{|c|c|c|c|c|}
\hline
  Triangle & \#($P$-orb) & Top trunc.                                      & Top ideal \\
\hline
               $[4]\ 1;\ 2,\ 3$ &  7  & $p=5,6,8,12$                                     & $p=4$\\                   
               $[3]\ 2;\ 1,\ 23\bar2$ &  7 & $p=8,12$                                    & $p=6$\\
\hline
\end{tabular}
\end{center}

\begin{figure}[htbp]
\begin{center}
  \hfill
    \begin{tabular}{c}
      $p=3$\\
    \end{tabular}
    \begin{minipage}[c]{0.18\textwidth}
      \includegraphics[width = 0.8\textwidth]{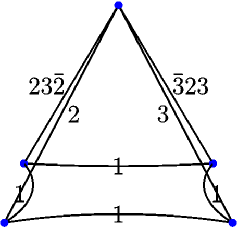}
    \end{minipage}
    \begin{minipage}[c]{0.18\textwidth}
      \includegraphics[width = \textwidth]{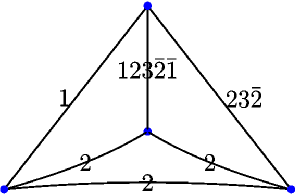}
    \end{minipage}
  \hfill\,
    \begin{tabular}{c}
      $p=4$\\
    \end{tabular}
    \begin{minipage}[c]{0.18\textwidth}
      \includegraphics[width = 0.8\textwidth]{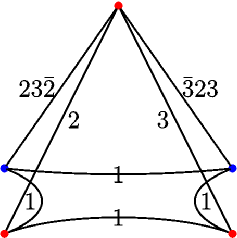}
    \end{minipage}
    \begin{minipage}[c]{0.18\textwidth}
      \includegraphics[width = \textwidth]{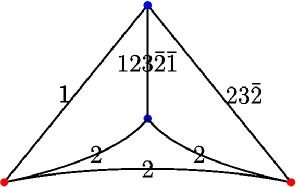}
    \end{minipage}
  \hfill\,
\end{center}

\begin{center}
  \hfill
    \begin{tabular}{c}
      $p=5$\\
    \end{tabular}
    \begin{minipage}[c]{0.18\textwidth}
      \includegraphics[width = \textwidth]{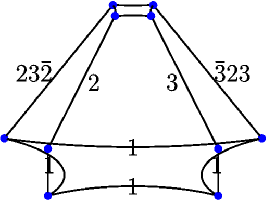}
    \end{minipage}
    \begin{minipage}[c]{0.18\textwidth}
      \centering
      \includegraphics[width = 0.8\textwidth]{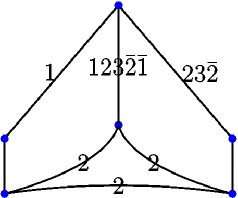}
    \end{minipage}
  \hfill\,
    \begin{tabular}{c}
      $p=6$\\
    \end{tabular}
    \begin{minipage}[c]{0.18\textwidth}
      \includegraphics[width = \textwidth]{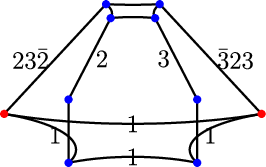}
    \end{minipage}
    \begin{minipage}[c]{0.18\textwidth}
      \centering
      \includegraphics[width = 0.8\textwidth]{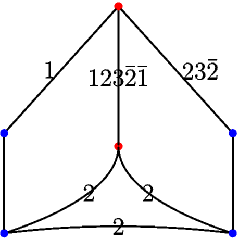}
    \end{minipage}
  \hfill\,
\end{center}

\begin{center}
  \hfill
  \begin{tabular}{c}
    $p=8$\\
  \end{tabular}
  \begin{minipage}[c]{0.18\textwidth}
    \includegraphics[width = \textwidth]{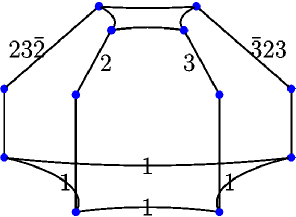}
  \end{minipage}
  \begin{minipage}[c]{0.18\textwidth}
    \centering
    \includegraphics[width = 0.6\textwidth]{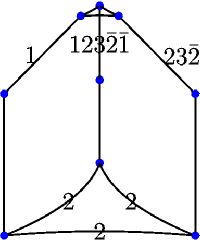}
  \end{minipage}
  \hfill\,
    \begin{tabular}{c}
      $p=12$\\
    \end{tabular}
    \begin{minipage}[c]{0.18\textwidth}
      \includegraphics[width = \textwidth]{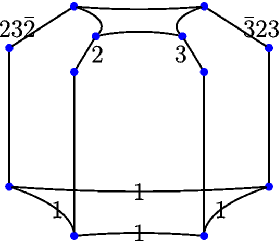}
    \end{minipage}
    \begin{minipage}[c]{0.18\textwidth}
      \centering
      \includegraphics[width = 0.6\textwidth]{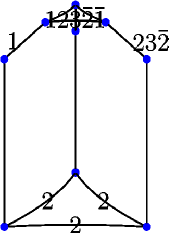}
    \end{minipage}
  \hfill\,
\end{center}
\end{figure}

\begin{center}
Vertex stabilizers:\\
\begin{tabular}{|c|ccc|ccc|}
\hline
  $p$   &   Vertex                &  Order   &   Nature                 &    Vertex                       &   Order    & Nature\\ 
  \hline
  3    &   $p_{12}$              &   72     &    $G_5$  &&&\\                 
       &   $p_{1,23\bar2}$       &   24     &    $G_4$ &&&\\
  \hline
  4    &   $p_{12}$              & $\infty$ &    Cusp  &&&\\                 
       &   $p_{1,23\bar2}$       &   96     &    $G_8$ &&&\\
 \hline
  5    &   $p_{1,(12)^2}$        &   50     &    $\Z_5\times\Z_{10}$
       &   $p_{1,(13)^2}$        &   50     &    $\Z_5\times\Z_{10}$\\
       &   $p_{1,23\bar2}$       &   600    &    $G_{16}$ &&&\\
 \hline
  6    &   $p_{1,(12)^2}$        &   36     &    $\Z_6\times\Z_{6}$
       &   $p_{1,(13)^2}$        &   36     &    $\Z_6\times\Z_{6}$\\
       &   $p_{1,23\bar2}$       & $\infty$ &    Cusp &&&\\
\hline
  8    &   $p_{1,(12)^2}$        &   32     &    $\Z_8\times\Z_4$
       &   $p_{1,(13)^2}$        &   32     &    $\Z_8\times\Z_4$\\
       &   $p_{1,(123\bar2)^3}$  &   64     &    $\Z_8\times\Z_8$&&&\\
\hline
  12   &   $p_{1,(12)^2}$        &   36     &    $\Z_{12}\times\Z_3$
       &   $p_{1,(13)^2}$        &   36     &    $\Z_{12}\times\Z_3$\\
       &   $p_{1,(123\bar2)^3}$  &   48     &    $\Z_{12}\times\Z_4$&&&\\
 \hline
\end{tabular}
\end{center}
\begin{center}
  Singular points:\\
  \begin{tabular}{|c|c|c|}
    \hline
    $p$ & Element &  Type\\
    \hline
    3,4,5,6,8,12  &  $J$                   & $\frac{1}{3}(1,2)$\\
    3,4,5,6,8,12  &  $P$                   & $\frac{1}{7}(1,3)$\\
    3,4,5,6,8,12  & $R_2R_3R_2^{-1}P^2$    & $A_1$\\
    5,6,8,12      & $R_2R_3$               & $A_1$\\
    8,12          & $R_1R_2R_3R_2^{-1}$    & $\frac{1}{3}(1,1)$\\
    8,12          & $R_1R_2R_3R_2^{-1}R_1$ & $A_1$\\
    \hline
  \end{tabular}
\end{center}


\subsection{Sporadic $\sigma_5$} \label{app-s5}

\begin{center}
Triangle group type: 4,4,4; 5,5,5; 30\\
$P^5$ is a complex reflection
\end{center}

\begin{center}
Lattice for $p=2,3,4$.
\end{center}

\begin{center}
Commensurability invariants:\\
\begin{tabular}{|c|c|c|c|c|c|}\hline
  $p$ & $\chi^{orb}$  & $\Q(\tr{\rm Ad}\ \Gamma)$ &  CM field                                                &  C?   & A? \\ \hline
 2    &     1/45      & $\Q(\sqrt{5})$            & $\Q(\sigma_5^3)=\Q(i\sqrt{3},\sqrt{5})$ &  C    &  A             \\
 3    &     49/180    & $\Q(\sqrt{5})$            & $\Q(i\sqrt{3},\sqrt{5})$                &  NC   &  NA(1)       \\
 4    &     17/36     & $\Q(\sqrt{3},\sqrt{5})$   & $\Q(i,\sqrt{3},\sqrt{5})$               &  NC   &  NA(3)       \\\hline
\end{tabular}
\end{center}

\begin{center}
Presentations:
{\small
\begin{eqnarray*}
&\left\langle\,R_1, R_2, R_3, J\vphantom{(R_1R_2)^{\frac{10p}{3p-10}}}\,\right.\left\vert 
     R_1^p,\, J^3,\, (R_1J)^{30},\, R_3=JR_2J^{-1}=J^{-1}R_1J,\, \br_4(R_1,R_2),\, \br_5(R_1,R_2R_3R_2^{-1}),\, \right. \qquad\qquad\qquad &\\ 
&\left. (R_1R_2R_3R_2^{-1})^{\frac{10p}{3p-10}},\, \br_6(R_2,R_3^{-1}R_2^{-1}R_1^{-1}R_2R_3R_2^{-1}R_1R_2R_3),\,
     (R_2\cdot R_3^{-1}R_2^{-1}R_1^{-1}R_2R_3R_2^{-1}R_1R_2R_3)^{\frac{3p}{p-3}} 
\,\right\rangle&
\end{eqnarray*}
}
\end{center}

\begin{center}
Combinatorics:\\
\begin{tabular}{|c|c|c|c|}
\hline
  Triangle                                                   & \#($P$-orb) & Top trunc.   & Top ideal \\
\hline
  $[4]\ 1;\ 2,\ 3$                                           &    30       &              & $p=4$\\              
  $[5]\ 2;\ 1,\ 23\bar2$                                     &    30       & $p=4$        & \\
  $[6]\ 2\bar3\bar2123\bar2;\ 2,\ \bar3\bar2\bar123\bar2123$ &     5       & $p=4$        & $p=3$\\
\hline
\end{tabular}
\end{center}

\begin{figure}[htbp]
\begin{center}
  \hfill
    \begin{tabular}{c}
      $p=2$\\
    \end{tabular}
    \hfill
    \begin{minipage}[c]{0.28\textwidth}
      \includegraphics[width=\textwidth]{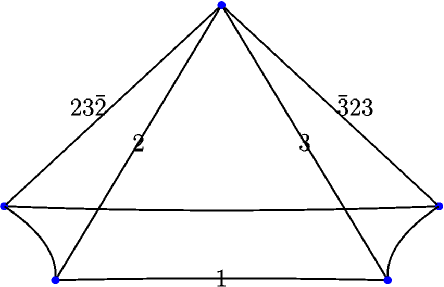}
    \end{minipage}
    \begin{minipage}[c]{0.28\textwidth}
      \includegraphics[width=\textwidth]{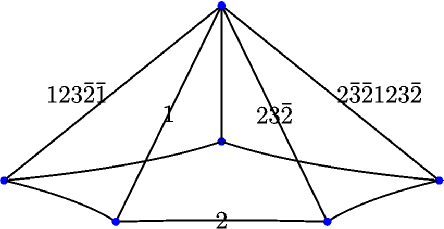}
    \end{minipage}
    \begin{minipage}[c]{0.28\textwidth}
      \includegraphics[width=\textwidth]{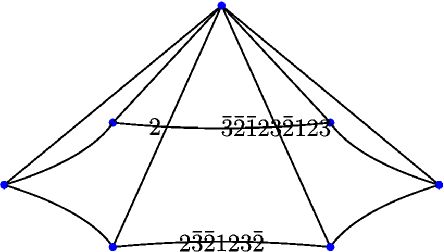}
    \end{minipage}
  \hfill\,
\end{center}

\begin{center}
  \hfill
    \begin{tabular}{c}
      $p=3$\\
    \end{tabular}
    \hfill
    \begin{minipage}[c]{0.28\textwidth}
      \includegraphics[width=\textwidth]{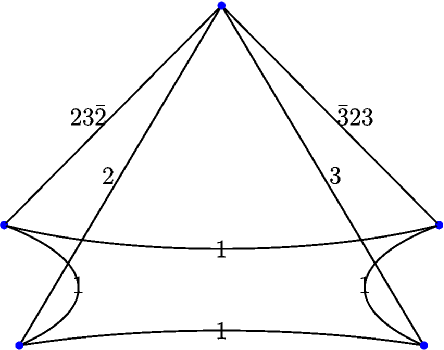}
    \end{minipage}
    \begin{minipage}[c]{0.28\textwidth}
      \includegraphics[width=\textwidth]{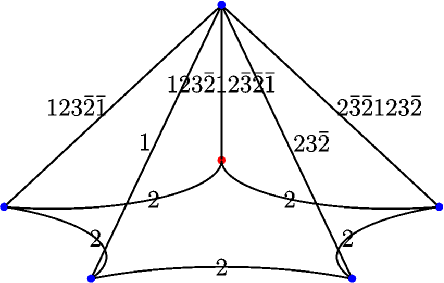}
    \end{minipage}
    \begin{minipage}[c]{0.28\textwidth}
      \includegraphics[width=\textwidth]{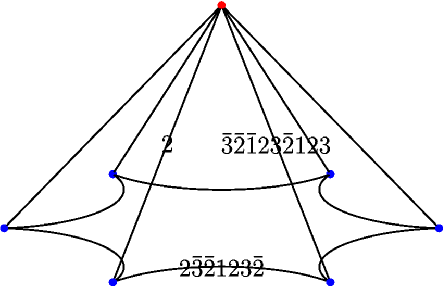}
    \end{minipage}
  \hfill\,
\end{center}

\begin{center}
  \hfill
    \begin{tabular}{c}
      $p=4$\\
    \end{tabular}
    \hfill
    \begin{minipage}[c]{0.28\textwidth}
      \includegraphics[width=\textwidth]{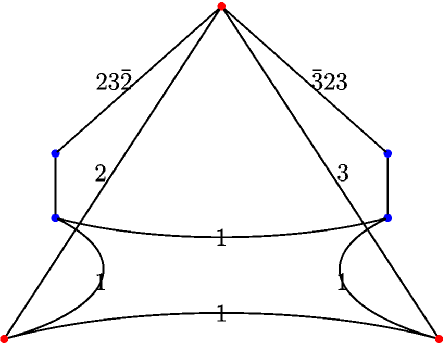}
    \end{minipage}
    \begin{minipage}[c]{0.28\textwidth}
      \includegraphics[width=\textwidth]{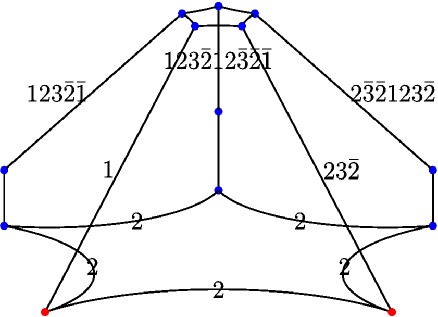}
    \end{minipage}
    \begin{minipage}[c]{0.28\textwidth}
      \includegraphics[width=\textwidth]{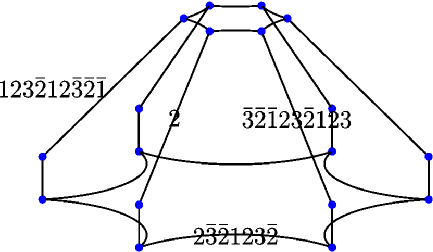}
    \end{minipage}
  \hfill\
\end{center}
\end{figure}

\begin{center}
Vertex stabilizers:\\
\begin{tabular}{|c|ccc|}
\hline
  $p$   &   Vertex                &  Order   &   Nature             \\ 
\hline
   2    &   $p_{12}$                                 &   8      &    $G(4,4,2)$ \\
        &   $p_{1,23\bar2}$                          &   10     &    $G(5,5,2)$ \\
        &   $p_{2,123\bar212\bar3\bar2\bar1}$                                &   72     &    $G(6,1,2)^{(*)}$ \\
\hline
   3    &   $p_{12}$                                 &   72     &    $G_{5}$ \\
        &   $p_{1,23\bar2}$                          &   360    &    $G_{20}$ \\
        &   $p_{2,123\bar212\bar3\bar2\bar1}$
                              & $\infty$ &    Cusp$^{(*)}$  \\
\hline
  4    &   $p_{12}$                                 & $\infty$ &    Cusp              \\
       &   $p_{1,(123\bar2)^5}$                      &    16    &    $\Z_4\times\Z_4$  \\
       &   $p_{2,(2\cdot 123\bar212\bar3\bar2\bar1)^3}$     &    48    &    ${\Z_{12}\times\Z_{4}}^{(*)}$  \\
\hline
\end{tabular}
\end{center}

\begin{center}
  Singular points:\\
  \begin{tabular}{|c|c|c|}
    \hline
     $p$ & Element &  Type\\
    \hline
      2,3,4  &  $J$                & $\frac{1}{3}(1,2)$\\
      2,3,4  &  $P$                & $\frac{1}{5}(1,2)$\\
    \hline
      4      & $123\bar2123\bar21$ & $A_1$\\
      4      & $P^5\bar2$          & $A_1$\\
      4      & $123\bar2$          & $\frac{1}{5}(1,3)$\\
    \hline
  \end{tabular}
\end{center}


\subsection{Sporadic $\sigma_{10}$} \label{app-s10}

\begin{center}
Triangle group type: 5,5,5; 3,3,3; 5
\end{center}

\begin{center}
Lattice for $p=3,4,5,10$.
\end{center}

\begin{center}
Commensurability invariants:\\
\begin{tabular}{|c|c|c|c|c|c|}\hline
  $p$ & $\chi^{orb}$  & $\Q(\tr{\rm Ad}\ \Gamma)$ & CM field                   & C? & A? \\ 
\hline 
 3    &     1/45      & $\Q(\sqrt{5})$            & $\Q(i\sqrt{3},\sqrt{5})$   &  C    &  A             \\
 4    &     3/32      & $\Q(\sqrt{5})$            & $\Q(i,\sqrt{5})$           &  C    &  A             \\
 5    &      1/8      & $\Q(\sqrt{5})$            & $\Q(\zeta_5)$              &  C    &  A             \\
 10   &     13/100    & $\Q(\sqrt{5})$            & $\Q(\zeta_5)$              &  C    &  A             \\
\hline
\end{tabular}
\end{center}

\begin{center}
Presentations:
\begin{eqnarray*}
&\left\langle\,R_1, R_2, R_3, J\vphantom{(R_1R_2)^{\frac{10p}{3p-10}}}\,\right.\left\vert\,  
     R_1^p,\, J^3,\,(R_1J)^5,\, R_3=JR_2J^{-1}=J^{-1}R_1J,\,\right. \qquad\qquad\qquad &\\ 
&\left.\br_5(R_1,R_2),\, (R_1R_2)^{\frac{10p}{3p-10}},\, \br_3(R_1,R_2R_3R_2^{-1}),\, (R_1R_2R_3R_2^{-1})^{\frac{6p}{p-6}} 
\,\right\rangle&
\end{eqnarray*}
\end{center}

\begin{center}
Combinatorics:\\
\begin{tabular}{|c|c|c|c|c|}
\hline
  Triangle & \#($P$-orb) & Top trunc.                                      & Top ideal \\
  $[5]\ 1;\ 2,\ 3$ &  5  & $p=4,5,10$                                       &\\              
  $[3]\ 2;\ 1,\ 23\bar2$ &  5 & $p=10$                                      &\\
\hline
\end{tabular}
\end{center}

\begin{figure}[htbp]
\begin{center}
  \hfill
    \begin{tabular}{c}
      $p=3$\\
    \end{tabular}
    \begin{minipage}[c]{0.18\textwidth}
      \includegraphics[width = \textwidth]{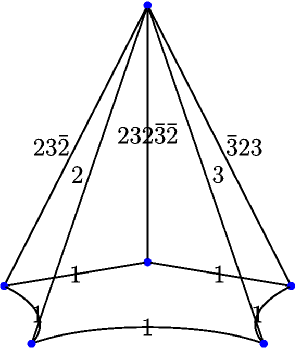}
    \end{minipage}
    \begin{minipage}[c]{0.18\textwidth}
      \includegraphics[width = \textwidth]{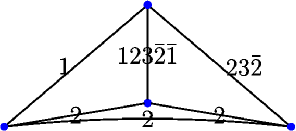}
    \end{minipage}
  \hfill\,
    \begin{tabular}{c}
      $p=4$\\
    \end{tabular}
    \begin{minipage}[c]{0.18\textwidth}
      \includegraphics[width = \textwidth]{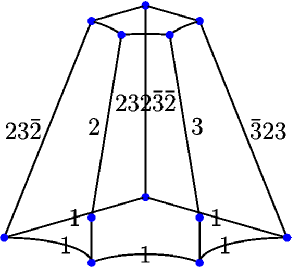}
    \end{minipage}
    \begin{minipage}[c]{0.18\textwidth}
      \includegraphics[width = \textwidth]{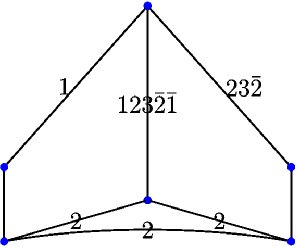}
    \end{minipage}
  \hfill\,
\end{center}

\begin{center}
  \hfill
    \begin{tabular}{c}
      $p=5$\\
    \end{tabular}
    \begin{minipage}[c]{0.182\textwidth}
      \includegraphics[width = \textwidth]{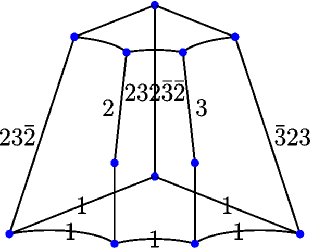}
    \end{minipage}
    \begin{minipage}[c]{0.182\textwidth}
      \centering
      \includegraphics[width = 0.7\textwidth]{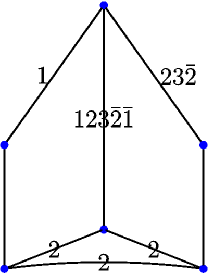}
    \end{minipage}
  \hfill\,
    \begin{tabular}{c}
      $p=10$\\
    \end{tabular}
    \begin{minipage}[c]{0.182\textwidth}
      \includegraphics[width = \textwidth]{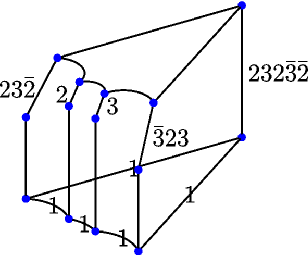}
    \end{minipage}
    \begin{minipage}[c]{0.182\textwidth}
      \centering
      \includegraphics[width = 0.4\textwidth]{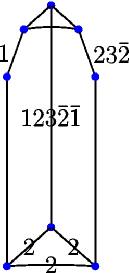}
    \end{minipage}
  \hfill\,
\end{center}
\end{figure}

\begin{center}
Vertex stabilizers:\\
\begin{tabular}{|c|ccc|ccc|}
\hline
$p$   &   Vertex                &  Order   &   Nature  \\ 
\hline
3    &   $p_{12}$                      &   360    &    $G_{20}$\\
     &   $p_{1,23\bar2}$               &   24     &    $G_4$   \\
     &   $p_{1,\bar3\bar2323}$         &   9      &    $\Z_3\times\Z_3$\\
\hline
4    &   $p_{1,(13)^5}$                &   16     &    $\Z_4\times\Z_4$\\
     &   $p_{1,23\bar2}$               &   96     &    $G_8$   \\
     &   $p_{1,\bar3\bar2323}$         &   16     &    $\Z_4\times\Z_4$\\
\hline
5    &   $p_{1,(13)^5}$                &   10     &    $\Z_5\times\Z_2$\\
     &   $p_{1,23\bar2}$               &   600    &    $G_{16}$   \\
     &   $p_{1,\bar3\bar2323}$         &   25     &    $\Z_5\times\Z_5$\\
\hline
10    &   $p_{1,(13)^5=id}$                &   10     &    $\Z_{10}$\\
      &   $p_{1,(123\bar2)^3}$          &   50     &    $\Z_{10}\times\Z_{5}$   \\
      &   $p_{1,\bar3\bar2323}$         &   100    &    $\Z_{10}\times\Z_{10}$\\
\hline
\end{tabular}
\end{center}

\begin{center}
Singular points:\\
  \begin{tabular}{|c|c|c|}
    \hline
    $p$ & Element &  Type\\
    \hline
    3,4,5  &  $Q$                & $\frac{1}{5}(1,2)$\\
           & $R_2Q^2$            & $A_1$\\
    \hline
    4,5    & $R_2Q^4$            & $\frac{1}{5}(1,2)$\\
    4,5    & $23\bar2(Q\bar2)^2$ & $A_1$\\
    \hline
     5     & $R_3Q^4$            & $A_1$\\
    \hline
  \end{tabular}
\end{center}


\subsection{Thompson ${\bf S_2}$} \label{app-S2}

\begin{center}
Triangle group type: 3,3,4; 5,5,5; 5 
\end{center}

\begin{center}
Lattice for $p=3,4,5$.
\end{center}

\begin{center}
Commensurability invariants:\\
\begin{tabular}{|c|c|c|c|c|c|}\hline
  $p$ & $\chi^{orb}$  & $\Q(\tr {\rm Ad}\ \Gamma)$ & CM field                  &  C?   & A?       \\ 
\hline
  3   & 2/15          & $\Q(\sqrt{5})$             & $\Q(i\sqrt{2},\sqrt{5})$  & C   &  A       \\
  4   & 1/3           & $\Q(\sqrt{3},\sqrt{5})$    & $\Q(i,\sqrt{3},\sqrt{5})$ & NC  &  NA(3)   \\
  5   & 133/300       & $\Q(\cos(2\pi/15))$        & $\Q(\zeta_{15})$          & C   &  NA(1)   \\
\hline
\end{tabular}
\end{center}

\begin{center}
Presentations:
\begin{eqnarray*}
&\left\langle\,R_1, R_2, R_3\vphantom{(R_1R_2R_3R_2^{-1})^{\frac{6p}{p-6}}}\,\right.\left\vert\,  
     R_1^p,\, R_2^p,\, R_3^p,\, 
     (R_1R_2R_3)^5,\, \br_3(R_1,R_3),\,\br_3(R_2,R_3)\, 
     \right. \qquad\qquad\qquad\qquad\qquad & \\ 
&\left.
     \br_4(R_1,R_2),\, (R_1R_2)^{\frac{4p}{p-4}},\,
     \br_5(R_1,R_2R_3R_2^{-1}),\, (R_1R_2R_3R_2^{-1})^{\frac{10p}{3p-10}} 
\,\right\rangle&
\end{eqnarray*}
\end{center}

\begin{center}
Combinatorics:\\
\begin{tabular}{|c|c|c|c|c|}
\hline
  Triangle & \#($P$-orb) & Top trunc.                                      & Top ideal \\
\hline
$[3]\ 1;\ 2,\ 3$ &  5  &                                                 &\\                   
$[3]\ 23\bar2;\ 1,\ 3$ &  5  &                                           &\\                   
$[4]\ 3;\ 1,\ 2$ &  5  & $p=5$                                           & $p=4$\\
$[5]\ 2;\ 1,\ 23\bar2$ &  5  & $p=4,5$                                   & \\
\hline
\end{tabular}
\end{center}

\begin{figure}[htbp]
\begin{center}
\hfill
\begin{tabular}{c}
$p=3$\\
\end{tabular}
\hfill
\begin{minipage}[c]{0.18\textwidth}
    \includegraphics[width = \textwidth]{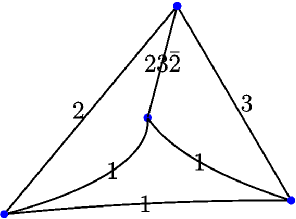}
  \end{minipage}\hfill
\begin{minipage}[c]{0.18\textwidth}
  \includegraphics[width = \textwidth]{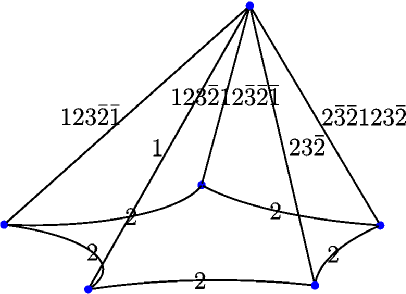}
\end{minipage}\hfill
\begin{minipage}[c]{0.18\textwidth}
  \includegraphics[width = \textwidth]{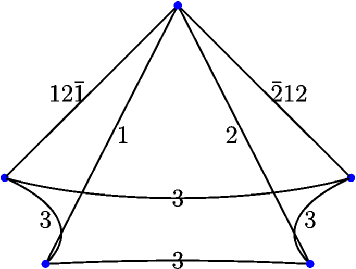}
\end{minipage}\hfill
\begin{minipage}[c]{0.18\textwidth}
  \includegraphics[width = \textwidth]{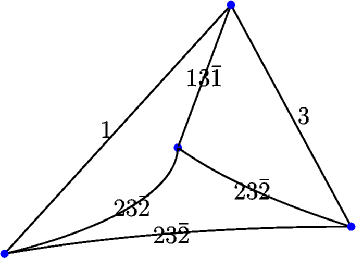}
\end{minipage}
\hfill\
\end{center}

\begin{center}
\hfill
\begin{tabular}{c}
$p=4$\\
\end{tabular}
\hfill
\begin{minipage}[c]{0.18\textwidth}
    \includegraphics[width = \textwidth]{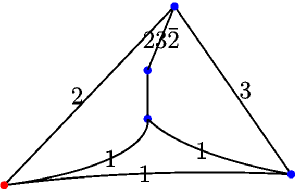}
  \end{minipage}\hfill
\begin{minipage}[c]{0.18\textwidth}
  \includegraphics[width = \textwidth]{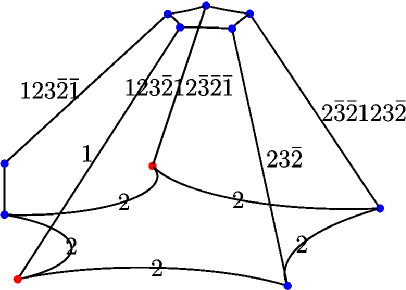}
\end{minipage}\hfill
\begin{minipage}[c]{0.18\textwidth}
  \includegraphics[width = \textwidth]{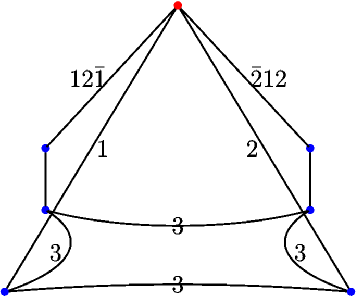}
\end{minipage}\hfill
\begin{minipage}[c]{0.18\textwidth}
  \includegraphics[width = \textwidth]{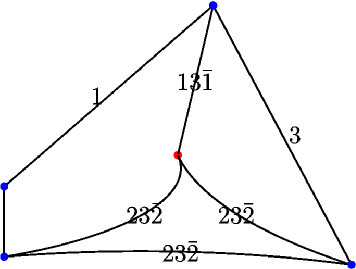}
\end{minipage}
\hfill\
\end{center}

\begin{center}
\hfill
\begin{tabular}{c}
$p=5$\\
\end{tabular}
\hfill
\begin{minipage}[c]{0.18\textwidth}
    \includegraphics[width = \textwidth]{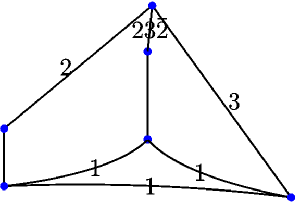}
  \end{minipage}\hfill
\begin{minipage}[c]{0.18\textwidth}
  \includegraphics[width = \textwidth]{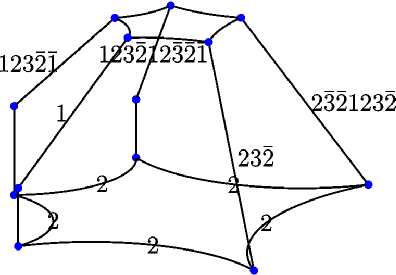}
\end{minipage}\hfill
\begin{minipage}[c]{0.18\textwidth}
  \includegraphics[width = \textwidth]{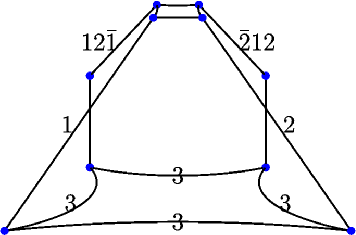}
\end{minipage}\hfill
\begin{minipage}[c]{0.18\textwidth}
  \includegraphics[width = \textwidth]{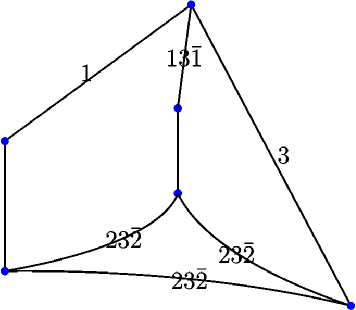}
\end{minipage}
\hfill\
\end{center}
\end{figure}

\begin{center}
Vertex stabilizers:\\
\begin{tabular}{|c|ccc|ccc|}
\hline
 $p$   &   Vertex             &  Order   &   Nature                 &    Vertex                       &   Order    & Nature\\ 
  \hline
  3    &   $p_{13}$              &   24     &    $G_4$
       &   $p_{23}$              &   24     &    $G_4$\\
       &   $p_{12}$              &   72     &    $G_5$           &&&\\     
       &   $p_{1,23\bar2}$       &   360    &    $G_{20}$        &&&\\
\hline
  4    &   $p_{13}$              &   96     &    $G_8$
       &   $p_{23}$              &   96     &    $G_8$\\ 
       &   $p_{12}$              & $\infty$ &    Cusp           &&&\\
       &   $p_{1,(123\bar2)^5}$  &   16     &    $\Z_4\times\Z_4$  &&&\\
\hline
  5    &   $p_{13}$              &  600     &    $G_{16}$
       &   $p_{23}$              &  600     &    $G_{16}$\\
       &   $p_{1,(12)^2}$        &   50     &    $\Z_5\times\Z_{10}$
       &   $p_{2,(12)^2}$        &   50     &    $\Z_5\times\Z_{10}$\\
       &   $p_{1,(123\bar2)^5}$  &   10     &    $\Z_5\times\Z_2$  &&&\\
 \hline
\end{tabular}
\end{center}

\begin{center}
Singular points:\\
  \begin{tabular}{|c|c|c|}
    \hline
   $p$ & Element             &  Type\\
    \hline
    3,4,5  &  $Q$                & $\frac{1}{5}(1,2)$\\
           & $R_2Q^2$            & $A_1$\\
    \hline
    4,5    & $R_2Q^4$            & $\frac{1}{5}(1,2)$\\
    4,5    & $23\bar2(Q\bar2)^2$ & $A_1$\\
    \hline
    5      & $R_3Q^4$            & $A_1$\\
    \hline
  \end{tabular}
\end{center}


\subsection{Thompson ${\bf E_2}$} \label{app-E2}

\begin{center}
Triangle group type: 3,4,4; 4,4,6; 6\\
$Q^3$ is a complex reflection
\end{center}

\begin{center}
Lattice for $p=3,4,6,(12)$.
\end{center}

\begin{center}
Commensurability invariants:\\
\begin{tabular}{|c|c|c|c|c|c|}\hline
  $p$ & $\chi^{orb}$  & $\Q(\tr {\rm Ad}\ \Gamma)$ & CM field         & C? & A? \\ 
\hline
 3    &     1/4       & $\Q$                       & $\Q(i\sqrt{3})$  &  NC  &  A     \\
 4    &     17/32     & $\Q(\sqrt{3})$             & $\Q(\zeta_{12})$ &  NC  &  NA(1) \\
 6    &     3/4       & $\Q$                       & $\Q(i\sqrt{3})$  &  NC  &  A     \\
\hline
\end{tabular}
\begin{tabular}{:c:c:c:c:c:c:c:}\hdashline
  $p$ & $\chi^{orb}$  & $\Q(\tr {\rm Ad}\ \Gamma)$ & CM field         & C? & A? & Alt.\\ 
\hdashline
 12   &     1/4       & $\Q(\sqrt{3})$             & $\Q(\zeta_{12})$ & C  & A  &  ?   \\
\hdashline
\end{tabular}
\end{center}

\begin{center}
Presentations ($p=3,4,6$ only):
{\small
\begin{equation*}
\begin{array}{c}
\left\langle\,R_1, R_2, R_3\vphantom{(R_1R_2R_3R_2^{-1})^{\frac{6p}{p-6}}}\,\right.\left\vert\,  
     R_1^p,\, R_2^p,\, R_3^p,\, 
     (R_1R_2R_3)^6,\, \br_3(R_2,R_3),\,
     \br_4(R_3,R_1), \,(R_1R_3)^{\frac{4p}{p-4}},\, \br_4(R_1,R_2),\right.\\ 
     \qquad\qquad\left.\, (R_1R_2)^{\frac{4p}{p-4}},\, \br_4(R_1,R_2R_3R_2^{-1}),\, (R_1R_2R_3R_2^{-1})^{\frac{4p}{p-4}}, \br_6(R_3,R_1R_2R_1^{-1})\, (R_3R_1R_2R_1^{-1})^{\frac{3p}{p-3}} \,\right\rangle
\end{array}
\end{equation*}
}
\end{center}

\begin{center}
Combinatorics:\\
\begin{tabular}{|c|c|c|c|c|}
\hline
  Triangle & \#($P$-orb) & Top trunc.                                      & Top ideal \\
\hline
$[3]\ 1;\ 2,\ 3$ &  6  & $p=12$                                          & $p=6$\\                   
$[4]\ 23\bar2;\ 1,\ 3$ &  6  & $p=6,12$                                  & $p=4$\\                   
$[4]\ 3;\ 1,\ 2$ &  6  & $p=6,12$                                        & $p=4$\\
$[4]\ 2;\ 1,\ 23\bar2$ &  6  & $p=6,12$                                  & $p=4$\\
$[6]\ \bar313;\ 12\bar1,\ 3$ &  3  & $p=4,6,12$                          & $p=3$\\
\hline
\end{tabular}
\end{center}

\begin{figure}[htbp]
\begin{center}
\hfill
\begin{tabular}{c}
$p=3$\\
\end{tabular}
\hfill
\begin{minipage}[c]{0.18\textwidth}
  \centering
  \includegraphics[width = 0.8\textwidth]{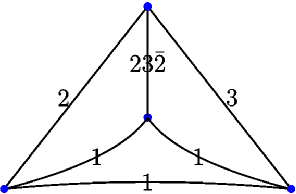}
  \end{minipage}\hfill
\begin{minipage}[c]{0.18\textwidth}
  \centering
  \includegraphics[width = 0.9\textwidth]{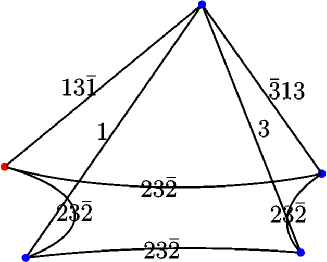}
\end{minipage}\hfill
\begin{minipage}[c]{0.18\textwidth}
  \centering
  \includegraphics[width = 0.9\textwidth]{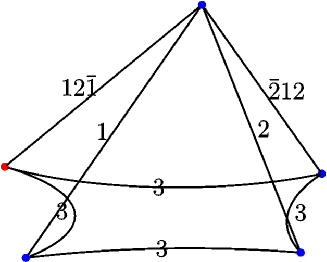}
\end{minipage}\hfill
\begin{minipage}[c]{0.18\textwidth}
  \centering
  \includegraphics[width = 0.9\textwidth]{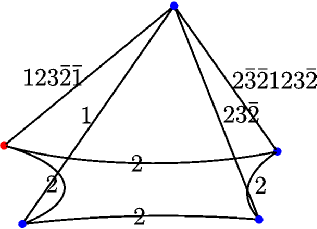}
\end{minipage}\hfill
\begin{minipage}[c]{0.18\textwidth}
  \centering
  \includegraphics[width = \textwidth]{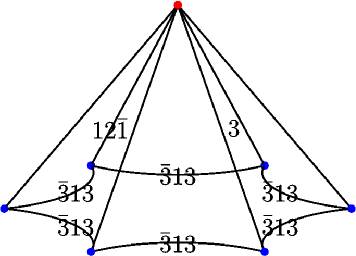}
\end{minipage}
\hfill\
\end{center}

\begin{center}
\hfill
\begin{tabular}{c}
$p=4$\\
\end{tabular}
\hfill
\begin{minipage}[c]{0.18\textwidth}
  \centering
  \includegraphics[width = 0.8\textwidth]{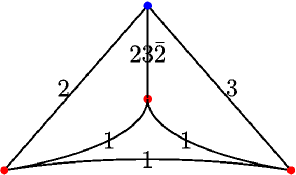}
  \end{minipage}\hfill
\begin{minipage}[c]{0.18\textwidth}
  \centering
  \includegraphics[width = 0.9\textwidth]{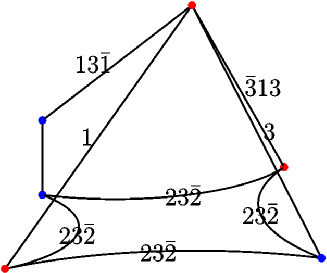}
\end{minipage}\hfill
\begin{minipage}[c]{0.18\textwidth}
  \centering
  \includegraphics[width = 0.9\textwidth]{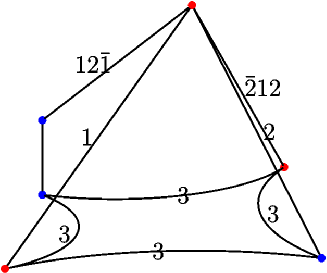}
\end{minipage}\hfill
\begin{minipage}[c]{0.18\textwidth}
  \centering
  \includegraphics[width = 0.9\textwidth]{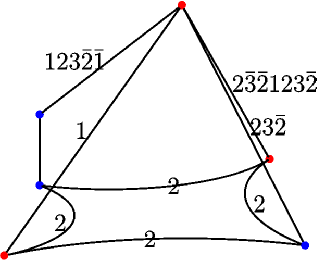}
\end{minipage}\hfill
\begin{minipage}[c]{0.18\textwidth}
  \centering
  \includegraphics[width = \textwidth]{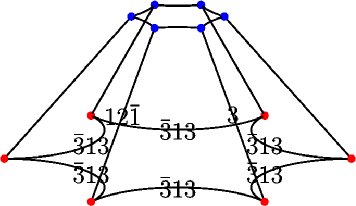}
\end{minipage}
\hfill\
\end{center}

\begin{center}
\hfill
\begin{tabular}{c}
$p=6$\\
\end{tabular}
\hfill
\begin{minipage}[c]{0.18\textwidth}
  \centering
  \includegraphics[width = 0.8\textwidth]{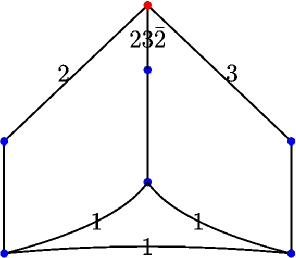}
  \end{minipage}\hfill
\begin{minipage}[c]{0.18\textwidth}
  \centering
  \includegraphics[width = 0.9\textwidth]{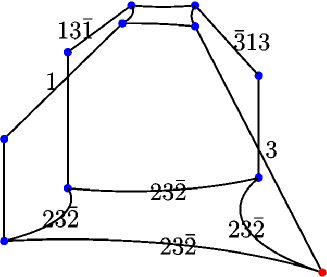}
\end{minipage}\hfill
\begin{minipage}[c]{0.18\textwidth}
  \centering
  \includegraphics[width = 0.9\textwidth]{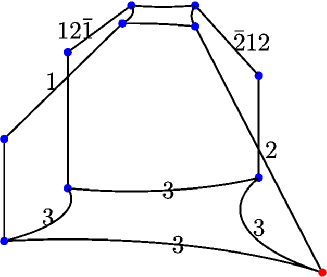}
\end{minipage}\hfill
\begin{minipage}[c]{0.18\textwidth}
  \centering
  \includegraphics[width = 0.9\textwidth]{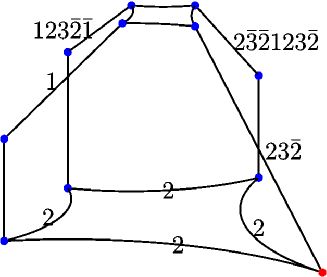}
\end{minipage}\hfill
\begin{minipage}[c]{0.18\textwidth}
  \centering
  \includegraphics[width = \textwidth]{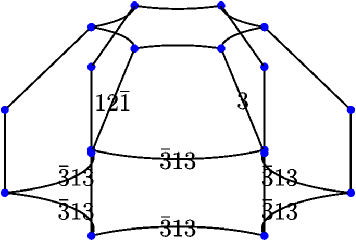}
\end{minipage}
\hfill\
\end{center}

\begin{center}
\hfill
\begin{tabular}{c}
$p=12$\\
\end{tabular}
\hfill
\begin{minipage}[c]{0.16\textwidth}
  \centering
  \includegraphics[width = 0.7\textwidth]{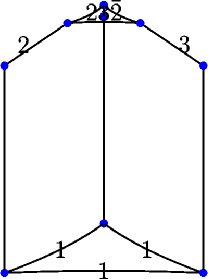}
  \end{minipage}\hfill
\begin{minipage}[c]{0.16\textwidth}
  \centering
  \includegraphics[width = 0.8\textwidth]{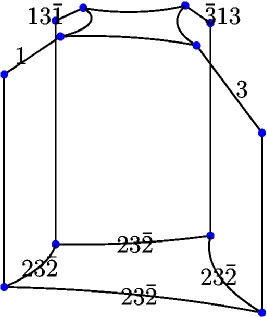}
\end{minipage}\hfill
\begin{minipage}[c]{0.16\textwidth}
  \centering
  \includegraphics[width = 0.8\textwidth]{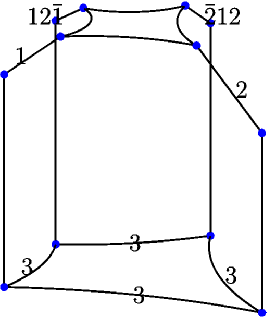}
\end{minipage}\hfill
\begin{minipage}[c]{0.16\textwidth}
  \centering
  \includegraphics[width = 0.8\textwidth]{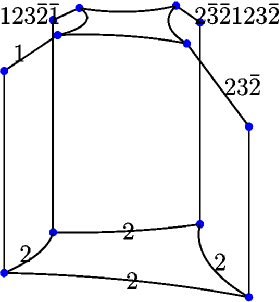}
\end{minipage}\hfill
\begin{minipage}[c]{0.16\textwidth}
  \centering
  \includegraphics[width = \textwidth]{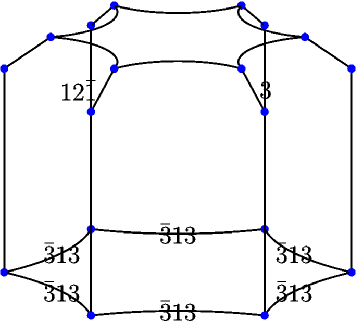}
\end{minipage}
\hfill\
\end{center}
\end{figure}

\begin{center}
Vertex stabilizers:\\
\begin{tabular}{|c|ccc|ccc|ccc|}
\hline
  $p$   &   Vertex             &  Order   &   Nature                 &    Vertex                       &   Order    & Nature  &    Vertex                       &   Order    & Nature\\ 
  \hline
   3    &   $p_{23}$              &   24     &    $G_4$ &&&&&&\\ 
 	&   $p_{1,23\bar2}$       &   72     &    $G_5$
        &   $p_{12}$              &   72     &    $G_5$
        &   $p_{13}$              &   72     &    $G_5$\\  
        &  $p_{2,123\bar2\bar1}$  & $\infty$ &    Cusp$^{(*)}$    &&&&&&\\
\hline
   4    &   $p_{23}$                 &   96     &    $G_8$ &&&&&&\\
 	&   $p_{1,23\bar2}$          & $\infty$ &    Cusp
        &   $p_{12}$                 & $\infty$ &    Cusp
        &   $p_{13}$                 & $\infty$ &    Cusp\\  
        & $p_{2,(123\bar2\bar1\cdot 2)^3}$ &   16     &    $\Z_4\times\Z_4$  &&&&&&\\
\hline
   6    & $p_{23}$ & $\infty$ & Cusp &&&&&&\\
 	&   $p_{1,(123\bar2)^2}$           & 36  &   $\Z_6\times\Z_6$ 
        &   $p_{1,(12)^2}$                 & 36  &   $\Z_6\times\Z_6$ 
        &   $p_{1,(13)^2}$                 & 36  &   $\Z_6\times\Z_6$ \\
        &   $p_{2,(123\bar2)^2}$           & 36  &   $\Z_6\times\Z_6$ 
        &   $p_{2,(12)^2}$                 & 36  &   $\Z_6\times\Z_6$ 
        &   $p_{3,(13)^2}$                 & 36  &   $\Z_6\times\Z_6$ \\
        & $p_{2,(123\bar2\bar1\cdot 2)^3}$ & 12  &   $\Z_6\times\Z_2$ &&&&&&\\
\hline
\end{tabular}
\end{center}

\begin{center}
Singular points:\\
  \begin{tabular}{|c|c|c|}
    \hline
    $p$ & Element &  Type\\
    \hline
     3,4,6  &  $Q$            & $\frac{1}{3}(1,1)$\\
    \hline
     4,6   & $Q\bar313Q\bar2$ & $A_1$\\
     4,6   & $\langle Q^{-1}2\bar3\bar2123\bar2,Q^3\rangle$  &  $\frac{1}{3}(1,1)$\\
    \hline
     6   & $R_2Q^5$         & $A_1$\\
     6   & $R_3Q^5$         & $A_1$\\
     6   & $23\bar2Q^5$     & $A_1$\\
    \hline
  \end{tabular}
\end{center}


\subsection{Thompson ${\bf \overline{H}_1}$} \label{app-H1}

\begin{center}
Triangle group type: 3,3,4; 7,7,7; 42\\
$Q^3$ is a complex reflection
\end{center}

\begin{center}
Lattice for $p=2,(7)$.
\end{center}

\begin{center}
Commensurability invariants:\\
\begin{tabular}{|c|c|c|c|c|c|}\hline
  $p$ & $\chi^{orb}$  & $\Q(\tr {\rm Ad}\ \Gamma)$ & CM field      & C? & A? \\ 
\hline
  2   &     1/49      & $\Q(\cos(2\pi/7))$         & $\Q(\zeta_7)$ & C  & A  \\
\hline
\end{tabular}
\begin{tabular}{:c:c:c:c:c:c:c:}\hdashline
  $p$ & $\chi^{orb}$  & $\Q(\tr {\rm Ad}\ \Gamma)$ & CM field      & C? & A? & Alt.\\ 
\hdashline
  7   &     1/49      & $\Q(\cos(2\pi/7))$         & $\Q(\zeta_7)$ & C  & A  & $\Gamma(7,9/14)$    \\
\hdashline
\end{tabular}
\end{center}

\begin{center}
Presentations ($p=2$ only):
\begin{eqnarray*}
&\left\langle\,R_1, R_2, R_3\vphantom{(R_1R_2R_3)^{42}}\,\right.\left\vert\,  
     R_1^2,\, R_2^2,\, R_3^2,\, 
     (R_1R_2R_3)^{42},\, (R_1R_2R_3R_2^{-1})^7\right.&\\ 
&\left.\br_3(R_2,R_3),\,\br_3(R_3,R_1),\, \br_4(R_1,R_2)\,\right\rangle&
\end{eqnarray*}
\end{center}

\begin{center}
Combinatorics:\\
\begin{tabular}{|c|c|c|c|c|}
\hline
  Triangle & \#($P$-orb) & Top trunc.                                      & Top ideal \\
\hline
$[3]\ 1;\ 2,\ 3$ &  42 & $p=7$                                           &\\                   
$[3]\ 23\bar2;\ 1,\ 3$ &  42  & $p=7$                                    &\\                   
$[4]\ 3;\ 1,\ 2$ &  42  &$p=7$                                           &\\
$[7]\ 2;\ 1,\ 23\bar2$ &  42  &$p=7$                                     &\\
$[14]\ \bar2123\bar2\bar12;\ \bar212,\ \bar312\bar13$ &  3  &$p=7$       &\\
\hline
\end{tabular}
\end{center}

\begin{figure}[htbp]

\begin{center}
\hfill
\begin{tabular}{c}
$p=2$\\
\end{tabular}
\hfill
\begin{minipage}[c]{0.18\textwidth}
  \centering
  \includegraphics[width = 0.7\textwidth]{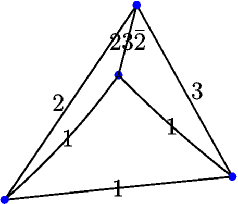}
  \end{minipage}\hfill
\begin{minipage}[c]{0.18\textwidth}
  \centering
  \includegraphics[width = 0.7\textwidth]{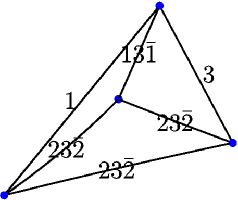}
\end{minipage}\hfill
\begin{minipage}[c]{0.18\textwidth}
  \centering
  \includegraphics[width = 0.95\textwidth]{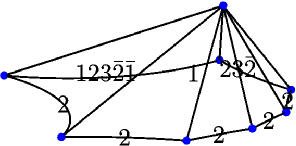}
\end{minipage}\hfill
\begin{minipage}[c]{0.18\textwidth}
  \centering
  \includegraphics[width = 0.95\textwidth]{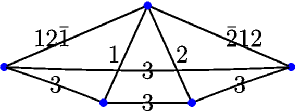}
\end{minipage}\hfill
\begin{minipage}[c]{0.18\textwidth}
  \centering
  \includegraphics[width = 0.9\textwidth]{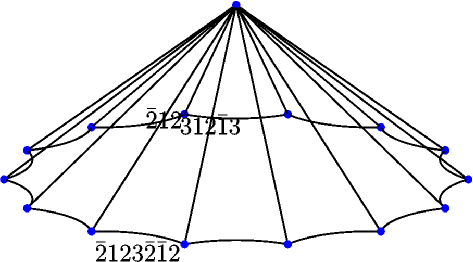}
\end{minipage}
\hfill\
\end{center}

\begin{center}
\hfill
\begin{tabular}{c}
$p=7$\\
\end{tabular}
\hfill
\begin{minipage}[c]{0.18\textwidth}
  \centering
  \includegraphics[width = 0.7\textwidth]{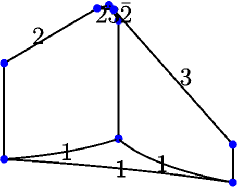}
  \end{minipage}\hfill
\begin{minipage}[c]{0.18\textwidth}
  \centering
  \includegraphics[width = 0.6\textwidth]{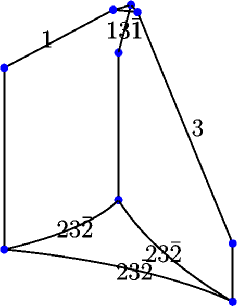}
\end{minipage}\hfill
\begin{minipage}[c]{0.18\textwidth}
  \centering
  \includegraphics[width = 0.8\textwidth]{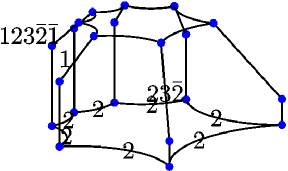}
\end{minipage}\hfill
\begin{minipage}[c]{0.18\textwidth}
  \centering
  \includegraphics[width = 0.8\textwidth]{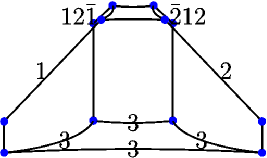}
\end{minipage}\hfill
\begin{minipage}[c]{0.18\textwidth}
  \centering
  \includegraphics[width = 0.8\textwidth]{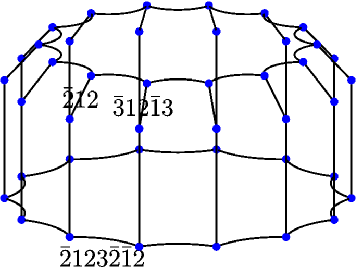}
\end{minipage}
\hfill\
\end{center}
\end{figure}

\begin{center}
Vertex stabilizers:\\
\begin{tabular}{|c|ccc|ccc|}
\hline
 $p$   &   Vertex             &  Order   &   Nature                 &    Vertex                       &   Order    & Nature \\ 
  \hline
  2    &   $p_{13}$              &   6      &    $G(3,3,2)$
       &   $p_{23}$              &   6      &    $G(3,3,2)$\\ 
       &   $p_{12}$              &   8      &    $G(4,4,2)$  &&&\\  
       &   $p_{1,23\bar2}$       &   14     &    $G(7,7,2)$  &&&\\
       &  $p_{2,123\bar212\bar3\bar2\bar1}$  & 392 &  $G(14,1,2)^{(*)}$    &&&\\
\hline
\end{tabular}
\end{center}

\begin{center}
Singular points:\\
  \begin{tabular}{|c|cccc|}
    \hline
    $p$ & Element & Order & Eigenvalues  &  Type\\
    \hline
    2  &  $Q$             &   42   &  $(\omega,\zeta_{42})$                  & $\frac{1}{3}(1,1)$\\
    \hline
  \end{tabular}
\end{center}


\subsection{Thompson ${\bf H_2}$} \label{app-H2}

\begin{center}
Triangle group type: 3,3,5; 5,5,5; 15\\
$Q^3$ is a complex reflection
\end{center}

\begin{center}
Lattice for $p=2,3,5,(10),(-5)
$.
\end{center}

\begin{center}
Commensurability invariants:\\
\begin{tabular}{|c|c|c|c|c|c|}\hline
  $p$ & $\chi^{orb}$  & $\Q(\tr {\rm Ad}\ \Gamma)$ & CM field         & C?  &  A?     \\ 
\hline
2     &    1/100      & $\Q(\sqrt{5})$             & $\Q(\zeta_5)$    & C   &  A      \\
3     &    26/75      & $\Q(\cos(2\pi/15))$        & $\Q(\zeta_{15})$ & C   &  NA(1)  \\
5     &    73/100     & $\Q(\sqrt{5})$             & $\Q(\zeta_5)$    & C   &  A      \\
\hline
\end{tabular}
\begin{tabular}{:c:c:c:c:c:c:c:}\hdashline
  $p$ & $\chi^{orb}$  & $\Q(\tr {\rm Ad}\ \Gamma)$ & CM field         & C? & A? & Alt.\\ 
\hdashline
  10  &    13/100     & $\Q(\sqrt{5})$             & $\Q(\zeta_5)$    & C  & A  & $\S(10,\sigma_{10})$      \\
\hdashline
&&&&&&\\[-13pt]
 -5   &     1/200     & $\Q(\sqrt{5})$             & $\Q(\zeta_5)$    & C  & A  & $\Gamma(5,7/10)$ \\
\hdashline
\end{tabular}
\end{center}

\begin{center}
Presentations ($p=2,3,5$ only):
\begin{eqnarray*}
&\left\langle\,R_1, R_2, R_3\vphantom{(R_1R_2R_3R_2^{-1})^{\frac{6p}{p-6}}}\,\right.\left\vert\,  
     R_1^p,\, R_2^p,\, R_3^p,\, 
     (R_1R_2R_3)^{15},\,\br_3(R_2,R_3),\,\br_3(R_3,R_1),\,  
     \right. & \\ 
&  \br_5(R_1,R_2),\,(R_1R_2)^{\frac{10p}{3p-10}},\, \br_5(R_1,R_2R_3R_2^{-1}),\, (R_1R_2R_3R_2^{-1})^{\frac{10p}{3p-10}},\\
     &\left. \, \br_{10}(R_3,R_1R_2R_1R_2^{-1}R_1^{-1}),\, (R_3R_1R_2R_1R_2^{-1}R_1^{-1})^{\frac{5p}{2p-5}}
\,\right\rangle&
\end{eqnarray*}
\end{center}

\begin{center}
Combinatorics:\\
\begin{tabular}{|c|c|c|c|c|}
\hline
  Triangle & \#($P$-orb) & Top trunc.                                      & Top ideal \\
\hline
$[3]\ 1;\ 2,\ 3$ & 15 & $p=10$                                           &\\                   
$[3]\ 23\bar2;\ 1,\ 3$ & 15 & $p=10$                                     &\\                   
$[5]\ 2;\ 1,\ 23\bar2$ & 15 & $p=5,10$                                   &\\                   
$[5]\ 3;\ 1,\ 2$ & 15 & $p=5,10$                                         &\\                   
$[10]\ (123)^2\bar212(\bar3\bar2\bar1)^2;\ 121\bar2\bar1,\ 3$ & 3 & $p=3,5,10$   &\\                   
\hline
\end{tabular}
\end{center}

\begin{figure}[htbp]
\begin{center}
\hfill
\begin{tabular}{c}
$p=2$\\
\end{tabular}
\hfill
\begin{minipage}[c]{0.16\textwidth}
  \centering
  \includegraphics[width = 0.7\textwidth]{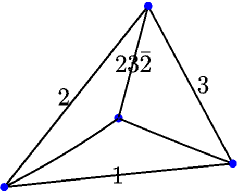}
  \end{minipage}\hfill
\begin{minipage}[c]{0.16\textwidth}
  \centering
  \includegraphics[width = 0.7\textwidth]{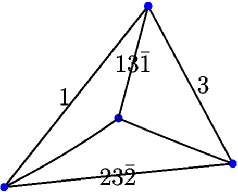}
\end{minipage}\hfill
\begin{minipage}[c]{0.16\textwidth}
  \centering
  \includegraphics[width = 0.9\textwidth]{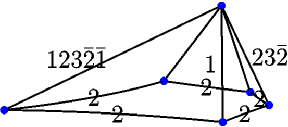}
\end{minipage}\hfill
\begin{minipage}[c]{0.16\textwidth}
  \centering
  \includegraphics[width = 0.9\textwidth]{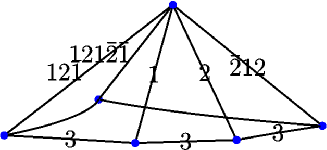}
\end{minipage}\hfill
\begin{minipage}[c]{0.16\textwidth}
  \centering
  \includegraphics[width = \textwidth]{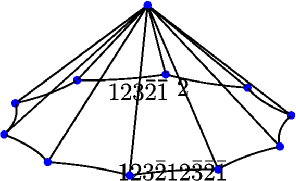}
\end{minipage}
\hfill\
\end{center}

\begin{center}
\hfill
\begin{tabular}{c}
$p=3$\\
\end{tabular}
\hfill
\begin{minipage}[c]{0.16\textwidth}
  \centering
  \includegraphics[width = 0.7\textwidth]{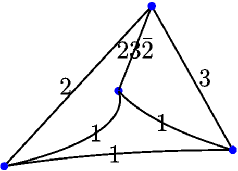}
  \end{minipage}\hfill
\begin{minipage}[c]{0.16\textwidth}
  \centering
  \includegraphics[width = 0.7\textwidth]{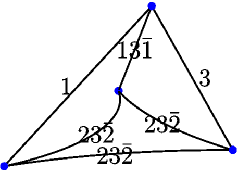}
\end{minipage}\hfill
\begin{minipage}[c]{0.16\textwidth}
  \centering
  \includegraphics[width = 0.9\textwidth]{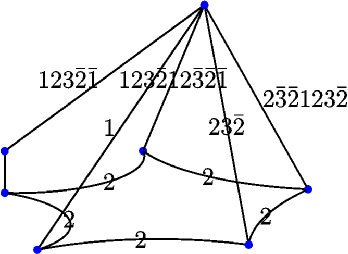}
\end{minipage}\hfill
\begin{minipage}[c]{0.16\textwidth}
  \centering
  \includegraphics[width = 0.9\textwidth]{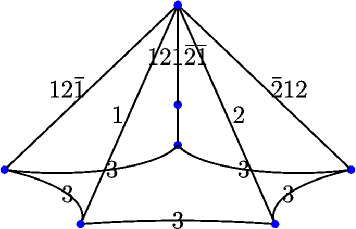}
\end{minipage}\hfill
\begin{minipage}[c]{0.16\textwidth}
  \centering
  \includegraphics[width = \textwidth]{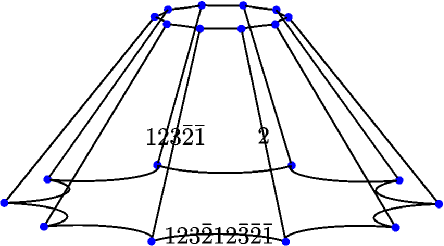}
\end{minipage}
\hfill\
\end{center}

\begin{center}
\hfill
\begin{tabular}{c}
$p=5$\\
\end{tabular}
\hfill
\begin{minipage}[c]{0.16\textwidth}  
  \centering
  \includegraphics[width = 0.6\textwidth]{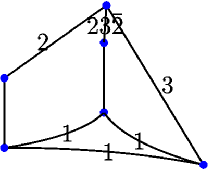}
  \end{minipage}\hfill
\begin{minipage}[c]{0.16\textwidth}
  \centering
  \includegraphics[width = 0.6\textwidth]{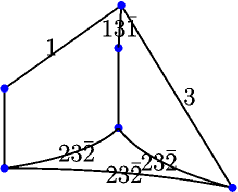}
\end{minipage}\hfill
\begin{minipage}[c]{0.16\textwidth}
  \centering
  \includegraphics[width = 0.9\textwidth]{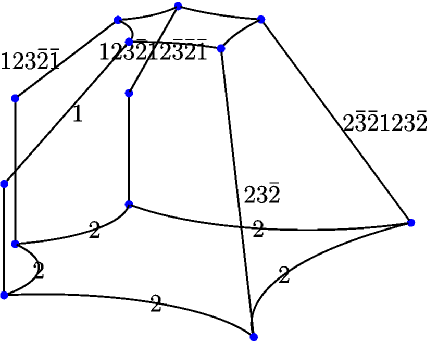}
\end{minipage}\hfill
\begin{minipage}[c]{0.16\textwidth}
  \centering
  \includegraphics[width = 0.9\textwidth]{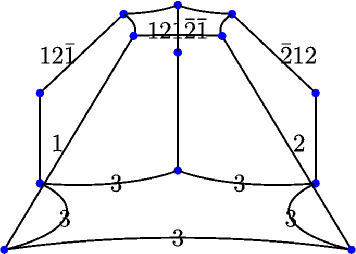}
\end{minipage}\hfill
\begin{minipage}[c]{0.16\textwidth}
  \centering
  \includegraphics[width = \textwidth]{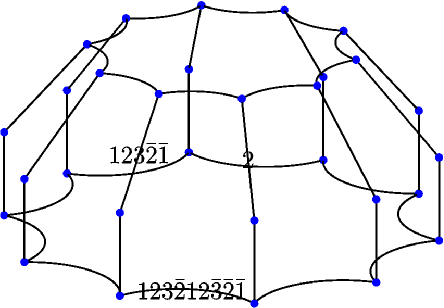}
\end{minipage}
\hfill\
\end{center}

\begin{center}
\hfill
\begin{tabular}{c}
$p=10$\\
\end{tabular}
\hfill
\begin{minipage}[c]{0.16\textwidth}
  \centering
  \includegraphics[width = 0.7\textwidth]{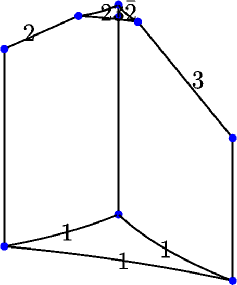}
  \end{minipage}\hfill
\begin{minipage}[c]{0.16\textwidth}
  \centering
  \includegraphics[width = 0.7\textwidth]{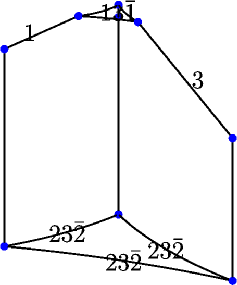}
\end{minipage}\hfill
\begin{minipage}[c]{0.16\textwidth}
  \centering
  \includegraphics[width = 0.9\textwidth]{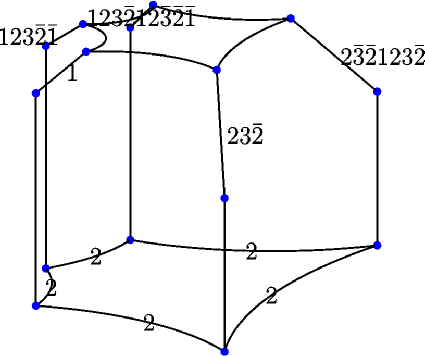}
\end{minipage}\hfill
\begin{minipage}[c]{0.16\textwidth}
  \centering
  \includegraphics[width = 0.9\textwidth]{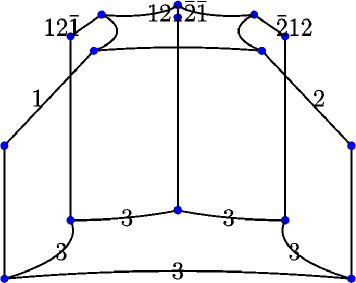}
\end{minipage}\hfill
\begin{minipage}[c]{0.16\textwidth}
  \centering
  \includegraphics[width = \textwidth]{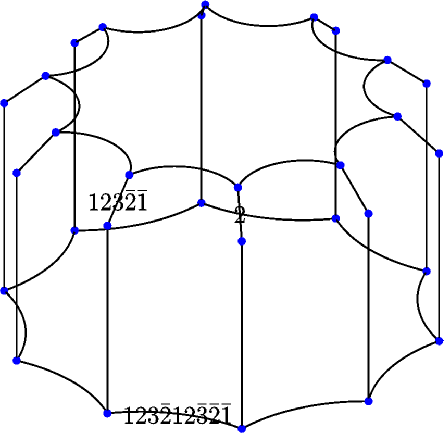}
\end{minipage}
\hfill\
\end{center}
\end{figure}

\begin{center}
Vertex stabilizers:\\
\begin{tabular}{|c|ccc|ccc|}
\hline
 $p$   &   Vertex             &  Order   &   Nature                 &    Vertex                       &   Order    & Nature \\ 
  \hline
   2   &   $p_{13}$              &   6      &    $G(3,3,2)$
       &   $p_{23}$              &   6      &    $G(3,3,2)$\\ 
       &   $p_{1,23\bar2}$       &   10     &    $G(5,5,2)$
       &   $p_{12}$              &   10     &    $G(5,5,2)$\\
       &  $p_{2,123\bar2\bar1}$  &   100    &    $G(10,2,2)^{(*)}$    &&&\\
\hline
   3   &   $p_{13}$              &   24      &    $G_4$
       &   $p_{23}$              &   24      &    $G_4$\\ 
       &   $p_{1,23\bar2}$       &   360     &    $G_{20}$
       &   $p_{12}$              &   360     &    $G_{20}$\\
       &   $p_{2,(2\cdot123\bar2\bar1)^5}$ &   45      &  $\Z_3\times\Z_{15}^{(*)}$  
       &   $p_{123\bar2\bar1,(2\cdot123\bar2\bar1)^5}$  &   45      &  $\Z_3\times\Z_{15}^{(*)}$  \\
\hline
  5    &   $p_{13}$                             &   600     &    $G_{16}$
       &   $p_{23}$                             &   600     &    $G_{16}$\\
       &   $p_{1,(123\bar2)^5}$                 &   10      &    $\Z_5\times\Z_2$
       &   $p_{2,(12)^5}$                       &   10      &    $\Z_5\times\Z_2$\\
       &   $p_{2,(\bar3\bar2\bar1)^2\bar2\bar12}$              &   25   &  $\Z_5\times\Z_5$  
       &   $p_{123\bar2\bar1,(\bar3\bar2\bar1)^2\bar2\bar12}$  &   25   &  $\Z_5\times\Z_5$\\
\hline
\end{tabular}
\end{center}

\begin{center}
Singular points\\
\begin{tabular}{|c|c|c|}
  \hline
  $p$ & Element &  Type\\
  \hline
   2,3,5  &  $Q$                & $\frac{1}{3}(1,1)$\\
  \hline
   5      & $23\bar2(Q\bar2)^2$ & $A_1$\\
   5      & $2(Q\bar3)^2$       & $A_1$\\
   5      & $2Q^{-1}$           & $\frac{1}{5}(1,2)$\\
   5      & $3Q^{-1}$           & $\frac{1}{5}(1,2)$\\
  \hline
\end{tabular}
\end{center}


\subsection{Mostow triangle groups} \label{sec:mostowinvariants}

In this section, we gather in the form of a table basic numerical
invariants for the Mostow triangle group.

In order to obtain the results below,
it is very useful to know that the group $\Gamma(p,t)$, generated by
$R_1$ and $J$ is always isomorphic to the hypergeometric monodromy
group $\Gamma_{\mu,\Sigma}$ for exponents
\begin{equation}\label{eq:converttomu}
\mu=\left(\frac{1}{2}-\frac{1}{p},\frac{1}{2}-\frac{1}{p},\frac{1}{2}-\frac{1}{p},
\frac{1}{4}+\frac{3}{2p}-\frac{t}{2},\frac{1}{4}+\frac{3}{2p}+\frac{t}{2}\right),
\end{equation}
and $\Sigma$ corresponding to permutations of the first three
weights. Moreover, if condition $\Sigma$-INT is satisfied but INT is
\emph{not} satisfied for the first three exponents (i.e. $p$ is odd),
then $\Gamma_{\mu,\Sigma}$ is the same as $\Gamma_\mu$.

From the hypergeometric exponents, one can easily read off the adjoint
trace field (which is the real subfield in the cyclotomic field
$\Q(\zeta_d)$, where $d$ is the least common denominator of the
exponents), see Lemma~(12.5) in~\cite{delignemostow}.

Presentations for various of these groups have been given in several
places,
including~\cite{mostowpacific},~\cite{dfp},~\cite{parkersurvey},~\cite{zhao}
for instance.  A unified presentation for all Deligne-Mostow groups
with three fold symmetry was given in~\cite{pasquinelli}. It is
straightforward to check that our presentation is equivalent to hers.

The non-arithmeticity index can be computed explicitly from the
hypergeometric weights, since Proposition~(12.7)
of~\cite{delignemostow} gives a formula for the signature of Galois
conjugates.

Finally, the volumes of Mostow lattices were tabulated by Sauter
in~\cite{sauter}; note that Sauter lists volumes, but volumes are
given by a universal constant ($8\pi^2/3$ if the holomorphic curvature
is normalized to be $-1$) times the orbifold Euler characteristic. A
lot of these volumes were also computed using different fundamental
domains, see~\cite{parkersurvey} for instance.

For some groups, namely $\Gamma(5,1/2)$, $\Gamma(7,3/14)$,
$\Gamma(9,1/18)$, the algorithm does not work quite as described in
section~\ref{sec:algo}, see section~\ref{sec:resultspoincare} for details.

Contrary to the previous sections of the Appendix, we do not list
vertex stabilizers and singular points, since these were already
described by Deligne and Mostow, see~\cite{delignemostowbook}.

\begin{center}
Triangle group type: 3,3,3; $k$,$k$,$k$; $2k$ 
\end{center}

\begin{center}
Presentations:
\begin{eqnarray*}
&\left\langle\,R_1, R_2, R_3, J\vphantom{(R_1R_2R_3R_2^{-1})^{\frac{6p}{p-6}}}\,\right.\left\vert\,  
     R_1^p,\, J^3,\, (R_1J)^{2k},\, JR_1J^{-1}=R_2,\, JR_2J^{-1}=R_3,\,  
     \right. & \\ 
& \left. \br_3(R_1,R_2),\,(R_1R_2)^{\frac{6p}{p-6}}, 
     (R_2R_1J)^{\frac{2kp}{(k-2)p-2k}}
\,\right\rangle&
\end{eqnarray*}
\end{center}


\begin{center}
Combinatorics:\\
\begin{tabular}{|c|c|c|}
\hline
   Triangle & \#(P-orbit) \\
\hline
 $[3]\ 1;\ 2,\ 3$ &  $2k$  \\                   
 $[k]\ 2;\ 1,\ 23\bar2$ &  2  \\
\hline
\end{tabular}
 \label{tab:rough_mostow}
\end{center}
\begin{rk} Note $2k$ stands for the order of $P=R_1J$. The second type of faces should be omitted when $k=2$, in which case
$1$ and $23\bar2$ commute, i.e. they braid with order $2$. The latter
  groups correspond to the Livn\'e family.
\end{rk}

\begin{center}
  Commensurability invariants:
\end{center}
\begin{center}
  $k=2$
\begin{tabular}{|c|c|c|c|c|c|c|c|c|}
\hline
  $p$ & $t$ & $o(P)$ & $\chi^{orb}$ & $\Q(\tr{\rm Ad}\ \Gamma)$  &  CM field        & C?  &  A?     \\ 
\hline
  $5$  & $7/10$      &  4       & 1/200  & $\Q(\sqrt{5})$        & $\Q(\zeta_5)$    & C   &  A      \\
  $6$  & $2/3$       &  4       & 1/72   & $\Q$                  & $\Q(\zeta_3)$    & NC  &  A      \\
  $7$  & $9/14$      &  4       & 1/49   & $\Q(\cos(2\pi/7))$    & $\Q(\zeta_7)$    & C   &  A      \\
  $8$  & $5/8$       &  4       & 3/128  & $\Q(\sqrt{2})$        & $\Q(\zeta_8)$    & C   &  A      \\
  $9$  & $11/18$     &  4       & 2/81   & $\Q(\cos(2\pi/9))$    & $\Q(\zeta_9)$    & C   &  NA(1)  \\
  $10$ & $3/5$       &  4       & 1/40   & $\Q(\sqrt{5})$        & $\Q(\zeta_5)$    & C   &  A      \\
  $12$ & $7/12$      &  4       & 7/288  & $\Q(\sqrt{3})$        & $\Q(\zeta_{12})$ & C   &  A      \\
  $18$ & $5/9$       &  4       & 13/648 & $\Q(\cos(2\pi/9))$    & $\Q(\zeta_9)$    & C   &  A      \\
\hline
\end{tabular}
\end{center}

\begin{center}
  $k=3$
\begin{tabular}{|c|c|c|c|c|c|c|c|c|}
\hline
  $p$ & $t$ & $o(P)$ & $\chi^{orb}$ & $\Q(\tr{\rm Ad}\ \Gamma)$  & CM field         & C?  &  A?     \\ 
\hline
  $4$  & $5/12$      &  6       & 1/72   & $\Q(\sqrt{3})$        & $\Q(\zeta_{12})$ & C   &  A     \\
  $5$  & $11/30$     &  6       & 8/225  & $\Q(\cos(2\pi/15))$   & $\Q(\zeta_{15})$ & C   &  A     \\
  $6$  & $1/3$       &  6       & 1/18   & $\Q$                  & $\Q(\zeta_3)$    & NC  &  A     \\
  $7$  & $13/42$     &  6       & 61/882 & $\Q(\cos(2\pi/21))$   & $\Q(\zeta_{21})$ & C   &  NA(2) \\
  $8$  & $7/24$      &  6       & 11/144 & $\Q(\cos(2\pi/24))$   & $\Q(\zeta_{24})$ & C   &  NA(1) \\
  $9$  & $5/18$      &  6       & 13/162 & $\Q(\cos(2\pi/9))$    & $\Q(\zeta_9)$    & C   &  A     \\
  $10$ & $4/15$      &  6       & 37/450 & $\Q(\cos(2\pi/15))$   & $\Q(\zeta_{15})$ & C   &  NA(2) \\
  $12$ & $1/4$       &  6       & 1/12   & $\Q(\sqrt{3})$        & $\Q(\zeta_{12})$ & C   &  A     \\
  $18$ & $2/9$       &  6       & 13/162 & $\Q(\cos(2\pi/9))$    & $\Q(\zeta_9)$    & C   &  A     \\
\hline
\end{tabular}
\end{center}

\begin{center}
  $k=4$
\begin{tabular}{|c|c|c|c|c|c|c|c|c|}
\hline
  $p$ & $t$ & $o(P)$ & $\chi^{orb}$ & $\Q(\tr{\rm Ad}\ \Gamma)$  & CM field         & C?  &  A?     \\ 
\hline
  $3$  & $1/3$       &  8       & 1/288  & $\Q(\sqrt{3})$        & $\Q(\zeta_{12})$ & C   &  A      \\
  $4$  & $1/4$       &  8       & 1/32   & $\Q$                  & $\Q(\zeta_{4})$  & NC  &  A      \\
  $5$  & $1/5$       &  8       & 23/400 & $\Q(\cos(2\pi/20))$   & $\Q(\zeta_{20})$ & C   &  NA(1)  \\
  $6$  & $1/6$       &  8       & 11/144 & $\Q(\sqrt{3})$        & $\Q(\zeta_{12})$ & NC  &  NA(1)  \\
  $8$  & $1/8$       &  8       & 3/32   & $\Q(\sqrt{2})$        & $\Q(\zeta_{8})$  & C   &  A      \\
  $12$ & $1/12$      &  8       & 7/72   & $\Q(\sqrt{3})$        & $\Q(\zeta_{12})$ & C   &  A      \\
\hline
\end{tabular}
\end{center}

\begin{center}
  $k=5$
\begin{tabular}{|c|c|c|c|c|c|c|c|}
\hline
  $p$  & $t$         & $o(P)$   & $\chi^{orb}$ & $\Q(\tr{\rm Ad}\ \Gamma)$ & CM field         & C?  &  A?     \\ 
\hline
  $3$  & $7/30$      &  10      & 2/225        & $\Q(\cos(2\pi/15))$       & $\Q(\zeta_{15})$ & C   &  A     \\
  $4$  & $3/20$      &  10      & 33/800       & $\Q(\cos(2\pi/20))$       & $\Q(\zeta_{20})$ & C   &  NA(2)  \\
  $5$  & $1/10$      &  10      & 13/200       & $\Q(\sqrt{5})$            & $\Q(\zeta_{5})$  & C   &  A      \\
  $10$ & $0$         &  10      & 1/10         & $\Q(\sqrt{5})$            & $\Q(\zeta_{5})$  & C   &  A      \\
\hline
\end{tabular}
\end{center}

\begin{center}
  $k=6$
\begin{tabular}{|c|c|c|c|c|c|c|c|}
\hline
  $p$ & $t$ & $o(P)$ & $\chi^{orb}$ & $\Q(\tr{\rm Ad}\ \Gamma)$  & CM field         & C?  &  A?     \\ 
\hline
  $3$  & $1/6$       &  12      & 1/72   & $\Q$                  & $\Q(\zeta_{3})$  & NC  &  A      \\
  $4$  & $1/12$      &  12      & 13/288 & $\Q(\sqrt{3})$        & $\Q(\zeta_{12})$ & C   &  NA(1)  \\
  $6$  & $0$         &  12      & 1/12   & $\Q$                  & $\Q(\zeta_{3})$  & NC  &  A      \\
\hline
\end{tabular}
\end{center}

\begin{center}
  $k=7$
\begin{tabular}{|c|c|c|c|c|c|c|c|}
\hline
  $p$  & $t$         & $o(P)$   & $\chi^{orb}$ & $\Q(\tr{\rm Ad}\ \Gamma)$  & CM field         &  C?  &  A?     \\ 
\hline
  $3$  & $5/42$      &  14      & 61/3528      & $\Q(\cos(2\pi/21))$        & $\Q(\zeta_{21})$ & C   &  NA(2) \\
\hline
\end{tabular}
\end{center}

\begin{center}
  $k=8$
\begin{tabular}{|c|c|c|c|c|c|c|c|}
\hline
  $p$  & $t$         & $o(P)$   & $\chi^{orb}$ & $\Q(\tr{\rm Ad}\ \Gamma)$  & CM field         & C?  &  A?     \\ 
\hline
  $3$  & $1/12$      &  16      & 11/576       & $\Q(\cos(2\pi/24))$        & $\Q(\zeta_{24})$ & C   &  NA(1)  \\
  $4$  & $0$         &  16      & 3/64         & $\Q(\sqrt{2})$             & $\Q(\zeta_{8})$  & C   &  A      \\
\hline
\end{tabular}
\end{center}

\begin{center}
  $k=9$
\begin{tabular}{|c|c|c|c|c|c|c|c|}
\hline
  $p$  & $t$         & $o(P)$   & $\chi^{orb}$ & $\Q(\tr{\rm Ad}\ \Gamma)$  & CM field        & C?  &  A?     \\ 
\hline
  $3$  & $1/18$      &  18      & 13/648       & $\Q(\cos(2\pi/9))$         & $\Q(\zeta_{9})$ & C   &  A      \\
\hline
\end{tabular}
\end{center}

\begin{center}
  $k=10$
\begin{tabular}{|c|c|c|c|c|c|c|c|}
\hline
  $p$ & $t$         & $o(P)$   & $\chi^{orb}$ & $\Q(\tr{\rm Ad}\ \Gamma)$  & CM field         & C?  &  A?     \\ 
\hline
  $3$ & $1/30$      &  20      & 37/1800      & $\Q(\cos(2\pi/15))$        & $\Q(\zeta_{15})$ & C   &  NA(2) \\
\hline
\end{tabular}
\end{center}

\begin{center}
  $k=12$
\begin{tabular}{|c|c|c|c|c|c|c|c|}
\hline
  $p$  & $t$   & $o(P)$ & $\chi^{orb}$ & $\Q(\tr{\rm Ad}\ \Gamma)$  & CM field         & C?  &  A?     \\ 
\hline
  $3$  & $0$   &  24    & 1/48         & $\Q(\sqrt{3})$             & $\Q(\zeta_{12})$ & C   &  A      \\
\hline
\end{tabular}
\end{center}

\begin{center}
  $k=5/2$
\begin{tabular}{:c:c:c:c:c:c:c:c:c:}
\hdashline
  $p$ & $t$   & $o(P)$ & $\chi^{orb}$ & $\Q(\tr{\rm Ad}\ \Gamma)$  & CM field        & C?  &  A? & Alt.\\ 
\hdashline
  $5$ & $1/2$ &  10    & 1/200        & $\Q(\sqrt{5})$             & $\Q(\zeta_{5})$ & C   &  A  &  $\Gamma(5,7/10)$          \\
\hdashline
\end{tabular}
\end{center}

\begin{center}
  $k=7/2$
\begin{tabular}{:c:c:c:c:c:c:c:c:c:}
\hdashline
  $p$ & $t$    & $o(P)$ & $\chi^{orb}$ & $\Q(\tr{\rm Ad}\ \Gamma)$  & CM field        & C?  &  A? & Alt.\\ 
\hdashline
  $7$ & $3/14$ &  14    & 1/49         & $\Q(\cos(2\pi/7))$         & $\Q(\zeta_{7})$ & C   &  A  & $\Gamma(7,9/14)$     \\
\hdashline
\end{tabular}
\end{center}

\begin{center}
  $k=9/2$
\begin{tabular}{:c:c:c:c:c:c:c:c:c:}
\hdashline
  $p$  & $t$    & $o(P)$ & $\chi^{orb}$ & $\Q(\tr{\rm Ad}\ \Gamma)$  & CM field        & C?  &  A?     & Alt.  \\ 
\hdashline
  $9$  & $1/18$ & 18     & 2/81         & $\Q(\cos(2\pi/9))$         & $\Q(\zeta_{9})$ & C   &  NA(1)  & $\Gamma(9,11/18)$  \\
\hdashline
\end{tabular}
\end{center}

\subsection{Deligne-Mostow groups without 3-fold symmetry}

In order to check that our lattices are not commensurable to any
Deligne-Mostow lattice, we also need to consider the handful of groups
in the Deligne-Mostow list whose hypergeometric exponents do not have
a 3-fold symmetry. In Table~\ref{tab:dm}, we list such non-arithmetic
groups, their orbifold Euler characteristics and the rough
commensurability invariants.

It turns out that the group $\Gamma_\mu$ with $\mu=(4,4,5,5,6)/12$ is
commensurable to the Mostow group $\Gamma(4,1/12)$, and the group with
$\mu=(6,6,9,9,10)/20$ is commensurable to $\Gamma(4,3/20)$,
see~\cite{delignemostowbook},~\cite{kappesmoller} or~\cite{mcmullengaussbonnet}.
\begin{table}[htbp]
\begin{tabular}{|c|c|c|c|c|c|}\hline
  $\mu$              & $\chi^{orb}(\Gamma_{\mu,\Sigma})$  & $\Q(\tr{\rm Ad}\ \Gamma)$   & CM field         & C? & A? \\ \hline

  $(4,4,5,5,6)/12$         & 13/96   & $\Q(\sqrt{3})$                                   & $\Q(\zeta_{12})$ & C  &  NA(1)          \\
  $(3,3,5,6,7)/12$         & 17/96   & $\Q(\sqrt{3})$                                   & $\Q(\zeta_{12})$ & NC &  NA(1)          \\
  $(6,6,9,9,10)/20$        & 99/800  & $\Q(\cos(2\pi/20))$                              & $\Q(\zeta_{20})$ & C  &  NA(2)          \\\hline
\end{tabular}
\caption{Invariants for non-arithmetic Deligne-Mostow groups
  $\Gamma_{\mu,\Sigma}$, such that $\mu$ has no 3-fold symmetry. Here
  $\Sigma$ stands for the full symmetry group of $\mu$.
}\label{tab:dm}
\end{table}

\subsection{Commensurability classes}

In this section we summarize the analysis of commensurability classes
of non-arithmetic lattices obtained in our paper, which brings to 22
the number of currently known non-arithmetic lattices in $\pu(2,1)$.

The result is given in Table~\ref{tab:commclasses}; groups in
different large boxes are in distinct commensurability classes, either
because they have different adjoint trace fields or because one is
cocompact and the other is not. Within a large box, we separate groups
by a solid line if they are known to be in distinct commensurability
classes.

\dashlinegap=2pt
\begin{table}
{
\small
\begin{tabular}{| c | c|c|c|c|c|c | c|c|c|c|c|c |}
\hline
$\Q(\tr{\rm Ad}\ \Gamma)$          &         \multicolumn{6}{c}{C}                                    &   \multicolumn{6}{|c|}{NC}  \\\hline\hline
$\Q(\sqrt{2})$                     &         \multicolumn{6}{c}{}                                     &   \multicolumn{6}{|c|}{$\S(4,\sigma_1)$} \\\hline
$\Q(\sqrt{3})$                     & \multicolumn{6}{c}{$\Gamma(4,\frac{1}{12})$, $\frac{1}{12}(4,4,5,5,6)$}     &   \multicolumn{2}{|c|}{$\Gamma(6,\frac{1}{6})$} & 
                                                                                                          \multicolumn{4}{c|}{$\frac{1}{12}(3,3,5,6,7)$,$\T(4,{\bf E_2})$}\\\hline
$\Q(\sqrt{5})$                     &         \multicolumn{6}{c}{}                                     &   \multicolumn{6}{|c|}{$\S(3,\sigma_5)$} \\\hline
$\Q(\sqrt{6})$                     &         \multicolumn{6}{c|}{}                                    &   \multicolumn{3}{c|}{\qquad$\S(3,\sigma_1)$\qquad \ } &
                                                                                                          \multicolumn{3}{c|}{$\S(6,\sigma_1)$}\\\hline
$\Q(\sqrt{7})$                     &         \multicolumn{6}{c|}{}                                    &   \multicolumn{6}{|c|}{$\S(4,\overline{\sigma}_4)$} \\\hline
$\Q(\sqrt{21})$                    &         \multicolumn{6}{c|}{}                                    &   \multicolumn{6}{|c|}{$\S(6,\overline{\sigma}_4)$} \\\hline
$\Q(\sqrt{2},\sqrt{3})$            &      \multicolumn{6}{c|}{}                                       &   \multicolumn{6}{|c|}{
                                                                                                            $\Gamma(3,1/12),\Gamma(8,\frac{7}{24})$}\\\hline
$\Q(\sqrt{2},\sqrt{7})$            & \multicolumn{6}{c|}{$\S(8,\overline{\sigma}_4)$}                 & \multicolumn{6}{c|}{} \\\hline
$\Q(\sqrt{3},\sqrt{5})$            &      \multicolumn{6}{c|}{}                                       &   \multicolumn{3}{c|}{\qquad$\S(4,\sigma_5)$\qquad \ } & 
                                                                                                          \multicolumn{3}{c|}{$\T(4,{\bf S_2})$}\\\hline
$\Q(\sqrt{3},\sqrt{7})$            &   \multicolumn{6}{c|}{$\S(12,\overline{\sigma}_4)$}              & \multicolumn{6}{c|}{}\\\hline
$\Q(\sqrt{\frac{5+\sqrt{5}}{2}})$  & \multicolumn{3}{c|}{$\Gamma(4,\frac{3}{20})$,$\frac{1}{20}(6,6,9,9,10)$}&    
                                     \multicolumn{3}{c|}{$\Gamma(5,\frac{1}{5})$}                             & \multicolumn{6}{c|}{}\\\hline
$\Q(\sqrt{\frac{5+\sqrt{5}}{14}})$        &       \multicolumn{6}{|c|}{$\S(5,\overline{\sigma}_4)$}           & \multicolumn{6}{|c|}{}\\\hline
$\Q(\cos\frac{2\pi}{9})$           & \multicolumn{6}{c|}{$\Gamma(9,11/18),\Gamma(9,1/18)$}    & \multicolumn{6}{c|}{}\\\hline
$\Q(\cos\frac{2\pi}{15})$          & \multicolumn{2}{c|}{$\Gamma(10,\frac{4}{15}),\Gamma(3,\frac{1}{30})$} &
                                          \multicolumn{2}{c|}{$\T(5,{\bf S_2})$} & 
                                          \multicolumn{2}{c|}{$\T(3,{\bf H_2})$}
                                                                 & \multicolumn{6}{c|}{}\\\hline
$\Q(\cos\frac{2\pi}{21})$          &  
                                          \multicolumn{6}{c|}{$\Gamma(3,\frac{5}{42}),\Gamma(7,\frac{13}{42})$} &\multicolumn{6}{c|}{}\\\hline

\end{tabular}
}
\caption{Commensurability classes of non-arithmetic lattice triangle groups.}\label{tab:commclasses}
\end{table}

\end{appendices}

\begin{flushleft}
  \textsc{Martin Deraux:\\
    Institut Fourier, Universit\'e Grenoble Alpes}\\
  \verb|martin.deraux@univ-grenoble-alpes.fr|
\end{flushleft}

\begin{flushleft}
  \textsc{John R. Parker:\\
    Department of Mathematical Sciences, Durham University}\\
    \verb|j.r.parker@durham.ac.uk|
\end{flushleft}

\begin{flushleft}
  \textsc{Julien Paupert:\\
    School of Mathematical and Statistical Sciences, Arizona State University}\\
       \verb|paupert@asu.edu|
\end{flushleft}

\end{document}